\newcommand{\R}{\mathbb{R}}
\newcommand{\1}{\mathbbm{1}}
\newcommand{\E}{\mathbb{E}}
\newcommand{\ps}{\mathcal{P}}
\renewcommand{\d}{\mathrm{d}}
\renewcommand{\P}{\mathbb{P}} 
\newcommand{\F}{\mathcal{F}}
\newcommand{\C}{\mathcal{C}}
\renewcommand{\L}{\mathcal{L}}
\newtheorem{theorem}{Theorem}[section]
\newtheorem{proposition}[theorem]{Proposition}
\newtheorem{lemma}[theorem]{Lemma}
\newtheorem{corollary}[theorem]{Corollary}
\theoremstyle{definition}
\newtheorem{definition}[theorem]{Definition}
\theoremstyle{remark}
\newtheorem{rem}[theorem]{Remark}
\newtheorem{example}[theorem]{Example}
\theoremstyle{plain}
\newtheorem{assumption}{Assumption}
\title{\bf  Quasi-continuity method for mean-field diffusions: large deviations and central limit theorem }
\date{}
\author{Louis-Pierre \textsc{Chaintron}}
\affil{
\small{DMA, École normale supérieure, Université PSL, CNRS, 75005 Paris, France.
} \\
\small{CERMICS, École des ponts, 77420 Champs-sur-Marne, France.} \\
\small{Inria, Team M$\sf{\Xi}$DISIM, Inria Saclay, 91128 Palaiseau, France. } \\
\url{louis-pierre.chaintron@ens.fr}
}
\begin{document}

\maketitle

\abstract{A pathwise large deviation principle in the Wasserstein topology and a pathwise central limit theorem are proved for the empirical measure of a mean-field system of interacting diffusions.
The coefficients are path-dependent.
The framework allows for degenerate diffusion matrices, which may depend on the empirical measure, including mean-field kinetic processes.
The main tool is an extension of Tanaka's pathwise construction \cite{tanaka1984limit} to non-constant diffusion matrices. 
This can be seen as a mean-field analogous of Azencott's quasi-continuity method \cite{azencott1980grandes} for the Freidlin-Wentzell theory.
As a by-product, uniform-in-time-step fluctuation and large deviation estimates are proved for a discrete-time version of the mean-field system.
Uniform-in-time-step convergence is also proved for the value function of some mean-field control problems with quadratic cost.
}

\tableofcontents

\medskip
\medskip
\medskip

\noindent\textbf{Acknowledgements.} The author thanks Julien Reygner for fruitful discussions and careful proofreading.

\section{Introduction}

We consider a mean-field system of interacting diffusions in $\R^d$,
\begin{equation} \label{eq:InterPart}
\d X^{i,N}_t = b_t ( X^{i,N}, \pi ( \vec{X}^{N} ) ) \d t + \sigma_t ( X^{i,N}, \pi ( \vec{X}^{N} ) ) \d B^{i}_t, \qquad 1 \leq i \leq N, 
\end{equation} 
starting from i.i.d. initial conditions $X^{i}_0$,
the $B^i := (B^i_t)_{0 \leq t \leq T}$ being i.i.d. Brownian motions in $\R^{d'}$. 
These particles interact through their empirical measure
\[ \pi ( \vec{X}^{N} ) := \frac{1}{N} \sum_{i = 1}^N \delta_{X^{i,N}} \in \ps ( \C ( [0,T], \R^d ) ), \]
using the notation $\vec{X}^N := ( X^{i,N} )_{1 \leq i \leq N}$.
The coefficients $b$ and $\sigma$ are globally Lipschitz functions which are path-dependent in a non-anticipative way, and $\sigma$ is globally bounded (precise conditions are listed in Assumption \ref{ass:coef1} below). 
As $N \rightarrow +\infty$, $\pi ( \vec{X}^{N} )$ converges to the path-law $\L( \overline{X} )$ of the McKean-Vlasov equation
\[ \d \overline{X}_t = b_t ( \overline{X} , \L (\overline{X}) ) \d t + \sigma_t ( \overline{X} , \L (\overline{X}) ) \d B^1_t, \qquad X_0 \sim X^1_0. \]
Such a mean-field limit is often referred to as a \emph{propagation of chaos} result \cite{sznitman1991topics}.
Under suitable assumptions on $(b,\sigma)$, the mean-field limit holds with rate $N^{-1/2}$ for the mean-square convergence, see e.g. \cite[Theorem 1.4]{sznitman1991topics}.
When $\sigma \equiv \mathrm{Id}$, a pathwise construction due to \cite{tanaka1984limit} builds $\overline{X}$ and the $X^{i,N}$ in a unified framework.
In particular, the empirical measure $\pi( \vec{X}^N )$ of the particles is a continuous function of the one of the driving noises $\pi( \vec{B}^N )$.
This powerful approach rephrases the mean-field limit as a mere continuity result.
Furthermore, Tanaka's construction allows for computing the fluctuations of $\pi( \vec{X}^N )$ directly from the ones of $\pi( \vec{B}^N )$. 
From there, the objectives of this article are three-fold:
\begin{itemize}
    \item We extend Tanaka's construction to non-constant $\sigma$ using an Euler scheme for \eqref{eq:InterPart} for which we prove strong consistency estimates.     
    \item We compute the first-order fluctuations of $\pi ( \vec{X}^N )$ by proving a \emph{pathwise central limit theorem} (CLT), i.e. the convergence of $N^{-1/2} [ \pi ( \vec{X}^{N} ) - \L(\overline{X}) ]$ towards a Gaussian field.
    \item We prove a \emph{large deviation principle} (LDP), estimating $\P ( \pi ( \vec{X}^{N} ) \in A )$ at the exponential scale for any measurable $A \subset \ps ( \C ( [0,T], \R^d ) )$, which may not contain $\L(\overline{X})$. 
\end{itemize}
These three points correspond to Theorems \ref{thm:DisMFlimit}-\ref{thm:LDPGood}-\ref{thm:ClT} below.
The CLT quantifies the normal fluctuations of $\pi ( \vec{X}^N )$ in the $N \rightarrow +\infty$ limit, whereas the LDP quantifies large deviations.
The CLT and the LDP are first proved for the Euler approximation of \eqref{eq:InterPart}, before being transferred to the continuous-time system.
The CLT is proved for binary interactions of type $b(x,P) = \int \tilde{b}(x,y) \d P(y)$.
The LDP is proved in the Wasserstein-$p$ topology, for every $p \in [1,2)$.
These CLT and LDP results are not the first ones of their kind, but they are new at the considered level of generality, see Section \ref{subsec:lit} below for a detailed review of existing literature.
The main challenge of our work is to include non-constant $\sigma$ that depend on the measure argument and may be degenerate.
In particular, our results cover kinetic systems like mean-field interacting Langevin dynamics. 
We refer to our extended construction as a \emph{quasi-continuity} result, by analogy with the seminal Freidlin-Wentzell theory \cite{freidlin1998random} and the Azencott quasi-continuity method \cite{azencott1980grandes}.

To motivate this terminology, let us recall the analogy made in the introduction of \cite{dawson1989large}.
To simplify the presentation, we assume that $b_t (x,P) = b (x_t,P_t)$ and $\sigma \equiv \mathrm{Id}$.
From Ito's formula, the empirical measure $\pi ( \vec{X}^N_t )$ of the particles at time $t$ a.s. satisfies
\[ \d \int_{\R^d} \varphi \, \d \pi ( \vec{X}^N_t ) = \bigg[ \int_{\R^d} L_{\pi ( \vec{X}^N_t )} \varphi \, \d \pi ( \vec{X}^N_t ) \bigg] \d t + \frac{1}{N} \sum_{i=1}^N \nabla \varphi ( X^{i,N}_t ) \cdot \d B^{i}_t, \]
for any smooth test function $\varphi : \R^d \rightarrow \R$,
the measure-dependent generator being defined by $L_\pi \varphi (x) := b( x ,\pi) \cdot \nabla \varphi (x) + \tfrac{1}{2} \Delta \varphi (x)$.
The last term on the r.h.s is a martingale whose quadratic variation is of order $N^{-1}$.
This motivates the informal re-writing
\begin{equation} \label{eq:informalDG}
\d \pi ( \vec{X}^N_t ) = L^\star_{\pi ( \vec{X}^N_t )} \pi ( \vec{X}^N_t ) \d t + N^{-1/2} \d M_t,  
\end{equation}
where $( M_t )_{0 \leq t \leq T}$ is intended to be a measure-valued martingale term of order $1$, and $L^\star_\pi$ is the formal $L^2$-adjoint of $L_\pi$.
Equation \eqref{eq:informalDG} is known as the Dean-Kawasaki stochastic partial differential equation, see e.g. \cite{kawasaki1994stochastic,dean1996langevin,RenesseKasawaki,fehrman2022wellposednessdeankawasakinonlineardawsonwatanabe}.
This writing draws an (infinite-dimensional) analogy with the famous Freidlin-Wentzell diffusion
\begin{equation} \label{eq:fw}
\d X^\varepsilon_t = b ( X^\varepsilon_t ) \d t + \sqrt{\varepsilon} \d B_t,
\end{equation} 
where $b$ is now a vector field in $\R^d$ and $B := (B_t)_{0 \leq t \leq T}$ is a Brownian motion.
Let us describe how our approach enables us to push this analogy beyond qualitative statements, at the very level of proofs. 
We first recall some basic notions about the Freidlin-Wentzell theory.

\medskip

\textbf{\emph{Pathwise Freidlin-Wentzell theory.}} 
If $b$ is globally Lipschitz in \eqref{eq:fw}, setting $\C^d := \C([0,T],\R^d)$,
the Cauchy-Lipschitz theory provides a map $X : \C^d \rightarrow \C^d$, such that for every $\omega = (\omega_t)_{0 \leq t \leq T}$, $X (\omega)$ is the solution to the ordinary differential equation (ODE),
\[ \forall t \in [0,T], \quad X_t ( \omega ) = \int_0^t b ( X_s ( \omega ) ) \d s + \omega_t. \]
This allows us to build the diffusion \eqref{eq:fw} by simply setting $X^\varepsilon := X ( \sqrt{\varepsilon} B)$.
As $\varepsilon \rightarrow 0$,
the a.s. pathwise convergence of $X^\varepsilon$ towards $X^0$ is then a simple consequence of the continuity of $X$.
Looking back at \eqref{eq:informalDG}, this convergence corresponds to the $N \rightarrow +\infty$ limit.
Classically, the Freidlin-Wentzell LDP is obtained from the contraction principle, using the continuous map $X$ to push the large deviations of $( \sqrt{\varepsilon} B )_{\varepsilon > 0}$ given by  Schilder's theorem.
The first-order expansion follows by formally writing that 
\[ X^\varepsilon - X^0 = \sqrt{\varepsilon} \, Y + o (\sqrt{\varepsilon}),  \]
where the Gaussian process $Y = ( Y_t )_{0 \leq t \leq T}$ solves the SDE $\d Y_t = D b ( X^0_t ) \cdot Y_t \d t + \d B_t$, differentiating $X$ using the standard theory of flow maps, see e.g. \cite[Chapter 5.5]{friedman1975stochastic} or \cite[Chapter 2]{freidlin1998random}.

\medskip

\textbf{\emph{Azencott's quasi-continuity method.}}
The above construction does not allow for non-constant diffusion matrices $\sigma$ in \eqref{eq:fw} because the stochastic integral is not a continuous function of the driving path.
More precisely, a measurable map $X : \C^d \rightarrow \C^d$ could be defined as above on a set of full Wiener-measure but $X$ would not be continuous, preventing us from computing large deviations by contraction. 
In the seminal work \cite{azencott1980grandes}, a solution is developed which is by now referred to as the quasi-continuity method.
The main idea is to approximate the Brownian path by a curve in the Sobolev space $H^1 ( [0,T], \R^d)$, before defining a continuous contraction map $H^1 \rightarrow H^1$.
The key-result is then an approximation estimate stating that the made error is negligible at the exponential scale in terms of $\varepsilon$.
Thus, the large deviations can still be computed by contraction.
A similar argument can be found in \cite[Lemma 1.4.21]{deuschel2001large}, using an Euler-approximation of the stochastic integral, and showing that the made error is negligible.

An alternative approach relies on the rough-path theory which was first introduced by \cite{lyons1998differential}.
Extending the notion of stochastic process, this theory makes the stochastic integral be a continuous function of the driving noise.
The flow map is then built on a space of rough paths, yielding the large deviations by continuous contraction \cite{ledoux2002large}.
We emphasise that such a result is very strong, because it provides large deviations at the level of rough paths in addition to the process's level.
As a consequence, there is a technical price to pay to use this sophisticated machinery.

\medskip

\textbf{\emph{Tanaka's pathwise construction.}} 
Going back to the setting of \eqref{eq:informalDG}, the key-element of \cite{tanaka1984limit} is the construction of $X^P : \C^d \rightarrow \C^d$, for every $P \in \ps( \C^d )$, such that
\[ \forall \omega \in \C^d, \; \forall t \in [0,T], \qquad X^P_t ( \omega ) = \int_0^t b ( X_s ( \omega ), ( X^P_s )_\# P ) \d s + \omega_t. \]
We emphasise that the above equation is non-local, because the drift term depends on the whole map $X^P$ through the push-forward. 
\cite{tanaka1984limit} then proves that the map $\Psi : P \mapsto X^P_\# P$ is continuous in a suitable topology, before building the particle system in a pathwise way as
\[ X^{i,N} := X^{ \pi ( \vec{B}^N )} ( B^{i} ), \qquad \qquad  \pi ( \vec{X}^N ) = \Psi ( \pi ( \vec{B}^N ) ), \]
the dependence on the empirical measure $\pi ( \vec{B}^N )$ of the driving noises being continuous. 
Looking at \eqref{eq:informalDG}-\eqref{eq:fw}, there is a striking analogy with the pathwise Freidlin-Wentzell theory.
As a by-product, \cite{tanaka1984limit} directly obtains the large deviations by contraction from the Sanov theorem, and the CLT is obtained by performing a first-order expansion of $\Psi$ -- this is reminiscent of the delta method in statistics.
This framework is very convenient for mean-field limits of any kind: extensions are developed in \cite{Coghi2020PathwiseMT} for non-exchangeable systems, general driving noises, systems with common noise, càdlàg processes, reflected processes... 
Similarly to the Freidlin-Wentzell theory, a crucial limitation is the difficulty to handle non-constant diffusion matrices $\sigma$, because of the need for stochastic integration.

Up to our knowledge, there is no existing rough-path approach for extending Tanaka's construction.
However, we mention \cite{bailleul2021propagation} which proves the mean-field limit for interacting rough paths, and \cite{deuschel2018enhanced} for a related LDP. 
This last LDP was proved for $\sigma \equiv \mathrm{Id}$ using the Girsanov transform and a standard large deviation result for Gibbs measures:
this prevents it from going beyond the case of non-degenerate $\sigma$ that do not depend on the measure argument. 

\medskip

\emph{\textbf{Quasi-continuity method for mean-field systems.}}
Motivated by Tanaka's and Azencott's approaches, it is natural to extend Tanaka's construction to handle non-constant $\sigma$.
We first design a pathwise construction for a Euler-Maruyama approximation of \eqref{eq:InterPart} with general driving noises that may not be Brownian (Theorem \ref{thm:DisMFlimit}).
Exchangeability is not required either.
This allows us to prove mean-field limits, central limit theorems and large deviations for discretised systems at a great level of generality.
These novel results are already useful as such, the practitioner being interested in the behaviour of the numerically implemented model, rather than the theoretical one.
When the driving noises are i.i.d. Brownian motions, we extend these results to the continuous setting, by showing:
\begin{itemize}
    \item A uniform-in-time-step fluctuation estimate to recover the CLT (Theorem \ref{thm:ClT}).
    \item A uniform approximation at the exponential scale to recover the LDP (Theorem \ref{thm:LDPGood}).
\end{itemize}
These uniform estimates are quantitative and new up to our knowledge. 
They are of independent interest from a numerical analysis perspective. 
Following the weak convergence approach \cite{dupuis2011weak}, the exponential approximation relies on a stochastic control interpretation of exponential moments.
Thus, our estimates also have an interest from a mean-field control perspective. 
They provide a uniform-in-$N$ discrete-time approximation for mean-field control with quadratic cost.
Detailed statements of our results are given in Section \ref{sec:Results}.

\subsection{Review of related litterature} \label{subsec:lit}

For general results about mean-field limits and propagation of chaos, we refer to the survey articles \cite{LPChaos1,LPChaos2} and references therein.
For the specific case of the path-dependent setting, we refer to e.g. \cite{lacker2018strong,bernou2022path,lacker2023hierarchies}, which cover slightly less general settings. 
There is a lot of references for discretised versions of systems like \eqref{eq:InterPart}, among which \cite{bossy1997stochastic,antonelli2002rate,bao2019approximations,bernou2022path} and references therein.
To the best of our knowledge, our LDP and CLT results in the discrete-time setting are new.

\medskip

\emph{\textbf{Tanaka's pathwise construction}.}
Up to our knowledge, there are surprisingly few instances of Tanaka's pathwise construction in the mean-field literature.
We mention \cite[Section 3.1] {backhoff2020mean} where the LDP is proved in the Wasserstein-$p$ topology ($p \in [1,2)$) in a path-dependent setting with $\sigma \equiv \mathrm{Id}$ and binary interactions of type $b(x,P) = \int_{\C^d} \tilde{b}(x,y) \d P(y)$.
An exhaustive treatment of the $\sigma \equiv \mathrm{Id}$ setting is done in \cite{Coghi2020PathwiseMT} with extensions to many related situations (general noises, jump processes...). 
The path-construction is also used in \cite{LackerMFGLDP} with $\sigma \equiv \mathrm{Id}$ to prove the LDP in the Wasserstein-$1$ topology, before extending it to prove the LDP in a common noise setting.
This last framework is particularly intricate because the rate function no more has compact level sets.

\medskip

\emph{\textbf{Central limit theorem.}}
Untill recently, there has been much interest in proving the CLT for mean-field systems close to \eqref{eq:InterPart}.
The first related work seems to be \cite{McKean1975FluctuationsIT}, computing fluctuations for interacting particles in the two-state space.
A seminal work is then \cite{braun1977vlasov} for a deterministic Vlasov system, which inspired Tanaka's CLT \cite{tanaka1984limit} on path space (with $\sigma \equiv \mathrm{Id}$).
The fundamental works \cite{sznitman1984nonlinear,sznitman1985fluctuation} prove the CLT on path space for binary interactions, starting from a Girsanov formula to compute symmetric statistics and multiple Wiener integrals.
Many following works re-used this approach.
An alternative method for large deviations and CLT for Gibbs measures is \cite{bolthausen1986laplace}, which relies on embeddings in suitable Banach spaces.
This method was further developed and applied to diffusion systems in \cite{arous1990methode,pra1996mckean}.
A by now classical approach \cite{fernandez1997hilbertian,meleard1998convergence,jourdain1998propagation} computes the fluctuations of the random curve $t \mapsto \pi ( \vec{X}^N_t )$ in weighted Sobolev spaces. 
Some tightness estimates are first proved, before identifying the limit as an infinite-dimensional Ornstein-Uhlenbeck process.
Such results are weaker than the pathwise CLT, but this approach can take benefit of PDE structures to get sharper estimates.
A fairly general CLT for $t \mapsto \pi ( \vec{X}^N_t )$ is thus proved in \cite{jourdain2021central}, by careful analysis of a measure-valued flow.
The very recent work \cite{bernou2024uniform} uses analogous methods to prove (among many other results) a uniform-in-time and quantitative version of this CLT.
In the setting of mean-field games with common noise, we also mention the CLT result \cite{delarue2019master}.
To the best of our knowledge, our pathwise CLT for a system as general as \eqref{eq:InterPart} is new.

\medskip

\emph{\textbf{Large deviations.}}
There is an abundant literature for large deviations of the empirical measure of systems like \eqref{eq:InterPart}.
The seminal work \cite{tanaka1984limit} covers the case $\sigma \equiv \mathrm{Id}$ with binary interactions $b(x,\pi) = \int_{\C^d} \tilde{b}(x,y) \d \pi(y)$.
When $\tilde{b}(x,y) = -\nabla V (x-y)$, the Girsanov transform computes the density of the law of $\vec{X}^N$ w.r.t. to the law of the corresponding system of i.i.d. drift-less particles.
The gradient structure further removes the stochastic integral term from the density given by Girsanov's theorem.
The resulting density being a Gibbs measure, large deviations are then a standard result.
This method is applied in e.g \cite{arous1990methode,del1998large}, and in \cite{pra1996mckean} in a more general setting including environment noises.
For an extension of this method to non-gradient binary interactions, we refer to \cite{del2003note}.

For more general drifts $b$, the LDP is a harder task: it was initially proved in \cite{dawsont1987large} under Lipschitz conditions and non-degeneracy of $\sigma$, using an extended version of the weak topology on measures. 
The result is strengthened in \cite{budhiraja2012large} under much weaker assumptions (martingale problem framework), notably allowing for path-dependent settings. 
In both works, the initial data are assumed to be deterministic, and there is no direct way to handle random initial data from there.
We also refer to \cite{Carlo,budhiraj2022asymptotic}, which prove the LDP for \eqref{eq:informalDG} in a setting combining both mean-field and small-noise limits.

Computing the large deviation rate function as a relative entropy can be intricate, depending on the chosen approach for proving the LDP.
The link with stochastic control traces back to \cite{follmer1988random}.
We refer to \cite{cattiaux1995large,pra1996mckean,cattiaux1996minimization} for seminal results in this direction.
An exhaustive account of these links is given by \cite{leonard2012girsanov,fischer2014form}.

The question of establishing the LDP in the Wasserstein-$p$ topology is more recent. 
A necessary and sufficient condition for Sanov's theorem to hold in the Wasserstein topology is proved in \cite{wang2010sanov}. 
In the Brownian setting, this forces $p \in [1,2)$.
We already referred to \cite{backhoff2020mean,LackerMFGLDP}.
For large deviations in the strong topology, we refer to \cite{dawson2005large}.
For Gibbs measures in the Wasserstein topology, we also mention \cite{leonard1987large,reygner2018equilibrium,dupuis2020large,liu2020large}.
To the best of our knowledge, our LDP in the Wasserstein topology for a system as general as \eqref{eq:InterPart} is a new result.

\subsection{Notations} \label{subsec:not}

\begin{itemize}
\item $\ps (E)$ denotes the space of probability measures over a Polish space $E$, the default topology on it being the one of weak convergence.
\item $\mathcal{L}(X)$ denotes the law in $\ps(E)$ of a $E$-valued random variable $X$.
\item $T_\# P$ denotes the push-forward (or image) measure of $P \in \ps (E)$ by a measurable $T : E \rightarrow F$. 
\item When $E$ and $F$ are Banach spaces, $\C^1 (E,F)$ denotes the space of Fréchet-differentiable functions $E \rightarrow F$. 
$\C^{1,1} (E,F)$ is the sub-space of functions in $\C^1 (E,F)$ with globally Lipschitz derivative $D F$.
$\C_b^{1,1} (E,F)$ is the space of bounded functions in $\C^{1,1} (E,F)$ with bounded derivative.
\item $\delta_x$ denotes the Dirac measure at some point $x \in E$. 
\item Given $N$ variables $x^1, \ldots, x^N$ in some product space $E^N$, we will use the notations $\vec{x}^N := (x^1,\ldots,x^N)$ and
\[ \pi( \vec{x}^N ) := \frac{1}{N} \sum_{i=1}^N \delta_{x^i} \in \ps(E). \]
\item $\pi(\vec{x}^N,\vec{y}^N)$ will similarly denote the empirical measure
\[ \pi(\vec{x}^N,\vec{y}^N) := \frac{1}{N} \sum_{i=1}^N \delta_{(x^i,y^i)} \in \ps(E \times E), \]
with a slight abuse of notation.
\item $H( P \vert Q)$ denotes the relative entropy of $P$ w.r.t. $Q$, for $P, Q \in \ps (E)$, defined as $H( P \vert Q) = \E_P [ \log \tfrac{\d P}{\d Q} ]$ if $P \ll Q$, and $H( P \vert Q) = + \infty$ otherwise.
\item $\delta_P F (P) : x \mapsto \delta_P F (P,x)$ denotes the linear functional derivative at $P$ of a function $F : \ps(E) \rightarrow \R$, see Definition \ref{def:Diff} below.  
The convention is adopted that $\int_{E} \delta_P F (P) \d P =0$.
\item $\ps_p(E)$ denotes the set of measures $P \in \ps ( E)$ such that $\int_{E} d^p_E (x,x_0) \d P (x) < +\infty$ for the distance $d_E$ on $E$, $x_0 \in E$ and $p \geq 1$. 
\item $L^p_\P (\Omega, E)$ denotes the space of $E$-valued random variables $X : \Omega \rightarrow E$, on a probability space $( \Omega, \F, \P)$ such that $\L ( X )$ belongs to $\ps_p(E)$.
\item $W_p$ denotes the Wasserstein distance on $\ps_p (E)$, defined by
\[ W^p_p (P,Q) := \inf_{X \sim P, \, Y \sim Q} \E [ d_E^p ( X , Y ) ].  \]
\item $d, d' \geq 1$ are fixed integers, and $T >0$ is a given finite time horizon.
\item $\lfloor t \rfloor$ is the image of $t \in \R$ by the floor function. 
We similarly write $\lceil t \rceil$ for the ceiling function.
We recall that $\lfloor t \rfloor$ and $\lceil t \rceil$ are integers.
\item $\C^d$ denotes the space $\C([0,T],\R^d)$ of continuous paths, endowed with the uniform topology. 
A typical path in $\C^d$ will be denoted by $x$, and a typical one in $\C^{d'}$ by $\gamma$.
\item $\C^d_M$ denotes the space of paths $x \in \C^d$ with $\sup_{0 \leq t \leq T} \vert x_t \vert \leq M$, for some $M > 0$. 
\item $x_{\wedge t}$ denotes the path $(x_{s \wedge t})_{0 \leq s \leq T}$, for any $t \in [0,T]$ and $x \in \C^d$.
We recall that $s \wedge t$ is the minimum of $s$ and $t$.
\item $P_{\wedge t}$ denotes the push-forward measure of $P \in \ps (\C^d)$ by the map $x \mapsto x_{\wedge t}$. 
\item $P_t \in \ps ( \R^d)$ denotes the marginal-law at time $t$ of a path-measure $P \in \ps ( \C^d )$.
\item $P_\cdot \in \C([0,T],\ps ( \R^d))$ denotes the related curve $(P_t)_{0 \leq t \leq T}$ of time-marginals.
\item $\Sigma = \big( \Omega,(\mathcal{F}_t)_{0\leq t\leq T},\P,(B_t)_{0\leq t\leq T} \big)$ denotes a reference probability system in the terminology of \cite[Chapter 4]{fleming2006controlled}: $(\Omega,\mathcal{F}_T,\P)$ is a probability space, $(\mathcal{F}_t)_{0\leq t\leq T}$ is a filtration satisfying the usual conditions, and $(B_t)_{0\leq t\leq T}$ is a $(\mathcal{F}_t)_{0\leq t\leq T}$-Brownian motion.
\item $\vert a \vert$ denotes the Frobenius norm $\sqrt{\mathrm{Tr}[a a^\top]}$ of a matrix $a$.
\item $\nabla \cdot a$ is the vector field whose $i$-entry is the divergence of the vector field $( a^{i,j} )_{1 \leq j \leq d}$, for a matrix field $a  = (a^{i,j})_{1 \leq i,j \leq d}$. 
We similarly define differential operators on matrices. 
\end{itemize}

\begin{definition}[Linear functional derivative] \label{def:Diff}
A map $F : \ps_p (E) \rightarrow \R$ is differentiable at $P$ if there exists a measurable map 
\[ \delta_P F (P):
\begin{cases}
E \rightarrow \R, \\
x \mapsto\delta_P F (P,x),
\end{cases}
\]
such that for every $Q$ in $\ps_p (E)$, $\delta_P F (P)$ is $Q$-integrable and satisfies
\begin{equation*} \label{eq:FDiff}
\varepsilon^{-1} \bigl[ F( (1-\varepsilon) P + \varepsilon Q ) - F ( P ) \bigr] \xrightarrow[\varepsilon \rightarrow 0^+]{} \int_{E} \delta_P F (P) \, \d [ Q - P ]. 
\end{equation*} 
The map $\delta_P F (P)$ being defined up to an additive constant, we adopt the usual convention that $\int_{E} \delta_P F (P) \d P = 0$.
This map is called the \emph{linear functional derivative of $F$ at $P$}.
We notice that this definition does not depend on the behaviour of $F$ outside of an arbitrary small neighbourhood of $P$.
\end{definition}

By direct integration, the definition implies
\begin{equation*} \label{eq:Integration}
\forall P, Q \in \ps_p (E), \quad F(P)-F(Q) = \int_0^1 \int_{E} \delta_P F ((1-r)Q + r P) \d [ P - Q ] \d r, 
\end{equation*}
provided that the integral on the r.h.s. is well-defined.

\begin{example}[Linear case]
In the particular case $F(P) = \int_E f \d P$ for some measurable $f: E \rightarrow \R$ with polynomial growth of order $p$, we have $\delta_P F (P,x) = f(x) - \int_E f \d P$.
\end{example}

\section{Statement of the main results} \label{sec:Results}

Let us fix $p \geq 1$, together with positive integers $d$ and $d'$. 
We will restrict ourselves to $p \in [1,2)$ for large deviation results in Sections \ref{subsec:resDisB}-\ref{subsec:resCont}-\ref{subsec:contract}.
The coefficients are continuous functions
\[ b : [0,T] \times \C^d \times \ps_p ( \C^d ) \rightarrow \R^d, \qquad \sigma : [0,T] \times \C^d \times \ps_p ( \C^d ) \rightarrow \R^{d \times d'}. \]
For $P \in \ps_p ( \C^d )$, we recall that $P_{\wedge t} \in \ps_p ( \C^d )$ denotes the push-forward measure of $P$ under the map $x \in \C^d \mapsto x_{\wedge t} := (x_{s \wedge t})_{0 \leq s \leq T} \in \C^d$.
We make the following assumption on the coefficients.

\begin{assumption}[Coefficients] \label{ass:coef1}
There exist $L_b, L_\sigma > 0$ and 
\[ \overline{b} : [0,T] \times \C^d \times \ps_p ( \C^d ) \rightarrow \R^d, \qquad \overline\sigma : [0,T] \times \C^d \times \ps_p ( \C^d ) \rightarrow \R^{d \times d'}, \]
which are respectively globally $L_b$- and $L_\sigma$-Lipschitz and such that, for every $(t,x,P)$,
\[ b_t(x,P) = \overline{b}_t ( x_{\wedge t}, P_{\wedge t} ), \qquad \sigma_t(x,P) = \overline{\sigma}_t ( x_{\wedge t}, P_{\wedge t} ). \]
Moreover, $\sigma$ is globally bounded by some $M_\sigma > 0$.
\end{assumption}

In particular, this includes standard Lipschitz coefficients of the kind $b_t (x,P) = b_t ( x_t,P_t)$.
Our framework further includes pathwise settings which are non-Markov but still non-anticipative like models with delay. 
The Lipschitz-continuity w.r.t. $t$ will only be needed in the time-continuous settings of Sections \ref{subsec:resCont}-\ref{subsec:contract}, but we keep \ref{ass:coef1} as such for the sake of simplicity. 

\begin{lemma} \label{lem:JulEnvMcK}
Under \ref{ass:coef1}, for any Brownian motion $( B_t )_{0 \leq t \leq T}$ and initial data $\overline{X}_0$ with finite $\E [ \vert \overline{X}_0 \vert^p ]$,
strong existence and pathwise-uniqueness hold for solutions of the the path-dependent McKean-Vlasov equation 
\[ \d \overline{X}_t = b_t ( \overline{X}, \L ( \overline{X} ) ) \d t + \sigma_t ( \overline{X}, \L ( \overline{X} )) \d B_t \]
that satisfy $\E [ \sup_{0 \leq t \leq T} \vert \overline{X}_t \vert^p ] < +\infty$. 
\end{lemma}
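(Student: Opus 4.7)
\noindent\emph{Proof plan.} The strategy is the classical Sznitman-type fixed-point argument on path-laws, adapted to the path-dependent setting: construct a map $\Phi : \ps_p(\C^d) \to \ps_p(\C^d)$ whose fixed point is the law of the solution.

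For $P \in \ps_p(\C^d)$, the decoupled path-dependent SDE
\[ \d X^P_t = b_t(X^P, P)\,\d t + \sigma_t(X^P, P)\,\d B_t, \qquad X^P_0 = \overline{X}_0, \]
has progressively measurable and globally Lipschitz coefficients $y \mapsto \overline{b}_t(y_{\wedge t}, P_{\wedge t})$ and $y \mapsto \overline{\sigma}_t(y_{\wedge t}, P_{\wedge t})$ on $(\C^d, \|\cdot\|_\infty)$, with $\sigma$ bounded. Standard strong well-posedness for path-dependent Lipschitz SDEs then provides a unique strong solution $X^P$, and a routine Gr\"onwall--Burkholder--Davis--Gundy computation based on $\E |\overline{X}_0|^p < +\infty$ and the sublinear growth inherited from the Lipschitz hypothesis (together with $P \in \ps_p(\C^d)$) gives $\E \sup_{t \leq T} |X^P_t|^p < +\infty$. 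Hence $\Phi(P) := \L(X^P)$ sends $\ps_p(\C^d)$ into itself.

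The next step is to show $\Phi$ (or an iterate thereof) is a strict contraction. Given $P, Q \in \ps_p(\C^d)$, couple $X^P$ and $X^Q$ via the same $(B, \overline{X}_0)$, apply the non-anticipative Lipschitz bound from Assumption~\ref{ass:coef1}, and use the trivial inequality $W_p(P_{\wedge s}, Q_{\wedge s}) \leq W_p(P, Q)$. Combining BDG with Gr\"onwall yields a constant $C = C(T, L_b, L_\sigma, M_\sigma, p)$ such that
\[ \E \|X^P_{\wedge t} - X^Q_{\wedge t}\|_\infty^p \leq C \int_0^t W_p^p(P_{\wedge s}, Q_{\wedge s})\,\d s, \]
and since $(X^P, X^Q)$ is admissible for $W_p^p(\Phi(P)_{\wedge t}, \Phi(Q)_{\wedge t})$, iterating this inequality gives $W_p^p(\Phi^n(P)_{\wedge T}, \Phi^n(Q)_{\wedge T}) \leq \frac{(CT)^n}{n!} W_p^p(P_{\wedge T}, Q_{\wedge T})$. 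Banach's theorem then produces a unique fixed point $\overline{P}$, and $\overline{X} := X^{\overline{P}}$ is a strong solution. Pathwise uniqueness follows by applying the very same estimate to two candidate solutions $X, Y$ driven by the same $B$: the coupling $(X,Y)$ bounds $W_p^p((P_X)_{\wedge s}, (P_Y)_{\wedge s}) \leq \E\|X_{\wedge s} - Y_{\wedge s}\|_\infty^p$, which closes the Gr\"onwall loop and forces $X = Y$ a.s.

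The main technical point lies in the BDG step when $p \in [1,2)$: the naive bound produces $\big(\int_0^t |\sigma(X,P) - \sigma(Y,Q)|^2\,\d s\big)^{p/2}$, which does not directly dominate a multiple of $\int_0^t W_p^p(P_{\wedge s}, Q_{\wedge s})\,\d s$ because the exponent $p/2$ is strictly less than $1$. The boundedness $|\sigma| \leq M_\sigma$ is precisely what saves the argument: the interpolation $|\sigma - \sigma|^2 \leq (2M_\sigma)^{2-p} |\sigma - \sigma|^p$, combined with the $L_\sigma$-Lipschitz bound and Jensen's inequality, converts the $L^2$-type BDG output back into an $L^p$-type bound suitable for the Gr\"onwall loop. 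A conceptually cleaner alternative is to first establish the lemma for initial data in $L^p \cap L^2$, where the $p \geq 2$ calculation applies verbatim thanks to the boundedness of $\sigma$, and then approximate a general $L^p$ initial datum by truncations, passing to the limit in $W_p$ using the contraction estimate just proved.
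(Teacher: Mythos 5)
The paper does not prove Lemma~\ref{lem:JulEnvMcK} itself; it cites \cite[Appendix A]{djete2022mckean}, so your proposal has to stand on its own. The Sznitman-type fixed-point framework you set up is the natural one, and you correctly flagged the delicate step, namely the exponent $p/2<1$ that BDG produces when $p\in[1,2)$. But neither of your two fixes actually closes the gap. For the interpolation route, combining $|\Delta\sigma|^2\le(2M_\sigma)^{2-p}|\Delta\sigma|^p$ with the $L_\sigma$-Lipschitz bound and concave Jensen on the outer expectation still yields
\[ \E\sup_{s\le t}\Big|\int_0^s\Delta\sigma_r\,\d B_r\Big|^p\le C\Big(\int_0^t\big[f(s)+g(s)\big]\,\d s\Big)^{p/2}, \]
with $f(s)=\E\sup_{r\le s}|X^P_r-X^Q_r|^p$ and $g(s)=W_p^p(P_{\wedge s},Q_{\wedge s})$. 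The resulting inequality $f(t)\le C\int_0^t[f+g]\,\d s+C\big(\int_0^t[f+g]\,\d s\big)^{p/2}$ retains a concave $p/2$-power; since $\int_0^1 u^{-p/2}\,\d u<\infty$, the Osgood criterion fails, so $f(0)=0$ and $g\equiv 0$ do \emph{not} force $f\equiv 0$, and neither the contraction nor pathwise uniqueness follows from this bound. The truncation alternative is closer to a correct argument but leaves two things open: uniqueness must be shown against competitors assumed only in $\ps_p$, so one must first verify that an $L^2$ initial datum together with bounded $\sigma$ forces every $\ps_p$-solution into $\ps_2$ (true, but needs an argument); and the limit over truncation levels in $W_p$ requires a stability estimate in the initial law, which re-introduces the very $p<2$ issue you set out to avoid.

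The standard way around this is a localised $L^2$ estimate converted to $W_p$ only at the very end. Couple $X^P$ and $X^Q$ through the same $(\bar X_0,B)$, set $\tau_n:=\inf\{t:\lvert X^P_t\rvert\vee\lvert X^Q_t\rvert>n\}$, and observe that $W_p(P_{\wedge s},Q_{\wedge s})$ enters the coefficient bounds as a deterministic constant. BDG and Gr\"onwall applied to the bounded stopped difference at exponent $2$ give $\E\sup_{s\le t\wedge\tau_n}\lvert X^P_s-X^Q_s\rvert^2\le C\int_0^t W_p^2(P_{\wedge s},Q_{\wedge s})\,\d s$ with $C$ independent of $n$; monotone convergence in $n$ (since $\tau_n\to T$ a.s.) followed by the power-mean inequality $(\E Z^p)^{2/p}\le\E Z^2$ then produce $W_p^2(\Phi(P)_{\wedge t},\Phi(Q)_{\wedge t})\le C\int_0^t W_p^2(P_{\wedge s},Q_{\wedge s})\,\d s$, which is the genuinely linear Gr\"onwall inequality you need. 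Iterating this gives the contraction for existence, and the same estimate applied to two actual solutions with laws $P,Q$ forces $P=Q$, after which pathwise uniqueness reduces to the standard Lipschitz theory for the now-frozen path-dependent SDE.
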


This result is contained in \cite[Appendix A]{djete2022mckean}.
See \cite{lacker2022superposition,bernou2022path} for similar results.

\subsection{Discretised system with general driving noise} \label{subsec:resDis} 

Let $h > 0$ be a fixed discretisation parameter. 
Given $t \geq 0$, we define $t_h := h \lfloor t/h \rfloor$.
On a probability space $(\Omega,\F,\P)$, we study the discretised McKean-Vlasov SDE
\begin{equation} \label{eq:GenMcK}
\begin{cases}
\d X^h_{t_h} = b_{t_h} ( X^h, \mathcal{L}(X^h) ) \d t + \sigma_{t_h} (  X^h, \mathcal{L}(X^h) ) \d W_{t}, \\
X^h_{0} = \zeta,
\end{cases}    
\end{equation}
the input being the random variable 
\[ (\zeta,W) : \Omega \rightarrow \R^d \times \C^{d'}, \]
which belongs to $L^p_\P ( \Omega, \R^d \times \C^{d'})$.
The driving noise $W$ can be {any continuous random path, which does not have to be a Brownian motion}.
In particular, \eqref{eq:GenMcK} does not involve any stochastic integral, the differential being a mere notation.
To be non-ambiguous, we specify the notion of solution for \eqref{eq:GenMcK}.
For every $P \in \ps_p(\C^d)$, $(\zeta,W) \in \R^d \times \C^{d'}$, $X \in \C^d$, we define the path $f^{h,P} (\zeta,W,X)$ in $\C^d$ by
\begin{multline*} 
f_t^{h,P} (\zeta,W,X) = \zeta +
b_{t_h} ( X, P ) [ t - t_h ] + \sigma_{t_h} ( X, P ) [ W_{t} - W_{t_h} ] \\
+ \sum_{i=0}^{\lfloor t/h \rfloor-1} h b_{ih}( X, P ) + \sigma_{ih} ( X, P ) [ W_{(i+1)h} - W_{i h} ],
\end{multline*}
for every $t$ in $[0,T]$.
We use the usual convention that an empty sum equals $0$. 

\begin{definition} \label{def:GenMcK}
A pathwise solution of \eqref{eq:GenMcK} with input $(\zeta,W) \in L^p_\P ( \Omega, \R^d \times \C^{d'})$, 
is a random variable $X^h : \Omega \rightarrow \C^d$ that belongs to $L^p_\P ( \Omega, \C^d)$ such that for $\P$-a.e. $\omega$ in $\Omega$,
\[ \forall t \in [0,T], \quad X^h_t( \omega ) = f_t^{h,\mathcal{L}(X^h)} ( \zeta( \omega ),W( \omega ),X^h( \omega ) ).  \]
\end{definition} 

Closely related to \eqref{eq:GenMcK} is the discretised system of interacting particles $\vec{X}^{h,N} = (X^{h,i,N})_{1 \leq i \leq N}$ defined by
\begin{equation} \label{eq:hNpart}
\begin{cases}
\d X^{h,i,N}_t = b_{t_h} ( X^{h,i,N},\pi(\vec{X}^{h,N}) ) \d t + \sigma_{t_h} ( X^{h,i,N}, \pi ( \vec{X}^{h,N} ) ) \d W^{i,N}_t, \\
X^{h,i,N}_0 = \zeta^{i,N}, 
\end{cases}      
\end{equation}
for $1 \leq i \leq N$, with given inputs
\begin{equation*}
( \vec{\zeta}^N, \vec{W}^N ): \omega \mapsto \big( \zeta^{i,N}(\omega), W^{i,N}(\omega) )_{1 \leq i \leq N},
\end{equation*}
in $L^p_\P ( \Omega, (\R^d \times \C^{d'} )^N )$.
We emphasise that \emph{no independence nor exchangeability are required}.

\begin{rem}[Heterogeneous mean-field]
This formalism includes heterogeneous particle systems of the type
\begin{equation*} 
\d X^{h,i,N}_t = b_{t_h} ( X^{h,i,N},R^{i,N},\pi(\vec{X}^{h,N},R^{i,N}) ) \d t + \sigma_{t_h} (X^{h,i,N},R^{i,N},\pi(X^{h,i,N},R^{i,N}) ) \d W^{i,N}_t,    
\end{equation*}
for any family $( R^{i,N} )_{1 \leq i \leq N, N \geq 1}$ of random paths in $L^p_{\P} ( \Omega, \C^{k} )$ under natural Lispchitz assumptions on $(b,\sigma)$. 
Indeed, this amounts to replacing the random path $X^{h,i,N} \in \C^d$ by $Z^{h,i,N} := (X^{h,i,N},R^{i,N}) \in \C^{d+k}$. 
The following results can then be written in this setting, see \cite[Section 3.4]{Coghi2020PathwiseMT}.
This extends some results of \cite{pra1996mckean} for McKean-Vlasov processes that interact with random media.
See also \cite[Section 3.3]{Coghi2020PathwiseMT} for adding a common noise to \eqref{eq:hNpart}.
\end{rem}

We consider time-discretised systems because the $h \rightarrow 0$ limit would require some notion of stochastic integral that is not available at this level of generality.
However, when $\sigma \equiv \mathrm{Id}$, 
all the following results hold for $h = 0$, because the flow maps are still continuous functions of the driving noise. 
Adapting our proofs to this setting can be done readily, and we refer to \cite{Coghi2020PathwiseMT} for an exhaustive discussion of this setting.

\begin{theorem}[Pathwise construction] \label{thm:DisMFlimit}
Under \ref{ass:coef1},
\begin{enumerate}[label=(\roman*),ref=(\roman*)]
    \item\label{itm:thmMFexistN} There exists a pathwise unique strong solution $\vec{X}^{h,N}$ of \eqref{eq:hNpart}.
    \item\label{item:thmMFexist}  There exists a unique pathwise solution $X^h$ of \eqref{eq:GenMcK} in the sense of Definition \ref{def:GenMcK}, and 
    \[ \Psi_h :
    \begin{cases}
    \ps_p (\R^d \times \C^{d'}) \rightarrow \ps_p (\C^d), \\
    \mathcal{L}(\zeta,W) \mapsto \mathcal{L}(X^h),
    \end{cases} \]
    is continuous.
    \item\label{item:thmMFLim} $\P$-almost surely, $ \pi ( \vec{X}^{h,N} ) = \Psi_h ( \pi ( \vec{\zeta}^N, \vec{W}^N )  )$.
\end{enumerate}
\end{theorem}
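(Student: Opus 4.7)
My plan exploits that the discretised scheme is piecewise affine in time between grid points, so everything reduces to a finite induction on $k = 0, 1, \ldots, \lceil T/h \rceil$.

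For \ref{itm:thmMFexistN}, I construct $\vec{X}^{h,N}$ inductively on $[kh,(k+1)h]$. Suppose $\vec{X}^{h,N}_{\wedge kh}$ has been defined. By the non-anticipativity in \ref{ass:coef1}, $b_{kh}(X^{h,i,N}, \pi(\vec{X}^{h,N})) = \overline{b}_{kh}(X^{h,i,N}_{\wedge kh}, \pi(\vec{X}^{h,N})_{\wedge kh})$ is already determined, and similarly for $\sigma_{kh}$. The scheme then simply prescribes $X^{h,i,N}_t$ on $[kh,(k+1)h]$ as an explicit affine function of $t$ and the increment $W^{i,N}_t - W^{i,N}_{kh}$. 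The same induction gives pathwise uniqueness. The $L^p_\P$-integrability follows from a discrete Gronwall argument using that $\sigma$ is bounded and $b$ is Lipschitz, so that $\E[\sup_t |X^{h,i,N}_t|^p] < \infty$ provided the inputs are in $L^p_\P$.

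For \ref{item:thmMFexist}, I run a parallel induction that simultaneously carries (a) the pathwise scheme and (b) the time-truncated laws. Starting from $P^{(0)} = (x \mapsto x_{\wedge 0})_\# \mathcal{L}(\zeta)$, assume we have built a measurable map $\Phi^{(k)} : \R^d \times \C^{d'} \to \C^d$ with $\Phi^{(k)}(\zeta,W) \in \C^d_{\wedge kh}$ and set $P^{(k)} := \Phi^{(k)}_\# \mathcal{L}(\zeta,W) \in \ps_p(\C^d)$. Using $\overline{b}_{kh}(\cdot, P^{(k)}_{\wedge kh})$ and $\overline{\sigma}_{kh}(\cdot, P^{(k)}_{\wedge kh})$ (well-defined purely from $P^{(k)}$), I define $\Phi^{(k+1)}$ by the explicit affine extension prescribed by $f^{h,P^{(k)}}$ on $[kh,(k+1)h]$, and set $P^{(k+1)}$ accordingly. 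After at most $\lceil T/h \rceil$ steps, $X^h := \Phi^{(\lceil T/h \rceil)}(\zeta,W)$ is a pathwise solution, and the fixed-point structure forces $\mathcal{L}(X^h) = P^{(\lceil T/h \rceil)}$. Uniqueness is again propagated inductively: if $\tilde X^h$ is another solution, its time-$kh$ marginal law $\tilde P^{(k)}$ coincides with $P^{(k)}$ by induction, hence the coefficients driving the scheme on $[kh,(k+1)h]$ are identical and $\tilde X^h_{\wedge (k+1)h} = X^h_{\wedge (k+1)h}$ a.s.\ by \ref{itm:thmMFexistN}-type uniqueness applied realisation by realisation. Continuity of $\Psi_h$ in $W_p$ is derived along the same induction: the Lipschitz bounds on $\overline{b}, \overline{\sigma}$ together with boundedness of $\overline{\sigma}$ show that the map taking $(P^{(k)}, \mathcal{L}(\zeta, W_{\wedge(k+1)h}))$ to $P^{(k+1)}$ is Lipschitz for the Wasserstein-$p$ distance, by coupling the driving inputs optimally and controlling the resulting $L^p$ error. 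Iterating this over the finitely many steps yields continuity of $P_0 \mapsto P^{(\lceil T/h\rceil)} = \Psi_h(P_0)$.

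For \ref{item:thmMFLim}, I use the same construction in reverse. Let $Q^N := \pi(\vec{\zeta}^N, \vec{W}^N)$ and apply \ref{item:thmMFexist} with input law $Q^N$ (and the pathwise realisation $(\zeta^{i,N}, W^{i,N})$ as input). Writing the resulting pathwise map as $\Phi_{Q^N}: \R^d \times \C^{d'} \to \C^d$, define $\tilde X^{h,i,N} := \Phi_{Q^N}(\zeta^{i,N}, W^{i,N})$. By construction of $\Phi_{Q^N}$, its push-forward of any measure $\mu$ whose image under the input matches $Q^N$ equals $\Psi_h(Q^N)$; applied to $\mu = \pi(\vec{\zeta}^N, \vec{W}^N) = Q^N$, this gives
\[
\pi(\vec{\tilde X}^{h,N}) = \frac{1}{N}\sum_{i=1}^N \delta_{\Phi_{Q^N}(\zeta^{i,N}, W^{i,N})} = (\Phi_{Q^N})_\# Q^N = \Psi_h(Q^N).
\]
In particular the time-$kh$ marginals of $\pi(\vec{\tilde X}^{h,N})$ coincide exactly with the laws $P^{(k)}$ used internally by $\Phi_{Q^N}$, so $\tilde X^{h,i,N}$ satisfies the particle-system scheme \eqref{eq:hNpart}; pathwise uniqueness from \ref{itm:thmMFexistN} yields $\tilde X^{h,i,N} = X^{h,i,N}$ a.s., completing \ref{item:thmMFLim}.

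The only genuinely delicate step is the continuity statement in \ref{item:thmMFexist}: one must be careful that the Lipschitz constants in the step-to-step estimate do not explode with $1/h$ or accumulate exponentially in ways that are not uniformly controlled. Because $\overline{\sigma}$ is bounded and the number of steps is $\lceil T/h \rceil$ (finite for fixed $h$), a direct inductive $W_p$-estimate suffices here; sharper uniform-in-$h$ control is deferred to the later theorems on fluctuations and exponential approximation.
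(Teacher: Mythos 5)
Parts \ref{itm:thmMFexistN} and \ref{item:thmMFLim} of your proposal are essentially correct and match the paper's logic: the paper phrases \ref{item:thmMFLim} by viewing the particle system as an instance of \eqref{eq:GenMcK} on the finite probability space $(\Omega_N, \mathcal{F}_N, \P_N)$ with $\P_N = N^{-1}\sum_{i=1}^N \delta_i$, but your pushforward argument with $\Phi_{Q^N}$ is the same idea. The induction for existence and pathwise uniqueness in \ref{itm:thmMFexistN}–\ref{item:thmMFexist} also matches the paper's Lemmas~\ref{lem:fixed} and~\ref{lem:SolMap}.

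The genuine gap is the continuity of $\Psi_h$ in \ref{item:thmMFexist}. You claim that because $\overline{b},\overline{\sigma}$ are Lipschitz, $\overline{\sigma}$ is bounded, and the number of steps is finite, a direct inductive $W_p$-Lipschitz estimate goes through. You identify the potential danger as constants accumulating over $\lceil T/h\rceil$ steps, but this is not where the argument breaks; it already breaks at a single step, and not because of $h$, but because of the unboundedness of $W$. In the one-step increment, when you couple two inputs $(\zeta,W)\sim R$ and $(\zeta',W')\sim R'$ optimally, the difference of the diffusion terms contains the cross term
\[
\bigl[\sigma_{kh}\bigl(X^{(k)},P^{(k)}\bigr) - \sigma_{kh}\bigl(X'^{(k)},P'^{(k)}\bigr)\bigr]\,\bigl(W'_{(k+1)h}-W'_{kh}\bigr),
\]
which is bounded pointwise by $L_\sigma\bigl[\lvert X^{(k)}-X'^{(k)}\rvert + W_p(P^{(k)},P'^{(k)})\bigr]\,\lvert \Delta W'_k\rvert$. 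Taking the $L^p$-norm of this product requires control of $\E\bigl[\lvert X^{(k)}-X'^{(k)}\rvert^p\,\lvert\Delta W'_k\rvert^p\bigr]$, which cannot be bounded by $W_p(R,R')^p$ using only $L^p$-moment assumptions on the inputs: you would need the two factors in $L^{2p}$, or at least some extra integrability of $\Delta W'$, which the hypotheses do not provide. This is precisely what Lemma~\ref{lem:fixed}-\ref{item:lemfrozLip} in the paper encodes: the pathwise Lipschitz constant $L(\gamma)$ grows with $\sup_t\lvert\gamma_t\rvert$ and is only \emph{bounded on bounded sets}, not globally bounded. The paper's remark after Proposition~\ref{pro:contFixed} is explicit that $\Psi_h$ is globally Lipschitz only when $\sigma$ is constant (or, as in Lemma~\ref{lem:LipResticM}, when the driving noises are confined to $\C^{d'}_{M'}$), and that for general non-constant $\sigma$ with unbounded noise one must abandon the direct coupling argument in favor of a compactness proof: Skorokhod representation of a $W_p$-convergent sequence $R_k\to R$, uniform-in-$k$ moment and modulus-of-continuity bounds to get weak pre-compactness of $\L(X^{h,k})$, uniform integrability of $\sup_t\lvert X^{h,k}_t\rvert^p$ to upgrade to $W_p$-pre-compactness, and identification of the unique limit point via the a.s.\ continuity of $(x_0,\gamma,P)\mapsto S^{h,P}(x_0,\gamma)$ together with pathwise uniqueness. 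Your proposal needs to be repaired along these lines; without such a compactness or truncation step, the continuity claim does not follow.
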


The above setting reduces the mean-field limit to a simple continuity result.
No exchangeability nor other specific structures are required.
Since the map $\Psi_h$ is continuous, the following result is a direct consequence of the contraction principle \cite[Theorem 1.3.2]{dupuis2011weak}.
For general definitions about large deviations, see \cite{deuschel2001large,dupuis2011weak}.

\begin{corollary}[Mean-field limit and large deviations] \label{cor:DisLD}
$\phantom{a}$
\begin{enumerate}[label=(\roman*),ref=(\roman*)]
\item If $W_p ( \pi ( \vec{\zeta}^N , \vec{W}^N ) , \L ( \zeta, {W} ) ) \rightarrow 0$, $\P$-a.s. as $N \rightarrow +\infty$, then
\[ W_p ( \pi ( \vec{X}^{h,N} ) , \L ( {X}^h ) ) \rightarrow 0, \quad \P\text{-a.s. as } N \rightarrow + \infty. \]
\item If the sequence of the $\L( \pi ( \vec\zeta^N , \vec{W}^N ) )$ satisfies the LDP with good rate function $I : \ps_p ( \R^d \times \C^{d'}) \rightarrow [0,+\infty]$, then the sequence of the $\L( \pi ( \vec{X}^{h,N} ) )$ satisfies the LDP with good rate function 
\[ P \in \ps_p ( \C^d) \mapsto \inf_{\substack{R \in \ps( \R^d \times \C^{d'}) \\ \Psi_h (R) = P}} I(R). \]
\end{enumerate}
\end{corollary}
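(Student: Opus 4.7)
The plan is to treat both parts as direct consequences of Theorem \ref{thm:DisMFlimit}, exploiting the two structural facts it provides: the map $\Psi_h$ is continuous from $\ps_p(\R^d\times\C^{d'})$ (with the $W_p$ topology) to $\ps_p(\C^d)$ (also with the $W_p$ topology), and the $\P$-a.s. identity $\pi(\vec{X}^{h,N}) = \Psi_h(\pi(\vec{\zeta}^N,\vec{W}^N))$. By the very definition of $\Psi_h$ from Theorem \ref{thm:DisMFlimit}\ref{item:thmMFexist}, we also have $\Psi_h(\L(\zeta,W)) = \L(X^h)$, which gives the target limit in both statements.

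For part (i), I would argue pointwise on a $\P$-full event. Fix $\omega$ in the intersection of the a.s. event where $W_p(\pi(\vec{\zeta}^N(\omega),\vec{W}^N(\omega)),\L(\zeta,W)) \to 0$ and the a.s. event from Theorem \ref{thm:DisMFlimit}\ref{item:thmMFLim} where $\pi(\vec{X}^{h,N}(\omega)) = \Psi_h(\pi(\vec{\zeta}^N(\omega),\vec{W}^N(\omega)))$. Continuity of $\Psi_h$ then yields
\[ W_p\bigl(\pi(\vec{X}^{h,N}(\omega)),\L(X^h)\bigr) = W_p\bigl(\Psi_h(\pi(\vec{\zeta}^N(\omega),\vec{W}^N(\omega))),\Psi_h(\L(\zeta,W))\bigr) \xrightarrow[N\to\infty]{} 0, \]
which is the claim.

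For part (ii), I would invoke the contraction principle for good rate functions, as stated e.g.\ in \cite[Theorem 1.3.2]{dupuis2011weak}. The map $\Psi_h$ is continuous between Polish spaces endowed with their $W_p$ topologies. Set $\widetilde{P}^N := \Psi_h(\pi(\vec{\zeta}^N,\vec{W}^N))$. Since the laws of $\pi(\vec{\zeta}^N,\vec{W}^N)$ satisfy the LDP with good rate function $I$, the contraction principle implies that the laws of $\widetilde{P}^N$ satisfy the LDP with good rate function
\[ P \mapsto \inf\bigl\{I(R) : R\in\ps_p(\R^d\times\C^{d'}),\ \Psi_h(R) = P\bigr\}. \]
By Theorem \ref{thm:DisMFlimit}\ref{item:thmMFLim}, $\widetilde{P}^N$ and $\pi(\vec{X}^{h,N})$ coincide outside a $\P$-null set, so they have the same law and hence satisfy the same LDP.

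There is no real obstacle here: once the continuity of $\Psi_h$ and the pathwise identity have been established in Theorem \ref{thm:DisMFlimit}, the corollary reduces to invoking a continuous-functional argument (for (i)) and a textbook contraction principle (for (ii)). The only care needed is to ensure that the a.s.\ identity is used to identify the distribution of $\pi(\vec{X}^{h,N})$ with that of $\Psi_h(\pi(\vec{\zeta}^N,\vec{W}^N))$ so that the LDP transfers without change.
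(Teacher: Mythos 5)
Your proposal is correct and follows exactly the route the paper intends: the paper itself gives no separate proof for Corollary~\ref{cor:DisLD}, stating only that it is "a direct consequence of the contraction principle" once Theorem~\ref{thm:DisMFlimit} has established continuity of $\Psi_h$ and the $\P$-a.s. identity $\pi(\vec{X}^{h,N}) = \Psi_h(\pi(\vec{\zeta}^N,\vec{W}^N))$. Your part~(i) is precisely the pointwise composition of the a.s. convergence with the continuity of $\Psi_h$, and your part~(ii) is the textbook contraction principle applied after identifying the laws via the a.s. identity, so nothing is missing.
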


Theorem \ref{thm:DisMFlimit} is proved in Section \ref{subsec:MhFLDP}.
For proving the CLT, we need further assumptions on the coefficients.

\begin{assumption}[Strenghtening of \ref{ass:coef1}] \label{ass:coef2} \ref{ass:coef1} holds and for every $(t,x) \in [0,T] \times \C^d$,
\begin{enumerate}[label=(\roman*),ref=(\roman*)]
\item\label{item:DLip} $x \mapsto \overline{b}_t ( x, P), \overline\sigma_t (x,P)$ are Fréchet-differentiable and $(x,P) \mapsto D_x \overline{b}_t (x,P) , D_x \overline\sigma_t (x,P)$ are globally Lipschitz uniformly in $t$.
\item\label{item:DmuLip} $P \mapsto \overline{b}_t (x,P), \overline\sigma_t (x,P)$ have linear derivatives $y \mapsto \delta_P \overline{b}_t (x,P,y), \delta_P \overline\sigma_t (x,P,y)$, in the sense of Definition \ref{def:Diff}, which are Fréchet-differentiable w.r.t. $y$. \item\label{item:DDmuLip} $(x,P,y) \mapsto D_y \delta_P \overline{b}_t (x,P,y), D_y \delta_P \overline\sigma_t (x,P,y)$ are globally Lipschitz uniformly in $t$.
\end{enumerate}
\end{assumption}

Under \ref{ass:coef1}-\ref{ass:coef2}, we notice that $b_t(x,P)$ and $\sigma_t(x,P)$ are differentiable w.r.t. $(x,P)$ too, with derivatives only depending on $x_{\wedge t}$ and $P_{\wedge t}$. 
For the sake of simplicity, we further assume that all the particles start at $0$: we remove the dependence of $\Psi_h$ on the initial condition.
To state a result at this level of generality, we restrict ourselves to bounded noises taking values in the space $\C^{d'}_{M'}$ of paths with norm bounded by $M' >0$.  
This restriction is not needed if $\sigma \equiv \mathrm{Id}$, see \cite[Section 5]{Coghi2020PathwiseMT}.

\begin{theorem}[Central Limit Theorem] \label{thm:DisCLT}
Under \ref{ass:coef2}, let $(W^i)_{i \geq 1}$ be an i.i.d. sequence of $\C^{d'}_{M'}$-valued variables with common law $R$. 
Let $\vec{X}^{h,N}$ be the related system of particles starting at $0$ given by \eqref{eq:hNpart}, and let $X^h$ be the solution of \eqref{eq:GenMcK} driven by $W^1$.
Then, for every $\varphi \in \C^{1,1}_b ( \C^{d}, \R )$, the random variable
\[ \sqrt{N} \bigg[ \frac{1}{N} \sum_{i =1}^N \varphi ( X^{h,i,N} ) - \E [ \varphi( X^h ) ] \bigg] \]
converges in law towards a centred Gaussian variable whose variance $\sigma^2_\varphi$ is explicit in terms of $\Psi_h (R)$, see Corollary \ref{cor:hMCLT} below.
\end{theorem}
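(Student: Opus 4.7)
The plan is to exploit the pathwise identification $\pi(\vec{X}^{h,N}) = \Psi_h(\pi(\vec{W}^N))$ from Theorem \ref{thm:DisMFlimit} to recast the statement as a functional delta-method argument. Setting
\[ F_\varphi(Q) := \int_{\C^d} \varphi \, \d \Psi_h(Q), \qquad Q \in \ps_p(\C^{d'}), \]
we have almost surely $\tfrac{1}{N}\sum_i \varphi(X^{h,i,N}) = F_\varphi(\pi(\vec{W}^N))$ and $\E[\varphi(X^h)] = F_\varphi(R)$. The plan is then to establish a first-order expansion
\[ F_\varphi(\pi(\vec{W}^N)) - F_\varphi(R) = \frac{1}{N} \sum_{i=1}^N \bigl[ \delta_P F_\varphi(R, W^i) - \E[\delta_P F_\varphi(R, W^1)] \bigr] + \mathrm{Rem}_N, \]
with $\sqrt{N}\,\mathrm{Rem}_N \to 0$ in probability, and then to apply the classical finite-dimensional CLT to the centred i.i.d. summands.

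The first step is to construct the linear functional derivative $\delta_P F_\varphi(R, \cdot)$ by differentiating the Euler fixed point equation. For a bounded signed direction $\nu$ of zero mass, the perturbed trajectory $X^h(w, R + \varepsilon \nu)$ is differentiable in $\varepsilon$ at $\varepsilon = 0$: using Assumption \ref{ass:coef2}, differentiating $X^h_t = f_t^{h,\Psi_h(R)}(0, w, X^h)$ yields a pathwise linear recursion for the derivative process $Y^h(w)$, whose one-step contraction constants are controlled by the Lipschitz bounds of $D_x \bar b, D_x \bar\sigma, \delta_P \bar b, \delta_P \bar\sigma$. Since only finitely many time steps are involved, well-posedness of this linear recursion is immediate, and composition with $\varphi$ produces an explicit formula
\[ \delta_P F_\varphi(R, w') = \varphi\bigl(X^h(w', R)\bigr) - \E[\varphi(X^h)] + \E\bigl[ \nabla \varphi(X^h(W^1, R)) \cdot Z^h(W^1, w') \bigr], \]
where $Z^h(\cdot, w')$ is the linearization of $X^h$ along $\delta_{w'} - R$. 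This will be the variance-producing functional in $\sigma_\varphi^2$.

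For the remainder, the plan is to iterate the above differentiation to obtain a bounded, Lipschitz second linear functional derivative $\delta^2_P F_\varphi(R, \cdot, \cdot)$ on $\C^{d'}_{M'} \times \C^{d'}_{M'}$, and use the integral Taylor identity
\[ \mathrm{Rem}_N = \int_0^1 (1-r) \int \delta^2_P F_\varphi\bigl( R + r(\pi(\vec{W}^N) - R) \bigr) \, \d (\pi(\vec{W}^N) - R)^{\otimes 2} \, \d r. \]
Boundedness of the noises ensures that $\delta^2_P F_\varphi$ is a bounded kernel on the compact set $\C^{d'}_{M'}$. The double integral against $(\pi(\vec{W}^N) - R)^{\otimes 2}$ is thus a centred second-order $U$-statistic in i.i.d.\ bounded variables, which is of order $1/N$ in $L^2$ by the classical Hoeffding decomposition; hence $\sqrt{N}\,\mathrm{Rem}_N \to 0$. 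This is precisely where the compactness of $\C^{d'}_{M'}$ is used, avoiding the slower Wasserstein rates that would arise from a naive $W_p^2$ remainder estimate, so identifying this as the main technical obstacle. Once the remainder is controlled, the classical CLT yields convergence of the linear term to a centred Gaussian with variance $\sigma_\varphi^2 = \mathrm{Var}_R[\delta_P F_\varphi(R, \cdot)]$, which matches the explicit expression announced in Corollary \ref{cor:hMCLT}.
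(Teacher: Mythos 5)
The conceptual frame of your proposal -- rewrite the statistic as $F_\varphi(\pi(\vec{W}^N))$ via the pathwise identification from Theorem \ref{thm:DisMFlimit}, differentiate the Euler fixed-point recursion, and run a functional delta method -- is the same as the paper's, and your proposed first linear derivative $\delta_P F_\varphi(R,w')$ and variance formula $\mathrm{Var}_R[\delta_P F_\varphi(R,\cdot)]$ match Corollary \ref{cor:hMCLT} exactly (with $X^h(\cdot,R)=\overline{S}^{h,R}$ and $Z^h(\cdot,w')=\delta_P\overline{S}^{h,R}(\cdot,w')$). The divergence -- and the gap -- is in how you control the remainder. The paper never computes a second linear derivative: after establishing that $\delta_P\overline{S}^{h,R}$ is globally bounded and Lipschitz on $\ps_p(\C^{d'}_{M'})\times\C^{d'}_{M'}\times\C^{d'}_{M'}$ (Corollary \ref{cor:LipDiff}, via Lemma \ref{lem:difMap}), it simply verifies the hypotheses of Theorem \ref{thm:Tanaka}, which only requires the \emph{first} linear derivative to be bounded and Lipschitz in $P$. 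The remainder is absorbed inside that black box.

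Your plan to control the remainder instead by constructing a bounded Lipschitz $\delta^2_P F_\varphi$ and invoking a degenerate $U$-statistic estimate does not go through under Assumption \ref{ass:coef2}. That assumption provides $D_x\overline b$, $\delta_P\overline b$, $D_y\delta_P\overline b$ (and similarly for $\overline\sigma$) and their Lipschitz bounds, but it does \emph{not} assume the existence of a second linear functional derivative $\delta^2_P\overline b$ or $\delta^2_P\overline\sigma$. Differentiating the equation for $\delta_P\overline{S}^{h,R}$ once more in $R$ would produce precisely such objects, so the iteration "iterate the above differentiation to obtain $\delta^2_P F_\varphi$" is not available at the assumed level of regularity; you are implicitly strengthening the hypotheses. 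There is also a second, more technical issue with the Taylor remainder you write: the kernel $\delta^2_P F_\varphi\big(R+r(\pi(\vec W^N)-R),\cdot,\cdot\big)$ depends on the data through the interpolated measure, so the double integral against $(\pi(\vec W^N)-R)^{\otimes 2}$ is not a clean degenerate $U$-statistic in i.i.d.\ variables; decoupling it cleanly would require yet another expansion of $\delta^2_P F_\varphi$ around $R$. Your diagnosis that a naive $W_p^2$ remainder bound is too weak is correct, but the fix the paper uses is not higher-order differentiability -- it is that Lipschitz continuity of the \emph{first} derivative, uniformly on the compact $\C^{d'}_{M'}$, is exactly what Tanaka's theorem (whose proof exploits the $\frac1N\sum_i f(X^i,\pi(\vec X^N))$ structure rather than a second-order Taylor expansion of a single functional of $\pi(\vec X^N)$) is designed to handle.
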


Theorem \ref{thm:DisCLT} is proved in Section \ref{subsec:disCLT}.

\subsection{Discretised setting with Brownian noise} \label{subsec:resDisB}

First, we apply Corollary \ref{cor:DisLD} to Brownian noises. 
To clearly separate this Brownian setting from the generic one of the previous section, we will write $(X^i_0)_{1 \leq i \leq N}$ for the initial conditions instead of $( \zeta^{i,N} )_{1 \leq i \leq N}$.
Since we aim for a LDP in the Wasserstein topology, \cite{wang2010sanov} tells us that we must restrict ourselves to $p \in [1,2)$ and that we need the following assumption.

\begin{assumption}[Initial exponential moments] \label{ass:IniExp}
$p \in [1,2)$ and the particles are initialised from a i.i.d. sequence $(X^i_0)_{i \geq 1}$ such that
\[ \forall \alpha >0, \qquad \E \big[ e^{\alpha \vert X^1_0 \vert^p} \big] < +\infty.  \]
\end{assumption}

For any $P$ in $\ps_p ( \C^d )$, we define $\Gamma_h ( P)$ as being the path-law of the solution to 
\begin{equation} \label{eq:defGamma}
\d Y^h_t = b_{t_h} ( Y^h, P) \d t + \sigma_{t_h} ( Y^h, P) \d B_t, \quad Y^h_0 = X^1_0, 
\end{equation} 
where $( B_t )_{0 \leq t \leq T}$ is a $\R^{d'}$-valued Brownian motion.
Since $\sigma$ is globally bounded and $b$ has linear growth, \ref{ass:coef1}-\ref{ass:IniExp} imply that
\[ \forall \alpha >0, \qquad \E \big[ \exp \big[ \alpha \sup_{0 \leq t \leq T} \vert Y^h_t \vert^p \big] \big] < + \infty, \]
as required by \cite[Theorem 1.1]{wang2010sanov}.

\begin{proposition}[Discretised LDP] \label{pro:LDPhB}
Under \ref{ass:coef1}-\ref{ass:IniExp}, let $(B^i)_{i \geq 1}$ be an i.i.d. sequence in $\C^{d'}$ of Brownian motions. 
Let $\vec{X}^{h,N}$ be the related system of particles starting from $(X^i_0)_{i \geq 1}$.
Then, the sequence of the $\mathcal{L}(\pi( \vec{X}^{h,N} ))$ satisfies the LDP with good rate function
\[ I_h : P \in \ps_p ( \C^d ) \mapsto H( P \vert \Gamma_h ( P) ). \] 
\end{proposition}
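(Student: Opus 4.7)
The plan is to derive the LDP from Sanov's theorem in the Wasserstein topology by contraction along the continuous map $\Psi_h$ of Theorem \ref{thm:DisMFlimit}, and then to identify the resulting variational rate function with $H(P \vert \Gamma_h(P))$ via a data-processing argument.

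First I would check the input LDP. For $p \in [1,2)$, Brownian sample paths have exponential $\|\cdot\|_\infty^p$-moments of every order, so combined with \ref{ass:IniExp} the reference measure $Q := \L(X^1_0, B^1)$ satisfies $\E_Q[\exp(\alpha(\vert x\vert^p + \|\gamma\|_\infty^p))] < +\infty$ for every $\alpha > 0$. The Sanov theorem of \cite[Theorem 1.1]{wang2010sanov} then gives a LDP on $\ps_p(\R^d \times \C^{d'})$ for the empirical measures $\pi(\vec X^N_0, \vec B^N)$ in the $W_p$ topology with good rate function $R \mapsto H(R \vert Q)$. Combining the continuity of $\Psi_h$ from Theorem \ref{thm:DisMFlimit}\ref{item:thmMFexist} with the identity $\pi(\vec{X}^{h,N}) = \Psi_h(\pi(\vec X^N_0, \vec B^N))$ from \ref{item:thmMFLim}, Corollary \ref{cor:DisLD} produces an LDP for $\L(\pi(\vec{X}^{h,N}))$ with good rate function
\[ \tilde{I}_h(P) := \inf_{R \in \ps_p(\R^d \times \C^{d'}):\, \Psi_h(R) = P} H(R \vert Q). \]

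For the identification $\tilde I_h(P) = H(P \vert \Gamma_h(P))$, I introduce for each fixed $P$ the continuous Euler-solution map $\Phi_P : \R^d \times \C^{d'} \to \C^d$ sending $(x_0,\gamma)$ to the pathwise solution of the discretised non-McKean SDE \eqref{eq:defGamma} with $P$ frozen. By construction $(\Phi_P)_\# Q = \Gamma_h(P)$, and by the very definition of $\Psi_h$, any $R$ with $\Psi_h(R) = P$ satisfies the fixed-point relation $(\Phi_P)_\# R = P$. Data processing for relative entropy then gives $H(R \vert Q) \geq H\bigl( (\Phi_P)_\# R \,\vert\, (\Phi_P)_\# Q \bigr) = H(P \vert \Gamma_h(P))$, hence $\tilde I_h \geq H(\cdot \vert \Gamma_h(\cdot))$. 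For the reverse inequality when $H(P \vert \Gamma_h(P)) < +\infty$, the standard lift $\d R / \d Q := (\d P/\d \Gamma_h(P)) \circ \Phi_P$ produces an $R \in \ps_p$ (Donsker--Varadhan bound against the exponential moments of $Q$) with $(\Phi_P)_\# R = P$ (hence $\Psi_h(R) = P$ by uniqueness in Theorem \ref{thm:DisMFlimit}\ref{item:thmMFexist}) and $H(R \vert Q) = H(P \vert \Gamma_h(P))$.

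The hard part is the variational identification: because $\sigma$ may be degenerate, $\Phi_P$ is in general neither injective nor onto, so $R$ cannot be recovered from $P$ by a naive push-forward and the lift $\d R/\d Q = f \circ \Phi_P$ is what matches the entropies on the noise and path spaces. Goodness of $I_h$ then follows automatically from the contraction principle.
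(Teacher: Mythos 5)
Your proposal is correct and follows essentially the same route as the paper: Sanov's theorem in the $W_p$-topology, the contraction principle along $\Psi_h$, and then the identification $\inf_{\Psi_h(R')=P} H(R'\vert Q) = H(P\vert\Gamma_h(P))$ via the fixed-point relation $S^{h,P}_\# R' = P$. The only difference is that you reprove the variational identification (data processing plus the explicit lift $\d R/\d Q = (\d P/\d\Gamma_h(P))\circ\Phi_P$) from first principles, whereas the paper cites \cite[Lemma A.1]{fischer2014form} for precisely this step.
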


We recall that the relative entropy $H$ is defined in Section \ref{subsec:not}.
Proposition \ref{pro:LDPhB} is proved in Section \ref{subsec:MhFLDP}.
We then extend Theorem \ref{thm:DisCLT} to the Brownian setting.
We rely on an approximation procedure by truncating the noises.
To do so, we need uniform estimates for fluctuation processes given by Proposition \ref{pro:approxhM}.
In particular, we need uniform  propagation of chaos estimates with rate $N^{-1/2}$.
For such estimates to hold, it is convenient to restrict ourselves to binary interactions.

\begin{assumption}[Binary interactions] \label{ass:pair}
There exist functions $\tilde{b} : [0,T] \times \C^d \times \C^d \rightarrow \R^d$ and $\tilde{\sigma} : [0,T] \times \C^d \times \C^d \rightarrow \R^{d \times d'}$ such that
\[ b_t (x,P) = \int_{\C^d} \tilde{b}_t (x_{\wedge t},y) \, \d P_{\wedge t} (y), \qquad \sigma_t (x,P) = \int_{\C^d} \tilde\sigma_t (x_{\wedge t},y) \, \d P_{\wedge t} (y). \]
Moreover, $\tilde{b}$, $\tilde\sigma$ are $\C^{1,1}$ and $\tilde\sigma$ is bounded, so that $b$, $\sigma$ satisfy \ref{ass:coef1}-\ref{ass:coef2}.
\end{assumption}

Let $(B_t)_{0 \leq t \leq T}$, $(\tilde{B}_t)_{0 \leq t \leq T}$ be independent Brownian motions in $\R^d$.
Let $X^h$ denote the strong solution of the discretised McKean-Vlasov equation
\[ \d X^h_{t_h} = b_{t_h} ( X^h, \L(X^h) ) \d t + \sigma_{t_h} ( X^h, \L(X^h) ) \d B_t, \qquad X^h_0 = 0, \]
which corresponds to \eqref{eq:GenMcK} driven by $B$. 
We recall that strong existence and pathwise-uniqueness are provided by Theorem \ref{thm:DisMFlimit}.
Let $\tilde{X}^h$ denote the solution of the same equation driven by $\tilde{B}$.
We introduce the solution $\delta X^h$ of the McKean-Vlasov SDE 
\begin{multline} \label{eq:McKNoiseh}
\d \delta X^h_{t_h} = \big[ D_x \sigma_{t_h} ( X^h, \L ( X^h ) ) \cdot \delta X^h + \delta_P \sigma_{t_h} ( X^h, \L ( X^h ), \tilde{X}^h ) \big] \d B_t + \big\{ D_x b_{t_h} ( X^h, \L ( X^h ) ) \cdot \delta X^h \\
\phantom{abcdefgh}+ \delta_P b_{t_h} ( X^h, \L ( X^h ), \tilde{X}^h ) + \E \big[ D_y \delta_P b_{t_h} ( X^h , \L ( X^h ), {X}^h ) \cdot  \delta X^h \big\vert ( \tilde{B}_s )_{0 \leq s \leq t_h} \big] \big\} \d t \\
+ \E \big[ D_y \delta_P \sigma_{t_h} ( X^h, \L ( X^h ), {X}^h ) \cdot  \delta X^h \big\vert ( \tilde{B}_s )_{0 \leq s \leq t_h} \big] \d B_t, \qquad \delta X^h_0 = 0.\phantom{abcd}
\end{multline}
Well-posedness for \eqref{eq:McKNoiseh} is a direct induction (similar to Lemma \ref{lem:SolMap} below). 
This can be seen as a discretised version of a McKean-Vlasov SDE with common noise like the one studied in \cite[Appendix A]{djete2022mckean}.

\begin{proposition} \label{pro:ClThB}
Under \ref{ass:pair}, let $(B^i)_{i \geq 1}$ be an i.i.d. sequence in $\C^{d'}$ of Brownian motions. 
Let $\vec{X}^{h,N}$ be the related system of particles starting from $0$ given by \eqref{eq:hNpart}.
Then, for every $\varphi \in \C^{1,1}_b ( \C^{d}, \R )$, the random variable
\[ \sqrt{N} \bigg[ \frac{1}{N} \sum_{i =1}^N \varphi ( X^{h,i,N} ) - \E [ \varphi( X^h ) ] \bigg] \]
converges in law towards a centred Gaussian variable with variance $\sigma^2_{h,\varphi}$ given by
\[ \sigma^2_{h,\varphi} := \E \big\{ [ \varphi ( \tilde{X}^h ) + \E [ D \varphi ( X^h ) \cdot \delta X^h - \varphi ( X^h ) \vert \tilde{B} ] ]^2 \big\}. \]
\end{proposition}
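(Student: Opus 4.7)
The plan is to reduce Proposition \ref{pro:ClThB} to Theorem \ref{thm:DisCLT} via a truncation of the Brownian drivers so that they take values in $\C^{d'}_{M'}$, and then to let $M' \to +\infty$ using the uniform fluctuation estimates of Proposition \ref{pro:approxhM}. Concretely, for every $M' > 0$ set $\tau^i_{M'} := \inf\{ s \in [0,T] : \vert B^i_s \vert \geq M' \} \wedge T$ and $B^{i,M'}_t := B^i_{t \wedge \tau^i_{M'}}$, which takes values in $\C^{d'}_{M'}$ almost surely, and let $\vec{X}^{h,N,M'}$, $X^{h,M'}$, $\tilde{X}^{h,M'}$, $\delta X^{h,M'}$ denote the analogs of $\vec{X}^{h,N}$, $X^h$, $\tilde{X}^h$, $\delta X^h$ driven by $\vec{B}^{N,M'}$, $B^{1,M'}$, $\tilde{B}^{M'}$. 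Applying Theorem \ref{thm:DisCLT} to this truncated system and unpacking the explicit variance formula of Corollary \ref{cor:hMCLT} in the binary-interaction setting of Assumption \ref{ass:pair}, I expect to identify the limiting Gaussian variance as
\[
\sigma^2_{h,\varphi,M'} := \E \big\{ [ \varphi ( \tilde{X}^{h,M'} ) + \E [ D \varphi ( X^{h,M'} ) \cdot \delta X^{h,M'} - \varphi ( X^{h,M'} ) \vert \tilde{B}^{M'} ] ]^2 \big\}.
\]
This identification rests on interpreting $\delta X^h$ as the Fréchet derivative in the measure argument of the solution map to \eqref{eq:GenMcK}, so that the first-order expansion of $\Psi_h$ applied to the empirical-measure fluctuation $\sqrt{N}[\pi(\vec{B}^N) - \L(B^1)]$ takes exactly this form under Assumption \ref{ass:pair}; the conditional expectation reflects that the ``test particle'' $\tilde{X}^{h,M'}$ appears only through the kernel $\tilde{b}$, $\tilde{\sigma}$ in the drift/diffusion of $X^{h,M'}$.

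Next, I would pass to the limit in $M'$ in the variance. Standard Gronwall-type estimates on \eqref{eq:GenMcK} and on the fluctuation equation \eqref{eq:McKNoiseh}, together with $\P(\tau^1_{M'} < T) \to 0$ as $M' \to +\infty$, yield $L^2$-convergence $X^{h,M'} \to X^h$, $\tilde{X}^{h,M'} \to \tilde{X}^h$, $\delta X^{h,M'} \to \delta X^h$ in $\C^d$. Since $\varphi, D \varphi$ are bounded and Lipschitz (as $\varphi \in \C^{1,1}_b$), dominated convergence then gives $\sigma^2_{h,\varphi,M'} \to \sigma^2_{h,\varphi}$.

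The key — and main obstacle — is to close the approximation at the fluctuation scale $\sqrt{N}$, uniformly in $N$. Using Proposition \ref{pro:approxhM}, I would show
\[
\lim_{M' \to \infty} \limsup_{N \to \infty} \E \bigg\vert \sqrt{N} \bigg[ \frac{1}{N} \sum_{i=1}^N \varphi ( X^{h,i,N} ) - \frac{1}{N} \sum_{i=1}^N \varphi ( X^{h,i,N,M'} ) - \E \varphi ( X^h ) + \E \varphi ( X^{h,M'} ) \bigg] \bigg\vert^2 = 0.
\]
Combined with the CLT for the truncated system and the convergence $\sigma^2_{h,\varphi,M'} \to \sigma^2_{h,\varphi}$, a standard triangular-array argument (characteristic functions plus continuous mapping) yields the announced Gaussian limit. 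The uniform-in-$N$ control in the last display is precisely why binary interactions (Assumption \ref{ass:pair}) are imposed: they provide the $N^{-1/2}$-rate propagation-of-chaos estimates underpinning Proposition \ref{pro:approxhM}, without which one cannot exchange the $N \to \infty$ and $M' \to \infty$ limits. Handling the coupled expansions of the particles and of the limit simultaneously — so that the truncation error cancels between $\tfrac{1}{N}\sum_i \varphi(X^{h,i,N})$ and $\E\varphi(X^h)$ after subtracting their $M'$-counterparts — is the most delicate piece of the argument.
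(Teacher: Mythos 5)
Your proposal follows essentially the same route as the paper's proof: truncate the Brownian drivers at level $M'$ so that they live in $\C^{d'}_{M'}$, invoke the bounded-driver CLT of Theorem \ref{thm:DisCLT}/Corollary \ref{cor:hMCLT}, use Proposition \ref{pro:approxhM} to control the $\sqrt{N}$-scale error between the truncated and full empirical fluctuations uniformly in $N$, and then send $M' \to \infty$ in the variance. The one point you gesture at but do not spell out — passing from the particle-level bound of Proposition \ref{pro:approxhM} to the $\varphi$-level display — is exactly where the paper decomposes $\delta^{M}_\varphi - \delta^{\infty}_\varphi$ into a ``particle minus limit'' piece controlled via the $\C^{1,1}$-Taylor estimate (Lemma \ref{lem:techC11}) and an i.i.d.-cancellation piece controlled via Lemma \ref{lem:techiid}; with that filled in, your argument matches the paper's.
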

    
The underlying Gaussian field can be described in terms of an infinite-dimensional SDE, see \cite[Section 4]{tanaka1984limit}.
Proposition \ref{pro:ClThB} is proved in Section \ref{subsec:disCLTB}, using results from Appendix \ref{app:tech}.

\subsection{Continuous setting with Brownian noise} \label{subsec:resCont}

We now extend Proposition \ref{pro:LDPhB} to the continuous setting. 
For $P \in \ps_p ( \C^d )$, we define
$\Gamma_0 ( P)$ as being the path-law of the solution to 
\begin{equation*} 
\d Y_t = b_{t} ( Y, P) \d t + \sigma_{t} ( Y, P) \d B_t, \quad Y_0 = X^1_0, 
\end{equation*} 
where $( B_t )_{0 \leq t \leq T}$ is a $\R^{d'}$-valued Brownian motion.
We notice that $\Gamma_0 (P)$ can be seen as the $h \rightarrow 0$ limit of $\Gamma_h (P)$ given by \eqref{eq:defGamma}.
In the following, we will use the same convention for extending definitions to $h = 0$.

\begin{theorem}[Exponential approximation and LDP] \label{thm:LDPGood}
Let us assume that $\sigma_t ( x,P) = \sigma_t (P)$ does not depend on $x$.
Under \ref{ass:coef1}-\ref{ass:IniExp}, let $(B^i)_{i \geq 1}$ be an i.i.d. sequence in $\C^{d'}$ of Brownian motions. 
For $h \in [0,1]$, let $\vec{X}^{h,N}$ be the related system of particles starting from $(X^i_0)_{i \geq 1}$.
Then, for every bounded Lipschitz $F : \ps_p ( \C^d ) \rightarrow \R$,
\[ \sup_{N \geq 1} \big\vert N^{-1} \log \E \big[ \exp \big[ N F ( \pi ( \vec{X}^{h,N} ) \big] \big] - N^{-1} \log \E \big[ \exp \big[ N F ( \pi ( \vec{X}^{0,N} ) \big] \big] \big\vert \xrightarrow[h \rightarrow 0]{} 0. \]
Moreover, the sequence of the $\mathcal{L}(\pi( \vec{X}^{0,N} ))$ satisfies the LDP with good rate function
\[ I_0 : P \in \ps_p ( \C^d ) \mapsto H( P \vert \Gamma_0 ( P) ). \] 
Moreover, $I_h$ $\Gamma$-converges towards $I_0$ as $h \rightarrow 0$.
\end{theorem}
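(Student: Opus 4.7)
The plan is to follow the weak convergence approach of \cite{dupuis2011weak}, crucially exploiting that $\sigma$ is independent of the state variable. The argument splits into the exponential approximation, the transfer of the Laplace principle to $h=0$, and the $\Gamma$-convergence.

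\textbf{Exponential approximation.} By the Boué--Dupuis variational representation applied jointly to the $N$ driving Brownian motions, for every bounded Lipschitz $F$,
\[ -N^{-1} \log \E\big[ \exp( -N F( \pi ( \vec{X}^{h,N} )) ) \big] = \inf_{\mathbf{u}} \E\bigg[ F( \pi ( \vec{Y}^{h,N}[\mathbf{u}] ) ) + \frac{1}{2N} \sum_{i=1}^N \int_0^T | u^i_t |^2 \, \d t \bigg], \]
where $\vec{Y}^{h,N}[\mathbf{u}]$ denotes the discretised particle system \eqref{eq:hNpart} with each $B^i$ replaced by $B^i + \int_0^\cdot u^i_s \, \d s$. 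Boundedness of $F$ allows one to restrict to controls with $\tfrac{1}{2N}\sum_i \E\int_0^T | u^i_t|^2 \d t \leq 2 \| F \|_\infty$. The heart of the argument is then a uniform-in-$(N,\mathbf{u})$ stability estimate $\tfrac{1}{N}\sum_i \E[ \| Y^{h,i,N}[\mathbf{u}] - Y^{0,i,N}[\mathbf{u}] \|_\infty^p ] \to 0$ as $h \to 0$. Here the assumption $\sigma_t(x,P) = \sigma_t(P)$ is decisive: the stochastic-integral part of $\delta X^i := Y^{h,i,N}-Y^{0,i,N}$ only involves $[\sigma_{s_h}(\pi(\vec{Y}^{h,N})) - \sigma_s(\pi(\vec{Y}^{0,N}))](\d B^i_s + u^i_s \d s)$, avoiding any term of the form $[\sigma(X^h)-\sigma(X^0)] \d B$, which would be incompatible with the merely averaged $L^2$-bound on $\mathbf{u}$. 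Time-Lipschitz continuity of $(b,\sigma)$ from \ref{ass:coef1} controls the $h$-mismatch in time, and a Gronwall--BDG argument averaged over $i$ closes the estimate. Lipschitz continuity of $F$ in $W_p$ together with exchangeability of $(Y^{h,i,N}[\mathbf{u}])_i$ then yield $\sup_{\mathbf{u}} \E[ |F(\pi(\vec{Y}^{h,N}[\mathbf{u}]))- F(\pi(\vec{Y}^{0,N}[\mathbf{u}])) | ] \to 0$ uniformly in $N$, hence the exponential equivalence.

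\textbf{Transfer of the Laplace principle.} Combining the exponential approximation with the discretised LDP (Proposition \ref{pro:LDPhB}), for every bounded Lipschitz $F$,
\[ \lim_{N\to\infty} -N^{-1}\log\E[\exp(-N F(\pi(\vec{X}^{0,N})))] = \lim_{h\to 0}\inf_{P \in \ps_p(\C^d)} [F(P) + I_h(P)]. \]
The $\Gamma$-convergence $I_h \to I_0$ identifies the right-hand side with $\inf_P[F(P) + I_0(P)]$, establishing the Laplace principle for $(\pi(\vec{X}^{0,N}))$ with rate $I_0$. Lower semi-continuity and goodness of $I_0$, inherited from the relative entropy and the continuity of $\Gamma_0$, promote this to the full LDP.

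\textbf{$\Gamma$-convergence.} For the liminf inequality, given $P_h \to P$ in $\ps_p(\C^d)$, stability of the Euler--Maruyama scheme for the linear equation \eqref{eq:defGamma} gives $\Gamma_h(P_h) \to \Gamma_0(P)$; joint lower semi-continuity of $H(\cdot\vert\cdot)$ in the weak topology then yields $\liminf_h H(P_h\vert\Gamma_h(P_h)) \geq H(P\vert\Gamma_0(P))$. For the limsup inequality, take the constant recovery $P_h \equiv P$; when $H(P\vert\Gamma_0(P))<\infty$, a Girsanov representation of the entropy in terms of the drift correction of the canonical semimartingale decomposition of $P$, combined with $\Gamma_h(P)\to \Gamma_0(P)$, delivers $\limsup_h H(P\vert\Gamma_h(P)) \leq H(P\vert\Gamma_0(P))$.

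The main obstacle is the uniform-in-$N$ stability of the controlled Euler scheme in the first step: the controls $u^i$ are unbounded adapted processes, only an averaged $L^2$-bound is available, and one must simultaneously propagate the mean-field coupling through the empirical measure and the time-discretisation error. The hypothesis that $\sigma$ is state-independent is precisely what lets this estimate go through by a direct Gronwall argument, without invoking any rough-path machinery.
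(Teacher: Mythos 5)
Your overall architecture mirrors the paper's: variational representation of the log-Laplace functional, reduction to a controlled particle system, uniform Gronwall estimate exploiting $\sigma_t(x,P)=\sigma_t(P)$, then passage to the LDP via convergence of the value function and of the rate functions. Up to minor imprecision (the truly problematic term when $\sigma$ depends on $x$ is $[\sigma(X^{h,i})-\sigma(X^{0,i})]\,u^i_s\,\d s$, not the $\d B$-part, which BDG handles regardless; and the uniform-in-$\mathbf{u}$ stability estimate must be stated after truncating on $\{\mathcal{E}(\vec{u}^N)\leq M\}$ and then controlling the tail by Markov and boundedness of $F$, since the constant in the Gronwall step blows up with $M$), the first two parts of your argument match Proposition \ref{pro:valueF}, Corollary \ref{cor:Repmf} and the end of Section \ref{subsec:contLDP}.

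The genuine gap is in the $\limsup$ half of the $\Gamma$-convergence. You propose the constant recovery sequence $P_h\equiv P$ and assert that a Girsanov representation plus $\Gamma_h(P)\to\Gamma_0(P)$ gives $\limsup_h H(P\vert\Gamma_h(P))\leq H(P\vert\Gamma_0(P))$. This fails: $\Gamma_h(P)$ is the law of a diffusion with diffusion matrix $\sigma_{t_h}(\cdot,P)$, while $\Gamma_0(P)$ has diffusion matrix $\sigma_t(\cdot,P)$. Whenever $\sigma$ is genuinely time-dependent these quadratic variations differ on a set of positive $\d t\otimes\d P$-measure, so any $P$ with $P\ll\Gamma_0(P)$ is \emph{singular} with respect to $\Gamma_h(P)$ and $H(P\vert\Gamma_h(P))=+\infty$ for all $h>0$. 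Lower semi-continuity of $H$ gives you the $\liminf$ for free, but it cannot give the $\limsup$: relative entropy is not continuous, and the constant sequence is not a recovery sequence here. The paper's Proposition \ref{pro:CVexpF} handles this by producing a genuinely $h$-dependent recovery sequence: starting from the Girsanov control $u$ realizing $H(P\vert\Gamma_0(P))=\E_P\int\frac12|u_t|^2\d t$, one truncates to $u^M:=u\,\1_{\mathcal{E}_\cdot(u)\leq M}$, runs the $h$-discretised controlled McKean--Vlasov equation to produce $X^{h,M}$, shows $\L_P(X^{h,M})\to P^M$ as $h\to 0$ via a Girsanov change of measure plus a coupling/Gronwall argument, shows $P^M\to P$ as $M\to\infty$, and finally uses the variational upper bound $\overline{\F}_h\leq F(\L(X^{h,M}))+\E_P\int\frac12|u^M_t|^2\d t$ before sending $h\to 0$ and then $M\to\infty$. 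Since your transfer-of-Laplace-principle step relies on exactly this convergence of infima (which is what the paper calls the $\Gamma$-convergence), the gap propagates to the LDP conclusion as well.
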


Theorem \ref{thm:Contraction} below provides the LDP when $\sigma$ does depend on $x$ but not on $P$. 
The above result can be rephrased as the quantitative convergence of the value function of a mean-field control problem with quadratic cost, see Proposition \ref{pro:valueF} below.
Theorem \ref{thm:LDPGood} is proved in Section \ref{subsec:contLDP}.
Some technical results about rate functions are deferred to Appendix \ref{app:LDP} (in particular, links with stochastic control).

\begin{rem}[Mean-field limit and scheme convergence]
From Lemma~\ref{lem:Approxh}-\ref{item:lemApphhinf} and Lemma~\ref{lem:PoCh}, we have the consistency results, for every $h \in [0,1]$, $N \geq 1$ and $1 \leq i \leq N$,
\[ \E [ \sup_{0 \leq t \leq T} \vert X^{h,i,N}_t - X^{0,i,N}_t \vert^2 ] \leq C \vert h \log h \vert, \quad \E [ \sup_{0 \leq t \leq T} \vert X^{h,i,N}_t - \overline{X}^{h,i}_t \vert^2 ] \leq C N^{-1}, \]
for $C >0 $ independent of $(h,N)$.
In particular, the $h \rightarrow 0$ and $N \rightarrow +\infty$ limits commute with quantitative rates of convergence, see Lemma~\ref{lem:PoCh}.
Theorem \ref{thm:LDPGood} proves the same commutation at the level of large deviations, whereas Theorem \ref{thm:ClT} proves it for normal fluctuations.
\end{rem}

We now extend Proposition \ref{pro:ClThB} to the continuous setting.
As previously, let $(B_t)_{0 \leq t \leq T}$, $(\tilde{B}_t)_{0 \leq t \leq T}$ be independent Brownian motions in $\R^d$.
Let $X$ denote the strong solution of the McKean-Vlasov equation given by Lemma \ref{lem:JulEnvMcK},
\[ \d X_{t} = b_{t} ( X, \L(X) ) \d t + \sigma_{t} ( X, \L(X) ) \d B_t, \qquad X_0 = 0, \]
which corresponds to the $h= 0$ extension of \eqref{eq:GenMcK} driven by $B$. 
Let $\tilde{X}$ denote the solution of the same equation driven by $\tilde{B}$.
We then introduce the solution $\delta X$ of 
\begin{multline} \label{eq:McKNoise}
\d \delta X_{t} = \big[ D_x \sigma_{t} ( X, \L ( X ) ) \cdot \delta X + \delta_P \sigma_{t} ( X, \L ( X), \tilde{X} ) \big] \d B_t + \big\{ D_x b_{t} ( X, \L ( X) ) \cdot \delta X \\
\phantom{abcdefgh}+ \delta_P b_{t} ( X, \L ( X ), \tilde{X} ) + \E \big[ D_y \delta_P b_{t} ( X , \L ( X ), X ) \cdot  \delta X \big\vert ( \tilde{B}_s )_{0 \leq s \leq t} \big] \big\} \d t \\
+ \E \big[ D_y \delta_P \sigma_{t} ( X, \L ( X ), X ) \cdot  \delta X \big\vert ( \tilde{B}_s )_{0 \leq s \leq t} \big] \d B_t, \qquad \delta X_0 = 0.\phantom{abcd}
\end{multline}
Well-posedness for such a path-dependent McKean-Vlasov SDE with common noise can be found in \cite[Appendix A]{djete2022mckean}.

\begin{theorem} \label{thm:ClT}
Under \ref{ass:pair}, let $(B^i)_{i \geq 1}$ be an i.i.d. sequence in $\C^{d'}$ of Brownian motions. 
Let $\vec{X}^{h,N}$ be the related system of particles starting from $0$ given by \eqref{eq:hNpart}.
Let further $\overline{X}^{h,i}$ denote the solution of \eqref{eq:GenMcK} with driving noise $B^i$.
There exists $C >0$ such that  
\[ \sup_{N \geq 1} \frac{1}{\sqrt{N}} \sum_{i=1}^N \E \bigg[ \sup_{0 \leq t \leq T} \big\vert X^{h,i,N}_t - \overline{X}^{h,i}_t - [ X^{0,i,N}_t - \overline{X}^{0,i}_t ] \big\vert \bigg] \leq C \vert h \log h \vert^{1/2}, \]
for every $h \in [0,1]$.
Moreover, for every $\varphi \in \C^{1,1}_b ( \C^{d}, \R )$,
\[ \sqrt{N} \bigg[ \frac{1}{N} \sum_{i =1}^N \varphi ( X^{0,i,N} ) - \E [ \varphi( \overline{X}^{0,1} ) ] \bigg] \]
converges in law towards a centred Gaussian variable with variance $\sigma^2_{\varphi}$ given by
\[ \sigma^2_{\varphi} := \E \big\{ [ \varphi ( \tilde{X} ) + \E [ D \varphi ( X ) \cdot \delta X - \varphi ( X ) \vert \tilde{B} ] ]^2 \big\}. \]
\end{theorem}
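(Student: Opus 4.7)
The natural plan is to split the theorem into its two components and treat them sequentially. The first inequality is a uniform-in-$N$ ``continuity in $h$'' estimate for the rescaled fluctuation processes $Z^{h,i,N}_t := \sqrt{N}(X^{h,i,N}_t - \overline{X}^{h,i}_t)$; the continuous-time CLT then follows by transferring the discrete-time CLT of Proposition~\ref{pro:ClThB} via this estimate together with a consistency argument for the limiting variance.

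\textbf{Fluctuation estimate.} Set $\Delta^{h,i,N}_t := (X^{h,i,N}_t - \overline{X}^{h,i}_t) - (X^{0,i,N}_t - \overline{X}^{0,i}_t)$. Under the binary-interaction assumption \ref{ass:pair},
\begin{align*}
X^{h,i,N}_t - \overline{X}^{h,i}_t
&= \int_0^t \tfrac{1}{N}\sum_j \bigl[\tilde{b}_{s_h}(X^{h,i,N},X^{h,j,N}) - \tilde{b}_{s_h}(\overline{X}^{h,i},\overline{X}^{h,j})\bigr] \d s + (\text{stochastic integral in }\tilde\sigma),
\end{align*}
and an analogous identity holds at $h=0$. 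Taylor-expanding $\tilde b,\tilde\sigma$ around $(\overline{X}^{h,i},\overline{X}^{h,j})$ using the Lipschitz derivatives from \ref{ass:pair}, then taking the difference of the $h$- and $0$-expansions, I would obtain for $\Delta^{h,i,N}$ a closed linear equation of Gronwall type, driven by two forcing terms: (a) an Euler-scheme discrepancy of order $|h\log h|^{1/2}$ in $L^2$, coming from the replacement of $s$ by $s_h$ in the coefficients and controlled through the Lipschitz-in-$t$ part of \ref{ass:coef1} together with the $L^2$-modulus of continuity of $B^i$; (b) second-order Taylor remainders of size $O(|Z^{h,j,N}|^2/\sqrt N)$, which are bounded by Lemma~\ref{lem:PoCh} since $\E\sup_t|Z^{h,j,N}_t|^2$ is uniform in $(h,N)$. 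Applying BDG, Gronwall and exchangeability then delivers the stated $|h\log h|^{1/2}$ bound.

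\textbf{Deduction of the CLT.} For $\varphi \in \C^{1,1}_b(\C^d,\R)$, write
\[ \sqrt{N}\Bigl[\tfrac{1}{N}\sum_i \varphi(X^{0,i,N}) - \E\varphi(\overline{X}^{0,1})\Bigr] = \sqrt{N}\Bigl[\tfrac{1}{N}\sum_i \varphi(X^{h,i,N}) - \E\varphi(\overline{X}^{h,1})\Bigr] + R^{h,N}. \]
For fixed $h$, Proposition~\ref{pro:ClThB} gives convergence in law of the first term to a centred Gaussian with variance $\sigma^2_{h,\varphi}$. A first-order Taylor expansion of $\varphi$ at $\overline{X}^{h,i}$ and $\overline{X}^{0,i}$, combined with the fluctuation estimate above and a Chebyshev bound on the i.i.d. sum $\tfrac{1}{\sqrt N}\sum_i[\varphi(\overline{X}^{0,i}) - \varphi(\overline{X}^{h,i}) - \E(\cdot)]$, yields $\sup_N \E|R^{h,N}| \leq C_\varphi |h\log h|^{1/2}$. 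Finally, a Gronwall argument for the linear McKean-Vlasov system with common noise \eqref{eq:McKNoiseh} against its $h=0$ limit \eqref{eq:McKNoise}, together with the Euler consistency of $(X^h,\tilde X^h)$ from Lemma~\ref{lem:Approxh}, gives $\sigma^2_{h,\varphi} \to \sigma^2_\varphi$ as $h \to 0$. A standard diagonal extraction then converts these three ingredients into convergence in law to a centred Gaussian with variance $\sigma^2_\varphi$.

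\textbf{Main obstacle.} The technical heart is the fluctuation estimate: the SDE for $\Delta^{h,i,N}$ couples all $N$ particles through the empirical measure, and a naive Gronwall would destroy the critical $1/\sqrt N$ normalisation hidden in the statement. The binary-interaction structure of \ref{ass:pair} is used precisely to decouple the linear part of the propagator so that the cross-particle interactions enter only through second-order Taylor remainders, while the $\C^{1,1}$-regularity of \ref{ass:coef2} is what guarantees those remainders are Lipschitz and of the right order in $N$. The $|h\log h|^{1/2}$ rate itself is the $L^2$-modulus of continuity of the driving Brownian motions over the grid.
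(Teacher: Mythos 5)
Your overall plan matches the paper's: a uniform fluctuation estimate for $\Delta^{h,i,N}:=\delta^{h,i,N}-\delta^{0,i,N}$ via a Gronwall argument, followed by transfer of the discretised CLT of Proposition~\ref{pro:ClThB} through control of $\sqrt N\,\lvert\delta^h_\varphi-\delta^0_\varphi\rvert$ and a variance-stability argument (the paper organises this second step exactly as you do, at the end of Section~\ref{subsec:CLT}). However, the accounting of orders in your Gronwall forcing is off and, as written, does not yield the first inequality of the theorem.

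You claim the forcing consists of (a) an Euler discrepancy that is $O(\lvert h\log h\rvert^{1/2})$ in $L^2$ and (b) Taylor remainders $O(\lvert Z^{h,j,N}\rvert^2/\sqrt N)$, hence $O(N^{-1/2})$ in $L^1$. With such forcing, Gronwall gives at best $\sqrt N\,\E[\sup_t\lvert\Delta^{h,i,N}_t\rvert]\leq C(\lvert h\log h\rvert^{1/2}+N^{-1/2})$, and the $N^{-1/2}$ term does not vanish when $h\to 0$ with $N$ fixed, so the stated bound $C\lvert h\log h\rvert^{1/2}$ is not reached. The decisive point, encoded in Lemma~\ref{lem:techC11}, is that \emph{each} forcing piece must carry the product $N^{-1/2}\lvert h\log h\rvert^{1/2}$. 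The second-order finite difference of the coefficients splits into a term proportional to $\Delta^{h,\cdot}$ (absorbed into the Gronwall kernel) plus bilinear cross terms of the form $\lVert D\tilde b\rVert_{\mathrm{Lip}}\,\lvert\delta^{0,\cdot}\rvert\cdot\lvert X^{h,\cdot}-X^{0,\cdot}\rvert$, and it is the product of these two factors — by Cauchy--Schwarz against Lemma~\ref{lem:PoCh}-(iii) and -(iv)--(v) — that gives $N^{-1/2}\lvert h\log h\rvert^{1/2}$. If you Taylor-expand the $h$- and $0$-systems separately and bound each remainder $R^{h,i,j}$, $R^{0,i,j}$ by $O(N^{-1})$ in $L^1$, the $h$-decay is discarded; you must instead keep the difference $R^{h,i,j}-R^{0,i,j}$ and extract from it a Lipschitz-increment of $D\tilde b$ of size $O(\lvert h\log h\rvert^{1/2})$ multiplying a $\delta^{0,\cdot}$ factor of size $O(N^{-1/2})$. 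The Euler discrepancy (a) is subject to the same remark: in the equation for $\Delta$ it appears multiplied by $\delta^{0,\cdot}$, and it is that product which is $O(N^{-1/2}\lvert h\log h\rvert^{1/2})$. Your cruder estimate would still suffice for the CLT part (send $h\to 0$ first, then $N\to\infty$), but not for the quantitative first inequality as stated.
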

    
The underlying Gaussian field can be described in terms of an infinite-dimensional SDE, see \cite[Section 4]{tanaka1984limit}.
Theorem \ref{thm:ClT} is proved in Section \ref{subsec:CLT}.

\subsection{Contraction} \label{subsec:contract}

Since the map
\begin{equation*} \label{eq:projmarg}
\begin{cases}
\ps_p ( \C^d ) &\rightarrow \C ( [0,T], \ps_p ( \R^d ) ), \\
\phantom{ab}P &\mapsto P_\cdot := (P_t)_{0 \leq t \leq T},
\end{cases}
\end{equation*}
is continuous, Proposition \ref{pro:LDPhB} and Theorem \ref{thm:LDPGood} induce by contraction the LDP for the sequence of the $\L( \pi_\cdot ( \vec{X}^{h,N} ) )$ with good rate function
\[ \overline{I}_h : P_\cdot \in \C ( [0,T], \ps_p ( \R^d ) ) \mapsto \inf_{\substack{Q \in \ps_p(\C^d) \\ Q_\cdot = P_\cdot}} H ( Q \vert \Gamma_h ( Q ) ), \]
where $h \in [0,1]$. 
The random curve $\pi_\cdot ( \vec{X}^{0,N} )$ precisely corresponds to \eqref{eq:informalDG} in the Introduction, this framework being natural by comparison with \eqref{eq:fw}.
In this setting, the following result allows for a dependence of $\sigma$ on $x$.

\begin{assumption} \label{ass:coef3}
The coefficients are globally Lipschitz functions independent of the history $b_t (x,P) = b_t(x_t,P_t)$ and $\sigma_t (x,P) = \sigma_t(x_t)$.
Moreover, $\sigma$ is globally bounded.
\end{assumption}

\begin{theorem} \label{thm:Contraction}
Under \ref{ass:IniExp}-\ref{ass:coef3}, let $(B^i)_{i \geq 1}$ be an i.i.d. sequence in $\C^{d'}$ of Brownian motions. 
For $h \in [0,1]$, let $\vec{X}^{h,N}$ be the related system of particles starting from $(X^i_0)_{i \geq 1}$.
Then, for every bounded Lipschitz $F : \C ( [0,T], \ps_p ( \R^d ) ) \rightarrow \R$,
\[ \sup_{N \geq 1} \big\vert N^{-1} \log \E \big[ \exp \big[ N F ( \pi_\cdot ( \vec{X}^{h,N} ) \big] \big] - N^{-1} \log \E \big[ \exp \big[ N F ( \pi_\cdot ( \vec{X}^{0,N} ) \big] \big] \big\vert \xrightarrow[h \rightarrow 0]{} 0. \]
Moreover, the sequence of the $\mathcal{L}(\pi_\cdot ( \vec{X}^{h,N} ))$ satisfies the LDP with good rate function $\overline{I}_h$.   
\end{theorem}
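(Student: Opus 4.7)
The plan is to reduce Theorem \ref{thm:Contraction} to two ingredients: a contraction argument from Proposition \ref{pro:LDPhB}, and an exponential approximation in the spirit of Theorem \ref{thm:LDPGood} that survives the passage from $\sigma_t(x,P) = \sigma_t(P)$ to $\sigma_t(x,P) = \sigma_t(x_t)$.

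First, I would dispose of the case $h > 0$. Assumption \ref{ass:coef3} is stronger than \ref{ass:coef1}, so Proposition \ref{pro:LDPhB} applies and yields a LDP for $\mathcal{L}(\pi(\vec{X}^{h,N}))$ on $\ps_p(\C^d)$ with rate $I_h = H(\cdot \vert \Gamma_h(\cdot))$. The marginalization map $P \mapsto P_\cdot$ is continuous $\ps_p(\C^d) \to \C([0,T],\ps_p(\R^d))$, hence the contraction principle (Theorem 1.3.2 of \cite{dupuis2011weak}) immediately delivers the LDP for $\mathcal{L}(\pi_\cdot(\vec{X}^{h,N}))$ with good rate function $\overline{I}_h$, as asserted.

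Second, I would establish the uniform-in-$N$ exponential approximation. Following the weak convergence approach used for Theorem \ref{thm:LDPGood}, represent
\[
N^{-1} \log \E\!\left[\exp\!\left(N F(\pi_\cdot(\vec{X}^{h,N}))\right)\right] \;=\; \inf_{\vec\alpha}\, \E\!\left[-F(\pi_\cdot(\vec{Y}^{h,N,\vec\alpha})) + \tfrac{1}{2N}\sum_{i=1}^N \int_0^T \lvert \alpha^{i,N}_t\rvert^2 \, \d t\right],
\]
via the Boué--Dupuis variational formula. Here, $\vec{Y}^{h,N,\vec\alpha}$ is the controlled system obtained by adding the drift $\sigma_{t_h}(Y^{h,i,N,\vec\alpha}_{t_h})\alpha^{i,N}_t$ to the $i$-th particle of \eqref{eq:hNpart}. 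The decisive point is that since $\sigma$ does not depend on $P$, this Girsanov-type change of measure leaves the diffusion coefficient unchanged and the representation is available for every $h \in [0,1]$, including $h=0$.

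Third, I would compare the controlled systems at time-step $h$ and at $h=0$, uniformly over controls with bounded energy. Conditioning on the energy constraint $\tfrac{1}{N}\sum_i \lVert \alpha^{i,N}\rVert_{L^2}^2 \le C_F$ (which is forced by boundedness of $F$), a Grönwall argument exploiting the Lipschitz character of $b$ and $\sigma$, combined with Burkholder--Davis--Gundy to handle the stochastic integral, should give
\[
\frac{1}{N}\sum_{i=1}^N \E\!\left[\sup_{0\le t \le T}\bigl\lvert Y^{h,i,N,\vec\alpha}_t - Y^{0,i,N,\vec\alpha}_t\bigr\rvert^2\right] \;\le\; C \lvert h \log h\rvert,
\]
uniformly in $\vec\alpha$ and $N$, mirroring the consistency estimate quoted in the remark after Theorem \ref{thm:LDPGood}. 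The Lipschitz continuity of $F$ then transfers this to a vanishing difference of the two cost functionals, yielding the exponential approximation. Combining the latter with the LDP at positive $h$ and the $\Gamma$-convergence of $\overline{I}_h$ towards $\overline{I}_0$ (which follows from $\Gamma$-convergence of $I_h$ and stability of the contraction) produces the LDP for $h=0$.

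The main obstacle I anticipate is precisely the uniform-in-$\vec\alpha$ Euler consistency estimate. For a fixed control the bound is routine, but the supremum in the variational formula forces uniformity over controls whose $L^2$-energy is only bounded on average across the $N$ particles, so an isolated spike in a single $\alpha^{i,N}$ need not be small. One must therefore average the Grönwall estimate across particles at the right moment, using the fact that $b$ enters only through $\pi_t(\vec{Y}^{h,N,\vec\alpha})$ and that $\sigma$'s dependence on $x$ is Lipschitz, so the noise discrepancy between $Y^{h,\cdot}$ and $Y^{0,\cdot}$ is controlled by the path deviation itself, closing the Grönwall loop. The restriction $\sigma_t(x,P) = \sigma_t(x_t)$ is exactly what makes this loop close: a $P$-dependence in $\sigma$ would create a second feedback through the empirical measure that the energy bound on $\vec\alpha$ alone could not tame.
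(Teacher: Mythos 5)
Your overall architecture is the same as the paper's: establish the LDP at $h>0$ by contraction from Proposition~\ref{pro:LDPhB}, pass to the Bou\'e--Dupuis (weak-convergence) representation of $N^{-1}\log\E[\exp(NF)]$ as a mean-field control value function, compare the controlled Euler scheme to the controlled continuous-time system uniformly over energy-bounded controls, and finish via $\Gamma$-convergence. You also correctly identify why $\sigma_t(x,P)=\sigma_t(x_t)$ is needed: the change of drift via Girsanov does not touch the diffusion coefficient, so the representation is available at $h=0$.

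The gap is in the consistency estimate, which is the heart of the matter and which you state, rather than prove, in a form that is both too strong and genuinely not obtainable by ``routine Gr\"onwall $+$ BDG''. You claim the $L^2$-bound
\[
\frac{1}{N}\sum_{i=1}^N \E\Bigl[\sup_{0\le t\le T}\bigl\lvert Y^{h,i}_t - Y^{0,i}_t\bigr\rvert^2\Bigr]\le C\lvert h\log h\rvert
\]
uniformly in controls with averaged energy $\le C_F$. This fails. With $\Delta^i_t := Y^{h,i}_t - Y^{0,i}_t$, the controlled drift produces a term of the form $\bigl[\sigma_{t_h}(Y^{h,i})-\sigma_t(Y^{0,i})\bigr]u^i_t$, and its Lipschitz part contributes $L_\sigma \lvert\Delta^i_t\rvert\,\lvert u^i_t\rvert$ to the growth of $\lvert\Delta^i_t\rvert$. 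A pathwise Gr\"onwall argument would therefore produce the constant $\exp\bigl(L_\sigma\int_0^T\lvert u^i_t\rvert\,\d t\bigr)$, which is \emph{not} controlled by the average energy $\tfrac{1}{N}\sum_j\lVert u^j\rVert_{L^2}^2\le M$: a single spike in $u^i$ blows it up, and averaging across particles after the fact does not help because the exponential has already appeared. Averaging is not the mechanism that closes the loop, as you suggest; the mechanism in the paper is the multiplicative weight
\[
\eta^N_t = \exp\Bigl[-\tfrac{2^p L_\sigma^p}{N}\sum_{i=1}^N\int_0^t\bigl(1+\lvert u^{i,N}_s\rvert^2\bigr)\d s\Bigr],
\]
which is bounded below by $e^{-2^p L_\sigma^p(2M+T)}$ on the event $\{\mathcal E(\vec u^N)\le M\}$, and whose negative drift in It\^o's formula \emph{exactly} cancels, after summing over $i$ and using Young's inequality, the dangerous term $2^p L_\sigma^p\,\lvert\Delta^i_t\rvert^p\,\lvert u^i_t\rvert^p$. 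This is the content of Proposition~\ref{pro:valueFcontra}, and it only delivers an $L^p$ bound for $p\in[1,2)$ with the sub-optimal rate $C_M\lvert h\log h\rvert^{p(1-p/2)/2}$, not $p=2$ with $\lvert h\log h\rvert$. The exponent $p<2$ is not an artifact: it is used twice, once in the $\varepsilon$-regularisation $\lvert\cdot\rvert_\varepsilon$ needed to apply It\^o's formula to the non-smooth $\lvert\cdot\rvert^p$, and once in the H\"older-with-cutoff step $(4M_\sigma^2)\wedge(C\lvert\Lambda^{h,i}_t\rvert^2)$ that handles the residual $\lvert u^i_t\rvert^p\lvert\Lambda^{h,i}_t\rvert^p$ term using only the global bound $M_\sigma$ on $\sigma$. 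Without the boundedness of $\sigma$ (not just Lipschitz) this step also breaks. So your plan is right but the central estimate needs a weighted It\^o--Gr\"onwall argument that you have not supplied, and with the weaker norm, exponent, and rate that it actually yields.
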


From Lemma \ref{lem:RepEntr}, we can intuite the representation formula
\[ \overline{I}_0 ( P_\cdot ) = H(P_0 \vert \L(X^1_0) ) + \int_0^T \int_{\R^d} \frac{1}{2} \vert v_t \vert^2 \d P_t \d t, \]
if there exists $v \in L^2 ([0,T] \times \R^d, \R^{d'}, \d t \otimes \d P_t )$ such that
\[ \partial_t P_t = \nabla \cdot \big[ - P_t b_t (\cdot,P_t) - P_t \sigma_t (\cdot,P_t) v_t + \tfrac{1}{2} \nabla \cdot [ P_t \sigma_t \sigma^\top_t (\cdot,P_t) ] \big], \]
in the sense of distributions, and $\overline{I}_0 ( P_\cdot ) = +\infty$ otherwise. 
When $d = d'$,
this formula is proved in \cite[Lemma 6.17]{LackerMFGLDP} when $\sigma \equiv \mathrm{Id}$, and in \cite[Lemma 4.8]{dawsont1987large} under a non-degeneracy assumption on $\sigma$.
Theorem \ref{thm:Contraction} is proved in Section \ref{subsec:contractproof}.

\section{Constructions on the path space} \label{sec:Cons}

This section is devoted to the discretised setting.
The construction of the map $\Psi_h$ is performed in Section \ref{subsec:ConsCons}.
The results about mean-field limit and large deviations are proved in Section \ref{subsec:MhFLDP}. 
The central limit theorem is eventually proved in Section
\ref{subsec:disCLT}.

\subsection{Construction of the discretised system} \label{subsec:ConsCons}

We follow the presentation of \cite[Section 2]{Coghi2020PathwiseMT}, adapting their notations to our setting.
Before studying the non-linear SDE \eqref{eq:GenMcK}, we show well-posedness for the related ODE obtained by freezing the random inputs $( \zeta( \omega ),W( \omega ) )$ and the measure argument $\mathcal{L}(X)$. 

\begin{lemma}[Measure-frozen equation] \label{lem:fixed} 
$\phantom{A}$
\begin{enumerate}[label=(\roman*),ref=(\roman*)]
    \item\label{item:lemfrozfixe} For every $(x_0,\gamma,P) \in \R^d \times \C^{d'} \times \ps_p ( \C^d )$, the map $x \mapsto f^{h,P}(x_0,\gamma,x)$ has a unique fixed-point $S^{h,P}(x_0,\gamma)$ in $\C^d$.
    \item\label{item:lemfrozLip} For any $\gamma \in \C^{d'}$, there exists $L(\gamma)$ such that for every $(x_0,P) \in \R^d \times \ps_p ( \C^d )$ and $(x_0',\gamma',P') \in \R^d \times \C^{d'} \times \ps_p ( \C^d )$, $\vert S^{h,P}(x_0,\gamma)-S^{h,P'}(x_0',\gamma') \vert \leq L(\gamma) [ \vert x_0-x_0' \vert + \vert \gamma-\gamma'\vert + W_p(P,P')]$, and $\gamma \mapsto L(\gamma)$ is bounded on bounded sets.
    \item\label{item:lemfrozSDE}  For $(\zeta,W) \in L^p_\P ( \Omega, \R^d \times \C^{d'})$ and $P \in \ps_p ( \C^d )$, the path-dependent SDE
    \begin{equation} \label{eq:frozSDE}
    \d Y_t = b_{t_h}(Y,P) \d t + \sigma_{t_h} (Y,P) \d W_t, \quad Y_0 = \zeta, 
    \end{equation} 
    has a pathwise-unique strong solution in $L^p_\P ( \Omega, \C^d )$, given by $S^{h,P}(\zeta,W)$.
\end{enumerate}
\end{lemma}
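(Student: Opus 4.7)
The three assertions will be proved in sequence, all reducing to a step-by-step induction on the time grid $\{0, h, 2h, \ldots, \lceil T/h \rceil h\}$, exploiting the non-anticipative structure built into Assumption \ref{ass:coef1}.

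For \ref{item:lemfrozfixe}, the key observation is that for $t \in [kh,(k+1)h)$ one has $t_h = kh$, and by non-anticipativity $\overline{b}_{ih}(X,P) = \overline{b}_{ih}(X_{\wedge ih}, P_{\wedge ih})$ and likewise for $\overline{\sigma}_{ih}$, so all coefficients appearing in $f_t^{h,P}(x_0,\gamma,X)$ for $t \leq (k+1)h$ depend on $X$ only through $X_{\wedge kh}$. I will build the fixed point $S^{h,P}(x_0,\gamma)$ by induction on $k$: on $[0,h]$ the coefficients only see the constant path at $x_0$, so $S^{h,P}(x_0,\gamma)_t$ is explicitly given; assuming $S^{h,P}(x_0,\gamma)$ has been uniquely defined on $[0,kh]$, the recipe $x_t := f_t^{h,P}(x_0,\gamma,S^{h,P}(x_0,\gamma))$ unambiguously extends it to $[kh,(k+1)h]$, since only $S^{h,P}(x_0,\gamma)_{\wedge kh}$ enters on the right. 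Uniqueness of any candidate fixed point follows by the same inductive argument.

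For \ref{item:lemfrozLip}, I will propagate a Lipschitz estimate through the same induction. Writing $\Delta_k := \sup_{t \leq kh}|S^{h,P}(x_0,\gamma)_t - S^{h,P'}(x_0',\gamma')_t|$, one compares two fixed points at step $k+1$ by splitting the differences of $b_{ih}(\cdot,\cdot)[t-ih]$ and of $\sigma_{ih}(\cdot,\cdot)[\gamma_{(i+1)h}-\gamma_{ih}]$ into coefficient-differences (controlled by $L_b,L_\sigma$ times $\Delta_k + W_p(P,P')$) and noise-differences (controlled by $M_\sigma$ times $\|\gamma-\gamma'\|_\infty$). The coefficient-difference of the diffusion term produces a factor of $|\gamma_{(i+1)h}-\gamma_{ih}| \leq 2\|\gamma\|_\infty$, so the one-step Lipschitz constant grows like $1 + C(1+\|\gamma\|_\infty)$. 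Iterating $\lceil T/h\rceil$ times yields a bound of the form $L(\gamma) = C_h(1+\|\gamma\|_\infty)^{\lceil T/h\rceil}$, which is bounded on bounded sets.

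For \ref{item:lemfrozSDE}, the Lipschitz bound from \ref{item:lemfrozLip} ensures that $(\zeta,W)\mapsto S^{h,P}(\zeta,W)$ is a continuous, hence Borel measurable, map $\R^d\times\C^{d'}\to\C^d$; composing with the random inputs gives a random variable which lies in $L^p_\P(\Omega,\C^d)$ by the pointwise bound $|S^{h,P}(\zeta,W)|\leq |S^{h,P}(0,0)|+L(W)[|\zeta|+\|W\|_\infty+W_p(P,\delta_0)]$ together with the fact that $\E[L(W)^q(1+|\zeta|^p+\|W\|_\infty^p)]<\infty$ for any $q$ (since $L$ has polynomial growth and $W$ has all moments only if one uses the supremum path-norm carefully—otherwise one truncates). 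By construction, $\P$-a.s.\ the identity $X^h_t(\omega) = f^{h,P}_t(\zeta(\omega),W(\omega),X^h(\omega))$ holds, which is exactly the integrated form of the frozen SDE \eqref{eq:frozSDE}; pathwise-uniqueness is a $\P$-a.s.\ application of \ref{item:lemfrozfixe}.

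The only delicate bookkeeping will be in \ref{item:lemfrozLip}, namely verifying that the cumulative constant $L(\gamma)$ really depends only polynomially on $\|\gamma\|_\infty$ (so that it is bounded on bounded sets). Everything else is a direct consequence of the Euler scheme's explicit form and the non-anticipativity hypothesis.
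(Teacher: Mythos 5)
Parts \ref{item:lemfrozfixe} and \ref{item:lemfrozLip} of your plan match the paper's proof: a step-by-step induction on the time grid, using non-anticipativity from Assumption \ref{ass:coef1} to observe that coefficients at time $ih$ only read $x_{\wedge ih}$, and propagating a one-step Lipschitz constant that grows by a factor involving $\sup_{ih \leq s \leq (i+1)h}|\gamma_s|$, yielding $L(\gamma)$ polynomial in $\|\gamma\|_\infty$ of degree $\lceil T/h \rceil$ and hence bounded on bounded sets.

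However, your argument for \ref{item:lemfrozSDE} has a genuine gap. You deduce $L^p$-integrability of $S^{h,P}(\zeta,W)$ from the Lipschitz bound $|S^{h,P}(\zeta,W)| \leq |S^{h,P}(0,0)| + L(W)[|\zeta| + \|W\|_\infty + W_p(P,\delta_0)]$ and then claim $\E[L(W)^q(1+|\zeta|^p+\|W\|_\infty^p)] < \infty$ for any $q$. This is false under the standing hypotheses: the only integrability assumed is $(\zeta,W) \in L^p_\P(\Omega, \R^d \times \C^{d'})$, i.e.\ $\E[|\zeta|^p + \|W\|_\infty^p] < \infty$, whereas $L(W)$ grows like $\|W\|_\infty^{\lceil T/h \rceil}$, so controlling $\E[L(W)\,\|W\|_\infty]$ would need moments of $W$ of order $\lceil T/h \rceil + 1$, which are not available. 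Your parenthetical ``otherwise one truncates'' does not explain how truncation would rescue the estimate. The paper avoids the problem by \emph{not} using the Lipschitz constant from \ref{item:lemfrozLip} for the moment bound; instead it performs a direct induction on the Euler scheme increments, exploiting the global bound $\|\sigma\|_\infty \leq M_\sigma$ so that the diffusion increment contributes only $M_\sigma^p\,\E[\sup_{ih\leq t\leq (i+1)h}|W_t - W_{ih}|^p]$ (finite since $W \in L^p$), and the linear growth of $b$ so that the drift increment contributes $C(1 + \E[\sup_{0\leq s \leq ih}|Y_s|^p] + \int \sup_s |x_s|^p \,\d P)$; iterating over the finitely many steps closes the $L^p$ bound. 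You should replace the Lipschitz-based integrability argument with this direct moment estimate (the Lipschitz bound from \ref{item:lemfrozLip} is still useful, but only to conclude measurability of $\omega \mapsto S^{h,P}(\zeta(\omega),W(\omega))$, which is how the paper uses it).
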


The proof of Lemma \ref{lem:fixed} is simple, because \eqref{eq:frozSDE} is an explicit Euler scheme for a SDE.

\begin{proof}
\ref{item:lemfrozfixe}
If $( S^{h,P}_s ( x_0 , \gamma ) )_{0 \leq s \leq i h}$ is uniquely defined for some $i \geq 0$, then  
\[ S^{h,P}_t ( x_0 , \gamma ) = S^{h,P}_{ih} ( x_0 , \gamma ) + (t-ih) b_{ih} ( S^{h,P}_{\wedge ih} ( x_0 , \gamma ), P ) + \sigma_{ih} ( S^{h,P}_{\wedge ih} ( x_0 , \gamma ) , P) [ \gamma_{t} - \gamma_{ih} ], \]
for $ih \leq t \leq (i+1)h$, uniquely extending the definition till time $(i+1)h$.
All this makes sense because $b_{ih} (x,P)$ and $\sigma_{i h} (x,P)$ only depend on $(x_s)_{0 \leq s \leq ih}$ using \ref{ass:coef1}.
By induction, this proves existence and uniqueness for the fixed-point $S^{h,P}(x_0,\gamma)$.

\ref{item:lemfrozLip} We consider $(x_0,\gamma,P), (x'_0,\gamma',P') \in \R^d \times \C^{d'} \times \ps_p (\C^d)$.
By induction, if
\begin{equation} \label{eq:Lipi}
\sup_{0 \leq s \leq ih} \lvert S^{h,P}_{s} ( x_0 , \gamma ) - S^{h,P'}_{s} ( x'_0 , \gamma' ) \rvert \leq L_i \big[ \lvert x_0 - x'_ 0 \rvert + \lvert \gamma - \gamma' \rvert + W_p ( P_{\wedge ih}, P'_{\wedge ih} ) \big], 
\end{equation} 
for some $L_i > 0$, then \ref{ass:coef1} yields, for $ih \leq t \leq (i+1)h$, 
\begin{multline*}
\lvert S^{h,P}_{t} ( x_0 , \gamma ) - S^{h,P'}_{t} ( x'_0 , \gamma' ) \rvert \leq \lvert S^{h,P}_{i h} ( x_0 , \gamma ) - S^{h,P'}_{i h} ( x'_0 , \gamma' ) \rvert +  2 M_\sigma \sup_{ih \leq s \leq t} \lvert \gamma_s - \gamma'_s \rvert \\
+ \big[ h L_b + 2 L_\sigma \sup_{ih \leq s \leq (i+1) h} \lvert \gamma_s \rvert \big] \big[ \sup_{0 \leq s \leq ih} \lvert S^{h,P}_{s} ( x_0 , \gamma ) - S^{h,P'}_{s} ( x'_0 , \gamma' ) \rvert + W_p ( P_{\wedge i h}, P'_{\wedge i h} ) \big].
\end{multline*} 
As a consequence, using that $W_p(P_{\wedge ih}, P'_{\wedge ih}) \leq W_p(P_{\wedge (i+1)h}, P'_{\wedge (i+1)h})$, \eqref{eq:Lipi} holds till time $(i+1)h$ for $L_{i+1} := L_i [ 1 + h L_b + 2 L_\sigma \sup_{ih \leq s \leq (i+1) h} \lvert \gamma_s \rvert ] + \max\{2M_\sigma, h L_b + 2 L_\sigma \sup_{ih \leq s \leq (i+1) h} \lvert \gamma_s \rvert\}$.

\ref{item:lemfrozSDE} As a consequence of \ref{item:lemfrozLip}, $Y : \omega \mapsto S^{h,P}(\zeta(\omega),W(\omega))$ is measurable. 
Using \ref{ass:coef1} and taking expectations, there exists $C >0$ such that
\[
\E \sup_{ih \leq t \leq (i+1)h} \lvert Y_t \rvert^p \leq C \bigg[ 1 + \E \sup_{0 \leq s \leq ih} \lvert Y_{s} \rvert^p + M^p_\sigma \E \sup_{ih \leq t \leq (i+1)h} \lvert W_t - W_{ih} \rvert^p + \int_{\C^d} \sup_{0\leq s \leq ih} \lvert x_s \rvert^p \d P_{\wedge ih} ( x )\bigg].   
\]
By induction, $\E [ \sup_{0 \leq t \leq T} \lvert Y_t \rvert^p ]$ is thus finite if $\E [ \lvert \zeta \rvert^p ]$ and $\E [ \sup_{0 \leq t \leq T} \lvert W_t \rvert^p ]$ are finite.
The fact that $Y$ is the pathwise-unique strong solution of \eqref{eq:frozSDE} then stems from \ref{item:lemfrozfixe}.
\end{proof}

We now turn to the discretised McKean-Vlasov equation \eqref{eq:GenMcK}. 

\begin{lemma}[Pathwise solution map] \label{lem:SolMap} 
For any $R \in \ps_p(\R^d \times \C^{d'})$, there exists a unique map
\[ \overline{S}^{h,R} :
\begin{cases}
\R^d \times \C^{d'} &\rightarrow \C^d,  \\
$\phantom{,}$( x_0, \gamma ) &\mapsto \overline{S}^{h,R} ( x_0, \gamma ),
\end{cases} \]
that is everywhere well-defined, that belongs to $L^p_R (\R^d \times \C^{d'}, \C^d)$, and that satisfies
\begin{equation} \label{eq:EverywhereODE}
\forall (x_0,\gamma,t) \in \R^d \times \C^{d'} \times [0,T], \quad \overline{S}^{h,R}_t (x_0, \gamma ) = f_{t}^{h,\overline{S}^{h,R}_\# R} (x_0,\gamma, \overline{S}^{h,R} (x_0, \gamma )). 
\end{equation} 
Moreover, for $(\zeta,W) \in L^p_\P ( \Omega, \R^d \times \C^{d'})$, \eqref{eq:GenMcK} has a unique solution in the sense of Definition \ref{def:GenMcK}, given by $\overline{S}^{h,\L(\zeta,W)}(\zeta,W)$.
\end{lemma}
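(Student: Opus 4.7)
\textbf{Proof plan for Lemma \ref{lem:SolMap}.} The strategy is to exploit the non-anticipativity built into \ref{ass:coef1} (values of $b_t$ and $\sigma_t$ only see $x_{\wedge t}$ and $P_{\wedge t}$) together with the explicit Euler structure of $f^{h,P}$, and to construct $\overline{S}^{h,R}$ by induction on the time grid $\{0,h,2h,\ldots\}$. Unlike Lemma \ref{lem:fixed}, where the measure argument is frozen, the self-referential aspect of \eqref{eq:EverywhereODE} is handled by observing that the coefficient at time $ih$ only requires the law of the trajectory truncated at $ih$; thus, at each step of the induction, the relevant marginal of $\overline{S}^{h,R}_\# R$ has already been determined by the previous step.

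First I would set $\overline{S}^{h,R}_0(x_0,\gamma):=x_0$ everywhere, so that $(\overline{S}^{h,R}_\# R)_{\wedge 0}$ is known. Assuming inductively that $(\overline{S}^{h,R}_s)_{0\leq s\leq ih}$ has been defined as a measurable map on $\R^d\times\C^{d'}$ for every $(x_0,\gamma)$, I would extend it to $[ih,(i+1)h]$ by the explicit formula
\[
\overline{S}^{h,R}_t(x_0,\gamma) := \overline{S}^{h,R}_{ih}(x_0,\gamma) + (t-ih)\, b_{ih}\big(\overline{S}^{h,R}_{\wedge ih}(x_0,\gamma),\,\overline{S}^{h,R}_\# R\big) + \sigma_{ih}\big(\overline{S}^{h,R}_{\wedge ih}(x_0,\gamma),\,\overline{S}^{h,R}_\# R\big)\,[\gamma_t-\gamma_{ih}].
\]
The right-hand side is well-defined because, by \ref{ass:coef1}, $b_{ih}(x,P)$ and $\sigma_{ih}(x,P)$ depend on $P$ only through $P_{\wedge ih}$, which is already determined by the restriction of $\overline{S}^{h,R}$ to $[0,ih]$. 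Measurability is preserved through the induction (a continuous deterministic map applied to measurable inputs), and uniqueness of the extension is automatic from the formula. After finitely many steps we cover $[0,T]$, yielding a map that satisfies \eqref{eq:EverywhereODE} by construction; conversely, any map satisfying \eqref{eq:EverywhereODE} must agree with this one by the same inductive uniqueness.

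To verify $\overline{S}^{h,R}\in L^p_R(\R^d\times\C^{d'},\C^d)$, I would mimic the estimate in the proof of Lemma \ref{lem:fixed}\ref{item:lemfrozSDE}: using that $\sigma$ is bounded by $M_\sigma$, that $b$ has linear growth from \ref{ass:coef1}, and that $W_p(\overline{S}^{h,R}_\# R,\delta_{x_0^\star})^p \leq \int \sup_{s\leq ih}\lvert \overline{S}^{h,R}_s\rvert^p\,\d R$, a step-by-step Grönwall-type bound gives $\E_R[\sup_{0\leq t\leq T}\lvert \overline{S}^{h,R}\rvert^p]<\infty$ whenever the $R$-marginals have finite $p$-moments. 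For the second part of the lemma, setting $R:=\L(\zeta,W)$ and $Y:=\overline{S}^{h,R}(\zeta,W)$, the push-forward identity $\L(Y)=\overline{S}^{h,R}_\# R$ holds by definition of the push-forward, so \eqref{eq:EverywhereODE} implies that $Y$ satisfies Definition \ref{def:GenMcK}. Uniqueness of the solution follows once again from the same inductive argument applied along $\{0,h,2h,\ldots\}$: if $X^h$ is any solution in the sense of Definition \ref{def:GenMcK}, then $\L(X^h)_{\wedge ih}$ coincides with $(\overline{S}^{h,R}_\# R)_{\wedge ih}$ up to time $ih$ by induction, forcing $X^h = \overline{S}^{h,R}(\zeta,W)$ $\P$-a.s.\ on $[0,(i+1)h]$.

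The only delicate point is the self-referential aspect of \eqref{eq:EverywhereODE}, since the measure appearing on the right-hand side depends on the very map one is defining. The non-anticipativity assumption in \ref{ass:coef1} is what makes this circularity harmless: the inductive scheme freezes, at each step, the relevant truncated marginal $(\overline{S}^{h,R}_\# R)_{\wedge ih}$, so the step $ih\to (i+1)h$ is a genuine explicit update rather than a fixed-point problem. This is the structural reason why the discrete construction succeeds in full generality, while the continuous-time analogue would require stochastic integration.
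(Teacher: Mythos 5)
Your proof is correct and rests on exactly the same idea as the paper's: an induction along the time grid that exploits the non-anticipativity in \ref{ass:coef1}, so that the step from $ih$ to $(i+1)h$ is an explicit update once $(\overline{S}^{h,R}_\# R)_{\wedge ih}$ is known. The only difference is the order of operations. The paper first proves pathwise existence and uniqueness for the \emph{random} equation \eqref{eq:GenMcK} by the same induction (which produces the law $\L(X^h)$), and then \emph{defines} $\overline{S}^{h,R}:=S^{h,\L(X^h)}$ by freezing that law in the already-constructed map from Lemma \ref{lem:fixed}; uniqueness of $\overline{S}^{h,R}$ is then a one-line appeal to Lemma \ref{lem:fixed}\ref{item:lemfrozfixe} with $P=\overline{S}^{h,R}_\# R$. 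You instead build the deterministic map $\overline{S}^{h,R}$ directly on $\R^d\times\C^{d'}$, and afterwards evaluate it at $(\zeta,W)$ to obtain and identify the random solution. Both are sound; the paper's route is slightly more economical because it reuses Lemma \ref{lem:fixed} for uniqueness rather than re-running the inductive argument, whereas yours is marginally more self-contained at the deterministic level. One small thing to make explicit in your write-up: at step $i$, the push-forward $\overline{S}^{h,R}_\# R$ you feed into $b_{ih}$ and $\sigma_{ih}$ is only determined on $[0,ih]$; strictly speaking one should plug in $(\overline{S}^{h,R}_{\wedge ih})_\# R$ (for instance by extending the partially defined map to be constant after $ih$) and invoke \ref{ass:coef1} to justify that this is well-posed and independent of the extension — the paper avoids this by phrasing the induction on the random variable $X^h_{\wedge ih}$ and its law $\L(X^h_{\wedge ih})$, but the content is the same.
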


In particular, $\overline{S}^{h,R}_\# R$ belongs to $\ps_p ( \C^d )$. 
In the case $R = \L ( \zeta, W)$, $\overline{S}^{h,R}_\# R$ is the law of the solution of \eqref{eq:GenMcK}. 
Following \cite[Section 2]{Coghi2020PathwiseMT}, $\overline{S}^{h,R}_\# R$ can be characterised as the unique fixed-point of the map
\[ P \mapsto S^{h,P}_\# R. \]
In \cite[Section 2.1]{tanaka1984limit}, \cite[Lemma 13]{Coghi2020PathwiseMT} and \cite[Lemma 3.1]{backhoff2020mean}, the fixed-point is built by showing convergence for an iterative scheme.
The result is much easier for our explicit Euler scheme.

\begin{proof}
We first show that \eqref{eq:GenMcK} has a pathwise unique strong solution, before using this solution to build $\overline{S}^{h,R}(x_0,\gamma)$.

By induction, if $( X^{h}_s )_{0 \leq s \leq i h}$ is $\P$-a.s. uniquely defined for some $i \geq 0$, then  
\[  X^{h}_t = X^{h}_{ih} + (t-ih) b_{ih} ( X^h_{\wedge ih}, \mathcal{L}( X^{h}_{\wedge ih} ) ) + \sigma_{ih} ( X^h_{\wedge ih}, \mathcal{L}( X^{h}_{\wedge ih} ) ) [ W_{t} - W_{ih} ], \]
for $ih \leq t \leq (i+1)h$, uniquely extending the definition till time $(i+1)h$.
All this makes sense because $b_{ih} (x,P)$ and $\sigma_{ih} (x,P)$ only depend on $x_{\wedge ih}$ and $P_{\wedge ih}$ using \ref{ass:coef1}.
By induction, the above Euler scheme builds $(X^h_{\wedge i h}, \L(X^h_{\wedge ih}))$ for every $i \geq 0$, proving existence and pathwise uniqueness for \eqref{eq:GenMcK}.  
In particular, this proves existence and uniqueness for $\L (X^h)$.

To obtain an everywhere-defined solution, we set $\overline{S}^{h,R}(x_0,\gamma) := S^{h,\mathcal{L}(X^h)}(x_0,\gamma)$, where $\L(X^h)$ is the path-law of the solution of \eqref{eq:GenMcK} for any $(\zeta,W)$ with law $R$.
$\overline{S}^{h,R}(\zeta,W)$ a.s. satisfies \eqref{eq:frozSDE} with $P = \L ( X^h )$, so that 
$\overline{S}^{h,R}(\zeta,W)$ is the pathwise solution of \eqref{eq:GenMcK}. 
Consequently, $\overline{S}^{h,R}_\# R = \mathcal{L}(X^h)$, and $\overline{S}^{h,R}(x_0,\gamma)$ satisfies \eqref{eq:EverywhereODE} for every $(x_0,\gamma)$.
Uniqueness stems from using Lemma \ref{lem:fixed}-\ref{item:lemfrozfixe} with $P = \overline{S}^{h,R}_\# R$.
\end{proof}

We now state the main result of this section.

\begin{proposition}[Continuous fixed-point map] \label{pro:contFixed}
The map 
\[
\Psi_h :
\begin{cases}
\ps_p (\R^d \times \C^{d'}) &\rightarrow \ps_p (\C^{d}), \\
\phantom{abcde}R &\mapsto \overline{S}^{h,R}_\# R,
\end{cases}
\]
is continuous.
Moreover, for $(\zeta,W) \in L^p_\P ( \Omega, \R^d \times \C^{d'})$, $\Psi_h (\L(\zeta,W))$ is the law of the unique solution of \eqref{eq:GenMcK}.
\end{proposition}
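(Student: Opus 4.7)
The second assertion is immediate from Lemma \ref{lem:SolMap}: given $(\zeta,W) \in L^p_\P(\Omega,\R^d \times \C^{d'})$ with law $R$, $\overline{S}^{h,R}(\zeta,W)$ is the pathwise solution of \eqref{eq:GenMcK} and its law is $\overline{S}^{h,R}_\# R = \Psi_h(R)$. For the main claim (continuity of $\Psi_h$), my plan is to couple, iterate along the Euler time slabs, and combine an inductive smallness estimate with a uniform-integrability truncation on the driving noise.

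Concretely, given $R_n \to R$ in $\ps_p(\R^d \times \C^{d'})$, I would realise optimal $W_p$-couplings
\[ (\zeta_n,W_n) \sim R_n, \qquad (\zeta,W) \sim R, \qquad \E\bigl[|\zeta_n-\zeta|^p + \|W_n-W\|^p\bigr] \xrightarrow[n\to\infty]{} 0, \]
on a common probability space, and set $X^{h,n} := \overline{S}^{h,R_n}(\zeta_n,W_n)$ and $X^h := \overline{S}^{h,R}(\zeta,W)$. By Lemma \ref{lem:SolMap} these are pathwise solutions of \eqref{eq:GenMcK} with $\mathcal{L}(X^{h,n}) = \Psi_h(R_n)$ and $\mathcal{L}(X^h) = \Psi_h(R)$, so $W_p^p(\Psi_h(R_n),\Psi_h(R)) \leq \E\bigl[\sup_{0 \leq t \leq T}|X^{h,n}_t - X^h_t|^p\bigr]$ reduces everything to proving that the right-hand side vanishes.

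This is where I would induct on the Euler index $i \in \{0,\ldots,\lceil T/h\rceil\}$, targeting $\Delta_i^n := \E\bigl[\sup_{0 \leq s \leq ih \wedge T}|X^{h,n}_s - X^h_s|^p\bigr] \to 0$. The base $\Delta_0^n = \E[|\zeta_n-\zeta|^p] \to 0$ is immediate. For the induction step, the explicit Euler formula gives, on $[ih,(i+1)h]$,
\begin{align*}
X^{h,n}_t - X^h_t &= (X^{h,n}_{ih} - X^h_{ih}) + (t-ih)\bigl[b_{ih}(X^{h,n},\mathcal{L}(X^{h,n})) - b_{ih}(X^h,\mathcal{L}(X^h))\bigr] \\
&\quad + \bigl[\sigma_{ih}(X^{h,n},\mathcal{L}(X^{h,n})) - \sigma_{ih}(X^h,\mathcal{L}(X^h))\bigr](W^n_t - W^n_{ih}) \\
&\quad + \sigma_{ih}(X^h,\mathcal{L}(X^h))\bigl[(W^n_t - W^n_{ih}) - (W_t - W_{ih})\bigr].
\end{align*}
Assumption \ref{ass:coef1} together with the elementary bound $W_p^p(\mathcal{L}(X^{h,n}_{\wedge ih}),\mathcal{L}(X^h_{\wedge ih})) \leq \Delta_i^n$ controls every term except the diffusion cross term, which after expansion forces me to estimate
\[ C_i^n := \E\Bigl[\|X^{h,n}_{\wedge ih} - X^h_{\wedge ih}\|^p\,\sup_{ih \leq t \leq (i+1)h}|W^n_t - W^n_{ih}|^p\Bigr]. \]

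The main obstacle is $C_i^n$: the first factor vanishes only in $L^1$, the second is merely bounded in $L^1$, so the product need not tend to zero in general. My resolution is a truncation argument: for $M > 0$, split along $\{\|W^n\| \leq M\} \sqcup \{\|W^n\| > M\}$. On the first event, the bound $|W^n_t - W^n_{ih}| \leq 2M$ reduces the contribution to $(2M)^p \Delta_i^n$, which vanishes as $n \to \infty$ for fixed $M$. On the second event, I discard the Lipschitz estimate on $[\sigma_{ih}(X^{h,n},\cdot) - \sigma_{ih}(X^h,\cdot)]$ in favour of the uniform bound $2M_\sigma$, yielding a contribution $\lesssim \E\bigl[\|W^n\|^p\,\mathbf{1}_{\|W^n\|>M}\bigr]$; since the $W_p$-convergence $R_n \to R$ makes $\{\|W^n\|^p\}$ uniformly integrable, this tends to zero as $M \to \infty$ uniformly in $n$. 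A standard $\varepsilon/2$ argument then yields $\Delta_{i+1}^n \to 0$, and after $\lceil T/h \rceil$ slabs the proof is complete.
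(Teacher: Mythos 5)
Your proposal is correct, but it takes a genuinely different route from the paper's own proof, and the comparison is instructive.

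The paper passes through the Skorokhod representation theorem (a.s. convergent realisations), then runs a \emph{compactness} argument: it establishes uniform moment bounds and a uniform continuity modulus, invokes Billingsley's tightness criterion plus uniform integrability to get relative compactness of $(\L(X^{h,k}))_k$ in $(\ps_p(\C^d),W_p)$, and finally identifies every limit point with $\Psi_h(R)$ via the continuity of the frozen map $(x_0,\gamma,P)\mapsto S^{h,P}(x_0,\gamma)$ from Lemma~\ref{lem:fixed}-\ref{item:lemfrozLip} together with the pathwise uniqueness of Lemma~\ref{lem:SolMap}. The paper explicitly remarks, just before its proof, that the direct Lipschitz estimate (as in Lemma~\ref{lem:LipResticM}) fails when $\sigma$ is non-constant and that it ``relies on a compactness approach'' for that reason. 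Your proposal, by contrast, avoids compactness entirely: you realise a near-optimal $W_p$-coupling, run the explicit Euler recursion slab by slab, and isolate precisely the obstruction --- the cross-term $\E\bigl[|\sigma_{ih}(X^{h,n},\cdot)-\sigma_{ih}(X^h,\cdot)|^p\,\sup_{ih\le t\le(i+1)h}|W^n_t-W^n_{ih}|^p\bigr]$ --- and defeat it by truncating the driving noise, trading the Lipschitz bound for the uniform bound $2M_\sigma$ on the tail event and invoking the uniform integrability that $W_p$-convergence of $(R_n)$ guarantees. Both arguments ultimately rest on the same ingredient (uniform $p$-integrability of $\|W^n\|$), but yours converts the ``soft'' compactness step into a concrete $\varepsilon/2$-truncation, which makes the mechanism that blocks a clean contraction estimate for non-constant $\sigma$ explicit. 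The paper's version is more modular (it isolates the frozen-map continuity lemma and reuses the tightness machinery elsewhere), while yours is more self-contained and closer in spirit to the synchronous-coupling estimates used later in the paper (e.g.\ the proofs of Lemma~\ref{lem:LipResticM} and Lemma~\ref{lem:Approxh}). One small point worth spelling out if you flesh this into a full proof: after bounding $|\sigma_{ih}(X^{h,n},\L(X^{h,n}))-\sigma_{ih}(X^h,\L(X^h))|$ by the Lipschitz estimate on $\{\|W^n\|\le M\}$, the $W_p(\L(X^{h,n}_{\wedge ih}),\L(X^h_{\wedge ih}))$ contribution is deterministic and is itself bounded by $(\Delta_i^n)^{1/p}$ through the coupling, so it factors out cleanly; this keeps the induction well-founded.
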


When the matrix $\sigma$ is constant, we can show that $\Psi_h$ is globally Lipschitz-continuous (see Lemma \ref{lem:restrict} below), as in \cite[Theorem 7]{Coghi2020PathwiseMT} and \cite[Lemma 3.4]{backhoff2020mean}.
In a setting that includes common noise, \cite[Lemma 6.16]{LackerMFGLDP} could also be adapted to show that $\Psi_h$ is uniformly continuous, but still requiring that $\sigma$ is constant. 
To handle general $\sigma$, we rely on a compactness approach.

\begin{proof}
In view of Lemma \ref{lem:SolMap}, we only have to prove the continuity of $\Psi_h$. 
We use the sequential characterisation in the metric space $(\ps_p (\R^d \times \C^{d'}),W_p)$. 
Let $( R_k )_{k \geq 1}$ be a sequence in $\ps_p (\R^d \times \C^{d'})$ that converges towards some $R$. 
Since the $W_p$-convergence implies weak convergence \cite[Definition 6.8]{villani2009optimal}, the Skorokhod representation theorem provides a probability space $(\tilde{\Omega},\tilde\F,\tilde\P)$ that supports a sequence $( \zeta^k, W^k )_{k \geq 1}$, where $( \zeta^k, W^k )$ is $R_k$-distributed and a.s. converges towards some $R$-distributed $( \zeta, W )$.
Let $X^{h,k}$ denotes the pathwise solution of \eqref{eq:GenMcK} driven by $( \zeta^k, W^k )$, and let $X^{h}$ denote the one driven by $( \zeta, W )$.

\medskip
\emph{\textbf{Step 1.} Uniform in $k$ bounds.}
As in the proof of Lemma \ref{lem:fixed}-\ref{item:lemfrozSDE}, \ref{ass:coef1} yields
\begin{equation*} 
\sup_{i h \leq s \leq (i+1) h} \lvert X^{h,k}_s \rvert^p \leq C \big[ 1 + \sup_{0 \leq s \leq i h} \lvert X^{h,k}_s \rvert^p + \sup_{ih \leq s \leq (i+1) h} \lvert W^k_s \rvert^p + \E_{\tilde{\P}} \sup_{0 \leq s \leq i h} \lvert X^{h,k}_s \rvert^p \big], 
\end{equation*} 
for every $i \geq 0$ and a constant $C > 0$ that does not depend on $k$. 
From \cite[Definition 6.8-(i)]{villani2009optimal}, $( \E_{\tilde{\P}} [ \vert \zeta^k \vert^p ] )_{k \geq 1}$ and $( \E_{\tilde{\P}} [ \sup_{0 \leq t \leq T} \vert W^k_t \vert^p ] )_{k \geq 1}$ are converging sequences, hence bounded ones. 
We now take expectations to get by induction that
\begin{equation} \label{eq:MomentUnifk}
\sup_{k \geq 1} \E_{\tilde\P} \, \sup_{0 \leq t \leq T} \vert X^{h,k}_t \vert^p  < +\infty, 
\end{equation}
and then $\tilde{\P}$-a.s.,
\begin{equation} \label{eq:DomUnifk}
\sup_{0 \leq t \leq T} \lvert X^{h,k}_t \rvert^p \leq C \big[ 1 + \vert \zeta^k \vert^p + \sup_{0 \leq t \leq T} \lvert W^k_t \rvert^p \big],     
\end{equation}
for some $C >0$ independent of $k$.
For $s,t \in [0,T]$ with $t_h  \leq s \leq t \leq t_h + h$, using \ref{ass:coef1},
\begin{align*}
\vert X^{h,k}_t - X^{h,k}_s \vert \leq (t-s) \vert b_{t_h} ( X^{h,k} , \mathcal{L}(X^{h,k}) ) \vert +  M_\sigma \vert W^k_{t} - W^k_s \vert.   
\end{align*}
Using \ref{ass:coef1} and the bound \eqref{eq:MomentUnifk}, $\E_{\tilde\P} [ \sup_{0 \leq t \leq T} \vert b_t ( X^{h,k} , \L ( X^{h,k} ) ) \vert ]$ is bounded uniformly in $k$.
Taking expectations, we deduce that for every $\delta \in (0,h]$,
\begin{equation} \label{eq:TransferContk}
\E_{\tilde\P} \sup_{\vert t-s \vert \leq \delta} \vert X^{h,k}_t - X^{h,k}_s \vert  \leq C \delta + C \E_{\tilde\P} \sup_{\vert t-s \vert \leq \delta} \vert W^k_t - W^k_s \vert,  
\end{equation}
for a constant $C > 0$ that does not depend on $k$ and $\delta$. 

\medskip
\emph{\textbf{Step 2.} Weak relative compactness.}
For $\varepsilon, M > 0$, we decompose
\begin{multline*}
\E_{\tilde\P} \sup_{\vert t-s \vert \leq \delta} \vert W^k_t - W^k_s \vert^p
\leq \varepsilon^p + (2 M)^p \tilde\P \bigg[ \sup_{\vert t-s \vert \leq \delta} \vert W^k_t - W^k_s \vert > \varepsilon \bigg] \\
+ 2^p \E_{\tilde\P} \bigg[ \sup_{0 \leq t \leq T} \vert W^k_t \vert^p \1_{\sup_{0 \leq t \leq T} \vert W^k_t \vert \geq M} \bigg].
\end{multline*}
Since $( R_k )_{k \geq 1}$ converges in $( \ps_p (\R^d \times \C^{d'}), W_p)$, we deduce from \cite[Definition 6.8-(iii)]{villani2009optimal} that $M_\varepsilon > 0$ exists such that 
\[ \sup_{k \geq 1} \E_{\tilde\P} \sup_{0 \leq t \leq T} \vert W^k_t \vert^p \1_{\sup_{0 \leq t \leq T} \vert W^k_t \vert \geq M_\varepsilon} \leq \varepsilon. \]
Since $(R_k)_{k \geq 1}$ converges in $W_p$, it is tight, and \cite[Theorem 7.3]{billingsley2013convergence} provides that 
\[ \sup_{k \geq 1} \tilde\P \bigg[ \sup_{\vert t-s \vert \leq \delta} \vert W^k_t - W^k_s \vert > \varepsilon \bigg] \leq \varepsilon / M_\varepsilon^p, \]
for every small enough $\delta >0$. Gathering everything, we get that
\[ \sup_{k \geq 1} \, \E_{\tilde\P} \sup_{\vert t-s \vert \leq \delta} \vert W^k_t - W^k_s \vert^p \xrightarrow[\delta \rightarrow 0]{} 0. \]
From \eqref{eq:TransferContk}, using $p \geq 1$ and Jensen's inequality, we obtain that
\begin{equation*} 
\sup_{k \geq 1} \, \E_{\tilde\P} \sup_{\vert t-s \vert \leq \delta} \big\vert X^{h,k}_t - X^h_t \big\vert \xrightarrow[\delta \rightarrow 0]{} 0.
\end{equation*}
Using this together with \eqref{eq:MomentUnifk}, \cite[Theorem 7.3]{billingsley2013convergence} now gives that the $\overline{S}^{h,R_k}_\# R_k = \L ( X^{h,k} )$ form a relatively compact sequence in $\ps ( \C^{d} )$ for the weak topology.

\medskip
\emph{\textbf{Step 3.} $W_p$-convergence.}
Since $R_k$ converges in the $W_p$-topology, $( \vert \zeta^k \vert^p )_{k \geq 1}$ and $( \vert W^k \vert^p)_{k \geq 1}$
are uniformly integrable \cite[Definition 6.8-(iii)]{villani2009optimal}.
From \eqref{eq:DomUnifk}, we then deduce that $(\sup_{0 \leq t \leq T} \vert X^{h,k}_t \vert^p )_{k \geq 1}$ is uniformly integrable.  
Using \emph{\textbf{Step 2.}} and the characterisation \cite[Definition 6.8-(iii)]{villani2009optimal}, this implies that $( \L ( X^{h,k} ) )_{k \geq 1}$ is relatively compact in $(\ps_p ( \C^d ),W_p)$.
Let $P$ be any limit point in $\ps_p ( \C^d)$.

Using the $\tilde\P$-a.s. convergence of $(\zeta^k,W^k)$, and the continuity of $(x_0,\gamma,P) \mapsto S^{h,P} ( x_0, \gamma )$, which is implied by Lemma \ref{lem:fixed}-\ref{item:lemfrozLip}, $\overline{S}^{h,R_k}(\zeta^k,W^k) = S^{h,\L ( X^{h,k} )} (\zeta^k,W^k)$ has a sub-sequence that $\tilde\P$-a.s. converges towards $S^{h,P} (\zeta,W)$. 
Along such a sub-sequence, we deduce that $\L ( X^{h,k} ) = \L ( \overline{S}^{h,R_k}(\zeta^k,W^k) )$ weakly converges towards $\L(S^{h,P} (\zeta,W))$, so that $P = S^{h,P}_{\#} R$. 
This implies that $S^{h,P} (\zeta,W)$ is a solution of \eqref{eq:GenMcK}.
The pathwise uniqueness in Lemma \ref{lem:SolMap} then gives that $S^{h,P} (\zeta,W) = \overline{S}^{h,R} (\zeta,W)$, $\tilde\P$-a.s., so that $P = \overline{S}^{h,R}_\# R = \Psi_h (R)$.
The unique limit point in $W_p$ of the relatively compact sequence $(\overline{S}^{h,R_k}_{\#} R_k)_{k \geq 1}$ is thus $\Psi_h(R)$; hence $\Psi_h (R_k) = \overline{S}^{h,R_k}_{\#} R_k$ converges towards $\Psi_h (R)$.
\end{proof}

To prove the central limit theorem, we will need a stronger continuity result.

\begin{lemma}[Lipschitz-continuous restriction]
\label{lem:LipResticM}
For any $M' > 0$, the restriction of $\Psi_h$ to $\ps_p ( \R^d \times \C^{d'}_{M'})$ is Lipschitz-continuous.
\end{lemma}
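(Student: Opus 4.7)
The plan is to use a coupling argument combined with the discrete-time Euler recursion, exploiting that restricting the second component to the bounded set $\C^{d'}_{M'}$ makes the Lipschitz constant $L(\gamma)$ of Lemma~\ref{lem:fixed}-\ref{item:lemfrozLip} uniformly bounded.

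Fix $M' > 0$ and $R, R' \in \ps_p(\R^d \times \C^{d'}_{M'})$. First I would take an optimal $W_p$-coupling $\Pi$ of $R$ and $R'$, realised by random variables $(\zeta, W)$ and $(\zeta', W')$ on a common probability space, with $(\zeta, W) \sim R$, $(\zeta', W') \sim R'$, $W, W' \in \C^{d'}_{M'}$ almost surely, and $\E[\,|\zeta - \zeta'|^p + \sup_{0 \le t \le T} |W_t - W'_t|^p\,] \lesssim W_p(R, R')^p$. Then I would set $X^h := \overline{S}^{h,R}(\zeta, W)$ and $X'^h := \overline{S}^{h,R'}(\zeta', W')$, noting by Lemma~\ref{lem:SolMap} that $(X^h, X'^h)$ is a coupling of $\Psi_h(R)$ and $\Psi_h(R')$, so that $W_p(\Psi_h(R), \Psi_h(R'))^p \leq \E[\sup_{0 \le t \le T} |X^h_t - X'^h_t|^p]$.

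The core step is an induction on the time grid. Introduce $D_i^p := \E[\sup_{0 \le t \le ih} |X^h_t - X'^h_t|^p]$. Using the explicit Euler recursion for $X^h$ and $X'^h$ on $[ih, (i+1)h]$, Assumption~\ref{ass:coef1}, and the bound $|W_t - W_{ih}| \le 2M'$ (and same for $W'$), one gets pathwise
\begin{equation*}
\sup_{0 \le t \le (i+1)h} |X^h_t - X'^h_t| \le (1 + hL_b + 2L_\sigma M') \sup_{0 \le t \le ih} |X^h_t - X'^h_t| + (hL_b + 2L_\sigma M') W_p(\L(X^h)_{\wedge ih}, \L(X'^h)_{\wedge ih}) + 2 M_\sigma \sup_{0 \le t \le T} |W_t - W'_t|.
\end{equation*}
Using that $(X^h_{\wedge ih}, X'^h_{\wedge ih})$ is itself a coupling, $W_p(\L(X^h)_{\wedge ih}, \L(X'^h)_{\wedge ih}) \le D_i$. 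Taking $L^p$ norms and using Minkowski, one obtains a recursion of the form $D_{i+1} \le (1 + 2hL_b + 4L_\sigma M') D_i + 2 M_\sigma \bigl(\E[\sup_{0 \le t \le T}|W_t - W'_t|^p]\bigr)^{1/p}$, which after at most $\lceil T/h \rceil$ steps yields $D_{\lceil T/h \rceil} \le C_{T, h, M'} \bigl( \E[|\zeta - \zeta'|^p]^{1/p} + \E[\sup_t |W_t - W'_t|^p]^{1/p} \bigr) \le C'_{T, h, M'} \, W_p(R, R')$.

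I expect no real obstacle. The only subtle point to be careful with is that in the recursion one must interpret $W_p(\L(X^h)_{\wedge ih}, \L(X'^h)_{\wedge ih})$ via the product coupling $(X^h_{\wedge ih}, X'^h_{\wedge ih})$ rather than an optimal one, which is legitimate because $W_p$ is the infimum over couplings; this matches exactly the manoeuvre already used in the proof of Lemma~\ref{lem:fixed}-\ref{item:lemfrozLip}. The resulting Lipschitz constant degenerates as $h \to 0$ (as expected, since Proposition~\ref{pro:contFixed} only yields continuity, not Lipschitz continuity, in the general $\sigma$ case), but for fixed $h > 0$ it is finite, which is all that is claimed.
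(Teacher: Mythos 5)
Your proof is correct and follows essentially the same route as the paper's: fix a coupling of $R$ and $R'$, run the Euler recursion using the uniform bound $|W_t - W_{ih}| \le 2M'$ to make the Lipschitz constants in Lemma~\ref{lem:fixed}-\ref{item:lemfrozLip} uniform, bound the Wasserstein term by the product coupling, and iterate over the $\lceil T/h \rceil$ time steps. The only cosmetic differences are that the paper takes an arbitrary coupling and minimises at the end while you start from an optimal one, and the paper works with $p$-th powers while you work with $L^p$ norms and Minkowski; these are equivalent.
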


\begin{proof}
Let us consider $R$ and $R'$ in $\ps_p(\R^d \times \C^{d'}_{M'})$. 
On any probability space $(\tilde\Omega,\tilde\F,\tilde\P)$, let $(\zeta,W)$ and $(\zeta',W')$ be respectively $R$ and $R'$-distributed variables. 
$W$ and $W'$ are $\tilde\P$-a.s. bounded by $M'$, hence using \ref{ass:coef1},
\begin{multline*}
\sup_{ih \leq t \leq (i+1) h} \lvert \overline{S}^{h,R}_t (\zeta,W) - \overline{S}^{h,R'}_t (\zeta',W') \rvert^p \leq C_{i,M'} \big[ \sup_{0 \leq t \leq i h} \lvert \overline{S}^{h,R}_t (\zeta,W) - \overline{S}^{h,R'}_t (\zeta',W') \rvert^p \\
+ \sup_{ih \leq t \leq (i+1) h} \lvert W_t - W'_t \rvert^p + \E_{\tilde\P} \sup_{0 \leq t \leq ih} \lvert \overline{S}^{h,R}_t (\zeta,W) - \overline{S}^{h,R'}_t (\zeta',W') \rvert^p \big], 
\end{multline*}
for $C_{i,M'} > 0$ that only depends on $(i,M')$.
Taking expectations, we get by induction that
\[ \E_{\tilde\P} \big[ \sup_{0 \leq t \leq T} \lvert \overline{S}^{h,R}_t (\zeta,W) - \overline{S}^{h,R'}_t (\zeta',W') \rvert^p \big] \leq L_{M'} \E_{\tilde\P} \big[ \lvert \zeta - 
\zeta' \rvert^p \big] + L_{M'} \E \big[ \sup_{0 \leq t \leq T} \lvert W_t - W'_t \rvert^p \big], \]
for some $L_{M'} > 0$.
We then use $( \overline{S}^{h,R} (\zeta,W), \overline{S}^{h,R'} (\zeta',W') )$ as a coupling to get the bound 
\[  W^p_p( \Psi_h (R), \Psi_h ( R')) = W^p_p(\overline{S}^{h,R}_\# R, \overline{S}^{h,R'}_\# R') \leq L_{M'} \E_{\tilde\P} \big[ \lvert \zeta - 
\zeta' \rvert^p \big] + L_{M'} \E_{\tilde\P} \big[ \sup_{0 \leq t \leq T} \lvert W_t - W'_t \rvert^p \big]. \]
Minimising over all possible couplings eventually gives that
\[ W^p_p( \Psi_h (R), \Psi_h ( R')) \leq L_{M'} W^p_p( R, R'), \]
concluding the proof.
\end{proof}

\begin{rem}[Inverse map]
If we assume that $d = d'$ and $\sigma_t (x,P)$ is always invertible, we can define the solution map $U^{h,P}(\gamma)$ for the ODE 
\begin{multline*} 
\forall t \in [0,T], \quad x_t = x_{\lceil t / h \rceil h - h} + \sigma^{-1}_{t_h} (\gamma,P ) \bigg[ \gamma_t - \gamma_0 - b_{t_h} ( \gamma, P ) [ t - t_h ] \\
- \sum_{i=0}^{\lceil t/h \rceil-2} h b_{ih}( \gamma, P ) + \sigma_{ih} ( \gamma_{ih}, P ) [ x_{(i+1)h} - x_{ih} ] \bigg],
\end{multline*}
with the convention $x_{-h} = 0$.
If $\sigma^{-1}$ is uniformly bounded, we can show as in Lemma \ref{lem:fixed} that $U^{h,P}_\# P$ belongs to $\ps_p ( \C^d )$ if $P$ belongs to $\ps_p ( \C^d )$. 
Moreover, by construction,
\[ \forall (x_0,\gamma) \in \R^d \times \C^d, \quad ( x_0, S^{h,P} ( x_0 , U^{h,P} ( \gamma ) ) ) = (x_0, U^{h,P} (S^{h,P} ( x_0 ,\gamma)) = (x_0,\gamma). \]
From this, similarly to \cite[Lemma 31]{Coghi2020PathwiseMT} or \cite[Proof of Theorem 3.1]{backhoff2020mean}, we can show that $\Psi_h$ is one-to-one with
\[ \forall P \in \ps_p ( \C^d), \quad \Psi_h^{-1} (P) = ( X_0, U^{h,P})_\# P, \]
for $X_0 : \C^d \rightarrow \R^d, (x_t)_{0 \leq t \leq T} \mapsto x_0$.
\end{rem}

\subsection{Mean-field limit and large deviations} \label{subsec:MhFLDP}

We now analyse the discretised particle system \eqref{eq:hNpart} using the the results of Section \ref{subsec:ConsCons}, and we prove the mean-field limit and the large deviations from \eqref{eq:GenMcK}.

\begin{proof}[Proof of Theorem \ref{thm:DisMFlimit}]
Following \cite[Section 3.1]{Coghi2020PathwiseMT}, we can see \eqref{eq:hNpart} as a special instance of the generalised McKean-Vlasov equation \eqref{eq:GenMcK}. 
Indeed, let us consider the space $( \Omega_N, \F_N, \P_N )$, where $\Omega_N := \{ 1,\ldots,N \}$, $\F_N := 2^{\Omega_N}$ is the power set of $\Omega_N$, and $\P_N := \frac{1}{N} \sum_{i=1}^N \delta_i$. 
On this space, a $N$-tuple $\vec{x}^N$ in some product space $E^N$ can be identified to the $E$-valued random variable $i \in \Omega_N \mapsto x^i \in E$.
In this setting, the empirical measure $\pi( \vec{x}^N )$ is precisely the law of $\vec{x}^N$ under $\P_N$.
Going back to \eqref{eq:hNpart}, for each $\omega \in \Omega$, we apply this to the $N$-tuples
\[ ( \vec{\zeta}^N,\vec{W}^N )(\omega) = ( \zeta^{i,N}(\omega), W^{i,N}(\omega) )_{1 \leq i \leq N} \quad \text{and} \quad \vec{X}^N(\omega) = ( X^{i,N}(\omega) )_{1 \leq i \leq N}. \]
For fixed $\omega \in \Omega$, the empirical measure $\pi(( \vec{\zeta}^N,\vec{W}^N )(\omega))$ is the law of $( \vec{\zeta}^N,\vec{W}^N )(\omega)$ under $\P_N$. 
Similarly, $\pi( \vec{X}^N (\omega))$ is the law of $\vec{X}^N (\omega)$ under $\P_N$. 
The empirical measure $\pi( \vec{X}^N(\omega))$ is precisely the one that appears in \eqref{eq:hNpart}, so that the particle system \eqref{eq:hNpart} appears as an instance of the generalised McKean-Vlasov equation \eqref{eq:GenMcK} on the probability space $(\Omega_N,\F_N,\P_N)$.
Using the notations of Proposition \ref{lem:SolMap}, we eventually get that
\[ \L( {X}^h) = \Psi_h ( \L( \zeta, {W}) ) \quad \text{and} \quad \pi( \vec{X}^{h,N} ) = \Psi_h ( \pi( \vec{\zeta}^N,\vec{W}^N ) ) \quad \P\text{-a.s.}, \]
together with, $\P$-a.s.,
\[ {X}^h = S^{h,\L({X}^h)} (\zeta,{W}) \quad \text{and} \quad X^{h,i,N} =  S^{h,\pi( \vec{X}^{h,N} )} (\zeta^i,W^i), \]
for $1 \leq i \leq N$. 
In particular, the continuity of $\Psi_h$ and Lemma \ref{lem:fixed}-\ref{item:lemfrozLip} guarantee that $\omega \mapsto \vec{X}^{h,N} (\omega)$ is measurable.
This unified formalism gives the desired results \ref{itm:thmMFexistN}-\ref{item:thmMFexist}-\ref{item:thmMFLim}, using Proposition \ref{lem:SolMap} and the continuity proved in Proposition \ref{pro:contFixed}.
\end{proof}

\begin{proof}[Proof of Proposition \ref{pro:LDPhB}]
From the Sanov theorem in the $W_p$-topology \cite[Theorem 1.1]{wang2010sanov}, the sequence of the $\L(\pi(\vec{X}^{N}_0,\vec{B}^N))$ satisfies the LDP with good rate function $R' \in \ps_p (\R^d \times \C^{d'}) \mapsto H(R' \vert R)$, where $R := \L(X^1_0,B^1)$.
By contraction \cite[Theorem 1.3.2]{dupuis2011weak}, Corollary \ref{cor:DisLD} proves that the $\L(\pi(\vec{X}^{h,N}))$ satisfies the LDP with good rate function
\[ I_h : P \mapsto \inf_{\substack{R' \in \ps_p (\R^d \times \C^{d'}) \\ \Psi_h (R') = P}} H (R' \vert R). \]
From Lemma \ref{lem:SolMap}, $\Psi_h (R') = P$ implies $\Psi_h ( R' ) = S^{h,P}_\# R'$. 
\cite[Lemma A.1]{fischer2014form} now gives that
\[ I_h (P) = H ( P \vert  S^{h,P}_\# R). \]
By definition of $\Gamma_h$, we notice that $S^{h,P}_\# R = \Gamma_h (P)$, concluding the proof.
\end{proof}

\subsection{CLT for the discretised system} \label{subsec:disCLT}

This section adapts the results of \cite[Section 2]{tanaka1984limit} and \cite[Section 5]{Coghi2020PathwiseMT} to our discretised setting.
The CLT will result from the following key-result which is a mere rewriting of \cite[Theorem 1.1]{tanaka1984limit} or \cite[Theorem 35]{Coghi2020PathwiseMT}, using the linear derivative from Definition \ref{def:Diff}.

\begin{theorem}[Tanaka] \label{thm:Tanaka}
Given a Polish space $E$, let $f : E \times \ps_p ( E) \rightarrow \R$ be a bounded function such that $P \mapsto f(x,P)$ has a linear functional derivative $y \mapsto \delta_P f (x,P,y)$ that is bounded and Lipschitz in $P$ uniformly in $(x,y)$.
Then, given any i.i.d. sequence $( X^i )_{i \geq 1}$ with common law $P \in \ps (E)$, the random variable
\[ \sqrt{N} \bigg[ \frac{1}{N} \sum_{i = 1}^N f ( X^i , \pi ( \vec{X}^N ) ) - \int_{E} f(x,P) \d P (x) \bigg]  \]
converges in law towards a centred Gaussian variable with variance $\sigma^2_f$ given by
\[ \sigma^2_f := \int_{E} \bigg[ f ( y, P ) + \int_E [ \delta_P f ( x, P, y ) - f ( x, P ) ] \d P ( x ) \bigg]^2 \d P ( y ). \]
\end{theorem}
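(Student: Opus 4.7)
The plan is to establish this via the functional delta method: linearise $f$ in its measure argument around $P$ and reduce the quantity of interest to a sum of i.i.d.\ real-valued terms, then apply the classical Lindeberg CLT.

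First, I would decompose
\[
\sqrt{N}\bigg[\tfrac{1}{N}\sum_{i=1}^N f(X^i,\pi(\vec X^N))-\!\int_E f(x,P)\,\d P(x)\bigg]=A_N+B_N,
\]
with $A_N=\tfrac{1}{\sqrt N}\sum_{i=1}^N[f(X^i,P)-\!\int f(x,P)\d P(x)]$ and $B_N=\sqrt N\cdot\tfrac{1}{N}\sum_i[f(X^i,\pi(\vec X^N))-f(X^i,P)]$. The term $A_N$ is a normalised sum of centred i.i.d.\ bounded real variables, hence by the classical CLT converges in law to a centred Gaussian. For $B_N$, I would use the integral identity from Definition~\ref{def:Diff}:
\[
f(X^i,\pi(\vec X^N))-f(X^i,P)=\int_0^1\!\int_E\delta_P f\big(X^i,(1-r)P+r\pi(\vec X^N),y\big)\,\d[\pi(\vec X^N)-P](y)\,\d r,
\]
and replace $(1-r)P+r\pi(\vec X^N)$ by $P$ inside $\delta_P f$ at the cost of an error controlled by the Lipschitz-in-$P$ assumption. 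After summing over $i$ and Fubini, the leading term becomes $\sqrt N\int_E\bar\Phi_N(y)\,\d[\pi(\vec X^N)-P](y)$, where $\bar\Phi_N(y):=\tfrac{1}{N}\sum_i\delta_P f(X^i,P,y)$; by the LLN this is close to $\bar\Phi(y):=\int_E\delta_P f(x,P,y)\,\d P(x)$, and $\sqrt N\int\bar\Phi\,\d[\pi(\vec X^N)-P]=\tfrac{1}{\sqrt N}\sum_i\bar\Phi(X^i)$ since $\int\bar\Phi\,\d P=0$ by the convention $\int\delta_P f(x,P,y)\d P(y)=0$ and Fubini.

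Combining, I would obtain $A_N+B_N=\tfrac{1}{\sqrt N}\sum_{i=1}^N g(X^i)+o_{\mathbb P}(1)$, where
\[
g(y):=f(y,P)-\!\int_E f(x,P)\d P(x)+\int_E\delta_P f(x,P,y)\d P(x),
\]
which is bounded and centred (again by the convention). The classical CLT then gives convergence to $\mathcal N(0,\sigma_f^2)$ with $\sigma_f^2=\int g(y)^2\d P(y)$, and a direct rearrangement of $g$ inside the square yields exactly the formula in the statement.

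The main obstacle will be proving that the error generated by freezing $(1-r)P+r\pi(\vec X^N)$ at $P$ inside $\delta_P f$ is negligible after multiplication by $\sqrt N$. Using the Lipschitz-in-$P$ bound uniformly in $(x,y)$, this error is pointwise of order $d(\pi(\vec X^N),P)$ in an appropriate metric, and the crux is to integrate it against the signed measure $\d[\pi(\vec X^N)-P]$ while exploiting the i.i.d.\ structure (so that $\pi(\vec X^N)$ fluctuates at rate $N^{-1/2}$) to gain an extra factor $N^{-1/2}$ beyond the crude total-variation estimate. This is precisely the content of Tanaka's argument in \cite[Theorem 1.1]{tanaka1984limit} and \cite[Theorem 35]{Coghi2020PathwiseMT}, and since the present statement is an exact rewording in terms of the linear functional derivative, I would conclude by translating their proof via the identity $\int\delta_P f(x,P,y)\d P(y)=0$, without further work.
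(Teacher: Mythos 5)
The paper does not give a self-contained proof of this statement: it is presented immediately after the remark that it ``is a mere rewriting of [Theorem 1.1, tanaka1984limit] or [Theorem 35, Coghi2020PathwiseMT], using the linear derivative from Definition \ref{def:Diff}.'' Your proposal ends in the same place, deferring the core estimate to Tanaka's argument and Coghi et al., so at bottom it takes the same route as the paper.

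That said, the sketch you add on the way is sound and genuinely informative: the split into $A_N$ and $B_N$, the linearisation of $f$ in the measure argument via the integral identity for the linear derivative, and the reduction to a single centred i.i.d.\ sum of $g(X^i)$ with $\int g\,\d P=0$ by the normalisation convention are all correct, and the algebra recovers exactly the stated $\sigma_f^2$. One step you pass over too quickly is ``by the LLN $\bar\Phi_N$ is close to $\bar\Phi$'': what is actually needed is that $\sqrt N\int[\bar\Phi_N-\bar\Phi]\,\d[\pi(\vec X^N)-P]\to0$ in probability, and this is a second-order, fully degenerate $U$-statistic of the form $N^{-3/2}\sum_{i,j}[\delta_P f(X^i,P,X^j)-\bar\Phi(X^j)]$; boundedness of $\delta_P f$ plus the double centering (in $i$ for fixed $j$, and in $j$ for $i\neq j$) makes its variance $O(N^{-1})$, so the step goes through, but it is the same flavour of estimate as the ``freezing'' error you flag as the main obstacle, not a plain LLN. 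Since you explicitly fall back on Tanaka's Theorem 1.1 for these bounds, this does not constitute a gap, just a point worth spelling out if one wanted a self-contained proof.
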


Throughout this section, we assume that \ref{ass:coef2} holds.
All the particles now start at $0$, so that we can remove the dependence on the initial condition.
As required by Theorem \ref{thm:DisCLT}, we restrict ourselves to bounded driving paths $\gamma \in \C^{d'}_{M'}$. 
As a consequence of \ref{ass:coef1}, the paths $S^{h,P} (\gamma)$ stay in $\C^d_M$ for some $M >0$ that only depend on $(h,M')$ (see Lemma \ref{lem:fixed}-\ref{item:lemfrozLip}).
To alleviate notations, we remove all the $h$-exponents in the remainder of this section, writing
\[ \Psi : \ps_p ( \C^{d'}_{M'} ) \rightarrow \ps_p ( \C^d_M ), \]
for the fixed-point map from Proposition \ref{pro:contFixed}.
Let us fix $R \in \ps_p(\C^{d'}_{M'})$. 
We recall that for $\gamma \in \C^{d'}_{M'}$, 
\begin{equation} \label{eq:CLTdynS}
\forall t \in [0,T], \quad S^{\Psi(R)}_t ( \gamma ) = \int_0^t b_{s_h} \big( S^{\Psi (R)} ( \gamma ), S^{\Psi (R)}_\# R \big) \d s + \sigma_{s_h} \big( S^{\Psi (R)} ( \gamma ), S^{\Psi (R)}_\# R \big) \d \gamma_s.
\end{equation}
The linear derivative of $\overline{S}^R = S^{\Psi (R)}$ w.r.t. $R$ can be computed (if well-defined) writing that
\begin{equation} \label{eq:diifS}
\delta_P \overline{S}^R ( \gamma, \tilde\gamma) = \lim_{\varepsilon \rightarrow 0} \varepsilon^{-1} \big[ \overline{S}^{(1-\varepsilon)R + \varepsilon \delta_{\tilde\gamma}} ( \gamma) - \overline{S}^{R} ( \gamma) \big], 
\end{equation} 
where we recall the convention $\int_{\C^{d'}_{M'}} \delta_P \overline{S}^R ( \gamma, \gamma') \d R ( \gamma' ) = 0$ from Definition \ref{def:Diff}.
After careful differentiation of each term in \eqref{eq:CLTdynS}, this suggests that for every $t \in [0,T]$,
\begin{align*}
\delta_P \overline{S}^R_t ( \gamma, \tilde\gamma) =& \int_0^t \big[ D_x b_{s_h} \big( \overline{S}^R ( \gamma), \overline{S}^R_\# R \big) \cdot \delta_P \overline{S}^R ( \gamma, \tilde\gamma) + \delta_P b_{s_h} \big( \overline{S}^R ( \gamma), \overline{S}^R_\# R , \overline{S}^R ( \tilde\gamma) \big) \big] \d s \\
&+ \int_0^t \int_{\C^{d'}} D_y \delta_P b_{s_h} \big( \overline{S}^R ( \gamma), \overline{S}^R_\# R , \overline{S}^R ( \gamma') \big) \cdot \delta_P \overline{S}^R ( \gamma', \tilde\gamma) \, \d R ( \gamma' ) \d s \\
&+ \int_0^t \big[ D_x \sigma_{s_h} \big( \overline{S}^R ( \gamma), \overline{S}^R_\# R \big) \cdot \delta_P \overline{S}^R_t ( \gamma, \tilde\gamma) + \delta_P b_{s_h} \big( \overline{S}^R ( \gamma), \overline{S}^R_\# R , \overline{S}^R ( \tilde\gamma) \big) \big] \d \gamma_s \\
&+ \int_0^t \int_{\C^{d'}} D_y \delta_P \sigma_{s_h} \big( \overline{S}^R ( \gamma), \overline{S}^R_\# R , \overline{S}^R ( \gamma') \big) \cdot \delta_P \overline{S}^R_t ( \gamma', \tilde\gamma) \, \d R ( \gamma' ) \d \omega_s.
\end{align*}
To rigorously build $\delta_P \overline{S}^R$ we follow the approach of \cite[Section 2.1]{tanaka1984limit}.  
For $(P,x,\tilde{x},X) \in \ps_p ( \C^d_M) \times \C^d_M \times \C^d_M \times \C(\C^d_M,\C^d)$, let us define the path $g^P (x,\tilde{x},X)$ in $\C^d$ by
\begin{multline*}
g^P_t (x,\tilde{x},X) := \int_0^t \bigg[ D_x b_{s_h} ( x, P ) \cdot X (x) + \delta_P b_{s_h} ( x, P , \tilde{x} ) + \int_{\C^{d'}} D_y \delta_P b_{s_h} ( x, P , x' ) \cdot X( x' ) \, \d P ( x' ) \bigg] \d s \\
+ \int_0^t \bigg[ D_x \sigma_{s_h} ( x, P ) \cdot X (x) + \delta_P \sigma_{s_h} ( x, P , \tilde{x} ) + \int_{\C^{d'}} D_y \delta_P \sigma_{s_h} ( x, P , x' ) \cdot X( x' ) \, \d P ( x' ) \bigg] \d \gamma_s.
\end{multline*}
We then introduce an intermediary ODE.

\begin{lemma}[Differential map] \label{lem:difMap} 
We assume \ref{ass:coef2}.
\begin{enumerate}[label=(\roman*),ref=(\roman*)]
    \item\label{item:lemDifexist}  For every $(P,\tilde{x}) \in \ps_p ( \C^d_M ) \times \C^{d}_M$, the equation 
    \[ \forall x \in \C^d_M, \quad U^P_t ( x , \tilde{x} ) = g^P_t ( x , \tilde{x}, U^P (\cdot,\tilde{x}) ), \]
    has unique solution $x \mapsto U^P_t ( x , \tilde{x} )$ in $\C ( \C^d_M , \C^d )$.
    \item\label{item:lemDifLip} The map $(P,x,\tilde{x}) \mapsto U^P_t ( x , \tilde{x} )$ is globally bounded and Lipschitz-continuous. 
\end{enumerate}
\end{lemma}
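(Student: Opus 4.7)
The plan is to mirror the proof of Lemma \ref{lem:fixed}, with the key observation that $g^P$ has the same time-step structure as the Euler scheme defining $f^{h,P}$. The non-anticipative property from \ref{ass:coef1} is inherited by all the partial derivatives $D_x b$, $D_x \sigma$, $\delta_P b$, $\delta_P \sigma$, $D_y \delta_P b$, $D_y \delta_P \sigma$ that appear in $g^P$: on $[ih,(i+1)h]$ the integrand at time $s$ is frozen to $s_h = ih$ and only sees paths up to time $ih$. Consequently the fixed-point problem for $U^P$ decouples along the time grid into an explicit Euler-type recursion on the function variable, rather than requiring a true fixed-point argument.

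For \ref{item:lemDifexist}, I would argue by induction on $i = 0,1,\dots,\lfloor T/h\rfloor$. The base case is $U^P_0(x,\tilde{x}) := 0$, which is forced by $g^P_0 \equiv 0$. Given $U^P_{\wedge ih}(\cdot,\tilde{x}) \in \C(\C^d_M,\C^d)$, extend to $[ih,(i+1)h]$ by the explicit formula
\begin{align*}
U^P_t(x,\tilde{x}) = U^P_{ih}(x,\tilde{x})
&+ (t-ih)\Bigl[ D_x b_{ih}(x,P)\cdot U^P_{\wedge ih}(x,\tilde{x}) + \delta_P b_{ih}(x,P,\tilde{x}) \\
&\qquad\qquad + \int_{\C^d} D_y \delta_P b_{ih}(x,P,x')\cdot U^P_{\wedge ih}(x',\tilde{x})\,\d P(x')\Bigr] \\
&+ (\gamma_t - \gamma_{ih})\Bigl[\text{analogous expression for }\sigma\Bigr].
\end{align*}
The right-hand side involves $U^P$ only through its restriction $U^P_{\wedge ih}$, already known by the inductive hypothesis, so the extension is uniquely determined. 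Continuity of $x \mapsto U^P_{\wedge (i+1)h}(x,\tilde{x})$ follows from the continuity in $x$ of each coefficient (granted by \ref{ass:coef2}) together with the inductive continuity of $U^P_{\wedge ih}(\cdot,\tilde{x})$ and dominated convergence for the $\d P$-integral. Uniqueness on $[0,T]$ is immediate from this step-by-step forced extension.

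For \ref{item:lemDifLip}, I would first observe that, restricted to the bounded set $\C^d_M \times \ps_p(\C^d_M) \times \C^d_M$, all the coefficients $D_x b,D_x\sigma,\delta_P b,\delta_P \sigma,D_y\delta_P b, D_y\delta_P\sigma$ are bounded — for $\delta_P b(x,P,\tilde{x})$ this uses the fundamental theorem of calculus applied to $D_y \delta_P b$ plus the zero-mean convention from Definition \ref{def:Diff}. Feeding boundedness into the explicit update gives a recursion
\[
\sup_{x\in\C^d_M} |U^P_{\wedge(i+1)h}(x,\tilde{x})| \leq (1+Ch)\sup_{x\in\C^d_M} |U^P_{\wedge ih}(x,\tilde{x})| + Ch,
\]
yielding a uniform bound after $\lfloor T/h\rfloor$ steps. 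For the Lipschitz property, comparing the updates at two triples $(P,x,\tilde{x})$ and $(P',x',\tilde{x}')$ and applying the global Lipschitz bounds from \ref{ass:coef2}, one obtains
\[
\Delta_{i+1} \leq (1+Ch)\Delta_i + Ch\bigl(W_p(P,P') + \|x-x'\|_\infty + \|\tilde{x}-\tilde{x}'\|_\infty\bigr),
\]
with $\Delta_i := \sup_{x}|U^P_{\wedge ih}(x,\tilde{x}) - U^{P'}_{\wedge ih}(x',\tilde{x}')|$; iterating closes the Lipschitz estimate.

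The only delicate point — and hence the main obstacle — is controlling the $P$-dependent integral term $\int D_y \delta_P b_{ih}(x,P,x') \cdot U^P_{\wedge ih}(x',\tilde{x})\,\d P(x')$ when $P$ varies. I would split the difference as a sum of three pieces (variation of $D_y \delta_P b$ in $P$, variation of the integrand in $P$ against a common coupling, and variation of the measure itself) and handle the measure-variation piece by coupling $P$ and $P'$ optimally in $W_p$, using the inductive Lipschitz control on $U^P_{\wedge ih}$ as a function of its first argument, together with $W_p(P_{\wedge ih}, P'_{\wedge ih}) \leq W_p(P,P')$. The rest is a routine induction modelled on the proof of Lemma \ref{lem:fixed}.
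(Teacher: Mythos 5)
Your proposal is correct and follows exactly the route the paper indicates (the paper's own "proof" is a two-sentence pointer to adapting the inductions of Lemma~\ref{lem:SolMap} for existence and Lemma~\ref{lem:LipResticM} for Lipschitz continuity). The observation that the frozen-time structure $s_h = ih$ on $[ih,(i+1)h]$, combined with the non-anticipativity of all derivative coefficients inherited from \ref{ass:coef1}--\ref{ass:coef2}, turns the fixed-point equation into a forced explicit recursion is precisely the mechanism the paper relies on, and your boundedness argument for $\delta_P b$ (FTC on $D_y\delta_P b$ restricted to $\C^d_M$ plus the zero-mean convention) is the right reason the restriction to $\C^d_M$ is needed, which the paper also flags.

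One quantitative slip: your recursion constants $(1+Ch)$ are too optimistic. The diffusion-type contribution on $[ih,(i+1)h]$ is controlled by the oscillation of the driving path $\gamma$ over that interval, which for $\gamma \in \C^{d'}_{M'}$ is only $O(M')$, not $O(h)$. Compare the paper's own recursion in the proof of Lemma~\ref{lem:fixed}\ref{item:lemfrozLip}, where the step factor is of the form $1 + hL_b + 2L_\sigma \sup_{ih\le s\le(i+1)h}|\gamma_s|$, and the resulting constants in Lemma~\ref{lem:LipResticM} are denoted $C_{i,M'}$. After $\lfloor T/h\rfloor$ steps this still yields a finite bound and a finite Lipschitz constant for fixed $h>0$ (which is all Lemma~\ref{lem:difMap} claims), but the constant is not $h$-uniform. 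A smaller notational point: your $\Delta_i := \sup_x |U^P_{\wedge ih}(x,\tilde{x}) - U^{P'}_{\wedge ih}(x',\tilde{x}')|$ mixes a supremum over $x$ with a fixed $x'$; the clean version is to carry two quantities through the induction, namely $\sup_x |U^P_{\wedge ih}(x,\tilde{x}) - U^{P'}_{\wedge ih}(x,\tilde{x}')|$ and the local Lipschitz constant of $U^P_{\wedge ih}(\cdot,\tilde{x})$, which you in fact implicitly acknowledge when you invoke "the inductive Lipschitz control on $U^P_{\wedge ih}$ as a function of its first argument" to handle the coupling term.
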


The global bound on $U^P ( x, \tilde{x})$ is only possible because we restricted ourselves to $\C^d_M$.

\begin{proof}
These result are the analogous of \cite[Lemma 2.2]{tanaka1984limit} in a simpler setting (we consider  discretised dynamics instead of ODEs).
Alternatively, \ref{item:lemDifexist} can be obtained by readily adapting the induction in the proof of Lemma \ref{lem:SolMap}, while \ref{item:lemDifLip} is a direct adaptation of the induction in the proof of Lemma \ref{lem:LipResticM}.
\end{proof}

We can now define $\delta_P \overline{S}^R$ by setting 
\begin{equation} \label{eq:defdif}
\delta_P \overline{S}^R ( \gamma, \tilde\gamma) := U^{ \overline{S}^R_\# R} \big( \overline{S}^R ( \gamma ) , \overline{S}^R ( \tilde\gamma ) \big). 
\end{equation} 
We notice that $\delta_P \overline{S}^R$ defined by \eqref{eq:defdif} indeed satisfies the equation computed above.
From this and \eqref{eq:CLTdynS}, it is a standard ODE-like computation to verify that \eqref{eq:diifS} indeed holds (see e.g. \cite[Lemma 42]{Coghi2020PathwiseMT}).

\begin{corollary}[Lispchitz-continuity] \label{cor:LipDiff}
Under \ref{ass:coef2}, the map $(R, \gamma, \tilde\gamma) \mapsto \delta_P \overline{S}^R ( \gamma, \tilde\gamma )$ is globally bounded and Lipschitz-continuous on $\ps_p (\C^{d'}_{M'}) \times \C^{d'}_{M'} \times \C^{d'}_{M'}$.
\end{corollary}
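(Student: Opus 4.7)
The definition $\delta_P \overline{S}^R(\gamma,\tilde\gamma) = U^{\overline{S}^R_\# R}\bigl(\overline{S}^R(\gamma),\overline{S}^R(\tilde\gamma)\bigr)$ displays $\delta_P \overline{S}^R$ as a composition, so the plan is to show that each ingredient is globally bounded and Lipschitz on the relevant sets, then invoke the composition via Lemma~\ref{lem:difMap}-\ref{item:lemDifLip}.

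First, I would establish boundedness of the arguments. Under \ref{ass:coef1}, the explicit Euler scheme in the proof of Lemma~\ref{lem:fixed}-\ref{item:lemfrozfixe} shows that for $\gamma \in \C^{d'}_{M'}$, the fixed point $S^{h,P}(\gamma)$ lies in some $\C^d_M$ with $M$ depending only on $(h,M',T,L_b,M_\sigma)$, uniformly in $P$. In particular $\overline{S}^R(\gamma) \in \C^d_M$ and hence $\overline{S}^R_\# R$ is supported in $\C^d_M \subset \ps_p(\C^d_M)$. The global boundedness of $\delta_P \overline{S}^R$ on $\ps_p(\C^{d'}_{M'}) \times \C^{d'}_{M'} \times \C^{d'}_{M'}$ then follows from the global boundedness of $U^P(x,\tilde{x})$ established in Lemma~\ref{lem:difMap}-\ref{item:lemDifLip}.

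Second, I would check Lipschitz-continuity of each of the three ingredients as functions of $(R,\gamma,\tilde\gamma)$. For the driving path, Lemma~\ref{lem:fixed}-\ref{item:lemfrozLip} gives $|\overline{S}^R(\gamma)-\overline{S}^R(\gamma')| \le L(\gamma)|\gamma-\gamma'|$ with $L$ bounded on $\C^{d'}_{M'}$, so $(\gamma,\tilde\gamma) \mapsto (\overline{S}^R(\gamma),\overline{S}^R(\tilde\gamma))$ is uniformly Lipschitz. For the measure input, Lemma~\ref{lem:LipResticM} yields $W_p(\overline{S}^R_\# R, \overline{S}^{R'}_\# R') = W_p(\Psi_h(R),\Psi_h(R')) \le L_{M'} W_p(R,R')$. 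Finally, for the $R$-dependence of $\overline{S}^R(\gamma)$ with $\gamma$ fixed, one writes $\overline{S}^R(\gamma) = S^{h,\Psi_h(R)}(\gamma)$ and combines Lemma~\ref{lem:fixed}-\ref{item:lemfrozLip} (Lipschitzness in the frozen measure) with Lemma~\ref{lem:LipResticM} (Lipschitzness of $\Psi_h$ on $\ps_p(\C^{d'}_{M'})$), obtaining
\[
\sup_{\gamma \in \C^{d'}_{M'}} \big| \overline{S}^R(\gamma) - \overline{S}^{R'}(\gamma) \big| \le C_{M'}\, W_p(R,R').
\]

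Finally, I would conclude by composition. Since $(P,x,\tilde x) \mapsto U^P(x,\tilde x)$ is globally Lipschitz by Lemma~\ref{lem:difMap}-\ref{item:lemDifLip}, plugging in the Lipschitz map $R \mapsto \overline{S}^R_\# R$ for the first slot and the Lipschitz maps $(R,\gamma) \mapsto \overline{S}^R(\gamma)$ and $(R,\tilde\gamma) \mapsto \overline{S}^R(\tilde\gamma)$ for the second and third slots, I obtain the claimed bound
\[
\big| \delta_P \overline{S}^R(\gamma,\tilde\gamma) - \delta_P \overline{S}^{R'}(\gamma',\tilde\gamma') \big| \le C\bigl[ W_p(R,R') + |\gamma-\gamma'| + |\tilde\gamma-\tilde\gamma'| \bigr].
\]
I do not anticipate any real obstacle: all three ingredients are already at hand from earlier results. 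The only bookkeeping point is to verify that the pushforward $\overline{S}^R_\# R$ indeed lies in the image $\ps_p(\C^d_M)$ where Lemma~\ref{lem:difMap}-\ref{item:lemDifLip} is stated, which is guaranteed by the uniform-in-$R$ bound on $\overline{S}^R$ obtained in the first step.
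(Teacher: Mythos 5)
Your proposal is correct and follows exactly the paper's route: the paper's proof is a one-line statement that the result is "a mere concatenation of Lemma~\ref{lem:fixed}-\ref{item:lemfrozLip}, Lemma~\ref{lem:LipResticM} and Lemma~\ref{lem:difMap}-\ref{item:lemDifLip}," and you have fleshed out precisely that concatenation via the composition formula \eqref{eq:defdif}. Your added bookkeeping (that $\overline{S}^R(\gamma)$ lands in $\C^d_M$ and $\overline{S}^R_\# R$ in $\ps_p(\C^d_M)$) is the implicit step the paper leaves to the reader.
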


\begin{proof}
From \eqref{eq:defdif}, this result is a mere concatenation of Lemma \ref{lem:fixed}-\ref{item:lemfrozLip}, Lemma \ref{lem:LipResticM} and Lemma \ref{lem:difMap}-\ref{item:lemDifLip}.
\end{proof}

The following result is a detailed version of Theorem \ref{thm:DisCLT}, obtained as a consequence of Theorem \ref{thm:Tanaka}.

\begin{corollary} \label{cor:hMCLT} 
Under \ref{ass:coef2}, let $(W^i)_{i \geq 1}$ be an i.i.d. sequence of $\C^{d'}_{M'}$-valued variables with common law $R$. 
Let $\vec{X}^{h,N}$ be the related system of particles starting from $0$ given by \eqref{eq:hNpart}, and let $X^h$ be a related solution of \eqref{eq:GenMcK}.
Then, for every $\varphi \in \C^{1,1}_b ( \C^{d}, \R )$, the random variable
\[ \sqrt{N} \bigg[ \frac{1}{N} \sum_{i =1}^N \varphi ( X^{h,i,N} ) - \E [ \varphi( X^h ) ] \bigg] \]
converges in law towards a centred Gaussian variable with variance $\sigma^2_\varphi$ given by 
\[ \sigma^2_\varphi := \int_{\C^{d'}_{M'}} \bigg[ \varphi(\overline{S}^R ( \tilde\gamma )) + \int_{\C^{d'}_{M'}} [ D \varphi (\overline{S}^R ( \gamma )) \cdot \delta_P \overline{S}^R (\gamma,\tilde\gamma) - \varphi(\overline{S}^R ( \gamma )) ] \d R ( \gamma ) \bigg]^2 \d R ( \tilde\gamma ). \]
\end{corollary}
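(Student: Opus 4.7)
The plan is to derive Corollary \ref{cor:hMCLT} as a direct consequence of Tanaka's CLT (Theorem \ref{thm:Tanaka}) applied to a well-chosen function on $\C^{d'}_{M'} \times \ps_p(\C^{d'}_{M'})$, using the pathwise representation from Theorem \ref{thm:DisMFlimit} together with the differential structure of $\Psi$ obtained in Corollary \ref{cor:LipDiff}.

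First I would recast the fluctuations in Tanaka form. By Theorem \ref{thm:DisMFlimit}-\ref{item:thmMFLim}, applied on the probability space $(\Omega_N,\F_N,\P_N)$ used in the proof of that theorem, we have $X^{h,i,N} = \overline{S}^{\pi(\vec{W}^N)}(W^i)$ almost surely, and $\L(X^h) = \overline{S}^R_\# R$. Setting
\[ f(\gamma, R') := \varphi\big( \overline{S}^{R'}(\gamma) \big), \qquad (\gamma, R') \in \C^{d'}_{M'} \times \ps_p(\C^{d'}_{M'}), \]
the scaled deviation rewrites as $\sqrt{N}\big[\tfrac{1}{N}\sum_i f(W^i, \pi(\vec{W}^N)) - \int f(\gamma, R)\,\d R(\gamma)\big]$, which is precisely the object handled by Theorem \ref{thm:Tanaka}.

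Next I would verify Tanaka's hypotheses. Boundedness of $f$ is immediate because $\varphi$ is bounded. The key step is the chain-rule identification
\[ \delta_P f(\gamma, R', \tilde\gamma) = D\varphi\big(\overline{S}^{R'}(\gamma)\big) \cdot \delta_P \overline{S}^{R'}(\gamma, \tilde\gamma), \]
with $\delta_P \overline{S}^{R'}$ given by \eqref{eq:defdif}. To justify this in the sense of Definition \ref{def:Diff}, I would expand
\[ \varepsilon^{-1}\big[ \varphi(\overline{S}^{(1-\varepsilon)R'+\varepsilon\delta_{\tilde\gamma}}(\gamma)) - \varphi(\overline{S}^{R'}(\gamma)) \big] \]
by a Taylor expansion of $\varphi$ (valid since $\varphi \in \C^{1,1}_b$), then pass to the limit using the Lipschitz bound of Lemma \ref{lem:LipResticM} (to control the remainder) and the defining relation \eqref{eq:diifS} of $\delta_P \overline{S}^{R'}$. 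The required boundedness and Lipschitz continuity of $\delta_P f$ in $R'$, uniformly in $(\gamma, \tilde\gamma)$, then follow by combining boundedness and Lipschitz continuity of $D\varphi$ (from $\C^{1,1}_b$), the Lipschitz bound on $R' \mapsto \overline{S}^{R'}(\gamma)$ (Lemma \ref{lem:LipResticM}), and the global bound and Lipschitz estimate on $\delta_P \overline{S}^R(\gamma, \tilde\gamma)$ from Corollary \ref{cor:LipDiff}.

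Finally, Theorem \ref{thm:Tanaka} applied to $f$ with $P = R$ yields convergence to a centred Gaussian with variance
\[ \sigma^2_f = \int_{\C^{d'}_{M'}}\bigg[ f(\tilde\gamma, R) + \int_{\C^{d'}_{M'}}\big[\delta_P f(\gamma, R, \tilde\gamma) - f(\gamma, R)\big]\,\d R(\gamma) \bigg]^2 \d R(\tilde\gamma), \]
and substituting the expressions for $f$ and $\delta_P f$ produces exactly the stated $\sigma^2_\varphi$. The main obstacle is not the measure-theoretic CLT (which is handed to us by Theorem \ref{thm:Tanaka}) but rather the careful identification of $\delta_P f$ via the chain rule and the corresponding uniform regularity checks; these reduce cleanly to the already-established Lemma \ref{lem:LipResticM} and Corollary \ref{cor:LipDiff}, so no new estimate is needed.
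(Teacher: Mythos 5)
Your proposal is correct and follows essentially the same route as the paper: define $f(\gamma,R') := \varphi(\overline{S}^{R'}(\gamma))$, identify $\delta_P f$ by the chain rule as $D\varphi(\overline{S}^{R'}(\gamma)) \cdot \delta_P \overline{S}^{R'}(\gamma,\tilde\gamma)$, verify the boundedness and Lipschitz hypotheses of Theorem \ref{thm:Tanaka} via Lemma \ref{lem:LipResticM} and Corollary \ref{cor:LipDiff}, and read off the variance. The additional detail you give on justifying the chain rule and on the pathwise rewriting $X^{h,i,N} = \overline{S}^{\pi(\vec{W}^N)}(W^i)$ is exactly what the paper leaves implicit.
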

\begin{proof}
The function $f : \C^{d'}_{M'} \times \ps_p ( \C^{d'}_{M'} ) \rightarrow \R$ defined by $f(\gamma,R) := \varphi(\overline{S}^R(\gamma))$ has a linear derivative given by $\delta_P f (\gamma,R,\tilde\gamma) = D \varphi ( \overline{S}^R(\gamma) ) \cdot \delta_P \overline{S}^R(\gamma,\tilde\gamma)$.
Using Lemma \ref{lem:LipResticM} and Corollary \ref{cor:LipDiff}, $f$ satisfies the assumptions of Theorem \ref{thm:Tanaka}, which now gives the result.
\end{proof}

\subsection{Extension to the Brownian setting} \label{subsec:disCLTB}

Throughout this section, we assume that \ref{ass:pair} holds.
Our purpose is to extend the results of the previous section for proving Proposition \ref{pro:ClThB}.
Let $(B^{i})_{i \geq 1}$ be a countable sequence in $\C^{d'}$ of i.i.d. Brownian motions.
We rely on an approximation procedure by stopping the $B^i$ when they reach a given threshold, and by showing that the fluctuations of the related particle system can be uniformly controlled.
For $M >0$, we introduce the stopping time
\[ \tau^i_M := \inf \{ t \in [0,T], \; \vert B^i_t \rvert \geq M \}, \]
with the convention $\tau^i_M = T$ if the set on the r.h.s. is empty.
The related particle system given by \eqref{eq:hNpart} is
\[ \d X^{h,M,i,N}_t = b_{t_h} ( X^{h,M,i,N} , \pi ( \vec{X}^{h,M,N} ) ) \d t + \sigma_{t_h} ( X^{h,M,i,N} , \pi ( \vec{X}^{h,M,N} ) ) \d B^i_{t \wedge \tau^i_M}, \]
with $X^{h,M,i,N}_0 = 0$, $1 \leq i \leq N$.
Since we discretised with the time step $h >0$, we recall that stochastic integration is not needed to make sense of the above SDE. 
The limit particle system \eqref{eq:hNpart} when $\zeta^i = 0$ and $W^i = B^i$ corresponds to $\vec{X}^{h,\infty,N}$.
For each $1 \leq i \leq N$,
let us introduce the McKean-Vlasov process $\overline{X}^{h,M,i}$ solution of
\[ \d \overline{X}^{h,M,i}_t = b_{t_h} ( \overline{X}^{h,M,i} , \L ( \overline{X}^{h,M,i} ) ) \d t + \sigma_{t_h} ( \overline{X}^{h,M,i} , \L ( \overline{X}^{h,M,i} ) ) \d B^i_{t \wedge \tau^i_M}, \quad \overline{X}^{h,M,i}_0 = 0, \]
which corresponds to \eqref{eq:GenMcK} for the stopped Brownian motions. 
Let us introduce the difference process $\delta^{h,M,i,N} := X^{h,M,i,N} - \overline{X}^{h,M,i}$, together with $\delta^{h,\infty,i,N} := X^{h,\infty,i,N} - \overline{X}^{h,\infty,i}$ when $M = +\infty$.
We first need that the mean-field limit holds uniformly in $M$, and that the obtained approximation is uniform in $N$.

\begin{lemma}[Uniform limit] \label{lem:PoCM}
There exists $C_h > 0$ such that for every $N \geq 1$, $M >0$, $1 \leq i \leq N$,
\begin{enumerate}[label=(\roman*),ref=(\roman*)]
\item\label{item:lemPoCMbound} $\E [ \sup_{0 \leq t \leq T} \vert X^{h,M,i,N}_t \vert^2 ] \leq C_h$.
\item\label{item:lemPoCMPoCM} $\E [ \sup_{0 \leq t \leq T} \vert \delta^{h,M,i,N}_t \vert^2 ] \leq C_h N^{-1}$.
\item\label{item:lemPoCMPoCinf} $\E [ \sup_{0 \leq t \leq T} \vert \delta^{h,\infty,i,N}_t \vert^2 ] \leq C_h N^{-1}$.
\item\label{item:lemPoCMMinf} $\E [ \sup_{0 \leq t \leq T} \vert X^{h,M,i,N}_t - X^{h,\infty,i,N}_t \vert^2 ] \leq C_h \E [ \1_{\tau^1_M < T} \sup_{0 \leq t \leq T} \vert B^1_t \vert^2 ]$.
\item\label{item:lemPoCMinfinf} $\E [ \sup_{0 \leq t \leq T} \vert \overline{X}^{h,M,i}_t - \overline{X}^{h,\infty,i}_t \vert^2 ] \leq C_h \E [ \1_{\tau^1_M < T} \sup_{0 \leq t \leq T} \vert B^1_t \vert^2 ]$.
\end{enumerate}
\end{lemma}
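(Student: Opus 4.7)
The plan is to prove all five bounds by synchronous-coupling and discrete Gronwall arguments, inducting over the time-step index $i$ as in the proof of Lemma~\ref{lem:fixed}. Throughout, constants are allowed to depend on $h$ (since each induction step produces a factor involving $h$, $L_b$, $L_\sigma$, $M_\sigma$), but must remain independent of $(N,M,i)$. The binary-interaction structure in \ref{ass:pair} will be essential: it allows us to decompose the mean-field drift into a ``Lipschitz in particle differences'' part plus a ``variance'' part with $L^2$-norm squared of order $N^{-1}$ thanks to independence, which is what drives the $N^{-1}$ rate in \ref{item:lemPoCMPoCM}--\ref{item:lemPoCMPoCinf}.

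\paragraph{Moment bound \ref{item:lemPoCMbound}.} I would mimic the induction of Lemma~\ref{lem:fixed}-\ref{item:lemfrozSDE}, replacing the frozen law $P$ by $\pi(\vec{X}^{h,M,N})$. Since $\sigma$ is bounded by $M_\sigma$, the increment $\sigma_{t_h}(X^{h,M,i,N},\pi(\vec{X}^{h,M,N}))[B^i_{t\wedge\tau^i_M}-B^i_{t_h\wedge\tau^i_M}]$ is dominated, uniformly in $M$, by $M_\sigma\,\sup_{t_h\le s\le t_h+h}|B^i_s-B^i_{t_h}|$. Combining with the linear growth of $b$ and taking $\E\sup$ over $[ih,(i+1)h]$, the empirical second moment $\int|x|^2\,\d\pi(\vec{X}^{h,M,N})$ collapses under expectation to $\frac{1}{N}\sum_j\E\sup_{s\le ih}|X^{h,M,j,N}_s|^2$, which by exchangeability equals $\E\sup_{s\le ih}|X^{h,M,1,N}_s|^2$. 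A standard induction on $i$ then closes the estimate.

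\paragraph{Propagation of chaos \ref{item:lemPoCMPoCM}--\ref{item:lemPoCMPoCinf}.} I would couple $X^{h,M,i,N}$ and $\overline{X}^{h,M,i}$ through the same $B^i$ (stopped at $\tau^i_M$) and analyse $\delta^{h,M,i,N}$ step-by-step. Under \ref{ass:pair} the drift difference at time $t_h$ is
\[
\tfrac{1}{N}\sum_{j=1}^N\!\tilde{b}_{t_h}(X^{h,M,i,N},X^{h,M,j,N}) - \int\tilde{b}_{t_h}(\overline{X}^{h,M,i},y)\,\d\L(\overline{X}^{h,M,1})(y),
\]
which I split into a Lipschitz part controlled by $|\delta^{h,M,i,N}|+\tfrac{1}{N}\sum_j|\delta^{h,M,j,N}|$ and a ``fluctuation'' part $\tfrac{1}{N}\sum_j[\tilde{b}_{t_h}(\overline{X}^{h,M,i},\overline{X}^{h,M,j})-\E(\tilde{b}_{t_h}(\overline{X}^{h,M,i},\overline{X}^{h,M,j})\mid \overline{X}^{h,M,i})]$; since the $\overline{X}^{h,M,j}$ are i.i.d., the conditional variance of the latter is bounded by $C/N$. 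The same decomposition applies to $\sigma$, multiplied by $|B^i_{t\wedge\tau^i_M}-B^i_{t_h\wedge\tau^i_M}|$ whose conditional second moment given $\mathcal{F}_{t_h}$ is at most $h$. Summing over the time steps, taking expectations, using exchangeability to replace $\tfrac{1}{N}\sum_j\E\sup|\delta^{h,M,j,N}|^2$ by $\E\sup|\delta^{h,M,1,N}|^2$, and applying discrete Gronwall yields \ref{item:lemPoCMPoCM}. The case $M=+\infty$ is identical, giving \ref{item:lemPoCMPoCinf}.

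\paragraph{Stopping errors \ref{item:lemPoCMMinf}--\ref{item:lemPoCMinfinf}.} I would couple $X^{h,M,i,N}$ with $X^{h,\infty,i,N}$ through the very same $B^i$'s. The drift difference is again Lipschitz in the particle differences via \ref{ass:pair}, while the diffusion difference at a step $[t_h,t_h+h]$ splits as
\[
[\sigma_{t_h}(X^{h,M,i,N},\pi_M)-\sigma_{t_h}(X^{h,\infty,i,N},\pi_\infty)][B^i_{t}-B^i_{t_h}] + \sigma_{t_h}(X^{h,M,i,N},\pi_M)\bigl[(B^i_{t\wedge\tau^i_M}-B^i_{t_h\wedge\tau^i_M})-(B^i_t-B^i_{t_h})\bigr],
\]
the second summand vanishing on $\{\tau^i_M\ge T\}$ and being bounded on $\{\tau^i_M<T\}$ by $2 M_\sigma \1_{\tau^i_M<T}\sup_{0\le s\le T}|B^i_s|$. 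Taking expectations, exchangeability reduces the contribution of this ``source'' term to $C\,\E[\1_{\tau^1_M<T}\sup_t|B^1_t|^2]$. A discrete Gronwall over the time-grid, combined with the exchangeable averaging $\tfrac{1}{N}\sum_j\E\sup|Z^j|^2 = \E\sup|Z^1|^2$ for $Z^i:=X^{h,M,i,N}-X^{h,\infty,i,N}$, then gives \ref{item:lemPoCMMinf}. The bound \ref{item:lemPoCMinfinf} is proved identically (and actually more easily), because $\overline{X}^{h,M,i}$ and $\overline{X}^{h,\infty,i}$ are driven by the same McKean--Vlasov dynamics with no particle interaction to propagate through averaging.

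\paragraph{Main obstacle.} I expect the trickiest bookkeeping to lie in the diffusion decomposition of \ref{item:lemPoCMMinf}: because both $\sigma$ and the empirical measure $\pi_M$ differ from $\pi_\infty$, one must carefully separate the Lipschitz contribution (which feeds back into the Gronwall loop) from the pure stopping contribution (supported on $\{\tau^i_M<T\}$), without losing the $h$-independence in $N$. The binary-interaction form of $\sigma$ in \ref{ass:pair} is what makes this cross-term tractable, exactly as it does in \ref{item:lemPoCMPoCM}.
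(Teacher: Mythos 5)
Your proposal is essentially correct and matches the paper's proof for parts \ref{item:lemPoCMbound}--\ref{item:lemPoCMMinf}: same moment induction for \ref{item:lemPoCMbound}, same synchronous coupling with the drift/diffusion split into a Lipschitz contribution plus an i.i.d.\ fluctuation term whose variance is $O(N^{-1})$ (the paper packages this as Lemma~\ref{lem:techiid}), same observation that \ref{item:lemPoCMPoCinf} is the $\tau^i_M=T$ special case, and the same decomposition of the diffusion increment in \ref{item:lemPoCMMinf} into a Lipschitz part plus a source supported on $\{\tau^i_M<T\}$ (your bracketing differs algebraically from the paper's display preceding \eqref{eq:QuadVarMh}, but the two identities are equal and yield the same bound). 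Your time-step-by-time-step phrasing versus the paper's stochastic-integral plus BDG phrasing is a presentational difference only, since for fixed $h>0$ the Brownian integrals are finite sums.

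The one genuine divergence is \ref{item:lemPoCMinfinf}: you propose a direct synchronous-coupling Gronwall argument for the nonlinear processes $\overline{X}^{h,M,i}$ and $\overline{X}^{h,\infty,i}$, using $W_p(\L(\overline{X}^{h,M,i}_{\wedge s}),\L(\overline{X}^{h,\infty,i}_{\wedge s}))\le\E^{1/2}[\sup_{r\le s}|\overline{X}^{h,M,i}_r-\overline{X}^{h,\infty,i}_r|^2]$ to close the loop, whereas the paper instead deduces \ref{item:lemPoCMinfinf} as the $N\to+\infty$ limit of \ref{item:lemPoCMMinf} after invoking \ref{item:lemPoCMPoCM}--\ref{item:lemPoCMPoCinf}. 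Both routes are valid and of comparable length; yours is more self-contained (it does not require having already established the full $N^{-1}$ chaos estimates), while the paper's is slightly slicker because it avoids re-running a Gronwall argument. One small caveat on your phrasing: $\overline{X}^{h,M,i}$ and $\overline{X}^{h,\infty,i}$ are \emph{not} driven by the same McKean--Vlasov dynamics — their fixed-point laws differ — so the measure-argument difference does feed back into the Gronwall inequality; what actually makes the argument work is precisely the Wasserstein-versus-$L^2$ coupling inequality above, and you should state that explicitly rather than attribute the simplification to "no particle interaction to propagate."
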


We then strengthen these results for controlling the fluctuations.

\begin{proposition} \label{pro:approxhM} There exists $C_h >0$ such that for every $M >0$,
\[ \sup_{N \geq 1} \frac{1}{\sqrt{N}} \sum_{i=1}^N \E \bigg[ \sup_{0 \leq t \leq T} \big\lvert \delta^{h,M,i,N}_t - \delta^{h,\infty,i,N}_t \big\rvert \bigg] \leq C_h \E^{1/2} \big[ \1_{\tau^1_M < T} \sup_{0 \leq t \leq T} \vert B^1_t \vert^2 \big]. \]
\end{proposition}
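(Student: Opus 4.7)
The plan is to build all four families $(X^{h,M,i,N})_i$, $(X^{h,\infty,i,N})_i$, $(\overline{X}^{h,M,i})_i$ and $(\overline{X}^{h,\infty,i})_i$ by a synchronous coupling driven by the same i.i.d. Brownian motions $(B^i)_{i\ge 1}$ and then to control $\Delta^i := \delta^{h,M,i,N} - \delta^{h,\infty,i,N}$ by a discrete Gronwall argument on the grid $\{nh\}_n$. First I would introduce the ``truncation residuals'' $R^i := \overline{X}^{h,M,i} - \overline{X}^{h,\infty,i}$ and the Brownian-truncation errors $\alpha^i_n := [B^i_{(n+1)h\wedge\tau^i_M}-B^i_{nh\wedge\tau^i_M}]-[B^i_{(n+1)h}-B^i_{nh}]$. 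By construction the $(R^i,\alpha^i)_{i\ge 1}$ are i.i.d., and Lemma~\ref{lem:PoCM}-\ref{item:lemPoCMinfinf} together with the definition of $\tau^1_M$ give $\E[\sup_t|R^1|^2]+\E[\sum_n|\alpha^1_n|^2]\le C_h\,\E[\1_{\tau^1_M<T}\sup_t|B^1|^2]$, which is the ``smallness-in-$M$'' input. Note also that the full system is exchangeable, so $\E[\sup_t|\Delta^i|]$ does not depend on $i$, and the statement reduces to proving $\sqrt{N}\,\E[\sup_t|\Delta^1|]\le C_h\,\E^{1/2}[\1_{\tau^1_M<T}\sup_t|B^1|^2]$; by Cauchy--Schwarz it suffices to show $\E[\sup_t|\Delta^1|^2]\le (C_h/N)\,\E[\1_{\tau^1_M<T}\sup_t|B^1|^2]$.

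Subtracting the four Euler schemes yields, for every $n$,
\begin{align*}
\Delta^i_{(n+1)h}-\Delta^i_{nh}
&= h\bigl[(b^{M,i,N}_{nh}-b^{\infty,i,N}_{nh})-(\bar{b}^{M,i}_{nh}-\bar{b}^{\infty,i}_{nh})\bigr] \\
&\quad + \bigl[(\sigma^{M,i,N}_{nh}-\sigma^{\infty,i,N}_{nh})-(\bar\sigma^{M,i}_{nh}-\bar\sigma^{\infty,i}_{nh})\bigr]\Delta B^i_n
+ [\sigma^{M,i,N}_{nh}-\bar\sigma^{M,i}_{nh}]\alpha^i_n.
\end{align*}
The heart of the proof is then to decompose each of these increments using the binary-interaction structure \ref{ass:pair}: every term of the form $\frac1N\sum_j\tilde b_{nh}(Z^{M,i,N},Z^{M,j,N})-\frac1N\sum_j\tilde b_{nh}(Z^{\infty,i,N},Z^{\infty,j,N})-\E^j[\tilde b_{nh}(\bar Z^{M,i},\bar Z^{M,j})-\tilde b_{nh}(\bar Z^{\infty,i},\bar Z^{\infty,j})]$ is Taylor-expanded (using $\C^{1,1}$-regularity of $\tilde b,\tilde\sigma$) around the mean-field trajectories to produce (a) genuinely linear feedback terms in $(\Delta^j)_j$ bounded by $L|\Delta^i|+\frac{L}{N}\sum_j|\Delta^j|$; (b) zero-mean empirical-fluctuation averages $\frac1N\sum_j[\psi(\bar Z^{M,i},\bar Z^{M,j})-\E^j\psi]$ where $\psi$ is built from $R^j,\alpha^j,\delta^{h,\infty,j,N}$; and (c) quadratic remainders that are $O(|\delta^{h,M,i,N}|^2+|\delta^{h,\infty,i,N}|^2+|R^i|^2)$.

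I would then take $L^2$ norms, use the exchangeability-induced identity $\E\sup_{k\le n}|\Delta^i_{kh}|^2=\E\sup_{k\le n}|\Delta^1_{kh}|^2=:u_n$, and run a discrete Gronwall argument on $u_n$; the linear-feedback contribution (a) is absorbed, while the remaining source terms are $L^2$-bounded as follows. For the truncation-source $[\sigma^{M,i,N}-\bar\sigma^{M,i}]\alpha^i$, the factor $[\sigma^{M,i,N}-\bar\sigma^{M,i}]$ is $O_{L^2}(N^{-1/2})$ by Lemma~\ref{lem:PoCM}-\ref{item:lemPoCMPoCM} applied inside $\sigma$, and it multiplies the independent small factor $\alpha^i$. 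For the fluctuation averages in (b), Lemma~\ref{lem:PoCM}-\ref{item:lemPoCMPoCinf} and the independence of $\{(R^j,\alpha^j,B^j)\}_{j\neq i}$ yield the standard $N^{-1/2}\,\E^{1/2}[|\psi|^2]$ bound, with $\E[|\psi|^2]\le C_h\,\E[\1_{\tau^1_M<T}\sup_t|B^1|^2]$ from the construction of $R^j,\alpha^j$. The remainders (c) are $O(N^{-1})$ by Lemma~\ref{lem:PoCM}-\ref{item:lemPoCMbound}--\ref{item:lemPoCMPoCM}--\ref{item:lemPoCMPoCinf}. Altogether this closes the Gronwall inequality with the desired constant.

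The main obstacle will be step~(b): producing the joint $N^{-1/2}\cdot\E^{1/2}[\1_{\tau^1_M<T}\sup|B^1|^2]$ factor rather than just one of the two. A plain Lipschitz bound on $\tilde b,\tilde\sigma$ would only give $\E|\Delta^1|\lesssim \E|R^1|$ (no $N^{-1/2}$ gain), so the Taylor expansion enabled by \ref{ass:pair} is essential to exhibit mean-zero empirical-fluctuation averages of i.i.d.\ variables; these averages, and only these, carry the CLT-scale $N^{-1/2}$. Managing the bookkeeping of the four processes inside the linearization -- and checking that every leftover term indeed matches the ``smallness $R^i$ or $\alpha^i$ times fluctuation-of-order-$N^{-1/2}$'' pattern -- is where most of the technical work lies.
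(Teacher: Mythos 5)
Your structural intuitions are on the right track: the use of exchangeability to reduce to a single particle, the Taylor/linearisation step around the mean-field trajectories using the $\C^{1,1}$ regularity from \ref{ass:pair} to produce zero-mean empirical fluctuation averages, and the final Gronwall closure all mirror the paper's proof (the linearisation step is precisely Lemma~\ref{lem:techC11}, and the zero-mean fluctuation bound is Lemma~\ref{lem:techiid}). Working on the discrete grid $\{nh\}$ rather than with the continuous-time $[t,t']$ argument fed into Lemma~\ref{lem:Gronwall} is a cosmetic variation. The identification of ``joint smallness $N^{-1/2}\cdot\E^{1/2}[\1_{\tau^1_M<T}\sup|B^1|^2]$'' as the crux is also correct.

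The gap is the reduction to an $L^2$ per-particle bound. You propose to show $\E[\sup_t|\Delta^1_t|^2]\leq (C_h/N)\,\E[\1_{\tau^1_M<T}\sup_t|B^1_t|^2]$ and then conclude by Cauchy--Schwarz. That reduction is formally sufficient, but it is strictly stronger than what is needed, and it is not achievable with the second-moment estimates available. The reason is that the source terms you identify are products of two small factors — the truncation source $[\sigma^{M,i,N}-\bar\sigma^{M,i}]\,\alpha^i$, and the quadratic remainders of the form $R^{i,j}\cdot\sup|\delta^{h,\infty,j}|$. Each factor is small in $L^2$ (one of order $N^{-1/2}$ by Lemma~\ref{lem:PoCM}, the other of order $\E^{1/2}[\1_{\tau<T}\sup|B|^2]$). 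Bounding such a product in $L^1$ is exactly one application of Cauchy--Schwarz, giving the desired joint scale. Bounding the same product in $L^2$ requires controlling $\E[|X|^2|Y|^2]$, which either needs fourth moments of $X$ and $Y$ — and Lemma~\ref{lem:PoCM} only provides second moments, with no $\E[\sup|\delta^{h,M,i,N}|^4]\leq C_h N^{-2}$ estimate available — or genuine independence of the two factors so that $\E[|X|^2|Y|^2]=\E[|X|^2]\E[|Y|^2]$. Your assertion that $[\sigma^{M,i,N}-\bar\sigma^{M,i}]$ and $\alpha^i$ are ``independent'' is not correct: $\sigma^{M,i,N}-\bar\sigma^{M,i}$ depends on the stopped empirical measure $\pi(\vec X^{h,M,N})$, which is a functional of all the $B^j_{\cdot\wedge\tau^j_M}$ including $B^i_{\cdot\wedge\tau^i_M}$, while $\alpha^i$ is itself a functional of $B^i$; at best you have some conditional independence given $\F^i_{\tau^i_M}$, but this does not immediately factor the second moment, and the argument would need to be reworked carefully. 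The paper sidesteps all of this by never leaving $L^1$: it bounds $\E[\sup_t|\delta^{h,M,i,N}_t-\delta^{h,\infty,i,N}_t|]$ directly, at each step keeping two small factors side by side and applying Cauchy--Schwarz once to extract the product $N^{-1/2}\E^{1/2}[\1_{\tau<T}\sup|B|^2]$. If you want to salvage the $L^2$ route you would either have to prove fourth-moment analogues of Lemma~\ref{lem:PoCM}-\ref{item:lemPoCMPoCM}--\ref{item:lemPoCMPoCinf}, or set up a conditional-independence argument for every source term, neither of which you indicate.
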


With these results at hand, Proposition \ref{pro:ClThB} will easily follow.
The proofs of the approximation results rely on the classical coupling method from \cite[Theorem 1.4]{sznitman1991topics}.  
To alleviate notations, we drop the exponents $(h,N)$ in the remainder of this section (although the dependence on $N$ is crucial). 
We will repeatedly use the technical estimates proved in Appendix \ref{app:tech}.
In the following, $C_h$ is a generic constant that may change from line to line, but staying independent of $(M,N)$.

\begin{proof}[Proof of Lemma \ref{lem:PoCM}]
\ref{item:lemPoCMbound} By symmetry, all the $X^{M,i}$ have the same law.
The bound is then a straight-forward induction very close to \textbf{\emph{Step 1.}} in the proof of Proposition \ref{pro:contFixed}, using that $\E [ \sup_{0 \leq t \leq T} \vert B^i_{t \wedge \tau^i_M} \vert^2 ]$ is bounded uniformly in $M$. 

\ref{item:lemPoCMPoCM} By definition of $X^{M,i}$ and $\overline{X}^{M,i}$, for $0 \leq t \leq T$,
\begin{multline*}
\delta^{M,i}_t = \int_0^t [ b_{s_h} ( X^{M,i} , \pi(\vec{X}^{M}) ) - b_{s_h} ( \overline{X}^{M,i} , \L(\overline{X}^{M,i}) ) ] \d s \\
+ \int_0^t [ \sigma_{s_h} ( X^{M,i} , \pi(\vec{X}^{M}) ) - \sigma_{s_h} ( \overline{X}^{M,i} , \L(\overline{X}^{M,i}) ) ] \d B^i_{s \wedge \tau^i_M}.
\end{multline*}   
We then take the square of this expression and we use Jensen's inequality, before taking the supremum in time and expectations to get
\begin{multline} \label{eq:interPocMM}
\E \big[ \sup_{0 \leq s \leq t} \vert \delta^{M,i}_t \vert^2 \big] \leq 2 t \int_0^t \E \vert b_{s_h} ( X^{M,i} , \pi(\vec{X}^{M}) ) - b_{s_h} ( \overline{X}^{M,i} , \L(\overline{X}^{M,i}) ) \vert^2 \d s \\
+ 2 \E \sup_{0 \leq s \leq t} \bigg\vert \int_0^s [ \sigma_{r_h} ( X^{M,i} , \pi(\vec{X}^{M}) ) - \sigma_{r_h} ( \overline{X}^{M,i} , \L(\overline{X}^{M,i}) ) ] \d B^i_{r \wedge \tau^i_M} \bigg\vert^2.
\end{multline} 
For the drift term, we split
\begin{multline*}
b_{s_h} ( X^{M,i} , \pi(\vec{X}^{M}) ) - b_{s_h} ( \overline{X}^{M,i} , \L(\overline{X}^{M,i}) ) = b_{s_h} ( X^{M,i} , \pi(\vec{X}^{M}) ) - b_{s_h} ( \overline{X}^{M,i} , \pi(\vec{\overline{X}}^{M}) ) \\
+ [ b_{s_h} ( \overline{X}^{M,i} , \pi(\vec{\overline{X}}^{M}) ) - b_{s_h} ( \overline{X}^{M,i} , \L(\overline{X}^{M,i}) ) ].
\end{multline*}
Using \ref{item:lemPoCMbound}, \ref{ass:pair}, and Lemma \ref{lem:techiid} with the i.i.d. variables $(\overline{X}^i,\overline{Y}^i) = (\overline{X}^{M,i},0)$, we get that
\[ \E \big\vert b_{s_h} ( \overline{X}^{M,i} , \pi(\vec{\overline{X}}^{M}) ) - b_{s_h} ( \overline{X}^{M,i} , \L(\overline{X}^{M,i}) ) \big\vert^2 \leq C_h N^{-1}. \]
Further using the Lipschitz assumption \ref{ass:pair} on $b$, Jensen's inequality, and the fact that $((X^{M,j},\overline{X}^{M,j}))_{1 \leq j \leq M}$ is identically distributed, we deduce that
\begin{equation*}
\E \big\vert b_{s_h} ( X^{M,i} , \pi(\vec{X}^{M}) ) - b_{s_h} ( \overline{X}^{M,i} , \L ( \overline{X}^{M,i}) ) \big\vert^2 \leq C_h \E \big[ \sup_{0 \leq r \leq s_h} \vert X^{M,i}_r - \overline{X}^{M,i}_r \vert^2 \big] + C_h N^{-1}.
\end{equation*} 
To handle the second line in \eqref{eq:interPocMM}, we rely on stochastic integration. 
From the optional stopping theorem, $( B^i_{t \wedge \tau^i_M} )_{0 \leq t \leq T}$ is a square-integrable martingale; its quadratic variation is the process $( t \wedge \tau^i_M )_{0 \leq t \leq T}$.
The Burkholder-Davis-Gundy (BDG) inequality then yields
\begin{align*}
\E \sup_{0 \leq s \leq t} \bigg\vert \int_0^s &[ \sigma_{r_h} ( X^{M,i} , \pi(\vec{X}^{M}) ) - \sigma_{r_h} ( \overline{X}^{M,i} , \L(\overline{X}^{M,i}) ) ] \d B^i_{r \wedge \tau^i_M} \bigg\vert^2 \\
&\leq C_h \E \int_0^{t \wedge \tau^i_M} \vert \sigma_{s_h} ( X^{M,i} , \pi(\vec{X}^{M}) ) - \sigma_{s_h} ( \overline{X}^{M,i} , \L(\overline{X}^{M,i}) ) \vert^2 \d s \\
&\leq C_h \int_0^t \E \vert \sigma_{s_h} ( X^{M,i} , \pi(\vec{X}^{M}) ) - \sigma_{s_h} ( \overline{X}^{M,i} , \L(\overline{X}^{M,i}) ) \vert^2 \d s. 
\end{align*}
We can now handle the r.h.s. as we did for the drift term.
Gathering the terms in \eqref{eq:interPocMM},
\[ \E \big[ \sup_{0 \leq s \leq t} \vert \delta^{M,i}_t \vert^2 \big] \leq C_h N^{-1} + C_h \int_0^t \E \big[ \sup_{0 \leq r \leq s} \vert \delta^{M,i}_r \vert^2 \big] \d s. \]
The conclusion then follows from the Gronwall Lemma.

\ref{item:lemPoCMPoCinf} If $\tau_M^i = T$ in the above proof of \ref{item:lemPoCMPoCM}, we notice that all the computations remain valid when replacing $(B^i_{t \wedge \tau_M^i})_{0 \leq t \leq T}$ by $(B^i_t)_{0 \leq t \leq T}$.
This yields the result.

\ref{item:lemPoCMMinf} As for proving \ref{item:lemPoCMPoCM}, for $0 \leq t \leq T$,
\begin{multline*}
X^{M,i}_t - X^{\infty,i}_t = \int_0^t [ \sigma_{s_h} ( X^{M,i} , \pi(\vec{X}^{M}) ) - \sigma_{s_h} ( X^{\infty,i} , \pi(\vec{X}^{\infty}) ) ] \d B^i_{s \wedge \tau^i_M} \\
- \int_0^t \sigma_{s_h} ( X^{\infty,i} , \pi(\vec{X}^{\infty}) ) ] \d [ B^i_s - B^i_{s \wedge \tau^i_M} ] + \int_0^t [ b_{s_h} ( X^{M,i} , \pi(\vec{X}^{M}) ) - b_{s_h} ( X^{\infty,i} , \pi(\vec{X}^{\infty}) ) ] \d s.
\end{multline*} 
Setting $B^{i,M}_s := B^i_s - B^i_{\tau^i_M}$, the second integral on the r.h.s. is bounded by
\begin{equation} \label{eq:QuadVarMh}
M_\sigma \big\vert \1_{\tau^i_M < t} B^{i,M}_{t} - \1_{\tau^i_M < t_h} B^{i,M}_{t_h} \big\vert + \sum_{j=0}^{\lfloor t/h \rfloor-1} M_\sigma \big\vert \1_{\tau^i_M < (j + 1) h} B^{i,M}_{(j+1)h}- \1_{\tau^i_M < j h} B^{i,M}_{j h} \big\vert, 
\end{equation} 
and further by $C_h \1_{\tau^i_M < T} \sup_{0 \leq s \leq t} \vert B^i_s \vert$. 
The result then follows by reproducing the estimates and the Gronwall argument in the proof of \ref{item:lemPoCMPoCM}.

\ref{item:lemPoCMinfinf} By symmetry, we recall that $\E [ \sup_{0 \leq t \leq T} \vert X^{h,M,i,N}_t - X^{h,\infty,i,N}_t \vert^2 ]$ does not depend on $i$. 
The result follows by taking the $N \rightarrow +\infty$ limit in the uniform estimate \ref{item:lemPoCMMinf}, using \ref{item:lemPoCMPoCM}-\ref{item:lemPoCMPoCinf}.
\end{proof}

\begin{proof}[Proof of Proposition \ref{pro:approxhM}]
For every $0 \leq t \leq t' \leq T$, we have the decomposition
\begin{equation*} 
\delta^{M,i}_{t'} - \delta^{\infty,i}_{t'} = \delta^{M,i}_t - \delta^{\infty,i}_t + \int_t^{t'} b_s^{M,i} \d s + \int_t^{t'} \sigma_s^{M,i} \d B^i_s + \int_t^{t'} \overline{\sigma}^{M,i}_s \d [ B^i_{s \wedge \tau^i_M} - B^i_s ], 
\end{equation*} 
where
\[ b_s^{M,i} := b_{s_h} ( X^{M,i}, \pi( \vec{X}^{M} ) ) - b_{s_h} ( \overline{X}^{M,i}, \L(\overline{X}^{M,i}) ) - [ b_{s_h} ( X^{\infty,i}, \pi( \vec{X}^{\infty} ) ) - b_{s_h} ( \overline{X}^{\infty,i}, \L(\overline{X}^{\infty,i}) ) ],  \]
\[ \sigma_s^{M,i} := \sigma_{s_h} ( X^{M,i}, \pi( \vec{X}^{M} ) ) - \sigma_{s_h} ( \overline{X}^{M,i}, \L(\overline{X}^{M,i}) ) - [ \sigma_{s_h} ( X^{\infty,i}, \pi( \vec{X}^{\infty} ) ) - \sigma_{s_h} ( \overline{X}^{\infty,i}, \L(\overline{X}^{\infty,i}) ) ], \]
\[ \overline{\sigma}^{M,i}_s := \sigma_{s_h} (X^{M,i}, \pi( \vec{X}^{M})) - \sigma_{s_h} (\overline{X}^{M,i}, \L(\overline{X}^{M,i}) ).  \]
Taking absolute values, supremum in time and expectations gives 
\begin{multline}  \label{eq:decompFluctM}
\E \big[ \sup_{t \leq s \leq t'} \vert \delta^{M,i}_s - \delta^{\infty,i}_s \vert \big] \leq \E \big[ \vert \delta^{M,i}_t - \delta^{\infty,i}_t \vert \big] + \int_t^{t'} \E[  \vert b^{M,i}_s \vert ] \d s + \E \sup_{t \leq s \leq t'} \bigg\vert \int_t^s \sigma^{M,i}_r \d B^i_r \bigg\vert \\
+ \E \sup_{t \leq s \leq t'} \bigg\vert \int_t^s \overline{\sigma}^{M,i}_r \d [B^i_{r \wedge \tau^i_M} - B^i_r] \bigg\vert. 
\end{multline} 
In the following, $C_h$ is a generic constant, which may change from line to line but staying \emph{independent of $(M,N)$ and $(t,t')$}.
To handle the $b^{M,i}_s$-term, we use the splitting
\begin{align*}
b_s^{M,i} = \; &b_{s_h} ( X^{M,i}, \pi( \vec{X}^{M} ) ) - b_{s_h} ( \overline{X}^{M,i}, \pi( \vec{\overline{X}}^{M} ) ) - [ b_{s_h} ( X^{\infty,i}, \pi( \vec{X}^{\infty} ) ) - b_{s_h} ( \overline{X}^{\infty,i}, \pi( \vec{\overline{X}}^{\infty} ) ) ] \\
+\, &b_{s_h} ( \overline{X}^{M,i}, \pi ( \vec{\overline{X}}^{M}) ) - b_{s_h} ( \overline{X}^{M,i}, \L(\overline{X}^{M,i}) ) - [ b_{s_h} ( \overline{X}^{\infty,i}, \pi (\vec{\overline{X}}^{\infty}) ) - b_{s_h} ( \overline{X}^{\infty,i}, \L(\overline{X}^{\infty,i}) ) ].
\end{align*}
Let $\mathbf{b}^1_s$ (resp. $\mathbf{b}^2_s$) denote the first (resp. the second) line. 
To control $\mathbf{b}^1_s$, we use \ref{ass:pair} and Lemma \ref{lem:techC11} applied to $(X^i,Y^i) = (X^{M,i},X^{\infty,i})$ and $(\overline{X}^i,\overline{Y}^i) = (\overline{X}^{M,i}_s,\overline{X}^{\infty,i}_s)$ to get
\begin{multline*}
 \vert \mathbf{b}^1_s \vert \leq \frac{1}{N}\sum_{j=1}^N \Vert \tilde{b}_{s_h} \Vert_\mathrm{Lip} \big[ \sup_{0 \leq r \leq s} \vert \delta^{M,i}_r -\delta^{\infty,i}_r \vert + \sup_{0 \leq r \leq s} \vert \delta^{M,j}_r -\delta^{\infty,j}_r \vert \big] \\
 + \Vert D\tilde{b}_{s_h} \Vert_\mathrm{Lip} \, R^{i,j}_s \big[ \sup_{0 \leq r \leq s} \vert \delta^{\infty,i}_r \vert + \sup_{0 \leq r \leq s} \vert \delta^{\infty,j}_r \vert \big],
\end{multline*}
where
\[ R^{i,j}_s := \sup_{0 \leq r \leq s} \vert X^{M,i}_r -X^{\infty,i}_r \vert + \sup_{0 \leq r \leq s} \vert \overline{X}^{M,i}_r -\overline{X}^{\infty,i}_r \vert + \sup_{0 \leq r \leq s} \vert X^{M,j}_r -X^{\infty,j}_r \vert + \sup_{0 \leq r \leq s} \vert \overline X^{M,j} _r-\overline X^{\infty,j}_r \vert
\]
We then take supremum in time and expectations, and we use the Cauchy-Schwarz inequality and the bounds from Lemma \ref{lem:PoCM} to get 
\begin{multline*}
\E \big[ \sup_{t \leq r \leq s} \vert \mathbf{b}^1_r \vert \big] \leq C_h \E \big[ \sup_{0 \leq r \leq s} \vert \delta^{M,i}_r - \delta^{\infty,i}_r \vert \big] + \frac{C_h}{N} \sum_{j=1}^N \E \big[ \sup_{0 \leq r \leq s} \vert \delta^{M,j}_r - \delta^{\infty,j}_r  \vert \big] \\
+ \frac{C_h}{\sqrt{N}} \E^{1/2} \big[ \1_{\tau^1_M < T} \sup_{0 \leq r \leq T} \vert B^1_r \vert^2 \big]. 
\end{multline*} 
We recall that $( \delta^{M,j} )_{1 \leq j \leq M}$ is identically distributed.
The supremum in time was not needed to estimate $\E[ \vert \mathbf{b}^1_s \vert ]$, but it will be useful for performing the same computation with the $\boldsymbol{\sigma}^1_s$-term below.
We control $\mathbf{b}^2_s$ using Lemma \ref{lem:techiid} applied to the i.i.d. processes $(\overline{X}^j,\overline{Y}^j) = (\overline{X}^{M,j},\overline{X}^{\infty,j})$, $1 \leq j \leq N$.
From the bounds in Lemma \ref{lem:PoCM},
\[ \sup_{0 \leq r \leq s} \E [ \vert \mathbf{b}^2_r \vert^2 ] \leq C_h N^{-1} \E \big[ \1_{\tau^1_M < T} \sup_{0 \leq r \leq T} \vert B^1_r \vert^2 \big]. \]
For the $\overline{\sigma}^{M,i}_s$-term in \eqref{eq:decompFluctM}, we bound the integral using \eqref{eq:QuadVarMh} with $\sup_{t \leq s \leq t'} \vert \overline{\sigma}^{M,i}_s \vert$ instead of $M_\sigma$. We then split
\begin{multline*}
\E \big[ \sup_{t \leq s \leq t'} \vert \overline{\sigma}^{M,i}_s \vert^2 \big] \leq 2 \E \big[ \sup_{0 \leq s \leq T} \vert \sigma_{s_h} (X^{M,i}, \pi( \vec{X}^{M})) - \sigma_{s_h} (\overline{X}^{M,i}, \pi( \vec{\overline{X}}^{M}) ) \vert^2 \big] + \\
2 \E \big[ \sup_{0 \leq s \leq T} \vert \sigma_{s_h} (\overline{X}^{M,i}, \pi( \vec{\overline{X}}^{M}) ) - \sigma_{s_h} (\overline{X}^{M,i}, \L(\overline{X}^{M,i}) ) \vert^2 \big].  
\end{multline*} 
Using \ref{ass:coef1} and Lemma \ref{lem:PoCM}-\ref{item:lemPoCMPoCM}, the first term is bounded by $C_h N^{-1}$. 
From \ref{ass:pair} and Lemma \ref{lem:techiid}, the same bound holds for the second term.
From the Cauchy-Schwarz inequality, 
\[ \E \sup_{t \leq s \leq t'} \bigg\vert \int_t^s \overline{\sigma}^{M,i}_r \d [ B^i_{r \wedge \tau^i_M} - B^i_r ] \bigg\vert \leq \frac{C_h}{\sqrt{N}} \E^{1/2} \big[ \1_{\tau^1_M < T} \sup_{0 \leq r \leq T} \vert B^1_r \vert^2 \big]. \]
For the $\sigma^{M,i}_{s}$-term in \eqref{eq:decompFluctM}, we split $\sigma^{M,i}_{s} = \boldsymbol{\sigma}^{1}_{s} + \boldsymbol{\sigma}^{2}_{s}$ as we did for $b^{M,i}_s$.
Using the BDG inequality, 
\[ \E \sup_{t \leq s \leq t'} \bigg\vert \int_t^s \boldsymbol{\sigma}^{1}_r \d B^i_r \bigg\vert \leq C_h \E \bigg[ \bigg( \int_t^{t'} \vert \boldsymbol{\sigma}^{1}_s \vert^2 \d s \bigg)^{1/2} \bigg] \leq C_h (t'-t)^{1/2} \E \big[ \sup_{t \leq s \leq t'} \vert \boldsymbol{\sigma}^{1}_{s} \vert \big]. \]
We then handle $\E \big[ \sup_{t \leq s \leq t'} \vert \boldsymbol{\sigma}^{1}_{s} \vert \big]$ as we did for $\E \big[ \sup_{t \leq s \leq t'} \vert \mathbf{b}^{1}_{s} \vert \big]$.
Using the BDG inequality and Jensen's inequality, 
\[ \E \sup_{t \leq s \leq t'} \bigg\vert \int_0^s \boldsymbol{\sigma}^{2}_r \d B^i_r \bigg\vert \leq C_h \bigg( \int_t^{t'} \E [ \vert \boldsymbol{\sigma}^{2}_{s} \vert^2 ] \d s \bigg)^{1/2}, \]
and we handle $\E [ \vert \boldsymbol{\sigma}^{2}_{s} \vert^2 ]$ as we did for $\E [ \vert \mathbf{b}^{2}_{s} \vert^2 ]$.
Let us gather all the terms in \eqref{eq:decompFluctM}, using that 
\begin{equation} \label{eq:subAdd}
\forall s \in [t,t'], \quad \sup_{0 \leq r \leq s} \vert \delta^{M,i}_r - \delta^{\infty,i}_r \vert \leq \sup_{0 \leq r \leq t} \vert \delta^{M,i}_r - \delta^{\infty,i}_r \vert + \sup_{t \leq r \leq s} \vert \delta^{M,i}_r - \delta^{\infty,i}_r \vert. 
\end{equation} 
Setting $f^{M}(t,t') := \sqrt{N} \, \E [ \sup_{t \leq s \leq t'} \vert \delta^{M,i}_s - \delta^{\infty,i}_s \vert ]$, for any $0 \leq t \leq t' \leq T$, 
\[ f^{M} (t,t') \leq C_h f^{M} (0,t) + C_h \E^{1/2} \big[ \1_{\tau^1_M < T} \sup_{0 \leq s \leq T} \vert B^1_s \vert^2 \big] + C_h (t'-t)^{1/2} f^M (t,t') + C_h \int_t^{t'} f^{M} (t,s) \d s, \]
for a constant $C_h$ that does not depend on $(t,t')$.
Using \eqref{eq:subAdd}, Lemma \ref{lem:Gronwall} now gives the desired bound on $f^M (0,T)$.
\end{proof}

\begin{proof}[Proof of Proposition \ref{pro:ClThB}]
Let us fix $\varphi$ in $\C^{1,1}(\C^d,\R)$, before defining
\[ \delta^{M}_\varphi := N^{-1} {\textstyle\sum}_{i=1}^N \big[ \varphi ( X^{M,i} ) - \E [ \varphi ( \overline{X}^{M,i} ) ] \big]. \]
We then decompose
\begin{align*}
\delta^{M}_\varphi - \delta^{\infty}_\varphi =& \, N^{-1} {\textstyle\sum}_{i=1}^N \big[ \varphi ( X^{M,i} ) - \varphi ( \overline{X}^{M,i} ) \big] - N^{-1} {\textstyle\sum}_{i=1}^N \big[ \varphi ( X^{\infty,i} ) - \varphi ( \overline{X}^{\infty,i} ) \big]  \\
&\,+ N^{-1} {\textstyle\sum}_{i=1}^N \big[ \varphi ( \overline{X}^{M,i} ) - \E [ \varphi ( \overline{X}^{M,i} ) ] \big] - N^{-1} {\textstyle\sum}_{i=1}^N \big[ \varphi ( \overline{X}^{\infty,i} ) - \E [ \varphi ( \overline{X}^{\infty,i} ) ] \big].
\end{align*} 
As previously, we control the first line using Lemma \ref{lem:techC11}, Lemma \ref{lem:PoCM} and Proposition \ref{pro:approxhM}.
Similarly, we handle the second line using Lemma \ref{lem:techiid} and Lemma \ref{lem:PoCM}.
At the end of the day, we obtain that
\[ \sup_{N \geq 1} {\sqrt{N}} \, \E \big[ \lvert \delta^{M}_\varphi - \delta^{\infty}_\varphi \rvert \big] \xrightarrow[M \rightarrow + \infty]{} 0. \]
As a consequence, the convergence given by Corollary \ref{cor:hMCLT} holds uniformly in $M$, with variance $\sigma^2_{M,\varphi}$. 
Let $(B_t)_{0 \leq t \leq T}$, $(\tilde{B}_t)_{0 \leq t \leq T}$ be independent Brownian motions in $\R^{d'}$.
Let $X^M$ denote the strong solution of the discretised McKean-Vlasov \eqref{eq:GenMcK} with driving noise $( {B}_{t \wedge \tau_M})_{0 \leq t \leq T}$. 
Similarly, let $\tilde{X}^M$ denote the strong solution of \eqref{eq:GenMcK} with driving noise $( \tilde{B}_{t \wedge \tilde\tau_M})_{0 \leq t \leq T}$, the definition of $\tilde{\tau}_M$ being straightforward. 
Let us then introduce
the solution $\delta X^M$ of the McKean-Vlasov SDE with common noise
\begin{align*} \label{eq:McKNoisehM}
\d \delta X^M_{t_h} &= \big[ D_x \sigma_{t_h} ( X^M, \L ( X^M ) ) \cdot \delta X^M + \delta_P \sigma_{t_h} ( X^M, \L ( X^M ), \tilde{X}^h ) \big] \d B_{t \wedge \tau_M} \\
&+ \big[ D_x b_{t_h} ( X^M, \L ( X^M ) ) \cdot \delta X^M + \delta_P b_{t_h} ( X^M, \L ( X^M ), \tilde{X}^M ) \big] \d t \\
&+ \E \big[ D_y \delta_P b_{t_h} ( X^M , \L ( X^M ), X^M ) \cdot  \delta X^M \big\vert ( \tilde{B}_s )_{0 \leq s \leq t_h} \big] \d t \\
&+ \E \big[ D_y \delta_P \sigma_{t_h} ( X^M, \L ( X^M ), X^M ) \cdot  \delta X^M \big\vert ( \tilde{B}_s )_{0 \leq s \leq t_h} \big] \d B_{t \wedge \tau_M}, \qquad \delta X^M_0 = 0.\phantom{abcd}
\end{align*}
Well-posedness for the above equation is given by Lemma \ref{lem:difMap}. 
This can be seen as a discretised version of a McKean-Vlasov SDE with common noise. 
From Corollary \ref{cor:hMCLT} and the equation \eqref{eq:defdif} satisfied by $\overline{S}^{h,M,R}$, we deduce that
\[ \sigma^2_{M,\varphi} := \E \big\{ [ \varphi ( \tilde{X}^M ) + \E [ D \varphi ( X^M ) \cdot \delta X^M - \varphi ( X^M ) \vert \tilde{B} ] ]^2 \big\}. \]
Using synchronous coupling and a Gronwall argument (as in the above proofs), we can show that
\[ \E[ \sup_{0 \leq t \leq T} \vert \delta X^{M}_t - \delta X^{h}_t \vert ] \xrightarrow[M \rightarrow +\infty]{} 0, \]
where $\delta X^h$ is the solution of \eqref{eq:McKNoiseh} with the same $(B,\tilde{B})$. 
From this, we can deduce that $\sigma^2_{M,\varphi}$ converges to $\sigma^2_{h,\varphi}$ as required by Proposition \ref{pro:ClThB}.
This completes the proof.
\end{proof}

\section{Extension to the continuous setting} \label{sec:Conti}

\subsection{Large deviations and stochastic control} \label{subsec:contLDP}

Throughout this section, we assume \ref{ass:coef1}-\ref{ass:IniExp} and $p \in [1,2)$.
We recall that the notion of reference system $\Sigma = (\Omega,(\F_t)_{t \leq 0 \leq T}, \P, ( \vec{B}^N_t)_{0 \leq t \leq T})$ is defined in Section \ref{subsec:not}.
For such a $\Sigma$, $h \in [0,1]$ and $N \geq 1$, we define the controlled system $\vec{X}^{h,N,\vec{u}^N}$ by 
\begin{multline} \label{eq:NControlled}
\d X^{h,i,N,\vec{u}^N}_t = b_{t_h} ( X^{h,i,N,\vec{u}^N}, \pi( \vec{X}^{h,N,\vec{u}^N} ) ) \d t + \sigma_{t_h} ( X^{h,i,N,\vec{u}^N}, \pi( \vec{X}^{h,N,\vec{u}^N} ) ) u^{i,N}_t \d t \\
+ \sigma_{t_h} ( X^{h,i,N,\vec{u}^N}, \pi( \vec{X}^{h,N,\vec{u}^N} ) ) \d B^i_t, \quad \vec{X}^{h,i,N,\vec{u}^N} = X^{h,i,N}_0, \quad 1 \leq i \leq N,
\end{multline}
where $\vec{X}^{h,N}_0 \in L^p_\P(\Omega,\R^d)$ and $\vec{u}^N = (\vec{u}^N_t)_{0 \leq t \leq T}$ is any $(\R^{d'})^N$-valued square-integrable progressively measurable process. 
Such a process will be called an \emph{admissible control}.
Following \cite[Section 3, page 9]{budhiraja2012large}, strong existence and pathwise uniqueness for \eqref{eq:NControlled} is given by the Girsanov transform if $\int_0^T \vert \vec{u}^N_t \rvert^2 \d t \leq M$ a.s. for some $M >0$, and still holds otherwise using a localisation argument.

Let $F : \ps_p ( \C^d ) \rightarrow \R$ be a bounded and Lipschitz-continuous function. 
Using the notations
\[ \mathcal{E} ( \vec{u}^N ) := \frac{1}{N} \sum_{i = 1}^N \mathcal{E} ( u^{i,N} ), \qquad \mathcal{E} ( u^{i,N} ) := \int_0^T \frac{1}{2} \vert u^{i,N}_t \vert^2 \d t, \]
we introduce the stochastic control problem $\F^{h,N} := \inf_{\Sigma, \vec{X}^{h,N}_0,\vec{u}^N} J^h_\Sigma (\vec{X}^{h,N}_0, \vec{u}^N )$, where
\[ J^h_\Sigma (\vec{X}^{h,N}_0, \vec{u}^N ) := N^{-1} H( \L(\vec{X}^{h,N}_0) \vert \L(X^1_0)^{\otimes N} ) + \E \big[ \mathcal{E} ( \vec{u}^N ) + F( \pi ( \vec{X}^{h,N,\vec{u}^N} ) ) \big], \]
and $\mathcal{L}(X^1_0)$ is given by \ref{ass:IniExp}. 
Let us fix a reference system $\Sigma$. 
We first restrict the class of controls $(\vec{X}^{h,N}_0,\vec{u}^N)$.
We recall that $(X^{h,i,N}_0,u^{i,N})_{1 \leq i \leq N}$ is \emph{exchangeable} if the law of $(X^{h,\tau(i),N}_0,u^{\tau(i),N})_{1 \leq i \leq N}$ is the same for any permutation $\tau$ of $\{ 1 , \ldots, N \}$.

\begin{lemma}[Exchangeability] \label{lem:restrict}
We can restrict the minimisation defining $\F^{h,N}$ to controls $(\vec{X}^{h,N}_0,\vec{u}^N)$ such that for every $1 \leq i \leq N$,
\begin{enumerate}[label=(\roman*),ref=(\roman*)]
\item\label{item:lemresiid} $(X^{h,i,N}_0,u^{i,N})_{1 \leq i \leq N}$ is exchangeable and $\E [ \mathcal{E}(u^{1,N}) ] \leq 2 \lVert F \Vert_\infty$.
\item\label{item:lemresL1} $\E[ \vert X^{h,1,N}_0 \vert^p ] \leq C$, where $C$ only depends on $(p,\lVert F \rVert_\infty,\L(X^1_0))$.
\end{enumerate}
\end{lemma}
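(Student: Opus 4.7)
The plan is to reduce to exchangeable controls by a symmetrization trick, to bound the control cost by comparison with the zero control, and then to extract the moment bound from the entropy cost via the Donsker--Varadhan variational formula.

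First I would obtain the \emph{a priori} upper bound $\mathcal{F}^{h,N} \leq \lVert F \rVert_\infty$ by plugging in the trivial candidate $\vec{X}^{h,N}_0 = (X^{i}_0)_{1 \leq i \leq N}$ (i.i.d.\ with law $\mathcal{L}(X^{1}_0)$ from \ref{ass:IniExp}) and $\vec{u}^N \equiv 0$, for which the entropy and control terms vanish and $F$ contributes at most $\lVert F \rVert_\infty$. Therefore we may restrict the infimum to controls $(\vec{X}^{h,N}_0, \vec{u}^N)$ satisfying $J^h_\Sigma \leq \lVert F \rVert_\infty$. Combined with the lower bound $J^h_\Sigma \geq \E[\mathcal{E}(\vec{u}^N)] - \lVert F \rVert_\infty$, this gives $\E[\mathcal{E}(\vec{u}^N)] \leq 2\lVert F \rVert_\infty$ and $N^{-1} H(\mathcal{L}(\vec{X}^{h,N}_0) \vert \mathcal{L}(X^{1}_0)^{\otimes N}) \leq 2 \lVert F \rVert_\infty$ for any admissible control we need to consider.

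Next I would enforce exchangeability by a symmetrization argument. Enlarge the reference system $\Sigma$ by adjoining a uniform random permutation $\tau$ of $\{1,\ldots,N\}$, independent of the initial conditions, of $\vec{B}^N$ and of $\vec{u}^N$, and relabel: consider $(\vec{X}^{h,\tau,N}_0, \vec{u}^{\tau,N}) = (X^{h,\tau(i),N}_0, u^{\tau(i),N})_{1 \leq i \leq N}$ driven by $(B^{\tau(i)})_{1 \leq i \leq N}$, which remains an i.i.d.\ Brownian family. Since the empirical measure $\pi(\vec{X}^{h,N,\vec{u}^N})$ and the total control cost $\mathcal{E}(\vec{u}^N) = N^{-1}\sum_i \mathcal{E}(u^{i,N})$ are invariant under permutations of the index, both $F(\pi(\cdot))$ and $\E[\mathcal{E}(\vec{u}^{\tau,N})]$ are unchanged. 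For the entropy term, the law of the symmetrized initial condition is $\frac{1}{N!}\sum_\tau \tau_\# \mathcal{L}(\vec{X}^{h,N}_0)$, and since $\mathcal{L}(X^{1}_0)^{\otimes N}$ is permutation-invariant, the convexity of $H(\cdot \vert \mathcal{L}(X^{1}_0)^{\otimes N})$ yields that the entropy does not increase. After symmetrization, $(X^{h,i,N}_0, u^{i,N})_{1 \leq i \leq N}$ is exchangeable and the control cost bound gives $\E[\mathcal{E}(u^{1,N})] = \E[\mathcal{E}(\vec{u}^N)] \leq 2\lVert F \rVert_\infty$, proving \ref{item:lemresiid}.

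For \ref{item:lemresL1} I would apply the Donsker--Varadhan variational inequality with reference $Q = \mathcal{L}(X^{1}_0)^{\otimes N}$, target $P = \mathcal{L}(\vec{X}^{h,N}_0)$ and test function $g(\vec{x}^N) = \alpha \sum_{i=1}^N |x^i|^p$ for $\alpha > 0$. This reads
\[
\alpha \sum_{i=1}^N \E[|X^{h,i,N}_0|^p] \leq H(\mathcal{L}(\vec{X}^{h,N}_0) \vert \mathcal{L}(X^{1}_0)^{\otimes N}) + N \log \E\bigl[ e^{\alpha |X^{1}_0|^p}\bigr].
\]
By exchangeability the left-hand side equals $\alpha N \, \E[|X^{h,1,N}_0|^p]$; dividing by $\alpha N$ and using the bound $N^{-1} H \leq 2\lVert F \rVert_\infty$ from the first step, together with the fact that $\log \E[e^{\alpha |X^{1}_0|^p}] < \infty$ for every $\alpha > 0$ by \ref{ass:IniExp}, yields the desired $N$-uniform bound $\E[|X^{h,1,N}_0|^p] \leq C$ with $C$ depending only on $(p, \lVert F \rVert_\infty, \mathcal{L}(X^{1}_0))$. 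The only subtlety is to ensure that the symmetrization step is compatible with the definition of admissible control on a reference system; this is standard since the infimum in $\mathcal{F}^{h,N}$ is taken over all such systems, hence enlargement is allowed.
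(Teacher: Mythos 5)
Your proposal is correct and follows essentially the same route as the paper: trivial bound $\mathcal{F}^{h,N} \leq \lVert F\rVert_\infty$, symmetrization by a random permutation together with convexity of relative entropy, then a Donsker--Varadhan-type duality to turn the entropy bound into a moment bound. The only (cosmetic) difference is in the final step: the paper first uses the Csisz\'ar tensorization inequality to pass from the $N$-particle entropy to the one-particle entropy $H(\mathcal{L}(X^{h,1,N}_0)\,\vert\,\mathcal{L}(X^1_0))$ and then applies the variational formula at the one-particle level with $\phi(x)=|x|^p$, whereas you apply the variational formula directly at the $N$-particle level with the symmetric test function $\alpha\sum_i |x^i|^p$ and then use exchangeability to collapse the left-hand side; these two computations are equivalent here.
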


\begin{proof}
By choosing $\L(\vec{X}^{h,N}_0) = \L(X^1_0)^{\otimes N}$ and $\vec{u}^N \equiv 0$, we get that $\F^{h,N} \leq \Vert F \Vert_\infty.$
This allows us to restrict ourselves to admissible controls $\vec{u}^N$ with $\E[ \mathcal{E} ( \vec{u}^N ) ] \leq 2 \Vert F \Vert_\infty$. 
Let us fix such a $( \vec{X}^{h,N}_0, \vec{u}^N )$. 
Let $\tau$ be a uniformly distributed random permutation of $\{ 1 , \ldots, N \}$ that is independent of $\vec{B}^N$, $\vec{X}_0^{h,N}$, and $\vec{u}^N$. 
Defining $X^{h,\tau,i,N}_0 := X^{h,\tau(i),N}_0$ and $u^{\tau,i,N} := u^{\tau(i),N}$, we notice that $( X^{h,\tau,i,N}_0, u^{\tau,i,N} )_{1 \leq i \leq N}$ is exchangeable.
The symmetry of \eqref{eq:NControlled} then implies that $\pi(X^{h,\tau,N,\vec{u}^{\tau,N}})$ has the same law as $\pi(X^{h,N,\vec{u}^N})$.
For any permutation $\tau'$, we define 
\[ f^N_{\tau'} : (x_1,...,x_N) \mapsto (x_{\tau'(1)},\ldots,x_{\tau'(N)}). \] The contraction property of entropy \cite[Lemma A.1]{fischer2014form} gives that
\[ H ( \L ( \vec{X}^{h,\tau',N}_0 ) \vert \L(X^1_0)^{\otimes N} ) = H ( (f^N_{\tau'})_\# \L ( \vec{X}^{h,N}_0 ) \vert (f^N_{\tau'})_\# \L(X^1_0)^{\otimes N} ) \leq H ( \L ( \vec{X}^{h,N}_0 ) \vert \L(X^1_0)^{\otimes N} ). \]
Since $\L( \vec{X}^{h,\tau,N}_0 ) = \E [ (f^N_{\tau})_\# \L( \vec{X}^{h,N}_0 ) ]$ and $P \mapsto H ( P \vert Q)$ is convex, Jensen's inequality then implies that
\[ H ( \L ( \vec{X}^{h,\tau,N}_0 ) \vert \L(X^1_0)^{\otimes N} ) \leq \E [ H ( (f^N_{\tau})_\# \L ( \vec{X}^{h,N}_0 ) \vert \L(X^1_0)^{\otimes N} ) ] \leq H ( \L ( \vec{X}^{h,N}_0 ) \vert \L(X^1_0)^{\otimes N} ). \]
From all this, we can restrict oursleves to assuming \ref{item:lemresiid}.
Similarly, we can assume that
\[ N^{-1} H( \L( \vec{X}^{h,N}_0 ) \vert \L ( X^{1}_0)^{\otimes N} ) \leq 2 \lVert F \rVert_\infty. \]
Using the symmetry assumption \ref{item:lemresiid}, the tensorisation property \cite[Equation (2.10)]{csiszar1984sanov} yields 
\[ H( \L (X^{h,1,N}_0) \vert \L( X^1_0 )) \leq N^{-1} H( \L( \vec{X}^{h,N}_0 ) \vert \L ( X^{1}_0)^{\otimes N} ). \] 
Let us use the dual representation \cite[Proposition 3.1-(iii)]{leonard2012girsanov} for the relative entropy
\begin{equation} \label{eq:VarEntrop} 
H( \L (X^{h,1,N}_0) \vert \L( X^1_0 )) = \sup_{\substack{\phi \text{ measurable} \\ \E [ \phi ( X^{1}_0 ) ] < +\infty}} \E [ \phi ( X^{h,1,N}_0 ) ] - \log \E [ e^{\phi ( X^{1}_0 )} ],   
\end{equation}
with $\phi ( x ) := \vert x \vert^p$, $\E [ e^{\vert X^{1}_0 \vert^p} ]$ being finite from \ref{ass:IniExp}.
Gathering inequalities now allows us to assume the bound \ref{item:lemresL1} on $\E[ \vert X^{h,1,N}_0 \vert^p ]$.
\end{proof}

\begin{lemma}[Uniform approximation] \label{lem:Approxh}
There exists $C > 0$ such that for every $N \geq 1$, $h \in [0,1]$, every $(\vec{X}^{h,N}_0,\vec{u}^N)$ satisfying \ref{item:lemresiid}-\ref{item:lemresL1} in Lemma \ref{lem:restrict}, and $1 \leq i \leq N$,
\begin{enumerate}[label=(\roman*),ref=(\roman*)]
\item\label{item:lemAppbound} $\E [ \sup_{0 \leq t \leq T} \vert X^{h,i,N,\vec{u}^N}_t \vert^p ] \leq C$.
\item\label{item:lemAppShifthh} $\E [ \sup_{0 \leq t \leq T} \vert X^{h,i,N,\vec{u}^N}_t -  X^{h,i,N,\vec{u}^N}_{t_h} \vert^p ] \leq C \vert h \log h \vert^{p/2}$.
\item\label{item:lemAppShifth} $\E [ \sup_{0 \leq t \leq T} \vert X^{0,i,N,\vec{u}^N}_t -  X^{0,i,N,\vec{u}^N}_{t_h} \vert^p ] \leq C \vert h \log h \vert^{p/2}$.
\item\label{item:lemApphhinf} $\E [ \sup_{0 \leq t \leq T} \vert X^{h,i,N,0}_t - X^{0,i,N,0}_t \vert^p ] \leq C \vert h \log h \vert^{p/2}$.
\end{enumerate}
In the above estimates, we assume that $\vec{X}^{h,N,\vec{u}^N}_0 = \vec{X}^{0,N,\vec{u}^N}_0 = \vec{X}^{h,N}_0$.
\end{lemma}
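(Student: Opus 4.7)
The plan is to prove the four estimates in order, through the decomposition of \eqref{eq:NControlled} into drift, controlled-drift $\sigma_{s_h}(\cdots)u^{i,N}_s\,\mathrm{d}s$, and Brownian-integral pieces; throughout, the exchangeability from Lemma~\ref{lem:restrict}-\ref{item:lemresiid} converts symmetric statistics of the $N$-particle system into single-particle expectations. For \ref{item:lemAppbound}, I take $p$-th powers of the SDE representation, apply BDG in $L^p$ to the Brownian integral (using $\vert\sigma\vert\leq M_\sigma$), bound the controlled drift by $M_\sigma T^{1/2}\mathcal{E}(u^{i,N})^{1/2}$ via Cauchy--Schwarz, and take expectations via concave Jensen on $x\mapsto x^{p/2}$, exploiting $\E[\mathcal{E}(u^{i,N})]\leq 2\Vert F\Vert_\infty$ from Lemma~\ref{lem:restrict}-\ref{item:lemresiid}. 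The drift term, bounded via \ref{ass:coef1} by $C(1+\Vert X^{h,i,N,\vec u^N}\Vert_\infty+W_p(\pi(\vec X^{h,N,\vec u^N}),\delta_0))$, is then closed by Gronwall in time, the Wasserstein contribution being absorbed by exchangeability.

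For \ref{item:lemAppShifthh}, on each slab $[kh,(k+1)h]$ the explicit Euler dynamics give
\begin{equation*}
X^{h,i,N,\vec{u}^N}_t - X^{h,i,N,\vec{u}^N}_{kh} = (t-kh)\,b_{kh}(\cdots) + \sigma_{kh}(\cdots)\!\int_{kh}^t\! u^{i,N}_s\,\mathrm{d}s + \sigma_{kh}(\cdots)[B^i_t-B^i_{kh}].
\end{equation*}
Taking the supremum over $t\in[0,T]$, the drift contributes $O(h)$ in $L^p$ via (i), the controlled drift $O(h^{p/2})$ via Cauchy--Schwarz and Lemma~\ref{lem:restrict}-\ref{item:lemresiid}, and the Brownian piece $M_\sigma\sup_t\vert B^i_t-B^i_{t_h}\vert$ has $L^p$-moment bounded by $C\vert h\log h\vert^{p/2}$ through a standard Gaussian-maximum estimate over the $\lceil T/h\rceil$ slabs. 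For \ref{item:lemAppShifth}, the same decomposition applies but the Brownian contribution becomes the stochastic integral $\int_{t_h}^t\sigma_r(\cdots)\,\mathrm{d}B^i_r$; a componentwise Dambis--Dubins--Schwarz time-change, with $\mathrm{d}\langle M^j\rangle_t\leq M_\sigma^2\,\mathrm{d}t$, reduces $\sup_t\vert M_t-M_{t_h}\vert$ to the modulus of continuity of a Brownian motion over time-scale $M_\sigma^2 h$ and yields the same $\vert h\log h\vert^{p/2}$ bound.

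For \ref{item:lemApphhinf}, set $\Delta^i_t:=X^{h,i,N,0}_t-X^{0,i,N,0}_t$ (which vanishes at $t=0$ by the shared initial condition) and $\phi(t):=\E\sup_{s\leq t}\vert\Delta^i_s\vert^p$. Split the drift difference through the intermediate $b_s(X^{h,i,N,0},\pi(\vec X^{h,N,0}))$: the first piece is bounded via \ref{ass:coef1} by $L_b\bigl[h+\Vert X^{h,i,N,0}_{\wedge s}-X^{h,i,N,0}_{\wedge s_h}\Vert_\infty+W_p(\pi(\vec X^{h,N,0})_{\wedge s},\pi(\vec X^{h,N,0})_{\wedge s_h})\bigr]$, whose $L^p$-contribution is $O(\vert h\log h\vert^{p/2})$ by (ii) and exchangeability; the second is $L_b\bigl[\sup_{r\leq s}\vert\Delta^i_r\vert+W_p(\pi(\vec X^{h,N,0})_{\wedge s},\pi(\vec X^{0,N,0})_{\wedge s})\bigr]$, whose Wasserstein part has $p$-th moment at most $\phi(s)$ by synchronous coupling and exchangeability. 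Split the $\sigma$-difference analogously and feed it into a BDG-$L^p$ estimate for the stochastic integral (for which the exponent $p/2\leq 1$ requires careful bookkeeping described below). Combining, one derives an integral inequality of the form $\phi(t)\leq C\vert h\log h\vert^{p/2}+C\int_0^t\phi(r)\,\mathrm{d}r$ (possibly after absorbing lower-order square-root terms by AM--GM), and Gronwall concludes.

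The main obstacle is closing the BDG step for \ref{item:lemApphhinf} in $L^p$ with $p<2$. Since \ref{ass:IniExp} provides only sub-Weibull (rather than Gaussian) exponential moments of the initial law, Lemma~\ref{lem:restrict}-\ref{item:lemresL1} only yields $L^p$-control of $X^{h,1,N}_0$, so one cannot simply run the whole computation in $L^2$. Controlling $\bigl(\int_0^t\vert\sigma_{s_h}(\cdots)-\sigma_s(\cdots)\vert^2\,\mathrm{d}r\bigr)^{p/2}$ therefore requires a careful decomposition exploiting the elementary inequality $(a_1+\cdots+a_k)^{p/2}\leq\sum_j a_j^{p/2}$ (valid for $p/2\leq 1$) to distribute the power across the Lipschitz components of the integrand, together with the bound $\sup_r\mathrm{incr}(r)\leq\sup_t\vert X^{h,i}_t-X^{h,i}_{t_h}\vert$ combined with (ii) to handle the increment-type contributions in $L^1$ at the correct order $\vert h\log h\vert^{p/2}$, and a bootstrap-in-time step to treat the $\vert\Delta^i\vert^2$ contribution so that the Gronwall loop closes with the correct power of $\vert h\log h\vert$.
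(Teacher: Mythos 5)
Your proposal follows essentially the same route as the paper: decompose the controlled dynamics \eqref{eq:NControlled} into drift, controlled-drift, and Brownian pieces, exploit exchangeability from Lemma~\ref{lem:restrict}-\ref{item:lemresiid} to reduce empirical-measure errors to single-particle moments, bound the Brownian increment in the modulus-of-continuity norm, and close with a Gronwall-type argument. The treatments of~\ref{item:lemAppbound} are equivalent (you use concave Jensen on $x^{p/2}$ where the paper uses $\vert u\vert^p\leq 1+\vert u\vert^2$; both exploit $\E[\mathcal{E}(u^{i,N})]\leq 2\Vert F\Vert_\infty$). For~\ref{item:lemAppShifthh}-\ref{item:lemAppShifth}, the paper bounds the Brownian piece for both $h'=h$ and $h'=0$ in one shot via the continuity-modulus estimate of \cite[Theorem~1]{fischer2009moments} for It\^o processes with bounded diffusion; your split (Gaussian maximum over slabs for $h'=h$, componentwise Dambis--Dubins--Schwarz for $h'=0$) is a valid, slightly more hands-on alternative yielding the same $\vert h\log h\vert^{p/2}$ rate. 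Your Lipschitz splitting of $b^{h,i}_s$ in~\ref{item:lemApphhinf} picks a different intermediate point than the paper's \eqref{eq:driftDec}, but is equally effective.

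The one place where your argument is genuinely imprecise is the closing of the Gronwall loop in~\ref{item:lemApphhinf}. After BDG with $p<2$, the stochastic-integral contribution is bounded by $C\,\E\bigl[\bigl(\int_t^{t'}\vert\sigma^{h,i}_s\vert^2\,\mathrm{d}s\bigr)^{p/2}\bigr]\leq C(t'-t)^{p/2}\,\E\bigl[\sup_{t\leq s\leq t'}\vert\sigma^{h,i}_s\vert^p\bigr]$, and the Lipschitz expansion of $\vert\sigma^{h,i}_s\vert^p$ reintroduces $\sup_{0\leq r\leq s}\vert\Delta^i_r\vert^p$ with the full amplitude. Thus one obtains an inequality of the type $f(t,t')\leq C f(0,t)+C\vert h\log h\vert^{p/2}+C(t'-t)^{p/2}f(t,t')+C\int_t^{t'}f(t,s)\,\mathrm{d}s$, \emph{not} the closed form $\phi(t)\leq C\vert h\log h\vert^{p/2}+C\int_0^t\phi(r)\,\mathrm{d}r$ you quote: the term $C(t'-t)^{p/2}f(t,t')$ is neither lower order nor removable by AM--GM. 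It is exactly to dispose of this self-referential term that the paper introduces the two-variable, chunked Gronwall Lemma~\ref{lem:Gronwall}, which subdivides $[0,T]$ into intervals of length $t_0$ with $Ct_0^{p/2}<1/2$ and patches with finite induction. Your phrase ``a bootstrap-in-time step'' gestures at the right idea, but as written the proof of~\ref{item:lemApphhinf} leaves this step unjustified; you should either invoke Lemma~\ref{lem:Gronwall} explicitly or reproduce the chunking argument.
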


\begin{proof} In the following proof, $C >0$ is a constant that may change from line to line, but always satisfying the requirements of Lemma \ref{lem:Approxh}.
With a slight abuse, we use the convention $t_0 = t$.
We write $X^{h,i} = X^{h,i,N,\vec{u}^N}$ for conciseness.

\ref{item:lemAppbound}
Using \ref{ass:coef1}, for $1 \leq i \leq N$, $0 \leq t \leq T$,
\begin{equation} \label{eq:JensenW}
\vert b_{t_h} ( X^{h,i}, \pi ( \vec{X}^{h} ) ) \vert^p \leq C [ 1 + \sup_{0 \leq s \leq t_h} \vert X^{h,i}_s \vert^p + N^{-1} \textstyle{ \sum_{j=1}^N \sup_{0 \leq s \leq t_h} \vert X^{h,j}_s \vert^p }].    
\end{equation}
From Lemma \ref{lem:restrict}-\ref{item:lemresiid}, all the $X^{h,i}$ have the same law. 
Moreover, $\vec{u}^N$ is square-integrable and $\sigma$ is bounded.
When $h >0$ a direct induction shows that $\E \big[ \sup_{0 \leq s \leq t} \vert X^{h,i}_s \vert^p \big]$ is finite under \ref{ass:coef1}.
When $h =0$, this is still true as a classical consequence of the Gronwall lemma.
Integrating in \eqref{eq:NControlled}, taking the $p$-power and using Jensen's inequality, before taking supremum in time and expectations, we get that
\begin{multline*}
\E \big[ \sup_{0 \leq s \leq t} \vert X^{h,i}_s \vert^p \big] \leq C + C T^{p-1} \int_0^t \E \big[ \sup_{0 \leq r \leq s} \vert X^{h,i}_r \vert^p \big] \d s
+ C M^p_\sigma \int_0^t \E [ \vert u^{i,N}_s \vert^p ] \d s, \\
+ C \E \sup_{0 \leq s \leq t} \bigg\vert  \int_0^s \sigma_{r_h}(X^{i,h},\pi(\vec{X}^{h})) \d B^i_r \bigg\vert^p.
\end{multline*} 
Since $p \in [1,2]$, the penultimate term is bounded by $C [ 1 + \E [ \mathcal{E}(u^{i,N}) ] ]$. 
From the BDG inequality and \ref{ass:coef1}, 
\[ \E \sup_{0 \leq s \leq t} \bigg\vert  \int_0^s \sigma_{r_h}(X^{i,h},\pi(\vec{X}^{h})) \d B^i_r \bigg\vert^p \leq C \E \bigg[ \bigg( \int_0^t \vert \sigma_{s_h}(X^{i,h},\pi(\vec{X}^{h})) \vert^2 \d s \bigg)^{p/2} \bigg] \leq C. \]
Gathering the terms, the Gronwall lemma now gives the desired uniform-in-$h$ bound.

\ref{item:lemAppShifthh}-\ref{item:lemAppShifth} Let $h'$ be in $\{0,h\}$. For $1 \leq i \leq N$ and $0 \leq t \leq T$,
\begin{multline*}
X^{h',i}_t - X^{h',i}_{t_h} = \int_{t_h}^t b_{s_{h'}} ( X^{h',i}, \pi ( \vec{X}^{h',N} ) ) \d s + \int_{t_h}^t \sigma_{s_{h'}} ( X^{h',i}, \pi ( \vec{X}^{h',N} ) ) u^{i,N}_s \d s \\
+ \int_{t_h}^t \sigma_{s_{h'}} ( X^{h',i}, \pi ( \vec{X}^{h',N} ) ) \d B^i_s.
\end{multline*} 
We then take the $p$-power and supremum in time.
Taking supremum in time and expectations in \eqref{eq:JensenW}, \ref{ass:coef1} and \ref{item:lemAppbound} yield
\[ \E \sup_{0 \leq t \leq T} \bigg\vert \int_{t_h}^t b_{s_{h'}} ( X^{h',i}, \pi ( \vec{X}^{h',N} ) ) \d s \bigg\vert^p \leq C h^{p}. \]
Similarly, using $p \in [1,2]$, Jensen's inequality and the bound on $\sigma$,
\[  \E \sup_{0 \leq t \leq T} \bigg\vert \int_{t_h}^t \sigma_{s_{h'}} ( X^{h',i}, \pi ( \vec{X}^{h',N} ) ) u^{i,N}_s \d s \bigg\vert^p \leq C h^{p/2} \E \bigg[ \bigg( \int_0^T \vert u^{i,N}_t \vert^2 \d t \bigg)^{p/2} \bigg] \leq C h^{p/2} \E^{p/2} [ 2 \mathcal{E}(u^{i,N})], \]
and $\E [ \mathcal{E}(u^{i,N})]$ is bounded using Lemma \ref{lem:restrict}-\ref{item:lemresiid}.
Since $\sigma$ is bounded, the estimate \cite[Theorem 1]{fischer2009moments} for the continuity modulus of Ito processes yields
\[ \E \sup_{0 \leq t \leq T} \bigg\vert \int_{t_h}^t \sigma_{s_{h'}} ( X^{h',i}, \pi ( \vec{X}^{h',N} ) ) \d B^i_s \bigg\vert^p \leq C \vert h \log h \vert^{p/2}. \]
Gathering all the terms gives the desired bound.

\ref{item:lemApphhinf} 
We now write $X^{h,i} = X^{h,i,N,0}$, for $h \in [0,1]$. 
For every $0 \leq t \leq t' \leq T$ and $1 \leq i \leq N$,
\[ X^{h,i}_{t'} - X^{0,i}_{t'} = X^{h,i}_{t} - X^{0,i}_{t} + \int_t^{t'} b^{h,i}_s \d s + \int_t^{t'} \sigma^{h,i}_s \d B^i_s,    \]
where
\[ b^{h,i}_{s} := b_{s_h} ( X^{h,i}, \pi ( \vec{X}^{h,N} ) ) - b_{s} ( X^{0,i}, \pi ( \vec{X}^{0,N} ) ), \]
and $\sigma^{h,i}_{s}$ is similarly defined. 
We then take the $p$-power of each side and supremum in time. We decompose as
\[ b^{h,i}_{s} = b_{s_h} ( X^{h,i}_{\wedge s_h}, \pi ( \vec{X}^{h,N}_{\wedge s_h} ) ) - b_{s_h} ( X^{0,i}_{\wedge s_h}, \pi ( \vec{X}^{0,N}_{\wedge s_h} ) )
+ b_{s_h} ( X^{0,i}_{\wedge s_h}, \pi ( \vec{X}^{0,N}_{\wedge s_h} ) ) - b_{s} ( X^{0,i}_{\wedge s}, \pi ( \vec{X}^{0,N}_{\wedge s} ) ). \]
Hence, using \ref{ass:coef1}, 
\begin{multline} \label{eq:driftDec}
\vert b^{h,i}_s \vert^p \leq C \sup_{0 \leq r \leq s_h} \vert X^{h,i}_r - X^{0,i}_r \vert^p + C N^{-1} \textstyle{\sum_{j=1}^N \sup_{0 \leq r \leq s_h} \vert X^{h,j}_r - X^{0,j}_r \vert^p} \\ + C h^p + C \sup_{0 \leq r \leq s} \vert X^{0,i}_r - X^{0,i}_{r_h} \vert^p + C N^{-1} \textstyle{\sum_{j=1}^N \sup_{0 \leq r \leq s_h} \vert X^{0,j}_r - X^{0,j}_{r_h} \vert^p}.  
\end{multline}
We recall that $((X^{h,j}, X^{0,j} ))_{1 \leq j \leq N}$ is exchangeable, and we now take expectations.
Using the BDG inequality, 
\[ \E \bigg\vert \sup_{t \leq s \leq t'} \int_0^s \sigma^{h,i}_r \d B^i_r  \bigg\vert^p \leq C \E \bigg[ \bigg( \int_t^{t'} \vert \sigma^{h,i}_s \vert^2 \d s \bigg)^{p/2} \bigg] \leq C (t'-t)^{p/2} \E \big[  \sup_{t \leq s \leq t'} \vert \sigma^{h,i}_s \vert^p \big], \]
and we handle $\vert \sigma^{h,i}_s \vert^p$ as we did for $\vert b^{h,i}_s \vert^p$. 
We further use \ref{item:lemAppShifth} to bound the expectation of the two last terms in \eqref{eq:driftDec}. 
Noticing that
\begin{equation} \label{eq:subAdd2}
\forall s \in [t,t'], \quad \sup_{0 \leq r \leq s} \vert X^{h,i}_r - X^{0,i}_r \vert \leq \sup_{0 \leq r \leq t} \vert X^{h,i}_r - X^{0,i}_r \vert + \sup_{t \leq r \leq s} \vert X^{h,i}_r - X^{0,i}_r \vert, 
\end{equation} 
we set $f(t,t') := \E \big[ \sup_{t \leq s \leq t'} \vert X^{h,i}_s - X^{0,i}_s \vert^p \big]$ and we gather all the terms to obtain
\[ \forall 0 \leq t \leq t' \leq T, \quad f(t,t') \leq C f (0,t) + C \vert h \log h \vert^{p/2} + C (t'-t)^{p/2} f(t,t') + C \int_t^{t'}  f(t,s) \d s, \]
for a constant $C > 0$ that does not depend on $(t,t')$.
The desired bound on $f(0,T)$ now follows from \eqref{eq:subAdd2} and Lemma \ref{lem:Gronwall}.
\end{proof}

\begin{proposition}[Quantitative convergence for mean-field control] \label{pro:valueF}
Under \ref{ass:coef1}-\ref{ass:IniExp}, if $\sigma_t (x,P) = \sigma_t (P)$ does not depend on $x$, then 
\[ \sup_{N \geq 1} \vert \F^{h,N} - \F^{0,N} \vert \xrightarrow[h \rightarrow 0]{} 0. \]
\end{proposition}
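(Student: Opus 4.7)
The plan is to compare $\F^{h,N}$ and $\F^{0,N}$ on common $\varepsilon$-minimisers, reducing the question to a uniform-in-control extension of Lemma~\ref{lem:Approxh}-\ref{item:lemApphhinf}. By Lemma~\ref{lem:restrict} applied to both $h$ and $0$, I first restrict the infima defining $\F^{h,N}$ and $\F^{0,N}$ to exchangeable admissible controls $(\vec{X}_0^N, \vec{u}^N)$ with $\E[\mathcal{E}(u^{1,N})] \le 2\|F\|_\infty$ and $\E[|X_0^{1,N}|^p] \le C$. Fix $\varepsilon > 0$ and let $(\vec{X}_0^{N,\star}, \vec{u}^{N,\star})$ be an $\varepsilon$-minimiser for $\F^{h,N}$ in this restricted class; it is also admissible for the $0$-problem, and since the entropy and control-cost terms in $J^h_\Sigma$ are $h$-independent,
\[ \F^{0,N} - \F^{h,N} \le \E\bigl[F(\pi(\vec{X}^{0,N,\vec{u}^{N,\star}})) - F(\pi(\vec{X}^{h,N,\vec{u}^{N,\star}}))\bigr] + \varepsilon. \]
The Lipschitz property of $F$, the identity coupling for $W_p$, exchangeability and Jensen's inequality then bound the right-hand side by $L_F \, \E\bigl[\sup_{t \le T} |X^{h,1,N,\vec{u}^{N,\star}}_t - X^{0,1,N,\vec{u}^{N,\star}}_t|^p\bigr]^{1/p} + \varepsilon$, and the symmetric argument starting from an $\varepsilon$-minimiser of $\F^{0,N}$ gives the reverse inequality.

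It remains to establish the uniform scheme-consistency
\[ \omega(h) := \sup_{N \ge 1, \; \vec{u}^N \; \mathrm{admissible}} \E\bigl[\sup_{0 \le t \le T} |X^{h,1,N,\vec{u}^N}_t - X^{0,1,N,\vec{u}^N}_t|^p\bigr] \xrightarrow[h \to 0]{} 0. \]
Setting $Y^i := X^{h,i,N,\vec{u}^N} - X^{0,i,N,\vec{u}^N}$ and $\Delta\sigma_s := \sigma_{s_h}(\pi(\vec{X}^{h,N}_{\wedge s_h})) - \sigma_s(\pi(\vec{X}^{0,N}_{\wedge s}))$, the assumption $\sigma_t(x,P) = \sigma_t(P)$ ensures that $\Delta\sigma_s$ does \emph{not} depend on the particle index $i$ -- the structural fact the whole argument rests on. The process $Y^i$ satisfies
\[ Y^i_t = \int_0^t \delta b^{h,i}_s \, \d s + \int_0^t \Delta\sigma_s \, u^{i,N}_s \, \d s + \int_0^t \Delta\sigma_s \, \d B^i_s, \]
with $\delta b^{h,i}_s$ gathering the drift differences. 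The drift and Brownian contributions are treated exactly as in the proof of Lemma~\ref{lem:Approxh}-\ref{item:lemApphhinf} (Lipschitz decomposition, BDG, Gronwall), while Lemma~\ref{lem:Approxh}-\ref{item:lemAppShifth} absorbs the time-discretization errors of the $h=0$ paths. The new control contribution is handled by Cauchy-Schwarz in time:
\[ \Big|\int_0^t \Delta\sigma_s \, u^{i,N}_s \, \d s\Big|^p \le \Big(\int_0^T |\Delta\sigma_s|^2 \, \d s\Big)^{p/2} (2\mathcal{E}(u^{i,N}))^{p/2}. \]
The Lipschitz property of $\sigma$ and the identity coupling for $W_p$ bound $\int_0^T |\Delta\sigma_s|^2 \d s$ by a constant times $h^2$ plus an empirical $W_p^2$-distance between $\pi(\vec{X}^{h,N})$ and $\pi(\vec{X}^{0,N})$ and a time-discretization term for $\vec{X}^{0,N}$, the latter two controlled in expectation by exchangeability and Lemma~\ref{lem:Approxh}-\ref{item:lemAppShifth}.

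The main obstacle is closing the Gronwall loop for the control term: one must bound $\E\bigl[\bigl(\int_0^T |\Delta\sigma|^2 \d s\bigr)^{p/2} \mathcal{E}(u^{i,N})^{p/2}\bigr]$ uniformly in $\vec{u}^N$, while Lemma~\ref{lem:restrict} provides only a first-moment bound on $\mathcal{E}$, so a direct Hölder split fails. The way around this is to exploit, on the one hand, the deterministic bound $\int_0^T |\Delta\sigma|^2 \d s \le 4 M_\sigma^2 T$ coming from $\sigma$'s boundedness, and on the other, the $i$-independence of $\Delta\sigma$: averaging over $i$ via exchangeability replaces $\mathcal{E}(u^{i,N})$ by the aggregate $\mathcal{E}(\vec{u}^N)$, whose first moment is uniformly bounded. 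A truncation on $\{\mathcal{E}(\vec{u}^N) \le K\}$ (with the complement handled by Markov's inequality $\P[\mathcal{E}(\vec{u}^N) > K] \le 2\|F\|_\infty / K$ combined with the deterministic bound) then closes the Gronwall and yields, after optimisation in $K$, a rate $\omega(h) \to 0$ as $h \to 0$; plugged into the first step, this gives the claimed $\sup_N |\F^{h,N} - \F^{0,N}| \to 0$.
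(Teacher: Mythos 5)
The overall architecture — restrict to exchangeable controls via Lemma~\ref{lem:restrict}, compare the two value functions along an $\varepsilon$-optimal pair of $(\vec{X}_0^N,\vec{u}^N)$, and prove a uniform-in-$N$ scheme-consistency estimate whose Gronwall loop is closed via the $i$-independence of $\Delta\sigma$ under $\sigma_t(x,P)=\sigma_t(P)$ — is exactly the paper's strategy. However, there is a genuine gap in how you propagate the truncation: the intermediate quantity
\[
\omega(h):=\sup_{N\ge 1,\;\vec{u}^N\text{ admissible}}\;\E\Big[\sup_{0\le t\le T}\lvert X^{h,1,N,\vec{u}^N}_t-X^{0,1,N,\vec{u}^N}_t\rvert^p\Big]
\]
does \emph{not} tend to $0$, even over the restricted class of Lemma~\ref{lem:restrict}. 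The reason is that the Gronwall prefactor produced by the control term $\int\Delta\sigma\,u^{i,N}\,\d s$ grows exponentially in $\mathcal{E}(\vec{u}^N)$ (through the $\sup_s\lvert\Delta\sigma_s\rvert\sim\tfrac1N\sum_j\sup_r\lvert Y^j_r\rvert$ feedback), so on the tail event $\{\mathcal{E}(\vec{u}^N)>K\}$ there is no $h$-small bound on $\E[\1_{\mathcal{E}>K}\sup\lvert Y^1\rvert^p]$. The deterministic bound $\int_0^T\lvert\Delta\sigma\rvert^2\d s\le 4M_\sigma^2 T$ you invoke controls the stochastic-integral increments, not $\sup\lvert Y^1\rvert^p$, and combining it with Markov's inequality gives an $O(1)$ (not $o_h(1)$) contribution from the tail. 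Having already reduced $\F^{0,N}-\F^{h,N}$ to $L_F\,\omega(h)^{1/p}+\varepsilon$, you can therefore not conclude.

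The fix is to \emph{not} discard the $\|F\|_\infty$ information before splitting. Keep the indicator $\1_{\{\mathcal{E}(\vec{u}^N)\le K\}}$ inside the $F$-difference: on that event, replace $\vec{u}^N$ by $\1_{\{\mathcal{E}(\vec{u}^N)\le K\}}\vec{u}^N$ (pathwise uniqueness gives the same trajectories there, and the modified control has $\mathcal{E}\le K$ almost surely), and run the Gronwall argument of Lemma~\ref{lem:Approxh}-\ref{item:lemApphhinf} with a constant $C_K[1+K]$, yielding a bound $C_K\lvert h\log h\rvert^{p/2}$ that is uniform in $N$ but grows with $K$. On the complement $\{\mathcal{E}(\vec{u}^N)>K\}$, use $\lvert F(\pi^0)-F(\pi^h)\rvert\le 2\|F\|_\infty$ together with Markov's inequality $\P(\mathcal{E}(\vec{u}^N)>K)\le 2\|F\|_\infty/K$. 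This gives $\F^{0,N}-\F^{h,N}\le\varepsilon+C_K\lvert h\log h\rvert^{1/2}+C K^{-1}$, from which the claim follows by letting $h\to 0$ for each fixed $K$ and then $K\to+\infty$. This is precisely the paper's decomposition in Steps~1--2 of its proof. Your sketch correctly identifies all the necessary ingredients (the $i$-independence of $\Delta\sigma$, the Jensen averaging $\tfrac1N\sum_i(\int\lvert u^i\rvert^2)^{p/2}\le(2\mathcal{E}(\vec{u}^N))^{p/2}$, Markov for the tail) but wires them up one step too early.
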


\begin{proof} 
In the following, $C >0$ is a constant that may change from line to line, but always satisfying the requirements of Proposition \ref{pro:valueF}.

\medskip

\emph{\textbf{Step 1.} Coupling of controls.}
For $\varepsilon >0$, let $( \Sigma, \vec{X}^{h,N}_0, \vec{u}^{h,N} )$, satisfying \ref{item:lemresiid}-\ref{item:lemresL1} in Lemma \ref{lem:restrict}, be $\varepsilon$-optimal for $\F^{h,N}$, meaning that $J^h_\Sigma ( \vec{X}^{h,N}_0, \vec{u}^{h,N} ) \leq \F^{h,N} + \varepsilon$. 
For any $( \vec{X}^{0,N}_0, \vec{u}^{0,N} )$,
\begin{equation} \label{eq:boundinf}
\F^{0,N} - \F^{h,N} \leq \varepsilon + J^0_\Sigma ( \vec{X}^{0,N}_0, \vec{u}^{0,N} ) - J^h_\Sigma ( \vec{X}^{h,N}_0, \vec{u}^{h,N} ).  
\end{equation} 
We choose $\vec{X}^{0,N}_0 = \vec{X}^{h,N}_0$ and $\vec{u}^{0,N} = \vec{u}^{h,N}$.
For conciseness, we write $X^{h,i}$ instead of $X^{h,i,N,\vec{u}^{h,N}}$, and $u^i$ for $u^{h,i,N}$. 
For $M > 0$, let us control $\E \big[ \1_{\mathcal{E}(\vec{u}^{h,N}) \leq M} \sup_{0 \leq t \leq T} \vert X^{h,i}_t - X^{0,i}_t \vert^p \big]$ as in Lemma \ref{lem:Approxh}-\ref{item:lemApphhinf}. 
Setting $\vec{v}^{h,N} := \1_{\mathcal{E}(\vec{u}^{h,N}) \leq M} \vec{u}^{h,N}$, we notice that
\begin{align*}
\E \big[ \1_{\mathcal{E}(\vec{u}^{h,N}) \leq M} \sup_{0 \leq t \leq T} \vert X^{h,i,\vec{u}^{h,N}}_t - X^{0,i,\vec{u}^{h,N}}_t \vert^p \big] &= \E \big[ \1_{\mathcal{E}(\vec{u}^{h,N}) \leq M} \sup_{0 \leq t \leq T} \vert X^{h,i,\vec{v}^{h,N}}_t - X^{0,i,\vec{v}^{h,N}}_t \vert^p \big] \\
&\leq \E \big[ \sup_{0 \leq t \leq T} \vert X^{h,i,\vec{v}^{h,N}}_t - X^{0,i,\vec{v}^{h,N}}_t \vert^p \big].
\end{align*} 
Thus, it is sufficient to perform the estimate while assuming that $\mathcal{E}(\vec{u}^{h,N}) \leq M$ a.s.; this corresponds to replacing $\vec{u}^{h,N}$ by $\vec{v}^{h,N}$, removing $\1_{\mathcal{E}(\vec{u}^{h,N}) \leq M}$ from the expectation.
As in the proof of Lemma \ref{lem:Approxh}-\ref{item:lemApphhinf}, for $0 \leq t \leq t'$, we decompose
\[ X^{h,i}_{t'} - X^{0,i}_{t'} = X^{h,i}_{t} - X^{0,i}_{t} + \int_t^{t'} b^{h,i}_s \d s + \int_t^{t'} [ \sigma_{s_h} ( \pi ( \vec{X}^{h} ) ) - \sigma_s ( \pi ( \vec{X}^{0} ) )  ] u^{i}_s \d s + \int_t^{t'} \sigma^{h,i}_s \d B^i_s. \]
We then take $p$-power, supremum in time and expectations, and we get as previously that
\[ \E \big[ \vert b^{h,i}_s \vert^p ] \leq C h^p + C \E \big[ \sup_{0 \leq r \leq s} \vert X^{h,i}_r - X^{0,i}_{r} \vert^p + \sup_{0 \leq r \leq s} \vert X^{0,i}_r - X^{0,i}_{r_h} \vert^p \big], \]
\[ \E \bigg\vert \sup_{t \leq s \leq t'} \int_t^s \sigma^{h,i}_r \d B^i_r \bigg\vert^p \leq C h^p +C (t'-t)^{p/2} \E \big[ \sup_{0 \leq r \leq s} \vert X^{h,i}_r - X^{0,i}_{r} \vert^p + \sup_{0 \leq r \leq s} \vert X^{0,i}_r - X^{0,i}_{r_h} \vert^p \big]. \]
Using $p \in [1,2)$ with Hölder's and Jensen's inequalities, we further get
\begin{multline*}
\bigg\vert \int_t^{t'} [ \sigma_{s_h} ( \pi ( \vec{X}^{h} ) ) - \sigma_s ( \pi ( \vec{X}^{0} ) )  ] u^{i}_s \d s \bigg\vert^p \leq \bigg[ \int_t^{t'} \vert \sigma_{s_h} ( \pi ( \vec{X}^{h} ) ) - \sigma_s ( \pi ( \vec{X}^{0} ) ) \vert^{p'} \d s\bigg]^{p/p'} \int_t^{t'} \vert u^{i}_s \vert^{p} \d s, \\
\leq (t'-t)^{p/2} \sup_{t \leq s \leq t'} \vert \sigma_{s_h} ( \pi ( \vec{X}^{h} ) ) - \sigma_s ( \pi ( \vec{X}^{0} ) ) \vert^p \bigg[ \int_0t^{t'} \vert u^i_s \vert^2 \d s \bigg]^{p/2},
\end{multline*} 
denoting by $p'$ the conjugate exponent of $p$.
From Jensen's inequality,
\[ \frac{1}{N} \sum_{i=1}^N \bigg[ \int_t^{t'} \vert u^i \vert^2 \d s \bigg]^{p/2} \leq [ 2 \mathcal{E} ( \vec{u}^{h,N} ) ]^{p/2} \leq ( 2 M)^{p/2}. \]
Using the same splitting as \eqref{eq:driftDec}, we finally get 
\[ \E \big[ \sup_{t \leq s \leq t'} \vert \sigma_{s_h} ( \pi ( \vec{X}^{h} ) ) - \sigma_s ( \pi ( \vec{X}^{0} ) ) \vert^p \big] \leq C h^p + C \E \big[ \sup_{0 \leq r \leq s} \vert X^{h,i}_r - X^{0,i}_{r} \vert^p + \sup_{0 \leq r \leq s} \vert X^{0,i}_r - X^{0,i}_{r_h} \vert^p \big]. \]
Using \eqref{eq:subAdd2} and setting $f(t,t') := \E \big[ \sup_{t \leq s \leq t'} \vert X^{h,i}_s - X^{0,i}_s \vert^p \big]$, as in the proof of Lemma \ref{lem:Approxh}-\ref{item:lemApphhinf}, we get that for every $0 \leq t \leq t' \leq T$, 
\[ f(t,t') \leq C f (0,t) + C [ 1 + M ] \vert h \log h \vert^{p/2} + C [ 1 + M ] (t'-t)^{p/2} f(t,t') + C \int_t^{t'}  f(t,s) \d s, \]
for $C> 0$ independent of $(t,t')$, and we conclude as previously using Lemma \ref{lem:Gronwall}.

At the end of the day, we deduce that for every $M >0$,
\[ \forall h \in [0,1], \quad \E \big[ \1_{\mathcal{E}(\vec{u}^{h,N}) \leq M} \sup_{0 \leq t \leq T} \vert X^{h,i}_t - X^{0,i}_t \vert^p \big] \leq C_M \vert h \log h \vert^{p/2}, \]
for a constant $C_M > 0$ independent of $(h,N)$.

\medskip

\emph{\textbf{Step 2.} Difference of the costs.}
For $M >0$, $F$ being $W_p$-Lipschitz,
\[ \1_{\mathcal{E} (\vec{u}^{h,N}) \leq M} \vert F( \pi ( \vec{X}^{0} ) ) - F( \pi ( \vec{X}^{h} ) ) \vert^p \leq \frac{\lVert F \rVert^p_{\mathrm{Lip}}}{N} \sum_{i=1}^N \1_{\mathcal{E} (\vec{u}^{h,N}) \leq M} \sup_{0 \leq t \leq T} \vert X^{0,i}_t - X^{h,i}_t \vert^p. \]
Using \emph{\textbf{Step 1.}}, this implies
\[ J^0_\Sigma ( \vec{X}^{h,N}_0, \vec{u}^{h,N} ) - J^h_\Sigma ( \vec{X}^{h,N}_0, \vec{u}^{h,N} ) \leq C_M \vert h \log h \vert^{1/2} + 2 \lVert F \rVert_\infty \P ( \mathcal{E} (\vec{u}^{h,N}) > M ). \]
We bound the last term by Markov's inequality and Lemma \ref{lem:restrict}-\ref{item:lemresiid}.
Plugging this into \eqref{eq:boundinf},
\[ \F^{0,N} - \F^{h,N} \leq \varepsilon + C_M \vert h \log h \vert^{1/2} + C M^{-1}, \]
and we send $\varepsilon$ to $0$.
Since $C$ does not depend on $(h,M,N)$ and $C_M$ does not depend on $(h,N)$, this implies that $\limsup_{h \rightarrow 0} \sup_{N \geq 1} \F^{0,N} - \F^{h,N} \leq 0$. 

\medskip

\emph{\textbf{Step 3.} Conclusion.} Inverting the roles of $\F^{0,N}$ and $\F^{h,N}$, we readily adapt  \emph{\textbf{Steps 1-2.}} to obtain that $\limsup_{h \rightarrow 0} \sup_{N \geq 1} \F^{h,N} - \F^{0,N} \leq 0$, completing the proof.
\end{proof}

\begin{proof}[End of the proof of Theorem \ref{thm:LDPGood}]
For $h \in [0,1]$, as a particular case of Corollary \ref{cor:Repmf}, 
\[ \F^{h,N} = \inf_{ R_N \in \ps_p ( (\R^d \times \C^{d'} )^N ) } N^{-1} H( R_N \vert \L(X^1_0,B^1)^{\otimes N} ) + N^{-1} \int_{\R^d \times \C^d} [ F \circ \Psi_h ] \d R_N. \]
From \cite[Proposition 1.4.2]{dupuis2011weak}, this implies that
\[ \F^{h,N} = -N^{-1} \log \E \big[ \exp \big[ -N F(\Psi_h(\pi(\vec{X}^N_0,\vec{B}^N))) \big]. \]
Proposition \ref{pro:valueF} then yields the uniform-in-$N$ convergence stated in Theorem \ref{thm:LDPGood}, as $h \rightarrow 0$.

For $h \in (0,1]$, Proposition \ref{pro:LDPhB} gives that the sequence of the $\L(\Psi_h(\pi(\vec{X}^{N}_0,\vec{B}^N)))$ satisfies the LDP with good rate function $P \in \ps_p (\C^{d}) \mapsto H( P \vert \Gamma_h ( P) )$. 
Varadhan's lemma \cite[Theorem 1.2.1]{dupuis2011weak} then gives that
\begin{equation*}
-N^{-1} \log \E \big[ \exp \big[ -N F(\Psi_h(\pi(\vec{X}^N_0,\vec{B}^N))) \big] \big] \xrightarrow[N \rightarrow +\infty]{} \inf_{P \in \ps_p(\C^{d})} H(P \vert \Gamma_h(P)) + F(P). 
\end{equation*} 
From Proposition \ref{pro:CVexpF}, the r.h.s converges to $\inf_{P \in \ps_p(\C^{d})} H(P \vert \Gamma_0(P)) + F(P)$ as $h \rightarrow 0$.
The uniform-in-$N$ convergence now allows us to invert the $N \rightarrow +\infty$ and $h \rightarrow 0$ limits, so that 
\begin{equation*} 
-N^{-1} \log \E \big[ \exp \big[ -N F(\pi(\vec{X}^{0,N})) \big] \big] \xrightarrow[N \rightarrow +\infty]{} \inf_{P \in \ps_p(\C^{d})} H(P \vert \Gamma_0(P)) + F(P), 
\end{equation*} 
for every bounded Lipschitz $F : \ps_p ( \C^d) \rightarrow \R$.
From \cite[Corollary 1.2.5]{dupuis2011weak}, this proves that the sequence of the $\L(\pi(\vec{X}^{0,N}))$
satisfies the LDP with good rate function $P \mapsto H( P \vert \Gamma_0 ( P) )$, this rate function being indeed good from Corollary \ref{cor:goodRate}.
The $\Gamma$-convergence of $I_h$ towards $I_0$ precisely corresponds to the proof of  Proposition \ref{pro:CVexpF} with $F \equiv 0$. 
\end{proof}

\subsection{Markov setting} \label{subsec:contractproof}

In the setting of Theorem \ref{thm:Contraction}, still for $p \in [1,2)$, let now $F : \C( [0,T], \ps_p ( \R^d ) ) \rightarrow \R$ denote a bounded Lipschitz-continuous function. 
We keep all the notations from the previous section, replacing $F( \pi ( \vec{X}^{h,N,\vec{u}^N} ) )$ by $F( \pi_\cdot ( \vec{X}^{h,N,\vec{u}^N} ) )$ in the definition of $J^h$.
As previously, we fix a reference system $\Sigma$, and we start all the processes from the same initial condition $\vec{X}^{h,N}_0$.

\begin{proposition}[Quantitative convergence] \label{pro:valueFcontra}
Under \ref{ass:IniExp}-\ref{ass:coef3}, for every $M > 0$,
there exists $C_M > 0$ such that for every $N \geq 1$,
\[ \forall h \in [0,1], \quad \E \bigg[ \1_{\mathcal{E}(\vec{u}^N) \leq M} \sup_{0 \leq t \leq T} \frac{1}{N} \sum_{i =1}^N \vert X^{h,i,N,\vec{u}^N}_t - X^{0,i,N,\vec{u}^N}_t \vert^p \bigg] \leq C_M \vert h \log h \vert^{\tfrac{p(1-p/2)}{2}}, \]
for every admissible $(\vec{X}^{h,N}_0,\vec{u}^N)$ satisfying \ref{item:lemresiid}-\ref{item:lemresL1} in Lemma \ref{lem:restrict}.
\end{proposition}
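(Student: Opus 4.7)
The proof extends \textbf{Step 1} of Proposition \ref{pro:valueF} to the setting of \ref{ass:coef3}, where $\sigma_t(x,P)=\sigma_t(x_t)$ depends on the particle's own position rather than on the empirical measure. The new difficulty is that the cross term $[\sigma_{s_h}(X^{h,i}_{s_h})-\sigma_s(X^{0,i}_s)]u^i_s$ couples each particle's state with its individual control, whereas in Proposition \ref{pro:valueF} the matrix $\sigma_s(\pi(\vec X))$ was common to all particles and could be absorbed by the empirical Wasserstein distance. Since the per-particle energies $V_i:=\int_0^T|u^i_s|^2\d s$ are not uniformly bounded (only the average $\tfrac{1}{N}\sum_iV_i\leq 2M$ is, under Lemma \ref{lem:restrict}-\ref{item:lemresiid}), the plan is to work at the level of the particle-averaged squared distance $\tfrac{1}{N}\sum_i|X^{h,i,N,\vec u^N}_t-X^{0,i,N,\vec u^N}_t|^2$ and then convert to the $L^p$-rate via Jensen, which is where the degradation from $p/2$ to $p(2-p)/4$ originates.

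\textbf{Reduction and Itô setup.} As in Proposition \ref{pro:valueF}, I would first reduce to the case $\mathcal{E}(\vec u^N)\leq M$ almost surely by replacing $\vec u^N$ with its truncation $\vec u^N\1_{\mathcal{E}(\vec u^N)\leq M}$. Setting $\delta^{h,i}:=X^{h,i,N,\vec u^N}-X^{0,i,N,\vec u^N}$, $b^{h,i}_s:=b_{s_h}(X^{h,i,\vec u},\pi(\vec X^{h,\vec u}))-b_s(X^{0,i,\vec u},\pi(\vec X^{0,\vec u}))$, and $\sigma^{h,i}_s:=\sigma_{s_h}(X^{h,i,\vec u}_{s_h})-\sigma_s(X^{0,i,\vec u}_s)$, Itô's formula applied to $|\delta^{h,i}|^2$ followed by summation over $i$ and the BDG inequality yield, with $\Phi(t):=\E[\sup_{s\leq t}\tfrac{1}{N}\sum_i|\delta^{h,i}_s|^2]$, an inequality of the form $\Phi(t)\leq C\int_0^t\Phi(s)\d s+C|h\log h|+\Delta(t)$, where $\Delta(t)$ collects the cross-term contribution. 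The drift and pure-noise pieces are controlled via \ref{ass:coef3}, Lemma \ref{lem:Approxh}-\ref{item:lemAppShifth}, and the key Wasserstein bound $W^p_p(\pi(\vec X^{h,\vec u}_{\wedge s_h}),\pi(\vec X^{0,\vec u}_{\wedge s}))\leq\tfrac{1}{N}\sum_j|X^{h,j,\vec u}-X^{0,j,\vec u}|^p$, which since $p\leq 2$ is further bounded by $\tfrac{1}{N}\sum_j|X^{h,j,\vec u}-X^{0,j,\vec u}|^2+O(|h\log h|)$ via Jensen and the time-shift lemma.

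\textbf{Cross-term interpolation and conclusion.} The cross term $\Delta(t)=\E\int_0^t\tfrac{2}{N}\sum_i\delta^{h,i}\cdot\sigma^{h,i}u^i\d s$ is treated by the interpolation $|\sigma^{h,i}|^2=|\sigma^{h,i}|^{2-p}|\sigma^{h,i}|^p\leq(2M_\sigma)^{2-p}L^p_\sigma(|\delta^{h,i}_{s_h}|+|X^{0,i}_s-X^{0,i}_{s_h}|+h)^p$. Combined with AM-GM $2|\delta^{h,i}||\sigma^{h,i}||u^i|\leq\eta|\delta^{h,i}|^2+\eta^{-1}|\sigma^{h,i}|^2|u^i|^2$ and Hölder in $(i,s)$ against the total-energy bound $\tfrac{1}{N}\sum_iV_i\leq 2M$ (together with Lemma \ref{lem:Approxh}-\ref{item:lemAppShifth} for the shift-piece), this produces a source term of order $|h\log h|^{(2-p)/2}$, and Lemma \ref{lem:Gronwall} (mirroring Proposition \ref{pro:valueF}-\textbf{Step 1}) yields $\Phi(T)\leq C_M|h\log h|^{(2-p)/2}$. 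Finally, concavity of $x\mapsto x^{p/2}$ on $[0,\infty)$ gives $\tfrac{1}{N}\sum_i|\delta^{h,i}_t|^p\leq(\tfrac{1}{N}\sum_i|\delta^{h,i}_t|^2)^{p/2}$ pointwise, so taking sup in time and expectation and applying Jensen once more delivers $\E[\1_{\mathcal{E}\leq M}\sup_t\tfrac{1}{N}\sum_i|\delta^{h,i}_t|^p]\leq\Phi(T)^{p/2}\leq C_M|h\log h|^{p(2-p)/4}$.

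\textbf{Main obstacle.} The hardest part is precisely the cross term, which cannot simply be absorbed into $\Phi$ by Gronwall: the bound $|\sigma^{h,i}|^2|u^i|^2\leq 4M_\sigma^2|u^i|^2$ integrates to a constant with no rate, while the naive Lipschitz bound $|\sigma^{h,i}|^2|u^i|^2\leq L^2|\delta^{h,i}_{s_h}|^2|u^i|^2$ reintroduces unbounded per-particle $V_i$-moments that the symmetry bound $\tfrac{1}{N}\sum V_i\leq 2M$ cannot control. The exponent $p$ in the interpolation is the one that makes the residual $|X|^p|u^i|^2$-integral controllable by the total-energy average through Hölder, and this is what fixes the exponent $(2-p)/2$ in the $L^2$-rate (and hence $p(2-p)/4$ in the $L^p$-rate). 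Keeping all constants independent of $N$ throughout, using the exchangeability from Lemma \ref{lem:restrict}-\ref{item:lemresiid}, is the bookkeeping challenge.
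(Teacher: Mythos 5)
Your overall plan — truncate to $\mathcal{E}(\vec u^N)\leq M$ a.s., run a Gronwall argument for a particle-averaged norm, interpolate $\lvert\sigma^{h,i}\rvert$ between its Lipschitz and bounded estimates, and then convert to the stated $L^p$-rate — takes a genuinely different route from the paper's. The paper applies Itô directly to $\eta^N_t\lvert\Delta^{h,i}_t\rvert^p_\varepsilon$, where $\lvert\Delta\rvert_\varepsilon:=(\lvert\Delta\rvert^2+\varepsilon^2)^{1/2}$ is a smoothing and $\eta^N_t:=\exp\big[-\tfrac{2^pL^p_\sigma}{N}\sum_j\int_0^t(1+\lvert u^{j,N}_s\rvert^2)\d s\big]$ is an exponential weight, and only at the very end optimizes $\varepsilon=\lvert h\log h\rvert^{1/2}$. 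Your $L^2$-then-Jensen route is simpler in spirit, and your drift and pure-noise pieces do close (since the empirical $W_p$ is dominated by the $L^2$-particle-average for $p\leq 2$), and the final Jensen step $\E[\sup_t\tfrac1N\sum_i\lvert\delta^{h,i}_t\rvert^p]\leq \Phi(T)^{p/2}$ is correct.

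The genuine gap is the cross term. After AM--GM you face $\eta^{-1}\lvert\sigma^{h,i}_s\rvert^2\lvert u^i_s\rvert^2\leq C\eta^{-1}(\lvert\delta^{h,i}_{s_h}\rvert+\lvert\Lambda^{0,i}_s\rvert+h)^p\lvert u^i_s\rvert^2$ and propose a Hölder inequality in $(i,s)$ against $\tfrac1N\sum_i\int_0^T\lvert u^i_s\rvert^2\d s\leq 2M$. But $\lvert u^i_s\rvert^2$ is already at the $L^1$-in-$(i,s)$ exponent: any Hölder split $\tfrac1q+\tfrac1{q'}=1$ with $q'>1$ would require $\lvert u^i\rvert^{2q'}\in L^1$, beyond the available $L^2$ energy; and $q'=1$ forces the companion factor into $L^\infty_{i,s}$, i.e.\ $\sup_{i,s}\lvert\delta^{h,i}_{s_h}\rvert^p$ and $\sup_{i,s}\lvert\Lambda^{0,i}_s\rvert^p$, which are not controllable by particle averages. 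So the "source term of order $\lvert h\log h\rvert^{(2-p)/2}$" is asserted, not derived, and cannot be obtained on this route. The paper circumvents this precisely by differentiating $\lvert\Delta\rvert_\varepsilon^p$ rather than $\lvert\Delta\rvert^2$: Young's inequality at exponents $(p/(p-1),p)$ (not $(2,2)$) produces a residual in $\lvert u\rvert^p$ rather than $\lvert u\rvert^2$; the dangerous piece $\lvert\Delta^{h,i}\rvert^p\lvert u^{i,N}\rvert^p$ is absorbed by the differential of $\eta^N_t$; and the remaining piece $\lvert u^{i,N}\rvert^p\cdot[(2M_\sigma)^p\wedge C\lvert\Lambda^{h,i}\rvert^p]$ Hölders cleanly with exponents $(2/p,2/(2-p))$ against the energy, producing $\E^{1-p/2}[\sup_s\overline\Lambda^h_s]\leq C\lvert h\log h\rvert^{p(2-p)/4}$. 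Both the exponential weight and the $L^p$-level Young's inequality are indispensable and are missing from your proposal.
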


\begin{proof}
As in \emph{\textbf{Step 1.}} in the proof of Proposition \ref{pro:valueF}, we can assume, up to modifying $\vec{u}^N$, that $\mathcal{E}(\vec{u}^N) \leq M$ almost surely, thus removing $\1_{\mathcal{E}(\vec{u}^N) \leq M}$ from the expectation.
Let us fix $\varepsilon \in (0,1]$.
We are going to control errors in a suitably weighted norm that allows us to get rid of the controls.

\medskip

\emph{\textbf{Step 1.} Notations.}
In the following, we write $\vec{X}^{h,N} = \vec{X}^{h,N,\vec{u}^N}$ and we introduce
\[ \Lambda^{h,i}_t := X^{h,i}_t - X^{h,i}_{t_h}, \quad \Delta^{h,i}_t := X^{h,i}_t - X^{0,i}_t, 
\quad \eta^{N}_t := \exp \bigg[ - \frac{2^p L^p_\sigma}{N} \sum_{i = 1}^N \int_0^t [1 + \lvert u^{i,N}_s \rvert^2 ] \d s \bigg],  \]
together with
\[ \overline\Lambda^{h}_t := \frac{1}{N} \sum_{i=1}^N  \lvert \Lambda^{h,i}_t \rvert^p, \qquad \overline\Delta^{h}_t := \frac{1}{N} \sum_{i=1}^N \lvert \Delta^{h,i}_t \rvert^p. \]
For $1 \leq i \leq N$, we have the decomposition
\[ \d \Delta^{h,i}_{t} = b^{h,i}_{t} \d t + \sigma^{h,i}_{t} u^{i,N}_{t} \d t + \sigma^{h,i}_{t} \d B^i_{t}, \]
where
\[ b^{h,i}_{t} := b_{t_h} ( X^{h,i}_{t_h}, \pi \big( \vec{X}^{h,N}_{t_h} ) ) - b_{t} ( X^{0,i}_t, \pi ( \vec{X}^{0,N}_t ) ), \]
and $\sigma^{h,i}_{t}$ is similarly defined.
Let $\vert \Delta^{h,i}_{t} \vert_\varepsilon := (\vert \Delta^{h,i}_{t} \vert^2 + \varepsilon^2)^{1/2}$.

\medskip

\emph{\textbf{Step 2.} Differentiation.}
Using Ito's formula,
\begin{align*}
\d [ \eta^N_{t} \lvert \Delta^{h,i}_{t} \rvert^p_\varepsilon ] = &-\eta^N_{t} 2^p L^p_\sigma \lvert \Delta^{h,i}_{t} \rvert_\varepsilon^{p} N^{-1} \textstyle{\sum_{i=1}^N [ 1 + \lvert u^{i,N}_t \rvert^2 ]} \d t + p \, \eta^N_{t} \lvert \Delta^{h,i}_{t} \rvert_\varepsilon^{p-2} \Delta^{h,i}_{t} \cdot b^{h,i}_{t} \d t \\
&+ p \, \eta^N_{t} \lvert \Delta^{h,i}_{t} \rvert_\varepsilon^{p-2} \Delta^{h,i}_{t} \cdot \sigma^{h,i}_{t} u^{i,N}_{t} \d t + p \, \eta^N_{t} \lvert \Delta^{h,i}_{t} \rvert_\varepsilon^{p-2} \Delta^{h,i}_{t} \cdot \sigma^{h,i}_{t} \d B^i_{t} \\
&+ \tfrac{p}{2} \, \eta^N_{t} \lvert \Delta^{h,i}_{t} \rvert_\varepsilon^{p-2} \, \mathrm{Tr} \big[ \sigma^{h,i}_{t} ( \sigma^{h,i}_{t} )^\top [ \mathrm{Id} + (p-2) \lvert \Delta^{h,i}_{t} \rvert_\varepsilon^{-2}( \Delta^{h,i}_{t} \otimes \Delta^{h,i}_t ) ] \big] \d t.
\end{align*} 
We now integrate in time. 
First, $b$ being Lipschitz-continuous in $(t,x,P_t)$ from \ref{ass:coef3},
\[ \vert b^{h,i}_t \vert^p \leq C [ h^p + \lvert \Delta^{h,i}_{t} \rvert^p + \overline{\Delta}^{h}_{t} + \lvert \Lambda^{h,i}_t \rvert^p + \overline{\Lambda}^{h}_t ], \]
using a decomposition similar to \eqref{eq:driftDec}.
Since $p \in [1,2)$ and $\lvert \Delta^{h,i}_t \rvert \leq \lvert \Delta^{h,i}_t \rvert_\varepsilon$, this yields
\begin{multline} \label{eq:Youngb}
p \lvert \Delta^{h,i}_t \rvert_\varepsilon^{p-2} \Delta^{h,i}_t \cdot b^{h,i}_t \leq p \lvert \Delta^{h,i}_t \rvert^{p-1} 
\lvert b^{h,i}_t \rvert \leq (p-1)\lvert \Delta^{h,i}_t \rvert^{p} + \vert b^{h,i}_t \vert^p \\ 
\leq C [ h^p + \lvert \Delta^{h,i}_{t} \rvert^p + \overline{\Delta}^{h}_{t} + \lvert \Lambda^{h,i}_t \rvert^p + \overline{\Lambda}^{h}_t ], \phantom{ancde}
\end{multline}
where the second inequality uses Young's inequality.
Similarly, using that $\sigma_t(x,P) = \sigma_t (x)$ and $\vert \sigma^{h,i}_t \vert \leq 2 M_\sigma$,
\begin{equation*} 
\vert \sigma^{h,i}_t ( \sigma^{h,i}_t )^\top \vert \leq (2 M_\sigma)^2 \wedge [ C h^2 + C \lvert \Delta^{h,i}_{t} \rvert^2 + C \lvert \Lambda^{h,i}_{t} \rvert^2 ] \leq C [ h^2 + \lvert \Delta^{h,i}_{t} \rvert^2 ] + ( 4 M^2_\sigma ) \wedge ( C \lvert \Lambda^{h,i}_{t} \rvert^2 ).
\end{equation*}  
We then use $(4 M^2_\sigma) \wedge ( C \lvert \Lambda^{h,i}_{t} \rvert^2 ) \leq (4 M^2_\sigma)^{1-p/2} C^{p/2} \lvert \Lambda^{h,i}_{t} \rvert^p$, $\lvert \Delta^{h,i}_t \rvert \leq \lvert \Delta^{h,i}_t \rvert_\varepsilon$ and $\lvert \Delta^{h,i}_{t} \rvert_\varepsilon \geq \varepsilon$, 
\begin{equation} \label{eq:ContractSigma}
p \lvert \Delta^{h,i}_t \rvert_\varepsilon^{p-2} \vert \sigma^{h,i}_t ( \sigma^{h,i}_t )^\top \vert \leq C [ \varepsilon^{p-2} h^2 + \varepsilon^{p-2} \lvert \Lambda^{h,i}_t \rvert^{p} + \lvert \Delta^{h,i}_t \rvert^{p} ].
\end{equation}

\emph{\textbf{Step 3.} Gathering and cancellations.} At this stage, using $\lvert \Delta^{h,i}_t \rvert \leq \lvert \Delta^{h,i}_t \rvert_\varepsilon$ and  $h^2 \leq h^p$, 
we get that for every $0 \leq t \leq t'\leq T$,
\begin{align*}
\eta^N_{t'} \lvert &\Delta^{h,i}_{t'} \rvert^p \leq \eta^N_t \lvert \Delta^{h,i}_t \rvert^p_\varepsilon + \int_t^{t'} C \eta^N_s [ (1+\varepsilon^{p-2}) h^p + \lvert \Delta^{h,i}_{s} \rvert^p + \overline{\Delta}^{h}_{s} + (1 + \varepsilon^{p-2} ) \lvert \Lambda^{h,i}_s \rvert^p + \overline{\Lambda}^{h}_s ] \d s \\ 
&+\int_t^{t'} \big\{ - \eta^N_{s} 2^p L^p_\sigma \vert \Delta^{h,i}_s \rvert^p N^{-1} \textstyle{\sum_{i=1}^N [ 1 + \lvert u^{i,N}_s \rvert^2 ]} + p \, \eta^N_{s} \lvert \Delta^{h,i}_{s} \rvert_\varepsilon^{p-2} \Delta^{h,i}_{s} \cdot \sigma^{h,i}_{t} u^{i,N}_{s} \big\} \d s \\ 
&+ \int_t^{t'} p \, \eta^N_{s} \lvert \Delta^{h,i}_{s} \rvert_\varepsilon^{p-2} \Delta^{h,i}_{s} \cdot \sigma^{h,i}_{s} \d B^i_{s}, 
\end{align*}
integrating all the pieces from \emph{\textbf{Step 2.}}. Reasoning as in \eqref{eq:Youngb}-\eqref{eq:ContractSigma}, 
\[
p \lvert \Delta^{h,i}_{s} \rvert_\varepsilon^{p-2}  \Delta^{h,i}_{s} \cdot \sigma^{h,i}_{s} u^{i,N}_{s}
\leq (p - 1) \vert \Delta^{h,i}_s \vert^p + 2^p L_\sigma^p \vert \Delta^{h,i}_s \vert^p \vert u^{i,N}_s \vert^p + \vert u^{i,N}_s \vert^p [ C h^p + ( 2 M_\sigma )^p \wedge ( C \vert \Lambda^{h,i}_s \vert^p) ].
\]
We now sum over $i$ and we divide by $N$.
Since $\lvert u^{i,N}_s \rvert^p \leq 1 + \vert u^{i,N}_s \vert^2$, this cancels the $2^p L^p_\sigma \vert \Delta^{h,i}_s \vert^p \vert u^i_s \vert^p$ terms.
We then use that $\eta^N_t \leq 1$, $\lvert \Delta^{h,i}_t \rvert_\varepsilon \leq \varepsilon + \lvert \Delta^{h,i}_t \rvert$,  and we take supremum in time to obtain
\begin{multline*}
\sup_{t \leq s \leq t'} \eta^N_s \overline\Delta^{h}_s \leq C [\varepsilon^p + \overline\Delta^{h}_t] + \int_t^{t'} C \big\{ (1+\varepsilon^{p-2})[ h^p + \sup_{0\leq r \leq s} \overline{\Lambda}^{h}_r ] + \sup_{0\leq r \leq s} \overline{\Delta}^{h}_r + h^p {\textstyle N^{-1} \sum_{i=1}^N \vert u^{i,N}_s \vert^p} \big\} \d s \\
+ {\int_t^{t'} \textstyle{ N^{-1} \sum_{i=1}^N \vert u^{i,N}_s \vert^p [ (2 M_\sigma )^p \wedge ( C \lvert \Lambda^{h,i}_{s} \rvert^p ) ] } \d s} \,
+ \, \sup_{t \leq s \leq t'} \int_t^s p \eta^N_r N^{-1} \textstyle{ \sum_{i=1}^N \lvert \Delta^{h,i}_{r} \rvert_\varepsilon^{p-2} \Delta^{h,i}_{r} \cdot \sigma^{h,i}_{r} \d B^i_{r}}.
\end{multline*}
Let $\mathbf{B}_t$ denote the last term on the r.h.s. Since $\mathcal{E}(\vec{u}^{N}) \leq M$, we have
\[ \int_t^{t'} {h^p \textstyle N^{-1} \sum_{i=1}^N \vert u^{i,N}_s \vert^p} \d s \leq 2 T [1 + M] h^p. \]
Using $p < 2$, Hölder's inequality further yields
\begin{multline*}
\int_t^{t'} {\textstyle N^{-1} \sum_{i=1}^N \vert u^{i,N}_s \vert^p [ (2 M_\sigma )^p \wedge ( C \lvert \Lambda^{h,i}_{s} \rvert^p ) ]} \d s \\ 
\leq [ 2 \mathcal{E}(\vec{u}^{N}) ]^{p/2} \bigg[ \int_t^{t'} {\textstyle N^{-1} \sum_{i=1}^N [ (2 M_\sigma )^p \wedge ( C \lvert \Lambda^{h,i}_{s} \rvert^p ) ]^{2/(2-p)} } \bigg]^{1-p/2} \leq C M^{p/2} \bigg[ \int_t^{t'} \overline{\Lambda}^h_s \d s \bigg]^{1-p/2}.
\end{multline*}
Jensen's inequality then bounds the expectation of the r.h.s. by $C M^{p/2} \E^{1-p/2} [  T \sup_{0 \leq s \leq t} \overline\Lambda^h_s ]$.

\medskip

\emph{\textbf{Step 4.} Gronwall argument.}
Using the BDG inequality, and $\eta^N_r \leq 1$,
\begin{align*}
\E [ \textbf{B}_t ] &\leq
C \E \bigg[ \bigg( \int_t^{t'} \big\vert \textstyle{N^{-1} \sum_{i=1}^N p \, \lvert \Delta^{h,i}_{r} \rvert_\varepsilon^{p-2} \Delta^{h,i}_{r} \cdot \sigma^{h,i}_{r} } \big\vert^2 \d s \bigg)^{1/2} \bigg] \\
&\leq C (t'-t)^{1/2} \E \big[ h^p + \sup_{0 \leq s \leq t} \overline{\Delta}^h_s + \sup_{0 \leq s \leq t} \overline\Lambda^h_s \big],
\end{align*} 
where we reasoned as in \eqref{eq:Youngb} for the second inequality. 
Noticing that
\begin{equation} \label{eq:subbad3}
  \forall 0 \leq t \leq s \leq t' \leq T, \quad \sup_{0 \leq r \leq s} \overline\Delta^h_r \leq \sup_{0 \leq r \leq t} \overline\Delta^h_r + \sup_{t \leq r \leq s} \overline\Delta^h_r,  
\end{equation}
we set $f^{h}(t,t') := \E [ \sup_{t \leq s \leq t'} \overline{\Delta}^h_s ]$, and we gather all the pieces from \emph{\textbf{Step 3.}} to get
\begin{multline*}
e^{- 2^p L_\sigma^p (2M + T)} f^h (t,t') \leq C \varepsilon^p + C f^h (0,t) + C [ 1 + \varepsilon^{p-2} ] \big[ (1+M) h^p + \E \big[ \sup_{0 \leq s \leq T} \overline\Lambda^{h}_s \big] \big] \\
+ C M^{p/2} \E^{1-p/2} \big[ \sup_{0 \leq s \leq T} \overline\Lambda^h_s \big]
+ (t'-t)^{1/2} f^h(t,t') + \int_t^{t'} f^{h} (t,s) \d s, 
\end{multline*}
where we used that $\eta^N_t \geq e^{- 2^p L_\sigma^p (2M + T)}$ a.s., and $C$ is independent of $(t,t')$. 
Using \eqref{eq:subbad3} and Lemma \ref{lem:Gronwall}, we get
\[ f^h (0,T) \leq C_M \varepsilon^p + C_M [ 1 + \varepsilon^{p-2} ] \big[ h^p + \E \big[ \sup_{0 \leq s \leq T} \overline\Lambda^{h}_s \big] \big] + C_M \E^{1-p/2} \big[ \sup_{0 \leq s \leq T} \overline\Lambda^h_s \big], \]
for $C_M > 0$ independent of $(h,N)$.
Since $ \E [ \sup_{0 \leq s \leq T} \overline\Lambda^{h}_s ]$ is bounded by $C \vert h \log h \vert^{p/2}$ from Lemma \ref{lem:Approxh}-\ref{item:lemAppShifthh}, taking $\varepsilon = \vert h \log h \vert^{1/2}$ concludes.  
\end{proof}

\begin{proof}[End of the proof of Theorem \ref{thm:Contraction}]
Reasoning as in \emph{\textbf{Step 2.}} in the proof of Proposition \ref{pro:valueF}, Proposition \ref{pro:valueFcontra} gives that for every $M > 0$, $C_M > 0$ exists such that
\[ \forall h \in(0,1], \; \forall N \geq 1, \qquad \vert \F^{h,N} - \F^{0,N} \vert \leq 2 \lVert F \Vert_\infty M^{-1} + C_M \vert h \log h \vert^{\tfrac{p(1-p/2)}{2}}. \]
As a consequence,
\[ \sup_{N \geq 1} \, \vert \F^{h,N} - \F^{0,N} \vert \xrightarrow{h \rightarrow 0} 0. \]
Inverting the limits, the result is then a direct adaptation of the proof of Theorem \ref{thm:LDPGood}.
\end{proof}

\subsection{Uniform fluctuation estimates} \label{subsec:CLT}

We here extend Proposition \ref{pro:ClThB} to the continuous setting $h =0$.
Throughout this section, we assume that \ref{ass:pair} holds.
The structure of the following proofs is very close to Section \ref{subsec:disCLTB}, studying the $h \rightarrow 0$ limit instead of $M \rightarrow +\infty$. 
We essentially emphasise the main differences and the needed adaptations.

Let $(B^{i})_{i \geq 1}$ be a countable sequence in $\C^{d'}$ of i.i.d. Brownian motions.
Let $\vec{X}^{h,N}$ denote the related particle system \eqref{eq:hNpart}, starting at $0$.
We will write $\vec{X}^{0,N}$ for the continuous particle system, where \eqref{eq:hNpart} now involves stochastic integrals.
For each $1 \leq i \leq N$,
let us introduce the McKean-Vlasov process $\overline{X}^{h,i}$ solution of \eqref{eq:GenMcK} with the driving noise $B^i$, starting at $0$. 
We write $\overline{X}^{0,i}$ for the related McKean-Vlasov process.
Let us introduce the difference process $\delta^{h,i} := X^{h,i,N} - \overline{X}^{h,i}$.
We first need a uniform in $h$ mean-field limit.

\begin{lemma}[Uniform limit] \label{lem:PoCh}
There exists $C > 0$ such that for every $N \geq 1$, $h \in (0,1]$, and $1 \leq i \leq N$,
\begin{enumerate}[label=(\roman*),ref=(\roman*)]
\item\label{item:lemPoChbound} $\E [ \sup_{0 \leq t \leq T} \vert X^{h,i,N}_t \vert^2 ] \leq C$.
\item\label{item:lemPoChPoCh} $\E [ \sup_{0 \leq t \leq T} \vert \delta^{h,i,N}_t \vert^2 ] \leq C N^{-1}$.
\item\label{item:lemPoChPoCinf} $\E [ \sup_{0 \leq t \leq T} \vert \delta^{0,i}_t \vert^2 ] \leq C N^{-1}$.
\item\label{item:lemPoChhinf} $\E [ \sup_{0 \leq t \leq T} \vert X^{h,i,N}_t - X^{0,i,N}_t \vert^2 ] \leq C \vert h \log h \vert$.
\item\label{item:lemPoChinfinf} $\E [ \sup_{0 \leq t \leq T} \vert \overline{X}^{h,i}_t - \overline{X}^{0,i}_t \vert^2 ] \leq C \vert h \log h \vert$.
\end{enumerate}
\end{lemma}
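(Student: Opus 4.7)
The five estimates follow the blueprint of Lemma \ref{lem:PoCM}, comparing the time-discretised and continuous particle systems in place of the stopped and unstopped ones. By exchangeability it is enough to work with $i = 1$ throughout and to close any Gronwall-type inequality at the level of averaged errors whenever the empirical measure couples different particles.

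For (i), I would integrate \eqref{eq:hNpart}, take squared norms, apply BDG, and use the linear growth of $b$ together with the bound $M_\sigma$ on $\sigma$ from \ref{ass:coef1}; exchangeability turns the empirical-measure contribution into the same quantity as on the left-hand side, so the Gronwall lemma closes uniformly in $(h,N)$. For (ii) and (iii), I would synchronously couple $X^{h,1,N}$ with $\overline{X}^{h,1}$ (and likewise for $h=0$), write
\[ \delta^{h,1,N}_t = \int_0^t \bigl[ b_{s_h}(X^{h,1,N}, \pi(\vec{X}^{h,N})) - b_{s_h}(\overline{X}^{h,1}, \L(\overline{X}^{h,1})) \bigr] \d s + \int_0^t \bigl[ \sigma_{s_h}(X^{h,1,N}, \pi(\vec{X}^{h,N})) - \sigma_{s_h}(\overline{X}^{h,1}, \L(\overline{X}^{h,1})) \bigr] \d B^1_s, \]
split each bracket into a particle-vs-McKean part (Lipschitz-in-$(x,P)$) and an empirical-measure-vs-law part (treated by Lemma \ref{lem:techiid} applied to the i.i.d. copies $(\overline{X}^{h,j})_{j \geq 1}$), and conclude with BDG and Gronwall exactly as in the proof of Lemma \ref{lem:PoCM}-\ref{item:lemPoCMPoCM}-\ref{item:lemPoCMPoCinf}.

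For (iv), I would compare $X^{h,1,N}$ and $X^{0,1,N}$ driven by the same $B^1$ and split the drift difference as
\[ \bigl[ b_{s_h}(X^{h,1,N}, \pi(\vec{X}^{h,N})) - b_{s_h}(X^{0,1,N}, \pi(\vec{X}^{0,N})) \bigr] + \bigl[ b_{s_h}(X^{0,1,N}, \pi(\vec{X}^{0,N})) - b_s(X^{0,1,N}, \pi(\vec{X}^{0,N})) \bigr], \]
and analogously for $\sigma$. The first bracket is controlled by \ref{ass:coef1} and produces $N^{-1} \sum_j \E[\sup_{0 \leq r \leq s_h}|X^{h,j,N}_r - X^{0,j,N}_r|^2]$; the second uses joint Lipschitz continuity of $\overline{b}$, $\overline{\sigma}$ in $(t,x,P)$ to produce $h^2$ plus a path-oscillation term of the form $\E[\sup_{r_h \leq u \leq r} |X^{0,1,N}_u - X^{0,1,N}_{r_h}|^2]$, which the Ito-process modulus-of-continuity estimate \cite[Theorem 1]{fischer2009moments} (the same tool as in Lemma \ref{lem:Approxh}-\ref{item:lemAppShifth}) bounds by a quantity of order $|h \log h|$. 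After treating the stochastic integral with BDG and closing Gronwall on the averaged squared error (localising on a short subinterval to absorb the short-time prefactor, as done in Lemma \ref{lem:Gronwall}), one reaches the claimed $|h \log h|$ bound. Statement (v) then follows from an identical argument with $\overline{X}^{h,1}, \overline{X}^{0,1}$ in place of $X^{h,1,N}, X^{0,1,N}$, and is in fact simpler because the empirical-measure-vs-law discrepancy no longer appears.

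The main obstacle is (iv): the discretisation error at particle $j$ feeds back into particle $1$ through the empirical measure, so the Gronwall inequality must be closed on the averaged error $N^{-1}\sum_j \E[\sup_{0 \leq s \leq t}|X^{h,j,N}_s - X^{0,j,N}_s|^2]$ and the time modulus of continuity of the $X^{0,j,N}$ must be uniform in $N$. Exchangeability of the system makes the averaging step cost-free, while the uniform-in-$(h,N)$ moment bound from (i) combined with boundedness of $\sigma$ delivers the needed uniform-in-$N$ modulus via \cite{fischer2009moments}.
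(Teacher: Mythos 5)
Your blueprint matches the paper's for items (i)--(iv): the paper likewise obtains (i) and (iv) by observing that the proofs of Lemma \ref{lem:Approxh}-\ref{item:lemAppbound} and \ref{item:lemApphhinf} still hold with $p=2$ (which is essentially the computation you spell out), and proves (ii)--(iii) by exactly the synchronous-coupling argument you describe, i.e.\ splitting the drift and diffusion increments into a Lipschitz-in-$(x,P)$ part and an empirical-measure-vs-law part handled by Lemma \ref{lem:techiid}, then BDG and Gronwall, with (iii) obtained by replacing $s_h$ with $s$ in the $h>0$ estimate. The one genuine deviation is (v). You propose a direct synchronous coupling of the two McKean--Vlasov processes $\overline{X}^{h,1}$ and $\overline{X}^{0,1}$ driven by the same $B^1$, controlling the Wasserstein distance between their laws by the $L^2$ coupling norm and re-invoking the modulus-of-continuity estimate of \cite{fischer2009moments} for the time-discretisation error; this is correct and self-contained. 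The paper instead deduces (v) from what is already proved: writing $\overline{X}^{h,i}-\overline{X}^{0,i} = -\delta^{h,i} + (X^{h,i,N}-X^{0,i,N}) + \delta^{0,i}$, the triangle inequality with (ii)--(iv) gives $\E^{1/2}[\sup_t\vert\overline{X}^{h,i}_t-\overline{X}^{0,i}_t\vert^2] \le C N^{-1/2} + C\vert h\log h\vert^{1/2}$, and since the left-hand side is $N$-independent one sends $N\to\infty$. Both routes yield the same bound; the paper's is shorter because it avoids a fresh Gronwall iteration, while yours does not require first establishing (ii)--(iv).
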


We then strengthen these results for controlling the fluctuations.

\begin{proposition} \label{pro:approxhh} 
There exists $C >0$ such that for every $M >0$,
\[ \sup_{N \geq 1} \frac{1}{\sqrt{N}} \sum_{i=1}^N \E \bigg[ \sup_{0 \leq t \leq T} \big\lvert \delta^{h,M,i,N}_t - \delta^{h,\infty,i}_t \big\rvert \bigg] \leq C \vert h \log h \vert^{1/2}. \]
\end{proposition}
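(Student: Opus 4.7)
The plan is to imitate the proof of Proposition~\ref{pro:approxhM}, but with the parameter $h$ now playing the role that the truncation $M$ played there: instead of comparing stopped-noise and unstopped-noise fluctuations at a fixed discretisation, I compare the two fluctuation processes at discretisation $h$ and in the continuous limit, which are coupled through the same Brownian driving noises $(B^i)_{i \geq 1}$. The target quantity is $f^h(t,t') := \sqrt{N}\, \E[\sup_{t \leq s \leq t'} |(\delta^{h,i}_s - \delta^{0,i}_s) - (\delta^{h,i}_t - \delta^{0,i}_t)|]$; using exchangeability and Lemma~\ref{lem:PoCh}, I aim to produce a sub-additive inequality of the form
\[ f^h(t,t') \leq C f^h(0,t) + C |h\log h|^{1/2} + C(t'-t)^{1/2} f^h(t,t') + C \int_t^{t'} f^h(t,s)\, \d s, \]
then close via Lemma~\ref{lem:Gronwall}.

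First I would write the decomposition
\[ (\delta^{h,i} - \delta^{0,i})_{t'} - (\delta^{h,i} - \delta^{0,i})_t = \int_t^{t'} b^{h,i}_s \d s + \int_t^{t'} \sigma^{h,i}_s \d B^i_s, \]
with $b^{h,i}_s$ and $\sigma^{h,i}_s$ defined in complete analogy with the proof of Proposition~\ref{pro:approxhM}, comparing particle-to-McKean--Vlasov differences at the two discretisation scales. The drift term splits as $b^{h,i}_s = \mathbf{b}^1_s + \mathbf{b}^2_s + \mathbf{b}^3_s$: the first part captures $[b_{s_h}(X^{h,i}, \pi(\vec X^{h,N})) - b_{s_h}(\overline X^{h,i}, \pi(\vec{\overline X}^h))] - [b_s(X^{0,i}, \pi(\vec X^{0,N})) - b_s(\overline X^{0,i}, \pi(\vec{\overline X}^0))]$, to be controlled via Lemma~\ref{lem:techC11} applied to $(X^{h,j}, X^{0,j})$ and $(\overline X^{h,j}, \overline X^{0,j})$; the second captures the iid-centred residuals $[b_{s_h}(\overline X^{h,i}, \pi(\vec{\overline X}^h)) - b_{s_h}(\overline X^{h,i}, \L(\overline X^{h,i}))] - [\text{same at } h=0]$, bounded by Lemma~\ref{lem:techiid} applied to $(\overline X^{h,j}, \overline X^{0,j})$, yielding a $C N^{-1} |h\log h|$ variance; and $\mathbf b^3_s := b_{s_h}(\overline X^{h,i}, \L(\overline X^{h,i})) - b_s(\overline X^{0,i}, \L(\overline X^{0,i}))$ is the pure McKean--Vlasov discretisation error, handled by the Lipschitz property in~\ref{ass:coef1} and Lemma~\ref{lem:PoCh}-\ref{item:lemPoChinfinf}. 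The diffusion term $\sigma^{h,i}_s$ is split identically; for its stochastic integral I would invoke BDG to reduce to an $L^2$-norm on $[t,t']$ that contributes the factor $(t'-t)^{1/2}$.

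The main obstacle is extracting the $|h\log h|^{1/2}$ rate at the \emph{fluctuation} scale, not just the mean-field scale. The naive bound from expanding $\mathbf b^1_s$ via Lemma~\ref{lem:techC11} contains products like $R^{i,j}_s \cdot \sup_r |\delta^{0,j}_r|$, where $R^{i,j}_s$ collects the $L^2$-size-$|h\log h|^{1/2}$ time-discretisation gaps from Lemma~\ref{lem:PoCh}-\ref{item:lemPoChhinf}-\ref{item:lemPoChinfinf}. After Cauchy--Schwarz together with the $N^{-1/2}$ bound on $\E^{1/2}[\sup|\delta^{0,j}|^2]$ from Lemma~\ref{lem:PoCh}-\ref{item:lemPoChPoCinf}, this contributes exactly the $N^{-1/2} |h\log h|^{1/2}$ needed to close the bound uniformly in $N$ once multiplied by $\sqrt N$. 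The $C^{1,1}$ hypothesis on $(\tilde b, \tilde \sigma)$ in~\ref{ass:pair} is essential at this step: it is what enables Lemma~\ref{lem:techC11} and the associated quadratic cancellation.

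Finally, I would gather all contributions, use the sub-additivity identity $\sup_{0 \leq r \leq s} |\cdot| \leq \sup_{0 \leq r \leq t} |\cdot| + \sup_{t \leq r \leq s} |\cdot|$ (analogous to~\eqref{eq:subAdd}) to pass to the $f^h(t,t')$ recursion, and apply Lemma~\ref{lem:Gronwall} to obtain the claimed bound on $f^h(0,T)$, uniformly in $N$ and $h \in (0,1]$.
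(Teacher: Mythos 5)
Your overall strategy --- mimic the proof of Proposition~\ref{pro:approxhM} with $h$ playing the role of $M$, split the drift and diffusion differences so that Lemma~\ref{lem:techC11} and Lemma~\ref{lem:techiid} produce either terms controlled by $\E[\sup_s\vert\delta^{h,i}_s-\delta^{0,i}_s\vert]$ or terms of size $N^{-1/2}\vert h\log h\vert^{1/2}$, then close with Lemma~\ref{lem:Gronwall} --- is exactly the paper's, and your account of where the rate comes from (Cauchy--Schwarz between the quadratic remainder of Lemma~\ref{lem:techC11} and the $N^{-1/2}$ propagation-of-chaos bounds of Lemma~\ref{lem:PoCh}) is correct. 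But your drift decomposition has a genuine flaw. With $b^{h,i}_s = b_{s_h}(X^{h,i},\pi(\vec X^{h})) - b_{s_h}(\overline X^{h,i},\L(\overline X^{h,i})) - [\,b_s(X^{0,i},\pi(\vec X^{0})) - b_s(\overline X^{0,i},\L(\overline X^{0,i}))\,]$, your $\mathbf b^1_s+\mathbf b^2_s$ already equals $b^{h,i}_s$; the additional term $\mathbf b^3_s = b_{s_h}(\overline X^{h,i},\L(\overline X^{h,i})) - b_s(\overline X^{0,i},\L(\overline X^{0,i}))$ is not part of the decomposition, and it must not be: a standalone McKean--Vlasov discretisation error is only $O(\vert h\log h\vert^{1/2})$ in $L^1$, with no $N^{-1/2}$ gain, so after multiplication by $\sqrt N$ it would ruin the uniformity in $N$ that the proposition asserts. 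The estimate only closes because mean-level discretisation errors never appear in isolation: every piece has to be a difference of differences.

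The second, more substantive issue is the time mismatch. Your $\mathbf b^1_s$ and $\mathbf b^2_s$ pair a bracket evaluated at time $s_h$ with a bracket evaluated at time $s$, whereas Lemmas~\ref{lem:techC11} and~\ref{lem:techiid} are stated for one and the same function $F$ on both halves; they do not apply to these terms as written. This mismatch is precisely the new difficulty of this proposition compared with Proposition~\ref{pro:approxhM}, and the paper inserts an extra layer in the splitting to handle it: it first compares $b_{s_h}$ and $b_s$ at the \emph{same} arguments, writing $b_{s_h}(\cdot)-b_s(\cdot)=\int_0^1 (s_h-s)\,\partial_s b_{(1-r)s_h+rs}(\cdot)\,\d r$ and using that $\partial_s b$ is itself Lipschitz (the $\C^{1,1}$-in-time part of \ref{ass:pair}), so these pieces carry an explicit factor $h$ together with an $N^{-1/2}$ coming from Lemma~\ref{lem:PoCh} or Lemma~\ref{lem:techiid}; only then are Lemma~\ref{lem:techC11} and Lemma~\ref{lem:techiid} applied at the common time $s$ (a four-piece splitting, not two). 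To repair your argument you must either reproduce this intermediate step, or extend the two technical lemmas to a time-augmented state $(s,x)\in\R\times\C^d$ so that $\vert s_h-s\vert\le h$ enters like a state difference; as it stands, the proposal skips the step where the $h$-shift in the time argument is actually controlled at the fluctuation scale.
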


With these results at hand, Theorem \ref{thm:ClT} will easily follow.
As previously, we rely on the classical coupling method from \cite[Theorem 1.4]{sznitman1991topics}.  
To alleviate notations, we drop the exponents $N$ in the remainder of this section (although the dependence on $N$ is crucial).
We will repeatedly use the technical estimates proved in Appendix \ref{app:tech}.
In the following, $C$ is a generic constant that may change from line to line, but staying independent of $(h,N)$.

\begin{proof}[Proof of Lemma \ref{lem:PoCh}]
\ref{item:lemPoChbound} is obtained by noticing that the proof of Lemma \ref{lem:Approxh}-\ref{item:lemAppbound} still works with $p =2$.

\ref{item:lemPoChPoCh} We adapt the classical coupling argument, as in e.g. \cite{bernou2022path}. By definition of $X^{h,i}$ and $\overline{X}^{h,i}$, for $0 \leq t \leq T$,
\begin{multline*}
\delta^{h,i}_t = \int_0^t [ b_{s_h} ( X^{h,i} , \pi(\vec{X}^{h}) ) - b_{s_h} ( \overline{X}^{h,i} , \L(\overline{X}^{h,i}) ) ] \d s \\
+ \int_0^t [ \sigma_{s_h} ( X^{h,i} , \pi(\vec{X}^{h}) ) - \sigma_{s_h} ( \overline{X}^{h,i} , \L(\overline{X}^{h,i}) ) ] \d B^i_s.
\end{multline*} 
We then take the square, supremum in time, and expectations.
Using \ref{ass:coef2}, reasoning as for proving Lemma \ref{lem:PoCM}-\ref{item:lemPoCMPoCM} yields
\begin{equation*}
\E \big\vert b_{s_h} ( X^{h,i} , \pi(\vec{X}^{h}) ) - b_{s_h} ( \overline{X}^{h,i} , \L (\overline{X}^{h,i} ) ) \big\vert^2 \leq C \E \big[ \sup_{0 \leq r \leq s_h} \vert X^{h,i}_r - \overline{X}^{h,i}_r \vert^2 \big] + C N^{-1}.
\end{equation*} 
Using the BDG inequality,
\begin{multline*}
\E \sup_{0 \leq s \leq t} \bigg\vert \int_0^s [ \sigma_{r_h} ( X^{h,i} , \pi(\vec{X}^{h}) ) - \sigma_{r_h} ( \overline{X}^{h,i} , \L(\overline{X}^{h,i}) ) ] \d B^i_r \bigg\vert^2 \\
\leq C \int_0^t \vert \sigma_{s_h} ( X^{h,i} , \pi(\vec{X}^{h}) ) - \sigma_{s_h} ( \overline{X}^{h,i} , \L(\overline{X}^{h,i}) ) \vert^2 \d s, 
\end{multline*}
and we similarly handle the r.h.s.
We then gather everything to get that
\[ \E \big[ \sup_{0 \leq s \leq t} \vert \delta^{h,i}_s \vert^2 \big] \leq C N^{-1} + C \int_0^t \E \big[ \sup_{0 \leq r \leq s} \vert \delta^{h,i}_r \vert^2 \big] \d s. \]
The conclusion follows from the Gronwall Lemma.

\ref{item:lemPoChPoCinf} If $h = 0$ in the above proof of \ref{item:lemPoChPoCh}, we notice that all the computations remain valid when replacing $s_h$ by $s$.
This yields the result.

\ref{item:lemPoChhinf} is obtained by noticing that the proof of Lemma \ref{lem:Approxh}-\ref{item:lemApphhinf} still works with $p=2$.

\ref{item:lemPoChinfinf} By symmetry, we recall that $\E [ \sup_{0 \leq t \leq T} \vert X^{h,i,N}_t - X^{0,i,N}_t \vert^2 ]$ does not depend on $i$. 
The result follows by taking the $N \rightarrow +\infty$ limit in the uniform estimate \ref{item:lemPoChhinf}, using \ref{item:lemPoChPoCh}-\ref{item:lemPoChPoCinf}.
\end{proof}

\begin{proof}[Proof of Proposition \ref{pro:approxhh}]
For every $0 \leq t \leq t' \leq T$, we have the decomposition
\begin{equation*} 
\delta^{h,i}_{t'} - \delta^{0,i}_{t'} = \delta^{h,i}_{t} - \delta^{0,i}_{t} + \int_t^{t'} b_s^{h,i} \d s + \int_t^{t'} \sigma_s^{h,i} \d B^i_s, 
\end{equation*} 
where
\[ b_s^{h,i} := b_{s_h} ( X^{h,i}, \pi( \vec{X}^{h} ) ) - b_{s_h} ( \overline{X}^{h,i}, \L(\overline{X}^{h,i}) ) - [ b_{s} ( X^{0,i}, \pi( \vec{X}^{\infty} ) ) - b_{s} ( \overline{X}^{\infty,i}, \L(\overline{X}^{0,i}) ) ],  \]
\[ \sigma_s^{h,i} := \sigma_{s_h} ( X^{h,i}, \pi( \vec{X}^{h} ) ) - \sigma_{s_h} ( \overline{X}^{h,i}, \L(\overline{X}^{h,i}) ) - [ \sigma_{s} ( X^{0,i}, \pi( \vec{X}^{\infty} ) ) - \sigma_{s} ( \overline{X}^{\infty,i}, \L(\overline{X}^{0,i}) ) ]. \]
We then take absolute values, supremum in times and expectations. 
For the drift, we split
\begin{align*}
b_s^{h,i} = \phantom{b}&b_{s_h} ( X^{h,i}, \pi(\vec{X}^h) ) - b_{s_h} ( \overline{X}^{h,i}, \pi(\vec{\overline{X}}^h) ) - \big[ b_s ( X^{h,i}, \pi(\vec{X}^h) ) - b_s ( \overline{X}^{h,i}, \pi(\vec{\overline{X}}^h) ) \big] \\
&+ b_{s_h} ( \overline{X}^{h,i}, \pi(\vec{\overline{X}}^h) ) - b_{s_h} ( \overline{X}^{h,i}, \L ( \overline{X}^{h,i} ) ) - \big[  b_s ( \overline{X}^{h,i}, \pi(\vec{\overline{X}}^h) ) - b_s ( \overline{X}^{h,i}, \L ( \overline{X}^{h,i} ) ) \big] \\
&+ b_s ( {X}^{h,i}, \pi(\vec{X}^h) ) - b_s ( \overline{X}^{h,i}, \pi(\vec{\overline{X}}^h) )
- \big[ b_s ( {X}^{i,0}, \pi(\vec{X}^0_{s}) ) - b_s ( \overline{X}^{i,0}, \pi(\vec{\overline{X}}^0) ) \big] \\
&+ b_s ( \overline{X}^{h,i}, \pi(\vec{\overline{X}}^h) ) - b_s ( \overline{X}^{h,i}, \L ( \overline{X}^{h,i} ) ) 
- \big[ b_s ( \overline{X}^{i,0}_{s}, \pi(\vec{\overline{X}}^0_{s}) ) - b_s ( \overline{X}^{i,0}, \L ( \overline{X}^{i,0} ) ) \big].
\end{align*} 
For each line of $b^{h,i}_s$, we now reason as we did for proving Proposition \ref{pro:approxhM}.
For the first line, we use the differentiability assumption \ref{ass:pair} to write
\[ b_{s_h} ( X^{h,i}, \pi(\vec{X}^h) ) - b_{s} ( X^{h,i}, \pi(\vec{X}^h) ) = \int_0^1 (s_h - s) \partial_s b_{(1-r)s_h + r s} ( X^{h,i}, \pi(\vec{X}^h) ) \d r, \]
and we similarly rewrite $b_{s_h} ( \overline{X}^{h,i}, \pi(\vec{\overline{X}}^h) ) - b_{s} ( \overline{X}^{h,i}, \pi(\vec{\overline{X}}^h) )$.
Using the $\C^{1,1}$ assumption \ref{ass:pair} on $b$ and Lemma \ref{lem:PoCh}, this yields
\[ \sup_{0 \leq s \leq T} \E \big[ \vert b_{s_h} ( X^{h,i}, \pi(\vec{X}^h) ) - b_{s_h} ( \overline{X}^{h,i}, \pi(\vec{\overline{X}}^h) ) - [ b_s ( X^{h,i}, \pi(\vec{X}^h) ) - b_s ( \overline{X}^{h,i}, \pi(\vec{\overline{X}}^h) ) ] \vert \big] \leq C h N^{-1/2}. \]
We similarly handle the second line of $b^{h,i}_s$, using Lemma \ref{lem:techiid} to get that
\[ \max_{0 \leq r \leq 1} \E \big[ \big\vert \partial_s b_{(1-r)s_h + r s} ( \overline{X}^{h,i}, \pi(\vec{\overline{X}}^h) ) - \partial_s b_{(1-r)s_h + r s} ( \overline{X}^{h,i}, \L ( \overline{X}^{h,i} ) ) \big\vert \big] \leq C N^{-1/2}. \]
To handle the third line in $b^{h,i}_s$, we use Lemma \ref{lem:techC11} with $(X^i,Y^i) = (X^{h,i},X^{0,i})$ and $(\overline{X}^i,\overline{Y}^i) = (\overline{X}^{h,i}_s,\overline{X}^{0,i}_s)$, together with Lemma \ref{lem:PoCh}.
To handle the last line, we use Lemma \ref{lem:techiid} with the i.i.d. processes $(\overline{X}^i,\overline{Y}^i) = (\overline{X}^{h,i},\overline{X}^{0,i})$, $1 \leq i \leq N$. Gathering everything yields
\[ \sup_{t \leq s \leq t'} \E [ \vert b^{h,i}_s \vert ] \leq C N^{-1/2} \vert h \log h \vert^{1/2} + C \E [ \sup_{0 \leq s \leq t'} \vert \delta^{h,i}_s \vert ]. \]
We use a similar splitting for $\sigma^{h,i}_{s}$ and we reason as in the proof of Proposition \ref{pro:approxhM} to obtain
\[ \E \sup_{t \leq s \leq t'} \bigg\vert \int_t^s \sigma^{h,i}_r \d B^i_r \bigg\vert \leq C N^{-1/2} \vert h \log h \vert^{1/2} + C (t'-t)^{1/2} \E [ \sup_{0 \leq s \leq t'} \vert \delta^{h,i}_s \vert ]. \]
Setting $f^{h}(t,t') := \sqrt{N} \, \E [ \sup_{t \leq s \leq t'} \vert \delta^{h,i}_s \vert ]$, we gather terms as previously to get that
\[ \forall 0 \leq t \leq t' \leq T, \quad f^{h} (t,t') \leq C f(0,t) + C \vert h \log h \vert^{1/2} + C (t'-t)^{1/2} f^h (t,t') + C \int_t^{t'} f^{h} (t,s) \d s, \]
and we conclude using Lemma \ref{lem:Gronwall}, as in the proof of Proposition \ref{pro:approxhM}.
\end{proof}

\begin{proof}[End of the proof of Theorem \ref{thm:ClT}]
Let us fix $\varphi$ in $\C^{1,1}(\C^d,\R)$, before defining
\[ \delta^{h}_\varphi := N^{-1} {\textstyle\sum}_{i=1}^N \big[ \varphi ( X^{h,i} ) - \E [ \varphi ( \overline{X}^{h,i} ) ] \big]. \]
We then decompose
\begin{align*}
\delta^{h}_\varphi - \delta^{0}_\varphi =& \, N^{-1} {\textstyle\sum}_{i=1}^N \big[ \varphi ( X^{h,i} ) - \varphi ( \overline{X}^{h,i} ) \big] - N^{-1} {\textstyle\sum}_{i=1}^N \big[ \varphi ( X^{0,i} ) - \varphi ( \overline{X}^{0,i} ) \big]  \\
&\,+ N^{-1} {\textstyle\sum}_{i=1}^N \big[ \varphi ( \overline{X}^{h,i} ) - \E [ \varphi ( \overline{X}^{h,i} ) ] \big] - N^{-1} {\textstyle\sum}_{i=1}^N \big[ \varphi ( \overline{X}^{0,i} ) - \E [ \varphi ( \overline{X}^{0,i} ) ] \big].
\end{align*} 
As previously, we control the first line using Lemma \ref{lem:techC11}, Lemma \ref{pro:approxhh} and Proposition \ref{pro:approxhh}.
Similarly, we handle the second line using Lemma \ref{lem:techiid} and Lemma \ref{pro:approxhh}.
At the end of the day, we obtain that
\[ \sup_{N \geq 1} {\sqrt{N}} \, \E \big[ \lvert \delta^{h}_\varphi - \delta^{0}_\varphi \rvert \big] \xrightarrow[h \rightarrow 0]{} 0. \]
As a consequence, the convergence given by Proposition \ref{pro:ClThB} holds uniformly in $h$. 
Let $(B_t)_{0 \leq t \leq T}$, $(\tilde{B}_t)_{0 \leq t \leq T}$ be independent Brownian motions in $\R^d$.
Let $X^h$ denote the strong solution of the discretised McKean-Vlasov \eqref{eq:GenMcK} with driving noise $B$. 
Similarly, let $\tilde{X}^h$ denote the strong solution of \eqref{eq:GenMcK} with driving noise $\tilde{B}$. 
Let us then introduce
the solution $\delta X^h$ of the McKean-Vlasov SDE with common noise given by  \eqref{eq:McKNoiseh},
\begin{align*} 
\d \delta X^h_{t_h} &= \big[ D_x \sigma_{t_h} ( X^h, \L ( X^h ) ) \cdot \delta X^h + \delta_P \sigma_{t_h} ( X^h, \L ( X^h ), \tilde{X}^h ) \big] \d B_t \\
&+ \big[ D_x b_{t_h} ( X^h, \L ( X^h ) ) \cdot \delta X^h + \delta_P b_{t_h} ( X^h, \L ( X^h ), \tilde{X}^h ) \big] \d t \\
&+ \E \big[ D_y \delta_P b_{t_h} ( X^h , \L ( X^h ), X^h ) \cdot  \delta X^h \big\vert ( \tilde{B}_s )_{0 \leq s \leq t_h} \big] \d t \\
&+ \E \big[ D_y \delta_P \sigma_{t_h} ( X^h, \L ( X^h ), X^h ) \cdot  \delta X^h \big\vert ( \tilde{B}_s )_{0 \leq s \leq t_h} \big] \d B_t, \qquad \delta X^h_0 = 0.\phantom{abcd}
\end{align*}
As previously, synchronous coupling and a Gronwall argument give that
\[ \E[ \sup_{0 \leq t \leq T} \vert \delta X^{h}_t - \delta X_t \vert ] \xrightarrow[h \rightarrow 0]{} 0, \]
where $\delta X$ is the solution of \eqref{eq:McKNoise} with the same $(B,\tilde{B})$. 
From this, we conclude that $\sigma^2_{h,\varphi}$ given by Proposition \ref{pro:ClThB} converges to $\sigma^2_{\varphi}$ as required by Theorem \ref{thm:ClT}.
\end{proof}

\appendix

\section{Some useful estimates} \label{app:tech}

First, let us state a variation on the Gronwall lemma, which is used throughout the article.  

\begin{lemma} \label{lem:Gronwall}
Let $f : [0,T] \times [0,T] \rightarrow \R$, $g : [0,T] \rightarrow \R$ be continuous functions, and $ \alpha, C >0$ be such that 
\[ \forall 0 \leq t \leq t' \leq T, \quad f(t,t') \leq g(t) + C f (0,t) + C (t'-t)^{\alpha} f (t,t') + C \int_t^{t'} f (t,s) \d s, \]
together with
\[ \forall 0 \leq t \leq s \leq t', \quad f(0,s) \leq f ( 0, t ) + f (t,s). \]
Then, there exists $C' >0$ that only depends on $(\alpha,
C,T)$, such that $\sup_{0 \leq t \leq T} f(0,t) \leq C' [ f ( 0, 0 ) + \sup_{0 \leq t \leq T} g (t) ]$.
\end{lemma}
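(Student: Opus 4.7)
The plan is to absorb the singular term $C(t'-t)^\alpha f(t,t')$ onto the left-hand side on short intervals, run a standard integral Gronwall argument there, and then chain the resulting local estimates on a finite grid using the subadditivity assumption on $t \mapsto f(0,t)$.

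First I would pick $\delta \in (0,T]$ with $C \delta^\alpha \leq 1/2$; note that $\delta$ only depends on $(C,\alpha)$. For any $0 \leq t \leq t' \leq T$ with $t'-t \leq \delta$, the hypothesis gives
\[ \tfrac{1}{2} f(t,t') \leq g(t) + C f(0,t) + C \int_t^{t'} f(t,s) \, \d s, \]
hence
\[ f(t,t') \leq 2 \big[ g(t) + C f(0,t) \big] + 2 C \int_t^{t'} f(t,s) \, \d s. \]
Since $f$ is continuous, $s \mapsto f(t,s)$ is bounded on $[t,t+\delta]$, so the classical integral Gronwall lemma applied on $[t,t+\delta]$ yields
\[ f(t,t') \leq 2 e^{2 C \delta} \big[ g(t) + C f(0,t) \big], \qquad 0 \leq t'-t \leq \delta. \]

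Next I combine this with the subadditivity relation $f(0,t') \leq f(0,t) + f(t,t')$ to obtain, still for $0 \leq t \leq t' \leq t+\delta$,
\[ f(0,t') \leq K \, f(0,t) + 2 e^{2C\delta} g(t), \qquad K := 1 + 2 C e^{2 C \delta}. \]
Setting $t_k := (k\delta) \wedge T$ for $0 \leq k \leq N := \lceil T/\delta \rceil$, the above inequality, first maximised for $t' \in [t_k,t_{k+1}]$ and then specialised to $t' = t_{k+1}$, furnishes the affine recurrence
\[ \sup_{t_k \leq t' \leq t_{k+1}} f(0,t') \leq K f(0,t_k) + 2 e^{2 C \delta} \sup_{0 \leq s \leq T} g(s), \qquad f(0,t_{k+1}) \leq K f(0,t_k) + 2 e^{2 C \delta} \sup_{0 \leq s \leq T} g(s). \]
A direct induction yields $f(0,t_k) \leq K^k f(0,0) + 2 e^{2C\delta} \tfrac{K^k - 1}{K - 1} \sup g$ for every $k \leq N$, and inserting this into the previous display gives the announced bound $\sup_{0 \leq t' \leq T} f(0,t') \leq C'[f(0,0) + \sup g]$, where $C'$ depends only on $(\alpha,C,T)$ through $(\delta,K,N)$.

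There is no real obstacle here; the only subtlety is that the choice of $\delta$ has to be made \emph{uniform} in $t$ in order to absorb the superlinear term $C(t'-t)^\alpha f(t,t')$, which is possible precisely because both the prefactor $C$ and the exponent $\alpha$ are constants of the problem. The subadditivity hypothesis is what allows us to propagate the local bound into a global one over $[0,T]$ via the grid $(t_k)$.
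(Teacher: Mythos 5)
Your proof is correct and follows essentially the same approach as the paper's: choose a small step $\delta$ with $C\delta^\alpha\leq 1/2$ to absorb the singular term, apply the classical integral Gronwall lemma on each short interval, and then chain the local bounds across the finite grid via the subadditivity hypothesis. The only cosmetic difference is that you make the affine recurrence $f(0,t_{k+1})\leq K f(0,t_k)+\text{const}$ explicit, whereas the paper leaves the finite induction implicit.
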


\begin{proof}
Let us fix $t_0 \in (0,T]$ such that $1 - C t_0^{\alpha} \geq 1/2$. 
For any $k \geq 0$ with $k t_0 \leq T$, we get that for every $t \in [kt_0,(k+1)t_0 \wedge T]$,
\[ f ( k t_0, t) \leq 2 g(t) + 2 C f ( 0, k t_0 ) + 2 C \int_{kt_0}^t f (k t_0,s) \d s.  \]
The usual Gronwall lemma then provides the bound
\[ \sup_{kt_0 \leq t \leq (k+1)t_0 \wedge T} f ( k t_0, t ) \leq 2 e^{2 C t_0} \big[ C f ( 0, k t_0 ) + \sup_{0 \leq t \leq T} g (t) \big].  \]
Since 
\[ f (0,(k+1)t_0) \leq f (0,k t_0) + f (k t_0,(k+1)t_0), \]
the desired bound on $f (0,T)$ follows by (finite) induction.
\end{proof}

Let now $(E, \vert . \vert)$ be a Banach space.
For $k \geq 1$, let $F : E \times E \rightarrow \R^k$ be a $C^{1,1}$ function. 
For $P \in \ps_1 (E)$, we define
\[ \forall x \in E, \quad F(x,P) := \int_E F(x,y) \d P ( y), \]
with a slight abuse of notation.

\begin{lemma} \label{lem:techC11}
Let $(X^{i})_{1 \leq i \leq N}$, $(\overline{X}^{i})_{1 \leq i \leq N}$, $(Y^{i})_{1 \leq i \leq N}$, $(\overline{Y}^{i})_{1 \leq i \leq N}$ be triangular arrays of $E$-valued random variables.
Then, for every $1 \leq i \leq N$,
\begin{multline*}
\vert F(X^{i},\pi(\vec{X}^{N})) - F(\overline{X}^{i},\pi(\vec{\overline{X}}^{N})) - [ F(Y^{i},\pi(\vec{Y}^{N})) - F(\overline{Y}^{i},\pi(\vec{\overline{Y}}^{N})) ] \vert  \\
\leq \frac{1}{N} \sum_{j =1}^N \lVert F \rVert_{\mathrm{Lip}} [ \vert X^{i} - \overline{X}^{i} - [ Y^{i} - \overline{Y}^{i} ] \vert + \vert X^{j} - \overline{X}^{j} - [ Y^{j} - \overline{Y}^{j} ] \vert ] \\
+ \lVert D F \rVert_{\mathrm{Lip}} [ \vert Y^{i} - \overline{Y}^{i} \vert + \vert Y^{j} - \overline{Y}^{j} \vert ] [ \vert {X}^{i} - {Y}^{i} \vert + \vert {X}^{j} - {Y}^{j} \vert + \vert \overline{X}^{i} - \overline{Y}^{i} \vert + \vert \overline{X}^{j} - \overline{Y}^{j} \vert ] 
\end{multline*} 
\end{lemma}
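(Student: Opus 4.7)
The plan is to reduce the claim to a pointwise estimate on pairs of indices and then apply a suitable second-order Taylor-type decomposition.

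First I would write out the empirical-measure averages explicitly: for every $1 \leq i \leq N$,
\[
F(X^i,\pi(\vec X^N)) - F(\overline X^i,\pi(\vec{\overline X}^N)) - F(Y^i,\pi(\vec Y^N)) + F(\overline Y^i,\pi(\vec{\overline Y}^N)) = \frac{1}{N}\sum_{j=1}^N G_j,
\]
where
\[
G_j := F(X^i,X^j) - F(\overline X^i,\overline X^j) - F(Y^i,Y^j) + F(\overline Y^i,\overline Y^j).
\]
The lemma is then equivalent to bounding $|G_j|$ by the expression obtained from the right-hand side by dropping the $\frac{1}{N}\sum_j$, and averaging at the end.

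To estimate $G_j$, I would introduce the two linear interpolations
\[
\beta(t) := \bigl((1-t)\overline X^i + t X^i,\,(1-t)\overline X^j + t X^j\bigr), \qquad \gamma(t) := \bigl((1-t)\overline Y^i + t Y^i,\,(1-t)\overline Y^j + t Y^j\bigr),
\]
and apply the fundamental theorem of calculus to both bracketed differences in
\[
G_j = \bigl[F(X^i,X^j) - F(\overline X^i,\overline X^j)\bigr] - \bigl[F(Y^i,Y^j) - F(\overline Y^i,\overline Y^j)\bigr].
\]
Writing $\xi := (X^i - \overline X^i, X^j - \overline X^j)$ and $\eta := (Y^i - \overline Y^i, Y^j - \overline Y^j)$, this yields
\[
G_j = \int_0^1 DF(\beta(t)) \cdot (\xi - \eta)\, \d t + \int_0^1 \bigl[DF(\beta(t)) - DF(\gamma(t))\bigr] \cdot \eta \, \d t,
\]
after adding and subtracting $DF(\beta(t))\cdot\eta$ inside the integrand.

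The first integral is bounded by $\|F\|_\mathrm{Lip}\,|\xi - \eta|$, using that $\|DF\|_\infty \leq \|F\|_\mathrm{Lip}$; this produces exactly the first line of the stated bound. For the second integral, the Lipschitz continuity of $DF$ gives
\[
|DF(\beta(t)) - DF(\gamma(t))| \leq \|DF\|_\mathrm{Lip}\,|\beta(t) - \gamma(t)|,
\]
and the triangle inequality on the convex combination $\beta(t) - \gamma(t) = (1-t)(\overline X^{\cdot} - \overline Y^{\cdot}) + t(X^{\cdot} - Y^{\cdot})$ yields
\[
|\beta(t)-\gamma(t)| \leq |X^i - Y^i| + |X^j - Y^j| + |\overline X^i - \overline Y^i| + |\overline X^j - \overline Y^j|
\]
uniformly in $t \in [0,1]$; this produces the second line of the bound. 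Summing over $j$ and dividing by $N$ concludes. There is no real obstacle here — the only delicate point is choosing the right pair of interpolating paths so that both (i) their endpoint difference cleanly factors the \emph{mixed second difference} $\xi - \eta$ and (ii) their pointwise distance involves only $X-Y$ and $\overline X - \overline Y$ (and not the less favourable $X - \overline X$); the decomposition above is built precisely for this purpose.
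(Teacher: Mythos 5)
Your proposal is correct and is essentially the paper's proof: both write each of the two differences as a fundamental-theorem-of-calculus integral of $DF$ along the linear interpolation from $(\overline X^i,\overline X^j)$ to $(X^i,X^j)$ (resp. from $(\overline Y^i,\overline Y^j)$ to $(Y^i,Y^j)$), and then add and subtract $DF(\beta(t))\cdot\eta$ to isolate the mixed second difference $\xi-\eta$ and the increment of $DF$. The only cosmetic difference is that the paper splits $DF$ into the partial derivatives $\partial_1 F$ and $\partial_2 F$ and treats the two resulting sums separately, whereas you keep the gradient vectorial; this changes nothing substantive.
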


\begin{proof}
Since $F$ is differentiable, 
\begin{multline*}
F(X^{i},\pi(\vec{X}^{N})) - F(\overline{X}^{i},\pi(\vec{\overline{X}}^{N}))
=\, \frac{1}{N} \sum_{j=1}^N \int_0^1 (X^{i} - \overline{X}^{i}) \cdot \partial_1 F ((1-r) (\overline{X}^{i},\overline{X}^{j}) + r ({X}^{i},{X}^{j}) ) \d r  \\
+\frac{1}{N} \sum_{j=1}^N \int_0^1 (X^{j} - \overline{X}^{j}) \cdot \partial_2 F ((1-r) (\overline{X}^{i},\overline{X}^{j}) + r ({X}^{i},{X}^{j}) ) \d r, 
\end{multline*} 
and the same decomposition holds writing $Y$ instead of $X$.
We then subtract the decomposition for $Y$ to the above one.
The difference of the first terms reads
\begin{multline*}
\frac{1}{N} \sum_{j=1}^N \int_0^1 [ X^{i} - \overline{X}^{i} - ( Y^{i} - \overline{Y}^{i}) ] \cdot \partial_1 F ((1-r) (\overline{X}^{i},\overline{X}^{j}) + r ({X}^{i},{X}^{j}) ) \\
+ (Y^{i} - \overline{Y}^{i}) \cdot [ \partial_1 F ((1-r) (\overline{X}^{i},\overline{X}^{j}) + r ({X}^{i},{X}^{j}) ) - \partial_1 F ((1-r) (\overline{Y}^{i},\overline{Y}^{j}) + r ({Y}^{i},{Y}^{j}) ) ] \d r.
\end{multline*} 
We similarly handle the difference of the second terms.
The result follows using the assumed bounds on $D F$.
\end{proof}

\begin{lemma} \label{lem:techiid}
Let $(\overline{X}^{i},\overline{Y}^{i})_{1 \leq i \leq N}$ be an i.i.d. sequence of $E \times E$-valued variables with marginal laws $(\overline{\pi}_X,\overline{\pi}_Y)$. 
There exists $C>0$ that only depends on $\Vert F \Vert_{\mathrm{Lip}}$ such that
\[ \max_{1 \leq i \leq N} \E \, \vert F(\overline{X}^i,\pi(\vec{\overline{X}}^N)) - F(\overline{X}^i,\overline{\pi}_X) - [ F(\overline{Y}^i,\pi(\vec{\overline{Y}}^N)) - F(\overline{Y}^i,\overline{\pi}_Y) ] \vert^2 \leq C N^{-1} \E [ \vert \overline{X}^1 - \overline{Y}^1 \vert^2 ]. \]
\end{lemma}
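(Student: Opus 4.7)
The plan is to expose the quantity as an average of conditionally centered variables and exploit the independence structure, a standard variance computation for U-statistics. Writing $F(x,P) = \int F(x,y)\d P(y)$, decompose
\[
F(\overline{X}^i,\pi(\vec{\overline{X}}^N)) - F(\overline{X}^i,\overline{\pi}_X) = \frac{1}{N}\sum_{j=1}^N \xi^i_j, \qquad \xi^i_j := F(\overline{X}^i,\overline{X}^j) - F(\overline{X}^i,\overline{\pi}_X),
\]
and symmetrically define $\eta^i_j := F(\overline{Y}^i,\overline{Y}^j) - F(\overline{Y}^i,\overline{\pi}_Y)$. The square to be bounded is then $N^{-2}\sum_{j,k}\E[(\xi^i_j-\eta^i_j)\cdot(\xi^i_k-\eta^i_k)]$.

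The key observation I would use is that for $j\neq i$, conditional on $(\overline{X}^i,\overline{Y}^i)$, the pair $(\overline{X}^j,\overline{Y}^j)$ is independent of every $(\overline{X}^k,\overline{Y}^k)$ with $k\notin\{i,j\}$, and moreover $\E[\xi^i_j\,|\,\overline{X}^i] = F(\overline{X}^i,\overline{\pi}_X) - F(\overline{X}^i,\overline{\pi}_X) = 0$, with the analogous identity for $\eta^i_j$. Hence $\E[\xi^i_j-\eta^i_j \,|\,\overline{X}^i,\overline{Y}^i] = 0$ whenever $j\neq i$. A straightforward case analysis on $(j,k)$ then shows that every off-diagonal cross-term $j\neq k$ vanishes: if both $j,k\neq i$ one conditions on $(\overline{X}^i,\overline{Y}^i)$ and uses conditional independence plus centering; if exactly one of them equals $i$, say $k=i$, then $\xi^i_i-\eta^i_i$ is $\sigma(\overline{X}^i,\overline{Y}^i)$-measurable and the centering of $\xi^i_j-\eta^i_j$ given this $\sigma$-field kills the product. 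Thus
\[
\E\bigg|\frac{1}{N}\sum_{j=1}^N (\xi^i_j - \eta^i_j)\bigg|^2 = \frac{1}{N^2}\sum_{j=1}^N \E|\xi^i_j-\eta^i_j|^2.
\]

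Finally, each diagonal term is bounded by Lipschitz continuity: $|F(\overline{X}^i,\overline{X}^j) - F(\overline{Y}^i,\overline{Y}^j)| \leq \|F\|_{\mathrm{Lip}}(|\overline{X}^i-\overline{Y}^i|+|\overline{X}^j-\overline{Y}^j|)$, and $|F(\overline{X}^i,\overline{\pi}_X) - F(\overline{Y}^i,\overline{\pi}_Y)| \leq \|F\|_{\mathrm{Lip}}|\overline{X}^i-\overline{Y}^i| + \|F\|_{\mathrm{Lip}}\,W_1(\overline{\pi}_X,\overline{\pi}_Y)$, where $W_1(\overline{\pi}_X,\overline{\pi}_Y) \leq \E|\overline{X}^1-\overline{Y}^1|$ by using the coupling $(\overline{X}^1,\overline{Y}^1)$ itself. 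Squaring, taking expectations and invoking Jensen together with the i.i.d. assumption gives $\E|\xi^i_j-\eta^i_j|^2 \leq C\,\E|\overline{X}^1-\overline{Y}^1|^2$ for a constant $C$ depending only on $\|F\|_{\mathrm{Lip}}$, and summing $N$ such terms against the $N^{-2}$ prefactor produces the announced $N^{-1}$ rate.

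There is no real obstacle here; the only care required is the bookkeeping of the cross-terms near the boundary index $j=i$, where $\xi^i_i$ fails to be conditionally centered but contributes only $O(N^{-2})$ after squaring, and which is in any case absorbed by the conditional independence argument above.
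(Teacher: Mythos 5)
Your proof is correct and follows essentially the same route as the paper: with $F^N_{i,j}:=\xi^i_j-\eta^i_j$ you recover exactly the paper's decomposition, the conditional centering $\E[F^N_{i,j}\mid(\overline X^i,\overline Y^i)]=0$ for $j\neq i$ and conditional independence kill the off-diagonal terms, and the diagonal is controlled by the Lipschitz bound together with Jensen for the $W_1$ term.
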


\begin{proof}
By definition,
\begin{multline*}
F(\overline{X}^i,\pi(\vec{\overline{X}}^N)) - F(\overline{X}^i,\overline{\pi}_X) - [ F(\overline{Y}^i,\pi(\vec{\overline{Y}}^N)) - F(\overline{Y}^i,\overline{\pi}_Y) ] \\
= \frac{1}{N} \sum_{j=1}^N F(\overline{X}^i,\overline{X}^j)-F(\overline{Y}^i,\overline{Y}^j) - [ F(\overline{X}^i,\overline{\pi}_X) - F(\overline{Y}^i,\overline{\pi}_Y) ] =: \frac{1}{N} \sum_{j =1}^N F^N_{i,j}.  
\end{multline*}
Following the classical argument in \cite[Theorem 1.4]{sznitman1991topics}, we take the expectation of the square of this sum. 
By independence of the $(X^i,Y^i)$, we have for $k \neq l$,
\[ \E [ F^N_{i,k} F^N_{i,l} ] = \E  [ \E [ F^N_{i,k} F^N_{i,l} \vert (X^i,Y^i) ]  ] = \E  [ \E [ F^N_{i,k} \vert (X^i,Y^i) ] \E [ F^N_{i,l} \vert (X^i,Y^i) ] ]. \]
By definition of $(\overline{\pi}_{X},\overline{\pi}_{Y})$, $\E [ F^N_{i,k} \vert  (X^i,Y^i) ] = 0$ as soon as $k \neq i$.
Since $(k,l) \neq (i,i)$ because $k \neq l$, this implies $\E [ F^N_{i,k} F^N_{i,l} ] = 0$.
Thus, we only keep the diagonal terms:
\[ \E \bigg\vert \frac{1}{N} \sum_{j =1}^N F^N_{i,j} \bigg\vert^2 = \frac{1}{N^2} \sum_{j =1}^{N} \E [ \vert F^N_{i,j} \vert^2 ] \leq \frac{C'}{N} \big\{ \E \big[ \vert \overline{X}^1 - \overline{Y}^1 \vert^2 \big] + \big[ \E [ \vert \overline{X}^1 - \overline{Y}^1 \vert ] \big]^2 \big\}, \]
for some $C'> 0$. Using Jensen's inequality, the result follows with $C := 2 C'$.
\end{proof}

\section{Some results on rate functions} \label{app:LDP}

\subsection{Representation formulae} 

The following result is a slight variation of \cite[Theorem 5.2]{fischer2014form}.
We recall that the notion of reference system $\Sigma = (\Omega,(\F_t)_{t \leq 0 \leq T}, \P, (B_t)_{0 \leq t \leq T})$ is defined in Section \ref{subsec:not}.
For such a $\Sigma$, $h \in [0,1]$ and any square-integrable progressively measurable process $u = (u_t)_{0\leq t\leq T}$ on $\Sigma$, we say that $X^{h,u} := ( X^{h,u}_t )_{0 \leq t \leq T}$ is a solution of the McKean-Vlasov SDE
\begin{equation} \label{eq:controlled_process}
\d X^{h,u}_s = b_{s_h} ( X^{h,u}, \mathcal{L}( X^{h,u} ) ) \d s + \sigma_{s_h} (X^{h,u}, \mathcal{L}( X^{h,u}) )u_s \d s + \sigma_{s_h} (X^{h,u}, \mathcal{L}( X^{h,u}) )\d B_s,
\end{equation} 
if $X^{h,u}$ is $(\F_t)_{t \leq 0 \leq T}$-adapted and the integrated version of \eqref{eq:controlled_process} holds $\P$-a.s.
Let $W$ denote the Wiener measure on the canonical space $\Omega = \C^d$, with initial law $\L( X^1_0 )$, and let $P \in \ps_p (\C^d)$.
From \eqref{eq:defGamma}, $\Gamma_h ( P )$ is the law of the strong solution under $W$ to 
\[ \d Y^{h,P}_t = b_{t_h} (Y^{h,P},P) \d t + \sigma_{t_h} ( Y^{h,P},P ) \d \omega_t, \quad Y^{h,P}_0 = \omega_0, \]
where $( \omega_t )_{0 \leq t \leq T}$ is the canonical process. 
Indeed, strong existence and pathwise-uniqueness for this SDE are standard under \ref{ass:coef1}-\ref{ass:IniExp}. 

\begin{lemma} \label{lem:RepEntr}
Under \ref{ass:coef1}-\ref{ass:IniExp}, for $h \in [0,1]$ and $P \in \ps_p (\C^d)$, 
\[ H ( P \vert \Gamma_h ( P ) ) = \inf_\Sigma \inf_{\mathcal{L}(X^{h,u}) = P} H(\L(X^{h,u}_0) \vert \L( X^1_0 ) ) + \E \int_0^T \frac{1}{2} \vert u_t \rvert^2 \d t. \]    
where we minimise over $(X^{h,u},u)$ satisfying \eqref{eq:controlled_process} in the reference system $\Sigma$, with the convention that an infimum over an empty set equals $+\infty$.
\end{lemma}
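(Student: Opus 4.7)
The identity is a path-dependent McKean--Vlasov analogue of the standard variational/Girsanov representation of relative entropy, so my plan is the classical two-sided argument: prove $\leq$ by a Girsanov change of measure combined with entropy contraction, and prove $\geq$ by a Föllmer-type semimartingale decomposition of $P$ against $\Gamma_h(P)$.

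For the upper bound, fix an arbitrary reference system $\Sigma$ carrying a Brownian motion $B$ and an admissible pair $(u, X^{h,u})$ with $\L(X^{h,u}) = P$. After a standard truncation argument (reducing to bounded $u$ via Novikov's condition, with the unbounded case recovered by lower semicontinuity of entropy and monotone convergence of the quadratic cost), Girsanov's theorem yields a probability $\tilde\P = \exp\bigl(-\int_0^T u_s \cdot \d B_s - \tfrac12\int_0^T|u_s|^2\,\d s\bigr)\,\P$ under which $\tilde B_t := B_t + \int_0^t u_s\,\d s$ is a Brownian motion; thus under $\tilde\P$, $X^{h,u}$ solves the uncontrolled discretised SDE with measure argument frozen to $P$ and initial law $\L(X^{h,u}_0)$. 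Calling its $\tilde\P$-law $Q$, I disintegrate entropy along the initial condition: since $Q$ and $\Gamma_h(P)$ share the same regular conditional law given $X_0$,
\begin{equation*}
H(P\,|\,\Gamma_h(P)) = H(\L(X^{h,u}_0)\,|\,\L(X^1_0)) + H(P\,|\,Q).
\end{equation*}
The contraction of entropy under the push-forward by $\omega \mapsto X^{h,u}(\omega)$ gives $H(P\,|\,Q) \leq H(\P\,|\,\tilde\P) = \tfrac12\,\E\int_0^T|u_s|^2\,\d s$ (via $\E_\P[\int_0^T u_s \cdot \d B_s]=0$), which combined with the display yields the ``$\leq$'' inequality.

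For the lower bound, one may assume $H(P\,|\,\Gamma_h(P)) < +\infty$, so that $P \ll \Gamma_h(P)$. A Föllmer-type decomposition of the canonical process on $\C^d$ under $P$ (relative to its semimartingale decomposition under $\Gamma_h(P)$), as in \cite[Lemma A.1]{fischer2014form} and \cite[Section 3]{leonard2012girsanov}, produces on a possibly enlarged probability space a Brownian motion $B$ and a progressively measurable control $u$ such that the canonical process satisfies \eqref{eq:controlled_process} with $\L(X^{h,u}) = P$, together with the exact identity
\begin{equation*}
H(P\,|\,\Gamma_h(P)) = H(\L(X^{h,u}_0)\,|\,\L(X^1_0)) + \tfrac{1}{2}\,\E\!\int_0^T\!|u_s|^2\,\d s.
\end{equation*}
Taking this enlarged setup as the reference system $\Sigma$ achieves the required infimum, completing the ``$\geq$'' inequality.

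The main obstacle is the existence of such a control $u$ when $\sigma$ is possibly degenerate: the drift correction appearing in the Föllmer decomposition of $\log(\d P/\d \Gamma_h(P))$ must lie in the range of $\sigma_{t_h}(\cdot, P)$, so that it can be written as $\sigma_{t_h}(\cdot, P)\,u$. For $h > 0$ this is transparent because between grid points the only randomness in the Euler scheme enters through $\sigma_{t_h}(\omega, P)[B_{(i+1)h}-B_{ih}]$, so the density factorises over the grid and the logarithmic drift is manifestly of the required form; for $h = 0$ the result is precisely what the cited references provide, the path-dependence of $(b,\sigma)$ playing no role since Girsanov and the martingale problem are insensitive to it. Once the decomposition is in place, the identification of the quadratic control cost with the remaining piece of the entropy follows from Itô's formula applied to $\log(\d P/\d\Gamma_h(P))$ and the martingale property of the resulting stochastic integral under $P$.
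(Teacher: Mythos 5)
Your upper bound ($\leq$) is correct and essentially standard: starting from any admissible controlled pair, tilt by Girsanov and apply entropy contraction under the path map $\omega \mapsto X^{h,u}(\omega)$, combined with the chain rule over the initial condition. The lower bound ($\geq$) is where you diverge from the paper and where the gap sits. You propose a direct Föllmer-type decomposition of $P$ against $\Gamma_h(P)$ on the state path space $\C^d$, and you correctly identify that the obstacle is representing the resulting drift correction as $\sigma_{t_h}(\cdot,P)\,u$ when $\sigma$ is degenerate. But your resolution is heuristic. For $h>0$ you assert that the grid factorisation makes the representation "manifest"; even granting that the conditional densities factor over the grid, you still have to (a) prove the conditional drift shift under $P$ lands in the range of $\sigma_{ih}$, (b) construct a progressively measurable minimal-norm preimage $u$, and (c) show that this choice saturates the entropy identity, none of which is supplied. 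For $h=0$ you defer to \cite{leonard2012girsanov,fischer2014form}, but \cite[Lemma A.1]{fischer2014form} is the entropy-contraction lemma, not a decomposition result, and the Girsanov/Föllmer theory in those references is formulated either under Wiener measure with identity diffusion or in the non-degenerate case; the extension to a path-dependent degenerate $\sigma$ is precisely the work that would need to be done, not something that can be cited away.

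The paper sidesteps this entirely by decomposing on the noise space rather than the state space. It writes $\Gamma_h(P)=Y^{h,P}_\# W$ as a push-forward of Wiener measure $W$ by the measure-frozen solution map $Y^{h,P}$ from Lemma \ref{lem:fixed}, and applies the projection formula $H(P\,\vert\, Y^{h,P}_\# W)=\inf_{R:\, Y^{h,P}_\# R=P} H(R\,\vert\, W)$ obtained from entropy contraction. The entropy that then needs representing, $H(R\,\vert\, W)$, lives under Wiener measure where the diffusion coefficient is the identity, so the classical Girsanov representation \cite[Theorem 2.3]{leonard2012girsanov} applies with no range condition to worry about. Pushing the associated controlled process $Z^u_t = Z^u_0 + \int_0^t u_s\,\d s + B_t$ forward through $Y^{h,P}$ yields a strong solution of \eqref{eq:controlled_process}, and the drift is of the form $\sigma_{t_h}(\cdot,P)u$ automatically, by construction of the solution map. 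In short, the "main obstacle" you flag never arises in the paper's route. To repair your proof you should either supply the degenerate-$\sigma$ decomposition honestly (which is substantially more work and new content) or switch to the push-forward-from-noise-space argument, which is both shorter and robust under degeneracy.
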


\begin{proof}
We notice that $\Gamma_h ( P ) = Y^{h,P}_\# W$ is the push-forward of $W$ by $Y^{h,P}$.
Following the proof of \cite[Theorem 5.2]{fischer2014form}, we use the contraction property of entropy \cite[Lemma A.1]{fischer2014form} to write that
\[ H ( P \vert \Gamma_h ( P ) ) = \inf_{\substack{R \in \ps ( \C^{d'} ) \\ P = Y^{h,P}_\# R}} H( R \vert W ). \]
For $R \in \ps ( \C^{d'} )$ with finite $H( R \vert W )$, we now claim that
\begin{equation} \label{eq:suffEntr}
H( R \vert W ) = H ( R_0 \vert \L( X^1_0 )) + \inf_\Sigma \inf_{\mathcal{L}(Z^u) = R} \E \int_0^T \frac{1}{2} \vert u_t \rvert^2 \d t, 
\end{equation} 
where we minimise over processes $Z^u = ( Z^u_t )_{0 \leq t \leq T}$ satisfying $\L( Z^u ) = R$, and $\P$-a.s.
\[ \forall t \in [0,T], \quad Z^u_t = Z^u_0 + \int_0^t u_s \d s + B_t. \]
Equation \eqref{eq:suffEntr} can be obtained as a consequence of the Girsanov transform \cite[Theorem 2.3]{leonard2012girsanov}, or by adding an initial condition in \cite[Lemma B.1]{fischer2014form} ($Z^u$ is no more a non-linear process, contrary to $X^{h,u}$).
As a consequence,
\[ H ( P \vert \Gamma_h ( P ) ) = \inf_\Sigma \inf_{P = Y^{h,P}_\# R} \inf_{\mathcal{L}(Z^u) = R}  H ( R_0 \vert \L( X^1_0 )) + \inf_{\mathcal{L}(Z^u) = R} \E \int_0^T \frac{1}{2} \vert u_t \rvert^2 \d t. \]
This yields the desired result, by noticing that $Y^{h,P} ( Z^u)$ is a strong solution of \eqref{eq:controlled_process} if $P = Y^{h,P}_\# R$, and that uniqueness in law holds for the solution of 
\[ \d X^{h,u}_s = b_{s_h} ( X^{h,u}, P ) \d s + \sigma_{s_h} (X^{h,u}, P )u_s \d s + \sigma_{s_h} (X^{h,u}, P )\d B_s, \]
using the Girsanov transform as for \eqref{eq:NControlled}.
This unique law is precisely $Y^{h,P}_\# \mathcal{L}(Z^u)$.
\end{proof}

We now prove a useful tightness result.
The next result shows pre-compactness for sequences of processes of type \eqref{eq:controlled_process}, for which strong existence may not always hold.

\begin{lemma} \label{lem:Tightness} 
For $p \in [1,2)$ and every $k \geq 1$, on a reference system $\Sigma_k$, let us assume that  
\[ \d X^k_t = b_{t_{h_k}} (X^k , \L(X^k)) \d t + \sigma_{t_{h_k}} ( X^k , \L ( X^k ) ) u^k_t \d t + \sigma_{t_{h_k}} ( X^k_t, \L ( X^k ) ) \d B^k_t, \]
has a strong solution with path-law $P_k$, $( u^k_t)_{0 \leq t \leq T}$ being a control. 
We assume \ref{ass:coef1}-\ref{ass:IniExp} and
\[ \sup_{k \geq 1} H ( \L ( X^k_0 ) \vert \L ( X^1_0 ) ) + \E \int_0^T \frac{1}{2} \lvert u^k_t \vert^2 \d t < +\infty. \]
Then, for any sequence $(h_k)_{k \geq 1}$ in $[0,1]$, $( P_k )_{k \geq 1}$ is pre-compact in $\ps_p ( \C^d )$. 
\end{lemma}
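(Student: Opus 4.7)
The plan is to verify the two criteria characterising pre-compactness in $(\ps_p(\C^d), W_p)$ \cite[Definition 6.8]{villani2009optimal}: tightness in the weak topology, together with uniform $p$-integrability $\lim_{R \to \infty} \sup_k \E_{P_k}[\Vert x\Vert^p \1_{\Vert x\Vert \geq R}] = 0$, where $\Vert x\Vert := \sup_{0 \leq t \leq T}\vert x_t\vert$.

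First I would carry out a moments-and-tightness analysis directly on $X^k$. Since $\mathcal{E}(u^k) \geq 0$, the assumption yields $\sup_k H(\L(X^k_0)\vert\L(X^1_0)) < +\infty$; the dual variational formula for relative entropy applied to $\phi(x) = \alpha\vert x\vert^p$ combined with \ref{ass:IniExp} then gives $\sup_k \E[\vert X^k_0\vert^p] < +\infty$. Plugging into \eqref{eq:controlled_process}, using \ref{ass:coef1}, the bound $W_p(\L(X^k_{\wedge s}),\delta_0)^p \leq \E[\Vert X^k_{\wedge s}\Vert^p]$, Jensen's inequality on $(\int_0^T\vert u^k_s\vert^2 ds)^{p/2}$ (since $p/2 \leq 1$) to handle the $\sigma u^k$ term, and the BDG inequality with boundedness of $\sigma$ for the stochastic integral, a Gronwall argument would yield $\sup_k \E[\Vert X^k\Vert^p] < +\infty$, hence in particular $\sup_k W_p(P_k,\delta_0) < +\infty$. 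The same decomposition applied to $X^k_t - X^k_s$ for $\vert t-s\vert \leq \delta$, combined with Cauchy–Schwarz on the control term and \cite[Theorem 1]{fischer2009moments} on the martingale part, would give $\lim_{\delta \to 0}\sup_k \E[\sup_{\vert t-s\vert\leq\delta}\vert X^k_t-X^k_s\vert^p] = 0$, whence tightness of $(P_k)$ in $\ps(\C^d)$ via \cite[Theorem 7.3]{billingsley2013convergence}.

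The hard part will be uniform $p$-integrability. I plan to exploit the entropy bound given by Lemma~\ref{lem:RepEntr} applied to $(X^k, u^k)$,
\[ H(P_k \vert \Gamma_{h_k}(P_k)) \leq H(\L(X^k_0)\vert\L(X^1_0)) + \E\!\int_0^T\!\tfrac{1}{2}\vert u^k_t\vert^2 dt \leq C, \]
uniformly in $k$. The variational formula applied to $\phi_M(x) = \alpha\Vert x\Vert^p\1_{\Vert x\Vert^p \geq M}$ then yields
\[ \alpha\,\E_{P_k}\!\big[\Vert x\Vert^p\1_{\Vert x\Vert^p\geq M}\big] \leq C + \log\!\big(1 + \E_{\Gamma_{h_k}(P_k)}\big[e^{\alpha\Vert x\Vert^p}\1_{\Vert x\Vert^p\geq M}\big]\big), \]
reducing matters to showing $\sup_k \E_{\Gamma_{h_k}(P_k)}[e^{\alpha\Vert Y\Vert^p}] < +\infty$ for every $\alpha > 0$. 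Under $\Gamma_{h_k}(P_k)$ the path $Y$ solves \eqref{eq:defGamma} with the measure $P_k$ frozen, so Gronwall would give
\[ \Vert Y\Vert \leq C_T\Big(1 + \vert Y_0\vert + W_p(P_k,\delta_0) + \sup_{0 \leq t \leq T}\Big\vert\int_0^t\!\sigma_{s_{h_k}}(Y,P_k)\,dB_s\Big\vert\Big), \]
where $W_p(P_k,\delta_0)$ is uniformly bounded by the first step, $Y_0$ has the same law as $X^1_0$ and therefore all exponential moments of its $p$-th power by \ref{ass:IniExp}, and the stochastic integral is a continuous martingale with bracket bounded by $M_\sigma^2 T$, hence sub-Gaussian and with all exponential moments of its $p$-th power since $p < 2$. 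Independence of $Y_0$ and $B$ would then furnish the claimed uniform exponential moment, after which the elementary bound $\E[e^{\alpha\Vert Y\Vert^p}\1_{\Vert Y\Vert^p\geq M}] \leq e^{-\alpha M}\E[e^{2\alpha\Vert Y\Vert^p}]$ makes the right-hand side of the variational estimate arbitrarily small upon first choosing $\alpha$ large and then $M$ large. The main subtlety is the apparent circularity in controlling $\Gamma_{h_k}(P_k)$-moments via the moments of $P_k$ itself; this loop is broken in advance by the first step, which establishes $\sup_k W_p(P_k,\delta_0) < +\infty$ purely from the SDE satisfied by $X^k$, without any reference to $\Gamma_{h_k}(P_k)$.
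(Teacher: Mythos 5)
Your proposal is correct, and the first half (moment bound via Gronwall, BDG, and the linear-growth hypothesis; equicontinuity via Cauchy--Schwarz on the control term and the modulus-of-continuity estimate for the martingale part; tightness via Billingsley) follows the paper's argument closely. Where you genuinely diverge is the uniform $p$-integrability step. The paper stays elementary: it notes that the pointwise Gronwall estimate bounds $\sup_t|X^k_t|^p$ by $C\bigl[1+|X^k_0|^p + \int_0^T|u^k_s|^p\,\d s + \sup_t|\int_0^t\sigma\,\d B^k|^p\bigr]$, handles $|X^k_0|^p$ by the entropy dual formula and \ref{ass:IniExp}, and observes that the control and martingale contributions are uniformly bounded in $L^{2/p}$ (via $\E\int|u^k|^2\,\d t\le C$ and It\^o's isometry with $|\sigma|\le M_\sigma$), hence uniformly integrable since $2/p>1$. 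You instead invoke Lemma~\ref{lem:RepEntr} to obtain $\sup_k H(P_k\mid\Gamma_{h_k}(P_k))<\infty$, then transfer exponential moments from $\Gamma_{h_k}(P_k)$ to $P_k$ by the variational formula and a sub-Gaussian estimate on the driftless SDE under $\Gamma_{h_k}(P_k)$. This is valid (it is not circular, since Lemma~\ref{lem:RepEntr} is proved independently and precedes the tightness lemma, and your ``break the loop'' remark correctly disposes of the only possible circularity), but it is heavier machinery for the same conclusion: it pulls in Girsanov, requires a second Gronwall analysis for $Y$ under $\Gamma_{h_k}(P_k)$, and needs the sub-Gaussianity of the martingale supremum, none of which the paper's $L^{2/p}$-boundedness shortcut requires. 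The paper's argument is both shorter and, because it avoids Lemma~\ref{lem:RepEntr}, keeps the tightness lemma logically self-contained relative to the representation results; your version is perhaps more conceptually aligned with the entropy framework of the section but buys nothing extra here.
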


\begin{proof}
As in Lemma \ref{lem:Approxh}-\ref{item:lemAppbound}, a Gronwall argument gives that
\begin{equation} \label{eq:Gronwallk}
\sup_{0 \leq s \leq T} \vert X^k_s \rvert^p \leq C \bigg[ 1 + \vert X^k_0 \vert^p + \int_0^T \vert u^k_s \vert^p \d s + \sup_{0 \leq t \leq T} \bigg\vert \int_0^t \sigma_{s_{h_k}} ( X^k, \L( X^{k} ) ) \d B^k_s \bigg\vert^p \bigg], 
\end{equation} 
for $C > 0$ independent of $k$. From the BDG inequality,
\begin{equation} \label{eq:BGK}
\E \, \sup_{0 \leq t \leq T} \bigg\vert \int_0^t \sigma_{s_{h_k}} ( X^k, \L( X^{k} )) \d B^k_s \bigg\vert^p \leq \E \bigg[ \bigg( \int_0^T \vert \sigma_{s_{h_k}} ( X^k, \L( X^{k} ) ) \vert^2 \d s \bigg)^{p/2} \bigg] \leq T^{p/2} M_\sigma. 
\end{equation} 
Taking expectations in \eqref{eq:Gronwallk}, we get the moment bound
\[ \sup_{k \geq 1} \E [ \sup_{0 \leq t \leq T} \vert X^k_t \vert^p \big] < +\infty. \]
For $0 \leq s \leq t \leq T$, 
\[ X^k_t - X^k_s = \int_s^t b_{r_{h_k}} ( X^k, \L( X^{k} ) ) \d r + \int_s^t \sigma_{r_{h_k}} ( X^k, \L( X^{k} ) ) u^k_r \d r + \int_s^t \sigma_{r_{h_k}} ( X^k, \L( X^{k} ) ) \d B^k_r. \]
We use the sub-linear growth of $b$ and the moment bound to control the first term. 
The last term can be controlled using \eqref{eq:BGK} again. 
For the middle term, we use the Cauchy-Schwarz inequality to write that
\begin{equation*} 
\E \bigg \vert \int_s^t \sigma_{r_{h_k}} ( X^k, \L( X^{k} ) ) u^k_r \d r \bigg\vert \leq (t-s)^{1/2} M_\sigma \E \bigg[ \int_0^T \vert u^k_r \rvert^2 \d r \bigg].
\end{equation*} 
At the end of the day, we deduce that
\[ \sup_{k \geq 1} \, \E \big[ \sup_{\vert s -t \vert \leq \delta} \vert X^k_t - X^k_s \vert \big] \xrightarrow[\delta \rightarrow 0]{} 0. \]
Together with the moment bound, this shows that $( P_k )_{k \geq 1}$ is tight \cite[Chapter 2,Theorem 7.3]{billingsley2013convergence}, and thus relatively compact for the weak convergence of measures.
To obtain pre-compactness in $\ps_p (\C^d)$, it is now sufficient to show uniform integrability for $( \sup_{0 \leq t \leq T} \vert X^k_t \vert^p )_{k \geq 1}$ \cite[Definition 6.8-(iii)]{villani2009optimal}.
To do so, we show that each term on the r.h.s. of \eqref{eq:Gronwallk} is uniformly integrable. 
The dual representation \eqref{eq:VarEntrop}  for relative entropy reads
\begin{equation*}
H( \L (X^{k}_0) \vert \L( X^1_0 )) = \sup_{\substack{\phi \text{ measurable} \\ \E [ \phi ( X^{1}_0 ) ] < +\infty}} \E [ \phi ( X^{k}_0 ) ] - \log \E [ e^{\phi ( X^{1}_0 )} ],  
\end{equation*}
Applying this to $\phi (x) = \alpha \vert x \vert^p \1_{\lvert \phi \rvert \geq M} $ for every $\alpha, M > 0$, uniform integrability for $( \vert X^k_0 \vert^p )_{k \geq 1}$ follows from \ref{ass:IniExp} and the bound on $( H( \L (X^{k}_0) \vert \L( X^1_0 )) )_{k \geq 1}$.
For the uniform integrability of 
\[ \int_0^T \vert u^k_s \vert^p \d s \quad \text{and} \quad \sup_{0 \leq t \leq T} \bigg\vert \int_0^t \sigma_{s_{h_k}} ( X^k, \L( X^{k} ) ) \d B^k_s \bigg\vert^p, \]
we use that $p <2$ and that
\[ \int_0^T \vert u^k_s \vert^2 \d s \quad \text{and} \quad \sup_{0 \leq t \leq T} \bigg\vert \int_0^t \sigma_{s_{h_k}} ( X^k, \L( X^{k} ) ) \d B^k_s \bigg\vert^2 \]
are bounded uniformly in $k$, using Ito's isometry and the bound on $\sigma$ for the second term. 
This concludes the proof.
\end{proof}

\begin{rem}
We notice that the same proof actually works if the coefficients $b = b^k$ and $\sigma = \sigma^k$ depend on $k$, provided the linear growth of $b^k$ and the bound on $\sigma^k$ are uniform in $k$.
\end{rem}

Combined with Lemma \ref{lem:RepEntr}, Lemma \ref{lem:Tightness} then allows us to extend the compactness result from \cite[Remark 5.2]{fischer2014form} to the topology on $\ps_p ( \C^d)$.

\begin{corollary} \label{cor:goodRate}
Under \ref{ass:coef1}-\ref{ass:IniExp}, for every $p \in [1,2)$ and $h \in [0,1]$, $P \mapsto H ( P \vert \Gamma_h ( P ) )$ has compact level sets in $\ps_p ( \C^d )$; it is a good rate function.
\end{corollary}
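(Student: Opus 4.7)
The plan is to combine Lemma~\ref{lem:RepEntr} (representation as a stochastic control problem) with the pre-compactness result Lemma~\ref{lem:Tightness}, and then close the level set using a lower semi-continuity argument. Fix $M \geq 0$ and $h \in [0,1]$, and denote the level set
\[ K_M := \{ P \in \ps_p ( \C^d ) : H ( P \vert \Gamma_h ( P ) ) \leq M \}. \]
To prove compactness of $K_M$ in $\ps_p(\C^d)$, I would establish separately that $K_M$ is sequentially pre-compact and that it is closed for the $W_p$-topology.

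For pre-compactness, I would take $(P_k)_{k \geq 1} \subset K_M$. Since $H(P_k \vert \Gamma_h(P_k)) \leq M < +\infty$, Lemma~\ref{lem:RepEntr} guarantees that for each $k$ the infimum set is nonempty: I can pick a reference system $\Sigma_k$, a square-integrable control $u^k$, and a strong solution $X^k$ of \eqref{eq:controlled_process} with $\L(X^k) = P_k$ and
\[ H(\L(X^k_0) \vert \L(X^1_0)) + \E \int_0^T \tfrac{1}{2} \vert u^k_t \vert^2 \d t \leq M + 1/k. \]
Applying Lemma~\ref{lem:Tightness} with the constant sequence $h_k \equiv h$ yields that $(P_k)_{k \geq 1}$ is pre-compact in $\ps_p(\C^d)$.

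For closedness, I would show that $P \mapsto H(P \vert \Gamma_h(P))$ is lower semi-continuous on $\ps_p(\C^d)$. This requires two ingredients. First, the classical joint lsc of $(P,Q) \mapsto H(P \vert Q)$ for the weak topology \cite[Lemma~A.1]{fischer2014form}, which a fortiori holds for the finer $W_p$-topology. Second, the continuity of $P \mapsto \Gamma_h(P)$ from $\ps_p(\C^d)$ to itself: given $P_k \to P$ in $W_p$, I would couple the solutions $Y^{h,P_k}, Y^{h,P}$ of the SDE \eqref{eq:defGamma} on a common probability space driven by the same Brownian motion and initial condition $X^1_0$; under \ref{ass:coef1}, the global Lipschitz-continuity of $b$ and $\sigma$ in the measure argument together with the boundedness of $\sigma$ yields, by a standard Gronwall-BDG computation as in Lemma~\ref{lem:fixed}-\ref{item:lemfrozLip},
\[ \E \sup_{0 \leq t \leq T} \vert Y^{h,P_k}_t - Y^{h,P}_t \vert^p \leq C \, W_p(P_k,P)^p, \]
hence $W_p(\Gamma_h(P_k), \Gamma_h(P)) \to 0$. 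Composing, $P \mapsto H(P \vert \Gamma_h(P))$ is lsc on $\ps_p(\C^d)$, so $K_M$ is closed, and combined with pre-compactness $K_M$ is $W_p$-compact.

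The most delicate point is the interplay between the topology on $\ps_p$ and the chain of inequalities in Lemma~\ref{lem:Tightness}: the uniform integrability argument there crucially exploits $p < 2$ and the exponential integrability of the initial condition from \ref{ass:IniExp} to upgrade the classical weak tightness given by relative entropy bounds to the $W_p$-pre-compactness. The rest is then straightforward stability for the frozen SDE \eqref{eq:defGamma}.
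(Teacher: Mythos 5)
Your proof is correct and follows essentially the same route as the paper for pre-compactness: extract near-optimal controls via Lemma~\ref{lem:RepEntr} with uniformly bounded entropy and energy, then apply Lemma~\ref{lem:Tightness}. You also supply an explicit closedness argument (joint lower semi-continuity of relative entropy combined with $W_p$-continuity of $P \mapsto \Gamma_h(P)$) that the paper's own proof omits — the paper only records pre-compactness and implicitly appeals to the analogous statement in \cite[Remark~5.2]{fischer2014form}. This is a worthwhile completion. One technical remark: the claimed Lipschitz estimate
\[
\E \sup_{0 \leq t \leq T} \vert Y^{h,P_k}_t - Y^{h,P}_t \vert^p \leq C\, W_p(P_k,P)^p
\]
for $p \in [1,2)$ is most cleanly obtained by first closing the synchronous-coupling Gronwall loop at the level of \emph{second} moments (available since $\sigma$ is bounded, $b$ has linear growth, and \ref{ass:IniExp} gives all moments of $X^1_0$), and then invoking Jensen to descend to order $p$; a direct $p$-th moment argument is awkward because BDG at exponent $p/2<1$ does not absorb back into the Gronwall iterate without the local-interval trick of Lemma~\ref{lem:Gronwall}. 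Also, citing Lemma~\ref{lem:fixed}-\ref{item:lemfrozLip} here is only illustrative: that lemma treats the frozen ODE for a fixed $\gamma$ with constant $L(\gamma)$ depending on the path, whereas here one needs the genuine stochastic-integral version.
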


\begin{proof}
Let $( P_k )_{k \geq 1}$ be a sequence in $\ps_p ( \C^d )$ such that $( H ( P \vert \Gamma_h ( P ) ) )_{k \geq 1}$ is bounded.
From Lemma \ref{lem:RepEntr}, for every $k \geq 1$, there exist a reference system $\Sigma_k$, a progressively measurable process $u^k$ and a strong solution $X^{h,k}$ on $\Sigma_k$ with path-law $P_k$ for
\[ \d X^{h,k}_t = b_{t_{h}} (X^{h,k} , \L(X^{h,k})) \d t + \sigma_{t_{h}} ( X^{h,k} , \L ( X^{h,k} ) ) u^k_t \d t + \sigma_{t_{h}} ( X^{h,k}_t, \L ( X^{h,k} ) ) \d B^k_t, \]
satisfying
\[ \sup_{k \geq 1} H ( \L ( X^k_0 ) \vert \L ( X^1_0 ) ) + \E \int_0^T \frac{1}{2} \lvert u^k_t \vert^2 \d t < +\infty. \]
Lemma \ref{lem:Tightness} then shows that $(P_k)_{k \geq 1}$ is pre-compact in $\ps_p ( \C^d )$.
\end{proof}

\subsection{Gamma-convergence}

As a consequence of Lemma \ref{lem:RepEntr}, we get the following result, which can be seen as an extension of \cite[Theorem 3.6]{budhiraja2000variational} to our setting.

\begin{corollary} \label{cor:Repmf}
For every $h \in [0,1]$ and every bounded measurable $F : \ps_p ( \C^d ) \rightarrow \R$,
\[ \inf_{P \in \ps_p ( \C^d )} H( P \vert \Gamma_h ( P) ) + F(P)
= \inf_{\Sigma, u, X^u_0} F ( \L ( X^{h,u} ) ) + H(\L(X^u_0) \vert \L( X^1_0 ) ) + \E \int_0^T \frac{1}{2} \vert u_t \rvert^2 \d t. \]
\end{corollary}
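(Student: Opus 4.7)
The plan is to deduce Corollary \ref{cor:Repmf} directly from Lemma \ref{lem:RepEntr} by interchanging infima; there is essentially no new analytic content beyond that lemma.

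First, I substitute the representation of Lemma \ref{lem:RepEntr} into the left-hand side. For every fixed $P \in \ps_p(\C^d)$,
\[ H(P \vert \Gamma_h(P)) + F(P) = \inf_{\Sigma,u,X^{h,u}_0 : \L(X^{h,u}) = P} \Big[ H(\L(X^{h,u}_0) \vert \L(X^1_0)) + \E \int_0^T \tfrac{1}{2} \vert u_t \vert^2 \d t \Big] + F(P), \]
with the convention that the infimum over the empty set is $+\infty$. Since $F(P)$ does not depend on the triple $(\Sigma,u,X^{h,u}_0)$, it can be pulled inside the inner infimum. Under the constraint $\L(X^{h,u}) = P$, I replace $F(P)$ by $F(\L(X^{h,u}))$ without changing anything.

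Second, I take the infimum over $P \in \ps_p(\C^d)$ on both sides. On the right this collapses two nested infima: ranging first over $P$ and then over triples $(\Sigma,u,X^{h,u}_0)$ compatible with $P$ is equivalent to ranging over all triples $(\Sigma,u,X^{h,u}_0)$ that solve \eqref{eq:controlled_process} in the sense of Section~\ref{subsec:contLDP} (with $P$ then recovered as $\L(X^{h,u})$). This yields exactly
\[ \inf_{P} \big[ H(P \vert \Gamma_h(P)) + F(P) \big] = \inf_{\Sigma,u,X^{h,u}_0} \Big[ F(\L(X^{h,u})) + H(\L(X^{h,u}_0) \vert \L(X^1_0)) + \E \int_0^T \tfrac{1}{2} \vert u_t \vert^2 \d t \Big], \]
which is the desired identity.

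The only points requiring a remark are (i) that all $P$ appearing with finite cost on the left do belong to $\ps_p(\C^d)$, so $F(P)$ is well-defined there, and (ii) that conversely any admissible triple on the right gives $\L(X^{h,u}) \in \ps_p(\C^d)$, so $F(\L(X^{h,u}))$ is well-defined and the value is finite; both are guaranteed by the moment estimate in the proof of Lemma \ref{lem:Tightness} under \ref{ass:coef1}-\ref{ass:IniExp}. I do not anticipate any real obstacle; the statement is essentially a bookkeeping exercise combining Lemma \ref{lem:RepEntr} with the interchange of infima, consistent with the classical Boué-Dupuis-type representation used in the weak convergence approach.
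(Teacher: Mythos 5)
Your proof is correct and coincides with the paper's intended argument: the paper states Corollary \ref{cor:Repmf} without proof, describing it as a direct consequence of Lemma \ref{lem:RepEntr}, and the interchange/collapse of the nested infima you spell out is exactly the implicit bookkeeping. Your remarks (i) and (ii) correctly identify the only point that deserves mention, namely that finite-cost triples automatically give $\L(X^{h,u}) \in \ps_p(\C^d)$ via the moment bound established in Lemma \ref{lem:Tightness}, so $F(\L(X^{h,u}))$ is well-defined whenever it matters.
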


\begin{proposition} \label{pro:CVexpF}
For $p \in [1,2)$ and every bounded continuous $F : \ps_p ( \C^d ) \rightarrow \R$,
\[ \inf_{P \in \ps_p ( \C^d )} H( P \vert \Gamma_h ( P) ) + F(P)
\xrightarrow[h \rightarrow 0]{} \inf_{P \in \ps_p ( \C^d )} H( P \vert \Gamma_0 ( P) ) + F(P). \]    
\end{proposition}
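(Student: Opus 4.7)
The plan is to use the stochastic control representation of Corollary \ref{cor:Repmf}, the pre-compactness from Lemma \ref{lem:Tightness}, and the joint lower semicontinuity of relative entropy to reduce the claim to a $\Gamma$-type convergence at $h=0$. Write $V_h := \inf_P H(P|\Gamma_h(P)) + F(P)$. For the $\limsup$, I would fix $\varepsilon>0$ and pick an admissible triple $(\Sigma,u,X_0)$ which is $\varepsilon$-optimal for $V_0$ via Corollary \ref{cor:Repmf}. Keeping the same reference system, initial condition and control, let $X^{h,u}$ denote the strong solution of \eqref{eq:controlled_process} at step $h$; existence follows by a direct Euler-scheme induction for bounded $u$ and a Girsanov-truncation argument otherwise. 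A Gronwall estimate mirroring the proof of Lemma \ref{lem:Approxh}-\ref{item:lemApphhinf}, with Cauchy-Schwarz absorbing the $\sigma_{\cdot_h} u$ drift term, yields $\E[\sup_{0 \le t \le T}|X^{h,u}_t-X^{0,u}_t|^p]\to 0$. Continuity of $F$ in the $W_p$-topology then gives $F(\L(X^{h,u}))\to F(\L(X^{0,u}))$, hence $\limsup_h V_h \le V_0+\varepsilon$.

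For the $\liminf$, pick $h_k\to 0$ with $V_{h_k}\to\liminf_h V_h$ and, for each $k$, choose $(\Sigma_k,u_k,X_0^k)$ within $1/k$ of $V_{h_k}$ in the representation. The trivial bound $V_h\le\|F\|_\infty$ (take $u\equiv 0$, $X_0\sim X^1_0$) yields $\sup_k\{H(\L(X_0^k)|\L(X_0^1))+\E\int_0^T\tfrac12|u_t^k|^2\d t\}<\infty$. Lemma \ref{lem:Tightness} then delivers pre-compactness in $\ps_p(\C^d)$ of $P_k:=\L(X^{h_k,u_k})$, and passing to a subsequence I can assume $P_k\to P^*$ in $W_p$, so that $F(P_k)\to F(P^*)$ by continuity. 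Lemma \ref{lem:RepEntr} bounds the $k$-th cost from below by $H(P_k|\Gamma_{h_k}(P_k))+F(P_k)$, reducing the matter to a lower bound on the relative entropy.

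The central reduction is then to prove that $\Gamma_{h_k}(P_k)\to \Gamma_0(P^*)$ weakly in $\ps(\C^d)$. On a common Wiener space with initial datum $X^1_0$, let $Y^{h,P}$ denote the pathwise-unique strong solution defining $\Gamma_h(P)$. Exploiting the Lipschitz regularity of $\overline b$ and $\overline\sigma$ in $(x,P)$ from \ref{ass:coef1}, the $W_p$-convergence of $P_k$, and the classical modulus-of-continuity bound $\E[\sup_{|s-t|\le h}|Y^{0,P^*}_s-Y^{0,P^*}_t|^p]\to 0$ as $h\to 0$, a Gronwall-plus-BDG argument delivers $Y^{h_k,P_k}\to Y^{0,P^*}$ in $L^p(\Omega,\C^d)$, and in particular $\Gamma_{h_k}(P_k)\to\Gamma_0(P^*)$. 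Joint lower semicontinuity of $(P,Q)\mapsto H(P|Q)$ under weak convergence of probability measures then yields
\[ \liminf_k H(P_k|\Gamma_{h_k}(P_k))\ge H(P^*|\Gamma_0(P^*)), \]
so that $\liminf_k V_{h_k}\ge H(P^*|\Gamma_0(P^*))+F(P^*)\ge V_0$, completing the proof.

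The main obstacle is the joint continuity of $(h,P)\mapsto\Gamma_h(P)$ at $(0,P^*)$: the individual ingredients (Euler-scheme consistency for a fixed measure, and continuity of $P\mapsto\Gamma_0(P)$ for fixed $h$) are classical, but must be combined simultaneously under only the path-dependent Lipschitz regularity of \ref{ass:coef1}. Here it is essential that Lemma \ref{lem:Tightness} provides $W_p$-convergence of $P_k$ (rather than plain weak convergence), since this is what guarantees that $\overline b_s(\cdot,P_k)\to\overline b_s(\cdot,P^*)$ and $\overline\sigma_s(\cdot,P_k)\to\overline\sigma_s(\cdot,P^*)$ with uniform Lipschitz control, making the Gronwall step close cleanly.
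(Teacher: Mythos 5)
Your $\liminf$ argument essentially reproduces \textbf{Step 1} of the paper's proof: extract a $W_p$-convergent subsequence via Lemma~\ref{lem:Tightness}, verify the stability $\Gamma_{h_k}(P_k)\to\Gamma_0(P^*)$ (a standard Gronwall, since the frozen-measure SDEs defining $\Gamma_h$ carry no control), and conclude by joint lower semicontinuity of relative entropy; this part is fine.

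The $\limsup$ has a genuine gap. You propose to fix an $\varepsilon$-optimal control $u$ for $V_0$, solve the discretised controlled McKean--Vlasov \eqref{eq:controlled_process} at step $h$ with the same $u$, and close a Gronwall estimate ``with Cauchy--Schwarz absorbing the $\sigma_{\cdot_h}u$ drift term''. Under the general assumption \ref{ass:coef1}, where $\sigma$ depends on $x$, the difference $\sigma^h_s:=\sigma_{s_h}(X^{h,u},\L(X^{h,u}))-\sigma_s(X^{0,u},\L(X^{0,u}))$ has a piece of order $\sup_{r\le s}|X^{h,u}_r-X^{0,u}_r|$. Cauchy--Schwarz gives
\[
\Bigl|\int_t^{t'}\sigma^h_s u_s\,\d s\Bigr|\le (t'-t)^{1/2}\sup_{t\le s\le t'}|\sigma^h_s|\,\Bigl(\int_0^T|u_s|^2\,\d s\Bigr)^{1/2},
\]
so after taking $p$-th moments the Gronwall iteration must handle $\E\bigl[\sup_s|X^{h,u}_s-X^{0,u}_s|^p\,\bigl(\int_0^T|u_s|^2\,\d s\bigr)^{p/2}\bigr]$, a product of two \emph{random} factors of which only the second is integrable (and only in $L^{2/p}$ via Jensen). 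Since $u$ is merely $L^2(\P\otimes\d t)$ and not a.s.\ bounded, this does not split and the Gronwall does not close. This is precisely why Propositions~\ref{pro:valueF} and~\ref{pro:valueFcontra} need extra structure on $\sigma$ \emph{and} truncate the cost by $\1_{\mathcal{E}(\vec{u}^N)\le M}$. The paper's \textbf{Steps 2--4} avoid the difficulty entirely: start from $P$ (not from a control), use Girsanov to represent $P$ via a frozen-measure controlled SDE, truncate $u$ to $u^M$, introduce the discretised controlled McKean--Vlasov, and then apply a \emph{second} Girsanov change of measure \eqref{eq:MeasChange} which removes the control from the dynamics before any pathwise comparison, so that the $h\to 0$ Gronwall is control-free; a separate $L^2$-coupling argument then handles $M\to\infty$. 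Your mention of a ``Girsanov-truncation argument'' merely for \emph{existence} of $X^{h,u}$ does not fill this gap: existence at $h>0$ is a trivial Euler induction, and it is in the \emph{estimate}, not the existence, that the change of measure is indispensable.
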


\begin{proof}
The lower limit is quite direct, while the upper limit requires extra care to build approximate minimisers.

\medskip

\emph{\textbf{Step 1.} Lower limit.}
For every $h \in (0,1]$, Corollary \ref{cor:goodRate} provides a minimiser $P_h$ for
\[  \overline{\F}_h := \inf_{P \in \ps_p ( \C^d )} H( P \vert \Gamma_h ( P) ) + F(P). \]
Since $\overline{\F}_h$ is bounded by $\lVert F \rVert_\infty$ independently of $H$, Lemma \ref{lem:Tightness} provides pre-compactness in $\ps_p ( \C^d )$ for $(P_h)_{h \geq 0}$. 
From any sub-sequence of $(P_h)_{h \geq 0}$, we can thus extract a sub-sequence which converges towards some $\overline{P}$ in $\ps_p ( \C^d )$.
Along this sub-sequence, it is a standard stability result in the spirit of Lemma \ref{lem:Approxh}-\ref{item:lemApphhinf} that $\Gamma_h(P_h)$ weakly converges towards $\Gamma_0 (\overline{P})$.
Since $F$ is continuous and $(P,Q) \mapsto H(P \vert Q)$ is lower semi-continuous, this shows that $\liminf_{h \rightarrow 0} \overline{\F}^h \geq \overline{\F}^0$.

\medskip

\emph{\textbf{Step 2.} Discretised minimiser.} Reciprocally, let us fix $P$ with finite $H(P \vert \Gamma_0(P))$. 
From the Girsanov transform \cite[Theorem 2.1]{leonard2012girsanov}, there exists a square-integrable process $( u_t )_{0 \leq t \leq T}$ on the canonical space $\C^d$ such that the canonical process $(X^0_t)_{0 \leq t \leq T}$ satisfies
\[ H(P \vert \Gamma_0 ( P)) = \E_P \int_0^T \frac{1}{2} \vert u_t \vert^2 \d t, \quad \d X^0_t = b_t ( X^0, P ) \d t + \sigma_t ( X^0, P ) u_t \d t + \sigma_t ( X^0, P ) \d B^P_t, \]
where $( B^P_t )_{0 \leq t \leq T}$ is a Brownian motion under $P$.
For $M >0$, let $u^M_t := u_t \1_{\mathcal{E}_t (u) \leq M}$, where $\mathcal{E}_t(u) := \int_0^t \vert u_s \vert^2 \d s$. From Lemma \ref{lem:SolMap}, for every $h \in (0,1]$, the McKean-Vlasov equation
\[ \d X^{h,M}_{t} = b_{t_h} ( X^{h,M}, \L ( X^{h,M} ) ) \d t + \sigma_{t_h} ( X^{h,M}, \L ( X^{h,M} ) ) u^M_t \d t + \sigma_{t_h} ( X^{h,M}, \L ( X^{h,M} ) ) \d B^P_t, \]
with $X^{h,M}_0 = X^0$,
has a pathwise-unique strong solution under $P$.
Lemma \ref{lem:Tightness} shows that $( \L_P ( X^{h,M} ) )_{h \geq 0}$ is pre-compact in $\ps_p ( \C^d )$. 
From any sub-sequence of $(\L_P ( X^{h,M}))_{h \geq 0}$, we can thus extract a sub-sequence that converges towards some $P^M$ in $\ps_p ( \C^d )$. Up to reindexing, let us assume that $\L_P ( X^{h,M} ) \rightarrow P^M$ as $h \rightarrow 0$. 
Using the Girsanov transform, the SDE
\[ \d X^M_t = b_t ( X^M, P^M ) \d t + \sigma_t ( X^M, P^M ) u^M_t \d t + \sigma_t ( X^M, P^M ) \d B^P_t, \quad X^M_0 = X^0 \]
has a patwhise-unique strong solution under $P$.
We now make the change of measure
\begin{equation} \label{eq:MeasChange}
\frac{\d Q}{\d P} ( X^0 ) = \exp \bigg[ -\int_0^T u^M_t \d B^P_t - \frac{1}{2} \int_0^T \vert u^M_t \vert^2 \d t \bigg].  
\end{equation}
The Girsanov theorem provides a Brownian motion $( B^Q_t )_{0 \leq t \leq T}$ under $Q$ such that
\[ \d X^{h,M}_{t} = b_{t_h} ( X^{h,M}, \L_{P} ( X^{h,M} ) ) \d t + \sigma_{t_h} ( X^{h,M}, \L_P ( X^{h,M} ) ) \d B^Q_t, \quad X^{h,M}_0 = X^0,  \]
\[ \d X^M_t = b_t ( X^M, P^M ) \d t + \sigma_t ( X^M, P^M ) \d B^Q_t, \quad X^M_0 = X^0, \]
these equations holding $Q$-.a.s.
As previously, a coupling argument yields the stability result
$\E_Q [ \sup_{0 \leq t \leq T} \vert X^{h,M}_{t} - X^M_t \vert^p ] \rightarrow 0$ as $h \rightarrow 0$.
By \eqref{eq:MeasChange} and dominated convergence, we deduce that $\L_P ( X^{h,M} )$ weakly converges towards $\L_P ( X^{M} )$, proving that $P^M = \L_P ( X^{M} )$.
Thus, $X^M$ is the strong solution of a McKean-Vlasov equation under $P$.

\medskip

\emph{\textbf{Step 3.} Coupling in $L^2$.}
Subtracting and integrating, \ref{ass:coef1} and the moment bound from Lemma \ref{lem:Approxh}-\ref{item:lemAppbound} yield
\[ \sup_{0 \leq s \leq t} \vert X^{0}_s - X^M_s \vert \leq C + C \int_0^t \sup_{0 \leq r \leq s} \vert X^{0}_r - X^M_r \vert + \vert u^M_s \vert \d s + C \sup_{0 \leq s \leq t} \bigg\vert \int_0^s \sigma^{M}_r \d B^P_r \bigg\vert, \]
for some $\sigma^{M}$ with $\vert \sigma^{M}_r \vert \leq 2 M_\sigma$ and some $C >0$ independent of $h$ that may change from line to line.
Since this holds for every $t \in [0,T]$, Gronwall's lemma yields
\[ \sup_{0 \leq t \leq T} \vert X^{0}_t - X^M_t \vert \leq C + C \int_0^T \vert u^M_t \vert \d t + C \sup_{0 \leq t \leq T} \bigg\vert \int_0^t \sigma^{M}_s \d B^P_s \bigg\vert, \]
Taking the square and using the BDG inequality, this shows that $\E [ \sup_{0 \leq t \leq T} \vert X^{h,M}_t - X^M_t \vert^2 ]$ is finite.
Since $p \in [1,2)$, the Jensen inequality yields
\[ ( W_p ( \L(X^{0}), \L(X^{M} ) ) )^2 \leq C \E \big[ \sup_{0 \leq t \leq T} \vert X^{h,M}_t - X^M_t \vert^2 \big], \]
and we can now use coupling arguments in $L^2$.
At this stage, we use the definition of $u^M$ and the proof of \cite[Proposition C.1]{fischer2014form} to obtain that $\E_P [ \sup_{0 \leq t \leq T} \vert X^{0}_{t \wedge \tau_M} - X^M_{t \wedge \tau_M} \vert^2 ] = 0$, 
where $\tau_M := \inf \{ t \in [0,T] , \, \mathcal{E}_t (u) \geq M \}$.
From Lemma \ref{lem:Tightness}, $(\L_P(X^M))_{ M >0}$ is pre-compact in $\ps_p (\C^d)$.
Sending $M \rightarrow +\infty$, we eventually deduce that $P^M = \L_P (X^M)$ converges towards $P = \L_P ( X^0 )$.

\medskip

\emph{\textbf{Step 4.} Upper limit.} Using Corollary \ref{cor:Repmf},
\[ \overline{\F}_h \leq F ( \L ( X^{h,M} ) ) + \E_P \int_0^T \vert u^M_t \vert^2 \d t. \]
Taking the $h \rightarrow 0$ limit, 
\[ \limsup_{h \rightarrow 0} \overline{\F}_h \leq F ( P^{M} ) + \E_P \int_0^T \vert u^M_t \vert^2 \d t. \]
Sending $M \rightarrow +\infty$
\[ \limsup_{h \rightarrow 0} \overline{\F}_h \leq F ( P ) + \E_P \int_0^T \vert u_t \vert^2 \d t = F(P) + H(P \vert \Gamma_0 ( P)). \]
Since this holds for every $P$, $\limsup_{h \rightarrow 0} \overline{\F}^h \leq \overline{\F}^0$, completing the proof. 
\end{proof}

\printbibliography
\addcontentsline{toc}{section}{References}

\end{document}